\newcommand{\EDIT}[1]{{\color{black}#1}}
\newcommand{\EEDIT}[1]{{\color{black}#1}}
\newif\ifcomment
\newcommand{\comment}[1]{\ifcomment#1\fi}
\newlength{\currentparindent}
\newcommand{\@minipagerestore}{\setlength{\parindent}{\currentparindent}}
\newcommand{\nospacepunct}[1]{\makebox[0pt][l]{#1}}
\DeclareSymbolFontAlphabet{\mathbb}{AMSb}
\DeclareSymbolFontAlphabet{\mathbbl}{bbold}
\newcommand{\Prism}{{\mathlarger{\mathbbl{\Delta}}}}
\newcommand{\prism}{\mathbbl{\Delta}}
\DeclareMathOperator{\Ker}{Ker}
\DeclareMathOperator{\id}{id}
\DeclareMathOperator{\Aut}{Aut}
\DeclareMathOperator{\Hom}{Hom}
\DeclareMathOperator{\Ext}{Ext}
\DeclareMathOperator{\Spf}{Spf}
\DeclareMathOperator{\Spec}{Spec}
\DeclareMathOperator{\Spa}{Spa}
\DeclareMathOperator{\Cone}{Cone}
\DeclareMathOperator{\Mod}{Mod}
\DeclareMathOperator{\Perf}{Perf}
\DeclareMathOperator{\gr}{gr}
\DeclareMathOperator{\Fil}{Fil}
\DeclareMathOperator{\Rep}{Rep}
\DeclareMathOperator{\Fun}{Fun}
\DeclareMathOperator{\MF}{MF}
\DeclareMathOperator{\Vect}{Vect}
\DeclareMathOperator{\Coh}{Coh}
\DeclareMathOperator{\Loc}{Loc}
\DeclareMathOperator{\fib}{fib}
\DeclareMathOperator{\cofib}{cofib}
\DeclareMathOperator{\RHom}{RHom}
\DeclareMathOperator{\Map}{Map}
\DeclareMathOperator{\Rees}{Rees}
\DeclareMathOperator{\FGauge}{F-Gauge}
\DeclareMathOperator{\Stab}{Stab}
\DeclareMathOperator{\Ind}{Ind}
\DeclareMathOperator{\Sym}{Sym}
\DeclareMathOperator{\Tot}{Tot}
\DeclareMathOperator{\Crys}{Crys}
\DeclareMathOperator{\Isoc}{Isoc}
\newcommand{\colim}{\operatornamewithlimits{colim}}
\newcommand{\sRHom}{\underline{\mathrm{RHom}}}
\newcommand{\sEnd}{\underline{\mathrm{End}}}
\newcommand{\Der}{\mathcal{D}\kern -.5pt er}
\newcommand{\tensorL}{\otimes^\mathbb{L}}
\newcommand\A{\mathbb{A}}
\newcommand\F{\mathbb{F}}
\newcommand\G{\mathbb{G}}
\newcommand\Q{\mathbb{Q}}
\newcommand\V{\mathbb{V}}
\newcommand\Z{\mathbb{Z}}
\newcommand\T{\mathbb{T}}
\newcommand\D{\mathcal{D}}
\newcommand\DF{\mathcal{DF}}
\let\O\cO
\let\H\calH
\let\T\calT
\newcommand\dHod{\slashed{D}}
\newcommand\crys{\mathrm{crys}}
\newcommand\dR{\mathrm{dR}}
\newcommand\N{\mathrm{N}}
\newcommand\Syn{\mathrm{Syn}}
\newcommand\et{\mathrm{\acute{e}t}}
\newcommand\proet{\mathrm{pro\acute{e}t}}
\newcommand\refl{\mathrm{refl}}
\newcommand\adm{\mathrm{adm}}
\newcommand\op{\mathrm{op}}
\let\inf\Ainf
\newcommand\HT{\mathrm{HT}}
\newcommand\lisse{\mathrm{lisse}}
\newcommand\FM{\mathrm{FM}}
\newcommand\red{\mathrm{red}}
\newcommand\Hod{\mathrm{Hod}}
\newcommand\can{\mathrm{can}}
\newcommand\typ{\mathrm{typ}}
\newcommand\nilp{\mathrm{nilp}}
\newcommand\an{\mathrm{an}}
\let\epsilon\varepsilon
\let\phi\varphi
\let\ol\overline
\let\ul\underline
\let\tensor\otimes
\let\cal\mathcal
\newtheorem{thm}{Theorem}[subsection]
\newtheorem{prop}[thm]{Proposition}
\newtheorem{lem}[thm]{Lemma}
\newtheorem{cor}[thm]{Corollary}
\theoremstyle{definition}
\newtheorem{defi}[thm]{Definition}
\newtheorem{rem}[thm]{Remark}
\newenvironment{ex}
  {\pushQED{\qed}\exx}
  {\popQED\endexx}
\numberwithin{equation}{subsection}
\date{\today}
\renewcommand{\address}[1]{\gdef\@address{#1}}
\renewcommand{\email}[1]{\gdef\@email{\url{#1}}}
\newcommand{\@endstuff}{\par\vspace{\baselineskip}\noindent\small
\begin{tabular}{@{}l}\scshape\@address\\\textit{E-mail address:} \@email\end{tabular}}
\title[An étale-crystalline comparison with coefficients]{An arithmetic étale-crystalline comparison \\ with coefficients in crystalline local systems}
\author{Maximilian Hauck}
\address{Max-Planck-Institut f\"ur Mathematik, Vivatsgasse 7, 53111 Bonn, Germany}
\email{max.hauck01@gmail.com}
\begin{document}

\begin{abstract}
We use the stacky approach to $p$-adic cohomology theories recently developed by Drinfeld and Bhatt--Lurie to generalise a comparison theorem between the rational crystalline cohomology of the special fibre and the rational $p$-adic étale cohomology of the arithmetic generic fibre of any proper $p$-adic formal scheme $X$ due to Colmez--Nizio{\l} to the case of coefficients in an arbitrary crystalline local system on the generic fibre of $X$. In the process, we establish a version of the Beilinson fibre square of Antieau--Mathew--Morrow--Nikolaus with coefficients in the proper case and prove a comparison between syntomic cohomology and $p$-adic étale cohomology with coefficients in an arbitrary $F$-gauge. Our methods also yield a description of the isogeny category of perfect $F$-gauges on $\Z_p$.
\end{abstract}

\maketitle

\tableofcontents

\section{Introduction}

While the idea that one may compute the value of a cohomology theory attached to a scheme $X$ by instead computing the coherent cohomology of a suitably defined stack attached to $X$ goes back, in the case of de Rham cohomology, to work of Simpson in the 1990s, see \cite{Simpson} and \cite{Simpson2}, this approach has only recently found entrance into the field of $p$-adic Hodge theory and has been starting to be fully exploited in the course of the last few years with the formulation of prismatic cohomology in terms of stacks independently developed by Bhatt--Lurie and Drinfeld in \cite{APC}, \cite{PFS}, \cite{FGauges} and \cite{Prismatization}. Roughly speaking, \EDIT{similarly to how one can attach to any $p$-adic formal scheme $X$ its \emph{de Rham stack} $X^\dR$, which has the property that coherent cohomology of the structure sheaf $\O_{X^\dR}$ agrees with the ($p$-completed) de Rham cohomology of $X$ if $X$ is smooth,} they functorially attach a stack $X^\prism$ to any \EDIT{such} $X$ with the feature that coherent cohomology of the structure sheaf $\O_{X^\prism}$ agrees with the (absolute) prismatic cohomology of $X$ \EEDIT{if $X$ is $p$-quasisyntomic in the sense of \cite[Def. C.6]{APC}}; correspondingly, the stack $X^\prism$ is called the \emph{prismatisation} of $X$. Moreover, similarly to how the de Rham stack $X^\dR$ admits a filtered refinement $X^{\dR, +}$ over $\A^1/\G_m$ computing the Hodge filtration, they also introduce a filtered refinement $X^\N$ over $\A^1/\G_m$ of $X^\prism$ computing the (absolute) Nygaard filtration on prismatic cohomology. Finally, there is an even more refined variant $X^\Syn$ of $X^\N$ called the \emph{syntomification} of $X$, which arises by gluing together two copies of $X^\prism$ which sit inside $X^\N$ as open substacks.

The upshot of this picture is twofold: First, various statements about prismatic cohomology and related cohomology theories now admit a ``geometric'' formulation; \EDIT{for example}, the comparison between prismatic cohomology and de Rham cohomology from \cite[Thm. 5.4.2]{APC} can be reinterpreted as saying that, for any smooth $p$-adic formal scheme $X$, there is a functorial isomorphism
\begin{equation*}
(X_{p=0})^\prism\cong X^\dR\;.
\end{equation*}
Second, the stacky formulation immediately furnishes natural categories of coefficients for the respective cohomology theories: these should just be the categories of quasi-coherent complexes, or perhaps perfect complexes, on the corresponding stack. 

For us, the most important such category of will be the category of \emph{$F$-gauges} on $X$ defined by
\begin{equation*}
\FGauge_\prism(X)\coloneqq \D(X^\Syn)\;.
\end{equation*}
The fact that (perfect) $F$-gauges play a central role in the theory is due to the fact that they seem to be some sort of ``universal coefficients'' for $p$-adic cohomology theories, \EDIT{i.e.\ they appear to be good candidates for a theory of $p$-adic motives}: namely, they admit various realisation functors towards more classical notions of coefficients for $p$-adic cohomology theories, e.g.\ a \emph{de Rham realisation}
\begin{equation*}
T_\dR: \Vect(X^\Syn)\rightarrow \Vect(X^\dR)\cong \Vect^{\nabla}(X)
\end{equation*}
towards the category of vector bundles on $X$ equipped with a flat connection; a filtered refinement
\begin{equation*}
T_{\dR, +}: \Vect(X^\Syn)\rightarrow \Vect(X^{\dR, +})\;,
\end{equation*}
where the right-hand side may roughly be described as filtered vector bundles on $X$ equipped with a flat connection satisfying Griffiths transversality, \EEDIT{see \cref{rem:fildrstack-vect}}; an \emph{étale realisation} 
\begin{equation*}
T_\et: \Vect(X^\Syn)\rightarrow \Loc_{\Z_p}(X_\eta)
\end{equation*}
towards the category of pro-étale $\Z_p$-local systems on the generic fibre $X_\eta$ of $X$; \EDIT{and a \emph{crystalline realisation}
\begin{equation*}
T_\crys: \Vect(X^\Syn)\rightarrow \Vect((X_{p=0})^\Syn)\;,
\end{equation*}
which may be postcomposed with a natural functor
\begin{equation*}
\Vect((X_{p=0})^\Syn)\rightarrow \Isoc^\phi(X_{p=0}/\Z_p)
\end{equation*}
towards the category of \EDIT{$F$-isocrystals} on the special fibre $X_{p=0}$ of $X$.} The details of this picture as well as the necessary background on the stacks $X^\dR, X^\prism$, etc.\ will be reviewed in Section \cref{sect:stacks}.

In this paper, we make use of these features of the stacky approach to $p$-adic cohomology theories and, in particular, syntomic cohomology to prove the following comparison theorem between the crystalline cohomology of the special fibre of a smooth proper $p$-adic formal scheme $X$ and the étale cohomology of its generic fibre. For the relevant definitions about crystalline local systems, we refer to Section \ref{subsect:review-cryslocsys}.

\begin{thm}[\cref{thm:cryset-main}]
\label{thm:intro-cryset}
Let $X$ be a $p$-adic formal scheme \EEDIT{which is smooth and proper over $\Spf\Z_p$}. For any crystalline local system $L$ on $X_\eta$ with Hodge--Tate weights all at most $-i-1$ for some $i\geq 0$, let $\cal{E}$ be its associated $F$-isocrystal. Then there is a natural morphism
\begin{equation*}
R\Gamma_\crys(X_{p=0}, \cal{E})[\tfrac{1}{p}][-1]\rightarrow R\Gamma_\proet(X_\eta, L)[\tfrac{1}{p}]\;
\end{equation*}
which induces an isomorphism
\begin{equation*}
\tau^{\leq i} R\Gamma_\crys(X_{p=0}, \cal{E})[\tfrac{1}{p}][-1]\cong\tau^{\leq i} R\Gamma_\proet(X_\eta, L)[\tfrac{1}{p}]
\end{equation*}
and an injection on $H^{i+1}$.
\end{thm}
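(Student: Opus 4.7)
The plan is to factor the comparison through syntomic cohomology of $X$ with coefficients in an appropriate $F$-gauge. First I would lift $L$ to a perfect $F$-gauge $M\in\Vect(X^\Syn)$ whose étale realisation $T_\et(M)$ is $L$ and whose crystalline realisation recovers $\mathcal{E}$ on the special fibre; this lift is expected to be furnished by the Bhatt--Lurie/Drinfeld dictionary between crystalline local systems and perfect $F$-gauges, with the Hodge--Tate weight bound on $L$ translating into a weight bound on $M$. The goal is then to produce a natural zig-zag
\begin{equation*}
R\Gamma_\crys(X_{p=0},\mathcal{E})[\tfrac{1}{p}][-1]\xleftarrow{(a)} R\Gamma_\Syn(X,M)[\tfrac{1}{p}]\xrightarrow{(b)} R\Gamma_\proet(X_\eta,L)[\tfrac{1}{p}]
\end{equation*}
and show both maps become isomorphisms after $\tau^{\leq i}$, with $(b)$ still injective on $H^{i+1}$.

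For arrow $(b)$ I would directly invoke the syntomic-étale comparison for arbitrary $F$-gauges, advertised in the abstract: for a perfect $F$-gauge with Hodge--Tate weights $\leq -i-1$, the canonical map from syntomic cohomology to pro-étale cohomology of the étale realisation should be an isomorphism on $\tau^{\leq i}$ and injective on $H^{i+1}$, this being the natural generalisation of the Fontaine--Messing/Colmez--Nizio\l{} comparison to the setting of $F$-gauge coefficients.

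For arrow $(a)$ I would use the coefficient version of the Beilinson fibre square (also established in the paper). Unwinding the gluing presentation of $X^\Syn$ as two copies of $X^\N$ along their overlap, syntomic cohomology of $M$ fits into a fibre sequence whose other terms are a Nygaard-filtered piece of prismatic/de Rham cohomology of $M$ and its unfiltered version, with connecting map of the shape $1-\phi$. The hypothesis that the Hodge--Tate weights of $M$ are $\leq -i-1$ should force the Nygaard-filtered contribution to vanish in cohomological degrees $\leq i$ rationally (by the usual reason that $\Fil^r R\Gamma_\dR$ vanishes in degrees $<r$), so the fibre sequence rationally degenerates in that range to an equivalence $\tau^{\leq i} R\Gamma_\Syn(X,M)[\tfrac{1}{p}]\simeq \tau^{\leq i}\bigl(R\Gamma_\crys(X_{p=0},\mathcal{E})[\tfrac{1}{p}][-1]\bigr)$.

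The main obstacle I expect is the careful bookkeeping in step $(a)$: making precise the interaction between the weight bound on $M$ and the vanishing of the filtration in the fibre square, and matching the Frobenius on the crystalline realisation of $M$ with the one on $\mathcal{E}$ used in the statement. A secondary point is that the Beilinson fibre square as formulated for general coefficients likely involves the stacky filtered de Rham/prismatic cohomology attached to $M$ rather than the naive $R\Gamma_\crys(X_{p=0},\mathcal{E})$, so one must also justify that these agree rationally on $\tau^{\leq i}$; this should reduce to the crystalline-de Rham comparison for $\mathcal{E}$ together with the weight bound.
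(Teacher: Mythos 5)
Your overall strategy matches the paper's proof very closely: lift $L$ to its associated $F$-gauge via the Guo--Reinecke/Guo--Li machinery, compare syntomic cohomology to pro-\'etale cohomology (Theorem \ref{thm:syntomicetale-mainfine}), and compare syntomic cohomology to de Rham (= crystalline via $X^\dR\cong (X_{p=0})^\prism$) cohomology via the coefficient Beilinson fibre square (in the Fontaine--Messing form of Theorem \ref{thm:beilfibsq-fmrat}), using the weight bound to kill $\Fil^0_\Hod$ in low degrees. Your ``secondary point'' is also handled exactly as you predict.

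However, there is a genuine gap in the way you set things up. You write that the lift $M$ lies in $\Vect(X^\Syn)$. That is not what the construction (Theorem \ref{thm:cryset-locsysfgauges}) produces: the associated $F$-gauge of a crystalline local system is a \emph{reflexive} perfect $F$-gauge, and a priori it need not be a vector bundle on $X^\Syn$. This matters because the fine syntomic-\'etale comparison you invoke for arrow $(b)$ --- isomorphism on $\tau^{\leq i}$ and injection on $H^{i+1}$ with Hodge--Tate weights $\leq -i-1$ --- is Theorem \ref{thm:syntomicetale-mainfine}, which the paper only proves for \emph{vector bundle} $F$-gauges; for arbitrary perfect $F$-gauges the paper only gets the coarse Theorem \ref{thm:syntomicetale-maincoarse}, which needs the much stronger bound $\leq -d-2$. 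If you trace through the proof of Theorem \ref{thm:syntomicetale-mainfine}, what is actually used is not that $E$ is a vector bundle on $X^\Syn$, but that the pullback of $E$ to $(X^\Hod)_{p=0}$ (equivalently, to $X^\N_\Hod$) is concentrated in cohomological degree $0$, so that Proposition \ref{prop:syntomicetale-cohomologyhod} yields the required bounds on the graded pieces. The paper supplies exactly this ingredient for reflexive $F$-gauges in Lemma \ref{lem:cryset-degreezero}, by a direct computation with the saturated Nygaardian filtration over a quasiregular semiperfectoid $R$ ($p$-torsionfreeness of $M$ and of $M/I$, injectivity of $\Fil^i M/p\to \Fil^{i-1}M/p$, and injectivity of multiplication by $\xi$ on the associated graded mod $p$). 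Your proposal as written does not notice that $M$ could fail to be a vector bundle and hence omits this step entirely; without it the application of Theorem \ref{thm:syntomicetale-mainfine} and of Proposition \ref{prop:cryset-fine} is unjustified.
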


This generalises a previous result of Colmez--Nizio{\l} who obtained essentially the same statement in the special case of $L=\Z_p(n)$ being a Tate twist, see \cite[Cor. 1.4]{ColmezNiziol}. Our strategy to prove this will be to first construct an $F$-gauge corresponding to any crystalline \EDIT{local system} building on the works \cite{GuoReinecke} of Guo--Reinecke and \cite{GuoLi} of Guo--Li; more precisely, in Section \ref{subsect:locsystofgauge}, we prove that:

\begin{thm}[\cref{thm:cryset-locsysfgauges}]
\label{thm:intro-locsysfgauges}
Let $X$ be a smooth $p$-adic formal scheme. Then there is a fully faithful embedding
\begin{equation*}
\Loc^\crys_{\Z_p}(X_\eta)\hookrightarrow \Perf(X^\Syn)
\end{equation*}
which is a weak right inverse to étale realisation. The image of a crystalline local system $L$ under this embedding is called the \emph{associated $F$-gauge} $E$ of $L$ and has the property that $T_\dR(E)$ identifies with the associated $F$-isocrystal of $L$.
\end{thm}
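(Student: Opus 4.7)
The plan is to construct the embedding $L\mapsto E_L$ in two stages, combining the constructions of Guo--Reinecke and Guo--Li, and then to verify the three claimed features: the weak right inverse property, the identification of $T_\dR$, and full faithfulness. First, I would apply the main theorem of Guo--Reinecke to attach to every crystalline $\Z_p$-local system $L$ on $X_\eta$ a perfect prismatic $F$-crystal $\cal{F}_L$ on $X^\prism$ whose étale realisation recovers $L$. Second, I would invoke the work of Guo--Li to lift $\cal{F}_L$ canonically to a Nygaard-filtered perfect complex on the Nygaard stack $X^\N$ whose restriction along the open inclusion $X^\prism\hookrightarrow X^\N$ is $\cal{F}_L$. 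Since $X^\Syn$ is obtained from $X^\N$ by gluing the two open substacks isomorphic to $X^\prism$ via Frobenius, and since the Guo--Li enhancement is compatible with the Frobenius structure already carried by $\cal{F}_L$, this filtered lift descends further to a perfect $F$-gauge $E_L\in\Perf(X^\Syn)$.

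From this construction the weak right inverse property $T_\et(E_L)\simeq L$ and the identification of $T_\dR(E_L)$ with the associated $F$-isocrystal $\cal{E}$ both follow tautologically: the étale realisation is tracked at each step of the construction, while the de Rham realisation factors through the underlying prismatic $F$-crystal $\cal{F}_L$, whose de Rham specialisation is by design the $F$-isocrystal attached to $L$.

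The substantive content is full faithfulness. For crystalline local systems $L_1,L_2$, one needs to show that
\[
\Hom_{\Perf(X^\Syn)}(E_{L_1},E_{L_2})\xrightarrow{T_\et}\Hom_{\Loc^\crys_{\Z_p}(X_\eta)}(L_1,L_2)
\]
is an isomorphism, i.e.\ that $T_\et$ is itself fully faithful on the essential image of $L\mapsto E_L$. Using internal homs and the tensor structure, one can reduce this to computing derived global sections $R\Gamma(X^\Syn, E_L)$ and comparing them with $R\Gamma_\proet(X_\eta, L)$ for $L$ in the image, which is a special case of the syntomic--pro-étale comparison with $F$-gauge coefficients developed elsewhere in this paper; the faithfulness half is already immediate from $T_\et\circ(L\mapsto E_L)\simeq\id$, so only the surjectivity on Hom-spaces actually needs work.

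The principal obstacle is the coherent interlocking of the Guo--Reinecke and Guo--Li constructions, so that the resulting object genuinely descends to $X^\Syn$ and not merely to $X^\N$. This forces one to keep careful track of how the Nygaard filtration interacts with Frobenius at the two poles of the $\A^1/\G_m$-base and of the Frobenius-equivariant identification of the two copies of $X^\prism$ sitting in $X^\N$; it is exactly at this point that the crystalline hypothesis on $L$, together with the resulting effectivity and Frobenius-compatibility of the Nygaard filtration produced by Guo--Li, is essential.
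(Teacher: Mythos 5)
Your construction agrees with the paper's: both pass from crystalline local systems to analytic prismatic $F$-crystals (Guo--Reinecke), then to prismatic $F$-crystals in perfect complexes via underived pushforward, and finally to $F$-gauges via the saturated Nygaardian filtration (Guo--Li), with the Frobenius structure ensuring descent from $X^\N$ to $X^\Syn$. The identification of $T_\et$ and $T_\dR$ on the image is also in line with the paper's argument.

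The genuine gap is in your treatment of full faithfulness, which you declare to be "the substantive content." In fact, full faithfulness requires no separate cohomological argument: \cref{prop:cryset-locsysvectan} is an equivalence, and both \cref{prop:cryset-vectanpushforward} (Guo--Reinecke, Thm.\ 5.10) and \cref{prop:cryset-fcrysfgauge} (Guo--Li, Thm.\ 2.31) are \emph{stated} as fully faithful embeddings, so the composite is automatically fully faithful once one verifies that the image of the second functor lands in $\Perf^\phi_{I\text{-}\mathrm{tf}}(X_\prism)$ (which the paper checks directly using that $I$ is an effective Cartier divisor). Your alternative route for fullness --- reducing via $\sRHom$ to comparing $R\Gamma(X^\Syn, E_L)$ with $R\Gamma_\proet(X_\eta, L)$ via the syntomic--pro-\'etale comparison --- does not go through: \cref{thm:syntomicetale-mainfine} and \cref{thm:syntomicetale-maincoarse} require all Hodge--Tate weights to be at most $-i-1$, and for crystalline local systems $L_1$, $L_2$ the internal hom $\sHom(L_1, L_2)$ typically has Hodge--Tate weight $0$ (or positive), so the hypothesis is violated precisely in the range one needs. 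One would have to twist and control torsion/growth in a way the comparison theorem does not provide, and even then the comparison only yields an isomorphism on $\tau^{\leq i}$, not on the degree $0$ Hom-space with no shift.

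A minor additional point: you identify the "principal obstacle" as the coherent interlocking of Guo--Reinecke and Guo--Li so that the object descends to $X^\Syn$ and not just $X^\N$. In the paper this step is essentially formal once the Nygaardian-filtered module is produced: the existing Frobenius structure on the prismatic $F$-crystal supplies the gluing datum (see \cite[Ex.\ 6.1.7]{FGauges}). The actual point that requires a check is that the essential image of \cref{prop:cryset-vectanpushforward} satisfies the $I$-torsion-freeness hypothesis so that \cref{prop:cryset-fcrysfgauge} applies, which your sketch does not address.
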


Then the proof is divided into two parts: On the one hand, we show that, in the range which is relevant to us, étale cohomology with coefficients in a crystalline local system agrees with syntomic cohomology with coefficients in the associated $F$-gauge. More precisely, in Section \ref{sect:syntomicetale}, we obtain the following theorem:

\begin{thm}[\cref{thm:syntomicetale-mainfine}]
\label{thm:intro-syntomicetale}
Let $X$ be a smooth qcqs $p$-adic formal scheme. For any vector bundle $F$-gauge $E\in\Vect(X^\Syn)$ with Hodge--Tate weights all at most $-i-1$ for some $i\geq 0$, the natural morphism
\begin{equation*}
R\Gamma(X^\Syn, E)\rightarrow R\Gamma_\proet(X_\eta, T_\et(E))
\end{equation*}
induces an isomorphism
\begin{equation*}
\tau^{\leq i} R\Gamma(X^\Syn, E)\cong\tau^{\leq i} R\Gamma_\proet(X_\eta, T_\et(E))
\end{equation*}
and an injection on $H^{i+1}$.
\end{thm}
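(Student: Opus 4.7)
The plan is to extend the proof of the syntomic-étale comparison for Tate twists due to Bhatt--Morrow--Scholze and Colmez--Nizio{\l} to $F$-gauge coefficients, working systematically in the stacky framework. Both sides satisfy Zariski descent in $X$ and the map between them is natural in $X$, so I first reduce to the case where $X = \Spf R$ is affine and small smooth over $\Z_p$.

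Next, using the presentation of $X^\Syn$ as obtained by gluing two copies of $X^\N$ along the open substack $X^\prism \subset X^\N$ via a Frobenius twist, I would express $R\Gamma(X^\Syn, E)$ as a $\phi - \can$ equalizer comparing the Nygaard-filtered prismatic cohomology $R\Gamma(X^\N, E)$ with the unfiltered prismatic cohomology $R\Gamma(X^\prism, E|_{X^\prism})$. Dually, by identifying $T_\et(E)$ as the restriction of $E$ to the ``étale'' locus of $X^\Syn|_{X_\eta}$ appearing in \cref{thm:intro-locsysfgauges}, the proétale cohomology $R\Gamma_\proet(X_\eta, T_\et(E))$ admits an analogous equalizer description on the generic fibre, but in which the Nygaard filtration has effectively been inverted (cf.\ the standard rewriting of proétale cohomology in terms of $\phi$-fixed points of prismatic cohomology after tilting).

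The comparison map is then induced by the natural restriction, and its cofibre should be computable as a limit/colimit of Hodge--Tate-type cohomology groups of Nygaard graded pieces $\gr^n_N E$ for suitable shifts in $n$. The Hodge--Tate weight hypothesis (all weights $\leq -i-1$) forces only graded pieces with $n \geq i+1$ to contribute nontrivially, and such pieces naturally sit in cohomological degrees $\geq i+1$, yielding the claimed isomorphism on $\tau^{\leq i}$ and the injection on $H^{i+1}$.

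The main obstacle I anticipate is making this weight-by-weight fibre analysis precise for a general $F$-gauge rather than a Breuil--Kisin twist $\O\{n\}$: because the Frobenius on $E$ can mix different Hodge--Tate weights, one cannot straightforwardly reduce to the trivial coefficient case by a twist. A dévissage through the Rees-algebra decomposition of $E$ into its Hodge--Tate graded pieces, combined with completeness of the Nygaard filtration on the coefficient object, should nonetheless reduce matters to pure-weight computations that mirror the classical Tate twist argument of Colmez--Nizio{\l}.
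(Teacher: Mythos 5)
Your general strategy — reduce mod~$p$, invoke a filtration whose graded pieces live on simpler strata, and then use the Hodge--Tate weight bound together with the relative dimension to control the cohomological amplitude — is aligned in spirit with the paper's proof. However, the concrete mechanism you propose is not the one that makes the argument go through, and as written there is a genuine gap.

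The key filtration in the paper is \emph{not} the Nygaard filtration on $R\Gamma(X^\N, -)$; it is the $v_1$-adic (syntomic) filtration on $R\Gamma(X^\Syn, E)$ coming from the distinguished section $v_1\in H^0(\Z_p^\Syn,\O\{p-1\}/p)$. The crucial input, which your proposal never invokes, is \cref{prop:syntomicetale-filtrationetale}: this filtration is complete (because $v_1$ is topologically nilpotent) and its colimit $R\Gamma(X^\Syn,E)[\tfrac{1}{v_1}]$ \emph{is} $R\Gamma_\proet(X_\eta,T_\et(E))$. With that in hand, the cofibre of the comparison map is controlled by the graded pieces $\gr_k^\Syn$, which by (\ref{eq:syntomicetale-grsyn}) are cohomology of $E/v_{1,X}\{k(p-1)\}$ on the \emph{reduced locus} $X^\Syn_\red$. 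Your description of the cofibre as ``a limit/colimit of Hodge--Tate-type cohomology groups of Nygaard graded pieces'' does not pin this down: there is no direct equalizer-to-equalizer map between the $\phi-\can$ presentation of $R\Gamma(X^\Syn,E)$ and a similar presentation of $R\Gamma_\proet$ whose cofibre one can filter by Nygaard graded pieces alone. One really needs the $v_1$-dévissage and the explicit stratification of $X^\Syn_\red$ into the components $X^\N_{\dR,+}$, $X^\N_{\HT,c}$, $X^\N_\dR$, $X^\N_\Hod$.

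Once on the reduced locus, the amplitude estimate is also more subtle than ``graded pieces with $n\geq i+1$ contribute only in degrees $\geq i+1$.'' The actual mechanism is \cref{prop:syntomicetale-cohomologyhod}: pushing forward from the Hodge component to $B\G_m$ replaces each graded piece $V_j$ of the coefficient by a Higgs complex $\Tot(V_j\to V_{j-1}\tensor\Omega^1\to\cdots)$, and the amplitude bound is an interaction between the Hodge--Tate weight bound (after the twist by $\{k(p-1)\}$, all weights are $\leq -i-p$) and the relative dimension $d$: the pushforward has weights $\leq -i-p+d$ and $\gr^{-i-p+d-k}V$ sits in degrees $\geq d-k$. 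Only after tracking this does the Mayer--Vietoris formula (\ref{eq:syntomification-cohomologyreduced}) give concentration of $R\Gamma(X^\Syn_\red,E)$ in degrees $\geq i+1$.

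Finally, the ``weight-mixing'' concern you raise is a red herring. The paper never decomposes $E$ into pure Hodge--Tate weight summands; indeed, the Frobenius does mix weights and such a decomposition would not be available. The argument works uniformly with the whole $F$-gauge, using only the \emph{bound} on the weights and the cohomological amplitude of $R\Gamma(Y,-)$ for $Y = X_{p=0}$. You also do not need to reduce to small affine $X$; the argument is global in $X$ once the completeness of the syntomic filtration is in place.
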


On the other hand, in order to make the connection between syntomic cohomology and crystalline cohomology, we use Section \ref{sect:beilfibsq} to prove the following version of the Beilinson fibre square from \cite{BeilFibSq} with coefficients, which recovers the previously known result \cite[Thm. 6.17]{BeilFibSq} of Antieau--Mathew--Morrow--Nikolaus in the case of smooth and proper $p$-adic formal schemes upon plugging in a Breuil--Kisin twist $\O\{n\}$ for the $F$-gauge $E$:

\begin{thm}[\cref{cor:beilfibsq-coeffs}]
\label{thm:intro-beilfibsq}
Let $X$ be a $p$-adic formal scheme which is smooth and proper over $\Spf\Z_p$. For any perfect $F$-gauge $E\in\Perf(X^\Syn)$, there is a natural pullback square
\begin{equation*}
\begin{tikzcd}
R\Gamma(X^\Syn, E)[\frac{1}{p}]\ar[r]\ar[d] & R\Gamma((X_{p=0})^\Syn, T_\crys(E))[\frac{1}{p}]\ar[d] \\
R\Gamma(X^{\dR, +}, T_{\dR, +}(E))[\frac{1}{p}]\ar[r] & R\Gamma(X^\dR, T_\dR(E))[\frac{1}{p}]
\end{tikzcd}
\end{equation*}
\EDIT{in the category $\D(\Q_p)$.}
\end{thm}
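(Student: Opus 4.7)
The plan is to derive the square from a pushout of stacks obtained after inverting $p$. Concretely, I would first exhibit a pushout diagram
\begin{equation*}
\begin{tikzcd}
X^\dR[\frac{1}{p}] \ar[r]\ar[d] & X^{\dR,+}[\frac{1}{p}] \ar[d] \\
(X_{p=0})^\Syn[\frac{1}{p}] \ar[r] & X^\Syn[\frac{1}{p}]
\end{tikzcd}
\end{equation*}
in which the top horizontal map is the inclusion of the fibre at $1$ of the filtered de Rham stack, the left vertical map combines the canonical identification $X^\dR \cong (X_{p=0})^\prism$ from \cite[Thm.~5.4.2]{APC} with one of the natural open immersions $(X_{p=0})^\prism \hookrightarrow (X_{p=0})^\Syn$, and the two remaining arrows are the stack-level incarnations of the realisations $T_{\dR,+}$ and $T_\crys$. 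Given such a pushout, for any $E\in\Perf(X^\Syn)$ the restrictions to the four corners fit into a pullback square of perfect complexes, and taking derived global sections produces precisely the square of the theorem in $\D(\Q_p)$.

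To establish the pushout, I would use the presentation of $X^\Syn$ obtained by gluing two copies of $X^\prism$ sitting inside $X^\N$ as open substacks via the Frobenius. After inverting $p$, the degeneration of the Nygaard filtration to the Hodge filtration identifies $X^\N[\frac{1}{p}]$ with $X^{\dR,+}[\frac{1}{p}]$, while the Frobenius-identification of one copy of $X^\prism$ with $(X_{p=0})^\prism$ produces the map to $(X_{p=0})^\Syn[\frac{1}{p}]$. Reassembling these identifications yields the desired pushout. Alternatively, one could first prove the abstract version of the pushout for $\Spf\Z_p$ and then base-change along the structure map $X^\Syn\to(\Spf\Z_p)^\Syn$, using properness to ensure compatibility with pushforward and with inverting $p$.

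The main obstacle will be the rational analysis at the level of stacks: the syntomification is an integral object whose Frobenius and filtration data only simplify fully after inverting $p$, and upgrading the rational comparisons of cohomologies to identifications of stacks (or equivalently, showing that perfect complexes on $X^\Syn[\frac{1}{p}]$ glue from the three other corners) is delicate. Properness of $X$ over $\Spf\Z_p$ is crucial to ensure that $R\Gamma$ commutes with inverting $p$, so that the pushout of stacks actually produces a pullback square in $\D(\Q_p)$.
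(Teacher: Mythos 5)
Your proposal rests on the claim that one can exhibit a genuine pushout of stacks after ``inverting $p$'', and then take global sections. This is not the strategy of the paper, and there is a genuine gap here: there is no sensible notion of ``$X^\Syn[\tfrac{1}{p}]$'' as a formal stack for which this pushout could even be stated. The syntomification lives over $\Spf\Z_p$ and is probed by $p$-nilpotent test rings, so inverting $p$ on test objects destroys the stack outright. Moreover, even granting an informal meaning, the assertion that $X^\Syn$ is (rationally) glued from the other three corners is strictly stronger than what is true: the paper only claims and proves that the square of stacks from \cref{prop:beilfibsq-square} is an ``almost pushout up to $p$-isogeny'', a phrase which is explicitly \emph{defined} to mean that the induced square on $R\Gamma(-,E)[\tfrac{1}{p}]$ is cartesian for perfect $E$. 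The categorical refinement \cref{thm:beilfibsq-categorical} proves \emph{full faithfulness} of the induced functor $\Perf(\Z_p^\Syn)[\tfrac{1}{p}]\rightarrow\DF(\Q_p)\times_{\D(\Q_p)}\D(\Mod^\phi(\Q_p))$ but not essential surjectivity, which is exactly the failure of the square to be an honest pushout in any categorical sense.

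Your alternative of reducing to $X=\Spf\Z_p$ via pushforward along $X^\Syn\rightarrow\Z_p^\Syn$, using properness, is the correct first step and is precisely how the paper derives \cref{cor:beilfibsq-coeffs} from \cref{thm:beilfibsq-main} (via \cref{prop:finiteness-main}). But the argument you propose for the base case over $\Z_p$ is where the gap lies: the intermediate step ``the degeneration of the Nygaard filtration to the Hodge filtration identifies $X^\N[\tfrac{1}{p}]$ with $X^{\dR,+}[\tfrac{1}{p}]$'' is not established and not true in the stacky sense you would need. The paper's actual route to \cref{thm:beilfibsq-main} is entirely cohomological and non-geometric: it reduces to coherent $F$-gauges using the bounded non-degenerate $t$-structure on $\Perf(\Z_p^\Syn)$ (\cref{lem:beilfibsq-boundedtstructure}), translates all four corners into classical $p$-adic Hodge-theoretic invariants of the crystalline representation $T_\et(E)[\tfrac{1}{p}]$ via \cref{lem:beilfibsq-rationalcohfpsyn}, \cref{lem:beilfibsq-etalephimod}, \cref{lem:beilfibsq-rationalcoha1gm} and \cref{lem:beilfibsq-etalefiltered}, proves the fibre-square at the level of admissible filtered $\phi$-modules (\cref{prop:beilfibsq-crys}, which relies on an Ext computation for $\MF^\phi_\adm(\Q_p)$), and then identifies $R\Gamma(\Z_p^\Syn, E)[\tfrac{1}{p}]$ with $\RHom_{\MF^\phi_\adm(\Q_p)}(\Q_p, D_\crys(T_\et(E)[\tfrac{1}{p}]))$ using the description of $H^0$, $H^1$ of coherent $F$-gauges in \cref{prop:syntomification-cohcrystalline}. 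None of this requires, or yields, a pushout of stacks.
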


We note here that this really also gives a new proof of the special case $E=\O\{n\}$ treated in \cite[Thm. 6.17]{BeilFibSq}: Indeed, while our argument uses results from \cite{GuoReinecke}, which in turn are proved using the Beilinson fibre square in the form from \cite[Thm. 6.17]{BeilFibSq}, Du--Liu--Moon--Shimizu have shown in \cite{DuLiuMoonShimizu} that one can also obtain the same results via a different route which does not make use of the Beilinson fibre square, see also \cite[Rem. 1.9]{GuoReinecke}. In particular, in contrast to the proof of Antieau--Mathew--Morrow--Nikolaus, our proof does not make use of topological Hochschild homology and only relies on methods from the theory of prismatic cohomology. Let us also remark here that, \EDIT{during our work on the contents of this paper, we learnt that,} in currently still unpublished work, Lurie has recently found yet another proof of the Beilinson fibre square, which also makes use of the stacky formulation of syntomic cohomology, but still significantly differs from our proof. In particular, his proof should allow one to drop the properness hypothesis in the above result and one can then also remove the properness hypothesis from our main theorem.

Moreover, we also show that the pullback square from \cref{thm:intro-beilfibsq} actually comes from a commutative square of stacks 
\begin{equation}
\label{eq:intro-beilfibsq}
\begin{tikzcd}
\Z_p^\dR\ar[r]\ar[d] & \Z_p^{\dR, +}\ar[d] \\
\F_p^\Syn\ar[r] & \Z_p^\Syn\nospacepunct{\;,}
\end{tikzcd}
\end{equation}
which gives a new perspective on the Beilinson fibre square as a statement about the geometry of the stack $\Z_p^\Syn$: Indeed, one may reinterpret \cref{thm:intro-beilfibsq} as saying that, at least from the perspective of cohomology after inverting $p$, the stack $\Z_p^\Syn$ \EDIT{behaves like it} is glued together from the stacks $\F_p^\Syn$ and $\Z_p^{\dR, +}$. Indeed, we also prove that this interpretation is reflected on the level of derived categories:

\begin{thm}[\cref{thm:beilfibsq-categorical}]
\label{thm:intro-beilfibsqcat}
The functor
\begin{equation}
\label{eq:intro-beilfibsqcat}
\Perf(\Z_p^\Syn)[\tfrac{1}{p}]\rightarrow \DF(\Q_p)\times_{\D(\Q_p)} \D(\Mod^\phi(\Q_p))
\end{equation}
induced by the diagram (\ref{eq:intro-beilfibsq}) is a fully faithful embedding. Here, $\DF(\Q_p)$ denotes the filtered derived category of $\Q_p$-vector spaces and $\Mod^\phi(\Q_p)$ is the category of $\phi$-modules over $\Q_p$.
\end{thm}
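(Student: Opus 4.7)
The plan is to deduce the categorical full faithfulness from the cohomological Beilinson fibre square (\cref{thm:intro-beilfibsq}) via an internal-Hom argument. Since $\Perf(\Z_p^\Syn)$ is closed under internal Hom, for $E, F \in \Perf(\Z_p^\Syn)$ we have $\sRHom(E, F) \in \Perf(\Z_p^\Syn)$ and
\begin{equation*}
R\Hom_{\Perf(\Z_p^\Syn)[\tfrac{1}{p}]}(E, F) \simeq R\Gamma\bigl(\Z_p^\Syn, \sRHom(E, F)\bigr)[\tfrac{1}{p}]\,.
\end{equation*}
Applying \cref{thm:intro-beilfibsq} to $X = \Spf\Z_p$ and the perfect $F$-gauge $\sRHom(E, F)$ then expresses this mapping complex as a pullback of the three cohomologies of $\sRHom(E, F)$ on the other vertices of the square (\ref{eq:intro-beilfibsq}).

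To finish, I would identify each of these three non-trivial corners with the corresponding mapping complex in the target categories. This uses two ingredients. First, each realization functor $T_\dR$, $T_{\dR, +}$, $T_\crys$ is symmetric monoidal on $\Perf$ and hence commutes with internal Hom, so that $T_\bullet \sRHom(E, F) \simeq \sRHom(T_\bullet E, T_\bullet F)$. Second, one needs compatible fully faithful embeddings of $\D(\Z_p^\dR)[\tfrac{1}{p}]$, $\D(\Z_p^{\dR, +})[\tfrac{1}{p}]$, and $\D(\F_p^\Syn)[\tfrac{1}{p}]$ into $\D(\Q_p)$, $\DF(\Q_p)$, and $\D(\Mod^\phi(\Q_p))$, respectively. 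The first two are essentially geometric: $\Z_p^\dR$ rationalises to $\Spf\Q_p$ while $\Z_p^{\dR, +}$ is its filtered refinement over $\A^1/\G_m$, so their rationalised derived categories are precisely (filtered) $\Q_p$-vector spaces. The third uses perfectness of $\F_p$ to describe the rational syntomic stack over $\F_p$ as classifying $\phi$-modules.

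The main obstacle is this last identification: one must make precise that $\Perf(\F_p^\Syn)[\tfrac{1}{p}]$ embeds into $\D(\Mod^\phi(\Q_p))$ in a way compatible with $\sRHom$ and $R\Gamma$, which requires a concrete geometric description of the syntomic stack of $\F_p$ in terms of Frobenius structure over $W(\F_p) = \Z_p$. Once this is available, combining it with the fibre square and monoidality of realization directly yields the desired expression of $R\Hom(E, F)[\tfrac{1}{p}]$ as the fibre product of mapping complexes in $\DF(\Q_p)$ and $\D(\Mod^\phi(\Q_p))$ over $\D(\Q_p)$, proving full faithfulness of the functor (\ref{eq:intro-beilfibsqcat}).
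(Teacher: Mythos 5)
Your proposal is correct and essentially coincides with the paper's strategy: the paper likewise reduces full faithfulness to a cohomology computation for $\sRHom(E,F)$ via the tensor-hom adjunction, using that the realization functors are pullbacks and therefore commute with internal hom between perfect complexes. The ``obstacle'' you flag — making precise that $\Perf(\F_p^\Syn)[\tfrac{1}{p}]$ and the filtered de Rham corner can be computed in $\D(\Mod^\phi(\Q_p))$ and $\DF(\Q_p)$ compatibly with $\sRHom$ and $R\Gamma$ — is exactly the content of the functors (\ref{eq:beilfibsq-dfpsynisog}) and (\ref{eq:beilfibsq-da1gmisog}) together with \cref{lem:beilfibsq-rationalcohfpsyn} and \cref{lem:beilfibsq-rationalcoha1gm}, all derived from the explicit description of $F$-gauges on perfectoid rings in \cref{ex:syntomification-fgaugesperfd}; the paper's written proof additionally routes through \cref{prop:beilfibsq-perfzpsyncrys} and then invokes \cref{prop:beilfibsq-crys} rather than appealing to \cref{thm:beilfibsq-main} directly, but since the latter is itself deduced from \cref{prop:beilfibsq-crys}, the two organizations are equivalent.
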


\EDIT{In particular, note that the above is a categorical generalisation of \cref{thm:intro-beilfibsq}: indeed, for any $E\in\Perf(\Z_p^\Syn)$, one recovers the statement of \cref{thm:intro-beilfibsq} by computing $\RHom(\O, E)[\tfrac{1}{p}]$ via the right-hand side of (\ref{eq:intro-beilfibsqcat}).}

\subsection*{Notations and Conventions}

We freely make use of the language of $\infty$-categories in the style of Lurie, see \cite{HTT}, and of the theory of derived algebraic geometry as laid out in \cite{DAG}. In particular, we work derived throughout: e.g., all our pullbacks and pushforwards are in the derived sense, i.e.\ when we write $f_*$ for a map $f: \cal{X}\rightarrow\cal{Y}$ of schemes/formal schemes/stacks, we really mean the derived pushforward $Rf_*: \cal{D}(\cal{X})\rightarrow\cal{D}(\cal{Y})$.

All the stacks occurring in this paper are going to be in the fpqc topology on commutative rings and we point out that, for the purposes of this paper, it does not make a difference whether one works in the setting of stacks in groupoids or stacks in $\infty$-groupoids. By a quasi-coherent complex on a stack $\cal{X}$, we mean an object of the derived category $\D(\cal{X})$, which is defined via Kan extension from the affine case as in \cite[§3.2]{DAG}; the same applies to the full subcategories $\Vect(\cal{X})$ and $\Perf(\cal{X})$ of vector bundles and perfect complexes, respectively.

When we speak of ``completeness'' of a module or complex with respect to some ideal, we will usually mean derived completeness as defined, for example, in \cite[Tag 091N]{Stacks} and distinguish any other usage of the term ``complete'' by speaking about ``classical completeness''. \EEDIT{For a ring $A$ and an ideal $I$,} we will use $\widehat{\D}(A)$ to denote the category of derived $I$-complete complexes of $A$-modules; \EEDIT{here, we omit the ideal $I$ from the notation as it will generally be clear from the context.} \EEDIT{Moreover, in the aforementioned situation, we denote the derived $I$-completion of a complex $M$ of $A$-modules by $M_I^\wedge$.}

We frequently make use of the Rees equivalence between the category of quasi-coherent complexes on $\A^1/\G_m$ over a ring $R$ and the category $\DF(R)$ of filtered objects in $\D(R)$, see \cite[§2.2.1]{FGauges} or \cite{Moulinos} for an introduction to the Rees equivalence. However, beware that our sign convention slightly differs from the one in \cite{FGauges}: we use the stack $\A^1_-/\G_m$, where the subscript $(-)_-$ indicates that $\G_m$ actson $\A^1$ by placing the coordinate $t$ on $\A^1$ in grading degree $-1$, to realise the Rees equivalence for decreasing filtrations and the universal generalised Cartier divisor on $\A^1_-/\G_m$ is denoted $t: \O(1)\rightarrow\O$; this has the pleasant effect of removing the change of sign in the passage to the associated graded in \cite[Prop.\ 2.2.6.(3)]{FGauges}. Similarly, the notation $\A^1_+/\G_m$ indicates the quotient of $\A^1$ by the $\G_m$-action given by placing the coordinate on $\A^1$, which will be called $u$ this time, in grading degree $1$ and the universal section of $\A^1_+/\G_m$ is denoted $u: \O\rightarrow\O(1)$; in our sign convention, sheaves on $\A^1_+/\G_m$ correspond to increasing filtrations.

From time to time, we need base change statements for cartesian squares of the form
\begin{equation*}
\begin{tikzcd}
X^?\ar[r]\ar[d] & X^{??}\ar[d] \\
Y^?\ar[r] & Y^{??}
\end{tikzcd}
\end{equation*}
for $?, ??\in\{\dR, \prism, \N, \dots\}$ which are induced by a map $X\rightarrow Y$ of formal schemes. We will usually use these without further justification and refer to Appendix \ref{sect:basechange} for details regarding how to prove such results.

Throughout, $p$ is a fixed prime and $X$ denotes a bounded $p$-adic formal scheme. If $X=\Spf R$ is affine, we also use the notation $R^\dR, R^\prism, \dots$ to denote the stacks $(\Spf R)^\dR, (\Spf R)^\prism, \dots$. 

We also fix an algebraic closure $\ol{\Q}_p$ of $\Q_p$, \EEDIT{whose completion we will denote by $C$.}

\subsection*{Acknowledgements}

Most of the results in this paper first appeared in my master's thesis and I heartily want to thank my advisor Guido Bosco for his continued support, for many long and fruitful discussions, his constant willingness to answer all of my questions and for lots of helpful comments on an earlier version of the material presented here. I also thank Bhargav Bhatt for suggesting that one may prove \cref{prop:beilfibsq-perfzpsyncrys} via a cohomology computation. This paper was prepared during my time as a PhD student at the Max Planck Institute for Mathematics in Bonn and I wish to thank the the institute for its hospitality.

\section{Recollections on stacks and $p$-adic cohomology theories}
\label{sect:stacks}

We briefly remind the reader of the essential input from \cite{Prismatization}, \cite{APC}, \cite{PFS} and \cite{FGauges} needed for our purposes. For a more thorough introduction, we advise the reader to consult these sources directly. Throughout, the idea will be that, given a cohomology theory $R\Gamma_\typ(-)$ for $p$-adic formal schemes $X$, we will functorially attach a stack $X^\typ$ to $X$ with the property that, in good cases, coherent cohomology on the stack $X^\typ$ computes $R\Gamma_\typ(X)$, i.e.\
\begin{equation*}
R\Gamma(X^\typ, \O_{X^\typ})\cong R\Gamma_\typ(X)\;.
\end{equation*}

\subsection{Preliminaries on stacks}

Before we can start talking about cohomology theories, let us make a few preliminary remarks about stacks in general. 

\subsubsection{Formal stacks}
We will almost exclusively deal with \emph{($p$-adic) formal stacks}, i.e.\ stacks $\cal{X}$ such that $\cal{X}(S)$ is only nonempty if $S$ is $p$-nilpotent. Note that any adic ring $R$ with an ideal of definition $I\subseteq R$ containing $p$ defines a formal stack $\Spf R$ via $\Spf R\coloneqq \colim_n \Spec R/I^n$ and, if $R$ is classically $I$-complete, then $(\Spf R)(S)$ identifies with the groupoid of continuous morphisms from $R$ to the discrete ring $S$. Clearly, any stack $\cal{X}$ yields a formal stack $\cal{X}\times\Spf\Z_p$; as we will soon work with formal stacks exclusively, we will almost always suppress the product with $\Spf\Z_p$ from the notation and restrict to $p$-nilpotent rings $S$ as our test objects.

In this setting of formal stacks, we make the following slightly nonstandard definition:

\begin{defi}
Let $\cal{P}$ be a property of formal schemes which is fpqc-local. Then we say that a formal stack $\cal{X}$ has property $\cal{P}$ if there is an fpqc cover from an affine formal scheme $\Spf R\rightarrow\cal{X}$ such that $\Spf R$ has property $\cal{P}$.
\end{defi}

We remark that, for a ring $R$ with bounded $p^\infty$-torsion endowed with the $p$-adic topology, the category $\D(\Spf R)$ identifies with the category $\widehat{\D}(R)$ of derived $p$-complete objects in $\D(R)$, see \cite[Prop.\ A.11]{DeltaRings}. Moreover, with the following definition, we can also recover the usual $t$-structure on $\widehat{\D}(R)$ by \cite[Ex.\ A.18]{DeltaRings}:

\begin{defi}
Let $\cal{X}$ be a stack. An object $F\in\D(\cal{X})$ is called \emph{connective} if $f^*F\in\D(S)$ is connective with respect to the standard $t$-structure for all $f: \Spec S\rightarrow\cal{X}$. The full subcategory of $\D(\cal{X})$ spanned by connective objects is denoted $\D^{\leq 0}(\cal{X})$.
\end{defi}

\begin{rem}
Using \cite[§1]{Aisles}, one can check that $\D^{\leq 0}(\cal{X})$ indeed defines the connective part of a $t$-structure $(\D^{\leq 0}(\cal{X}), \D^{\geq 0}(\cal{X}))$ on the category $\D(\cal{X})$.
\end{rem}

The Rees equivalence also applies to our context of stacks over $\Spf\Z_p$ (instead of over $\Spec R$ for some ring $R$). Indeed, one can show:

\begin{lem}
For any stack $\cal{X}$, there are symmetric monoidal equivalences
\begin{equation*}
\begin{split}
\D(B\G_m\times\cal{X})&\cong\D_{\gr}(\cal{X})\coloneqq\Fun(\Z, \D(\cal{X})) \\
\D(\A^1_-/\G_m\times\cal{X})&\cong\DF(\cal{X})\coloneqq\Fun((\Z, \geq), \D(\cal{X}))\;.
\end{split}
\end{equation*}
\end{lem}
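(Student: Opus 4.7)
The plan is to bootstrap up from the classical Rees equivalence in the affine case via descent. In the affine case $\cal{X} = \Spec R$, the Rees construction (as recalled in \cite[\S2.2.1]{FGauges} and \cite{Moulinos}) yields symmetric monoidal equivalences $\D(B\G_m\times\Spec R) \simeq \Fun(\Z, \D(R))$ and $\D(\A^1_-/\G_m\times\Spec R) \simeq \DF(R)$, which are natural in $R$ via pointwise base change of graded (resp.\ filtered) complexes.

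To globalise, I would use that $\D$ on a general stack is defined by right Kan extension from affines, so that $\D(\cal{X}) \simeq \lim_{\Spec R \to \cal{X}} \D(R)$. Since the product $-\times B\G_m$ preserves colimits of stacks (equivalently, since $\D$ satisfies fpqc descent and an affine atlas of $\cal{X}$ pulls back to one of $B\G_m\times\cal{X}$), the same recipe gives $\D(B\G_m\times\cal{X}) \simeq \lim_{\Spec R \to \cal{X}} \D(B\G_m\times\Spec R)$. On the other hand, $\Fun(\Z, -)$ preserves limits of $\infty$-categories, yielding $\Fun(\Z, \D(\cal{X})) \simeq \lim_{\Spec R \to \cal{X}} \Fun(\Z, \D(R))$. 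Inserting the natural affine equivalences into these two identifications produces the desired global equivalence, and symmetric monoidality descends since the affine equivalences already live in $\mathrm{CAlg}(\PrL)$ and limits there are computed on underlying $\infty$-categories. The filtered case follows verbatim by replacing $B\G_m$ with $\A^1_-/\G_m$ and $\Z$ with the poset $(\Z, \geq)$.

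The only genuinely nontrivial input is the affine Rees equivalence itself together with its naturality in $R$; the descent step is formal. The main piece of care required is therefore ensuring that the affine equivalence is promoted all the way up to an equivalence in $\mathrm{CAlg}(\PrL)$ (rather than just of underlying $\infty$-categories) before taking the limit, so that the symmetric monoidal structure actually descends to $\cal{X}$.
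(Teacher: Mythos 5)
Your proof is correct and takes essentially the same approach as the paper's: both arguments apply the affine Rees equivalence pointwise, reindex the limit defining $\D(B\G_m\times\cal{X})$ to a limit over $\Spec R\to\cal{X}$, and use that $\Fun(\Z,-)$ commutes with limits of $\infty$-categories. The paper makes the reindexing step explicit by noting that any $\Spec S\to B\G_m\times\cal{X}$ factors through $B\G_m\times\Spec S$, while your phrasing in terms of fpqc descent and atlases gets at the same point a bit more loosely (note that $B\G_m\times\Spec R$ is not itself affine, so one must either invoke the factorisation or further cover by $\Spec R\to B\G_m\times\Spec R$); your remark about promoting the affine equivalence to $\mathrm{CAlg}(\PrL)$ before passing to the limit is a welcome bit of extra care that the paper elides.
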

\begin{proof}
We only prove the first equivalence, the second one is similar. As any morphism $\Spec S\rightarrow B\G_m\times\cal{X}$ factors as $\Spec S\rightarrow B\G_m\times\Spec S\rightarrow B\G_m\times \cal{X}$, we obtain equivalences
\begin{equation*}
\begin{split}
\D(B\G_m\times\cal{X})&\cong\lim_{\Spec S\rightarrow\cal{X}} \D(B\G_m\times \Spec S)\cong \lim_{\Spec S\rightarrow\cal{X}} \Fun(\Z, \D(S)) \\
&\cong \Fun(\Z, \lim_{\Spec S\rightarrow\cal{X}} \D(S))\cong \Fun(\Z, \D(\cal{X})) 
\end{split}
\end{equation*}
using the Rees equivalence over the base $\Spec S$ and the universal property of the limit.
\end{proof}

In particular, we see that $\D(B\G_m\times\Spf\Z_p)$ and $\D(\A^1_-/\G_m\times\Spf\Z_p)$ identify with the categories $\widehat{\DF}(\Z_p)$ of filtered objects and $\widehat{\D}_{\gr}(\Z_p)$ of graded objects in $\widehat{\D}(\Z_p)$, respectively.

\subsubsection{Vector bundles and PD hulls}

For a projective $R$-module $E$ of finite rank, we adopt the convention that the geometric vector bundle $\V(E)\rightarrow\Spec R$ corresponding to $E$ is given by $\V(E)\coloneqq \Spec\Sym^\bullet_R E^\vee$. More generally, for a stack $\cal{X}$ and a vector bundle $E\in\Vect(\cal{X})$, we define $\V(E)$ by
\begin{equation*}
\V(E)(S)=\{\text{pairs $f: \Spec S\rightarrow\cal{X}, f^*E^\vee\rightarrow\O_{\Spec S})$}\}^{\cong}\;.
\end{equation*}

Particularly in the context of de Rham stacks, we will often consider the PD hull $\V(E)^\sharp$ of the zero section of $\V(E)$. On points, this is given by
\begin{equation*}
\begin{split}
\V(E)^\sharp(S)=\{&\text{tuples $(f: \Spec S\rightarrow\cal{X}, s_n: f^*E^{\tensor n\vee}\rightarrow\O_{\Spec S})$} \\
&\hspace{0.5cm}\text{such that $(s_n)_n$ is a system of divided powers for $s_1$}\}^{\cong}\;.
\end{split}
\end{equation*}
Here, the tuple $(s_n)_n$ being a system of divided powers for $s_1$ means that the relations
\begin{equation}
\label{eq:div-powers}
s_ms_n=\binom{m+n}{n}s_{m+n}
\end{equation}
are satisfied for all $m, n\geq 1$.

\begin{ex}
The stack $\G_a^\sharp$ classifies ring elements equipped with a system of divided powers and is given by $\G_a^\sharp=\Spec\Z[t, \tfrac{t^2}{2!}, \tfrac{t^3}{3!}, \dots]$.
\end{ex}

\comment{
\subsubsection{Cones}

We will often make use of the following construction, details of which can be found in \cite{RingGroupoid}:

\begin{defi}
Let $d: A\rightarrow B$ be a morphism of sheaves of abelian groups. We denote by $\Cone(d: A\rightarrow B)=\Cone(d)$ the stack obtained by sheafifying the assignment
\begin{equation*}
S\mapsto \Cone(A(S)\xrightarrow{d(S)} B(S))\;,
\end{equation*}
where the right-hand side denotes the quotient in the category of animated abelian groups. Note that $\Cone(d)$ is equipped with the structure of a $1$-truncated animated abelian group stack.
\end{defi}

In the cases relevant to us, the stack $\Cone(d)$ will even admit the structure of a $1$-truncated animated ring stack. This is due to the fact that the morphism $d$ will behave like the inclusion of an ideal of a ring in the following sense, see \cite[Ch.\ 3]{RingGroupoid}:

\begin{defi}
Let $C$ be a commutative ring scheme. A morphism $d: I\rightarrow C$ of $C$-module schemes is called a \emph{quasi-ideal} if, for any test ring $S$ and any $x, y\in I(S)$, we have $d(x)\cdot y=d(y)\cdot x$.
\end{defi}
}

\subsection{De Rham stacks}

Now we are in the position to define the de Rham stack. From now on, let $X$ be a bounded $p$-adic formal scheme; here, $X$ being bounded means that it locally has the form $\Spf R$ for $R$ having bounded $p^\infty$-torsion. We work with \EDIT{$p$-adic formal stacks}.

\begin{defi}
Consider the stack
\begin{equation*}
\G_a^\dR\coloneqq\Cone(\G_a^\sharp\xrightarrow{\can}\G_a)\;,
\end{equation*}
which admits the structure of a $1$-truncated animated $\G_a$-algebra stack. The \emph{de Rham stack} $X^\dR$ of $X$ is the stack defined by
\begin{equation*}
X^\dR(S)\coloneqq \Map(\Spec\G_a^\dR(S), X)\;,
\end{equation*}
where the mapping space is computed in derived algebraic geometry.
\end{defi}

\begin{ex}
\label{ex:drstack-perfd}
Let $R$ be a perfectoid ring and denote by $(A_\inf(R), I)=(\Prism_R, I)$ the initial object of the absolute prismatic site of $R$; i.e.\ $A_\inf(R)=W(R^\flat)$, where, as usual,
\begin{equation*}
R^\flat\coloneqq\lim_{x\mapsto x^p} R\cong\lim_{x\mapsto x^p} R/p
\end{equation*}
and $I$ is the kernel of the Fontaine map
\begin{equation*}
\theta: A_\inf(R)\rightarrow R\;, \hspace{0.5cm} \sum_k p^k[x_k]\mapsto \sum_k p^kx_k^\sharp\;;
\end{equation*}
here, for $x\in R^\flat=\lim_{x\mapsto x^p} R$, we denote by $x^\sharp$ the image of $x$ under the projection onto the first coordinate. \EDIT{As $I$ is principal, we may pick a generator $\xi\in A_\inf(R)$} and let
\begin{equation*}
A_\crys(R)\coloneqq A_\inf(R)[\tfrac{\xi^n}{n!}: n\geq 0]_{(p)}^\wedge
\end{equation*}
be the $p$-completed divided power envelope of $A_\inf(R)$ at $\xi$; more canonically, one may describe $A_\crys(R)$ as the $p$-completed divided power envelope of $A_\inf(R)$ at $I$. Let us emphasise that we are using the free divided power envelope here, i.e. $A_\inf(R)[\tfrac{\xi^n}{n!}: n\geq 0]$ denotes the quotient of the polynomial ring $A_\inf(R)[s_1, s_2, \dots]$ by the relations $s_1=\xi$ and (\ref{eq:div-powers}). This is not the same as the subring of $A_\inf(R)[\tfrac{1}{p}]$ generated by the elements $\tfrac{\xi^n}{n!}$ for $n\geq 0$: e.g., for $R=k$ a perfect field of characteristic field, we have
\begin{equation*}
A_\crys(k)\cong (W(k)[x_1, x_2, \dots]/(px_1-p^p, px_2-x_1^p, \dots))^\vee_{(p)}
\end{equation*}
even though $d=p$ already admits divided powers in $W(k)$!

We claim that
\begin{equation}
\label{eq:drstack-perfd}
R^\dR\cong\Spf A_\crys(R)\;,
\end{equation}
\EDIT{where $A_\crys(R)$ is endowed with the $(p, \xi)$-adic topology (note that this agrees with the $p$-adic topology due to the existence of divided powers for $\xi$).} Indeed, we may describe this isomorphism on $S$-valued points for $\G_a^\sharp$-acyclic $p$-nilpotent rings $S$ by \cref{lem:gasharp-acyclic} below. Then $\G_a^\dR(S)=\Cone(\G_a^\sharp(S)\rightarrow S)$ and an $S$-valued point of \EDIT{$R^\dR$} is the datum of a morphism $R\rightarrow\Cone(\G_a^\sharp(S)\rightarrow S)$. We will shortly show that this yields a contractibly unique lift
\begin{equation*}
\begin{tikzcd}
A_\inf(R)\ar[r, dotted]\ar[d] & S\ar[d] \\
R=A_\inf(R)/I\ar[r] & \Cone(\G_a^\sharp(S)\rightarrow S)\nospacepunct{\;.}
\end{tikzcd}
\end{equation*}
The commutativity of the diagram then provides a system of divided powers for the image of $I$ in $S$ and hence induces a unique map $A_\crys(R)\rightarrow S$; i.e.\ we obtain an $S$-valued point of $\Spf A_\crys(R)$. \EDIT{Finally, one easily sees that the construction is reversible: an $S$-valued point of $\Spf A_\crys(R)$ furnishes a composite map $A_\inf(R)\rightarrow A_\crys(R)\rightarrow S$ together with a system of divided powers for the image of $I$ in $S$ and hence a factorisation of the composite $A_\inf(R)\rightarrow S\rightarrow\Cone(\G_a^\sharp(S)\rightarrow S)$ through $R=A_\inf(R)/I$, as desired. Thus, (\ref{eq:drstack-perfd}) is proved.}

To perform the missing deformation-theoretic argument, first observe that the image of $\G_a^\sharp(S)\rightarrow S$ is locally nilpotent (indeed, if $n$ is large enough so that $p^n=0$ in $S$, then $x^{p^n}=(p^n)!\cdot\frac{x^{p^n}}{(p^n)!}=0$ for any $x\in S$ admitting divided powers), hence $S\rightarrow\Cone(\G_a^\sharp(S)\rightarrow S)$ is surjective on $\pi_0$ with locally nilpotent kernel. As the cotangent complex $L\Omega_{R^\flat/\F_p}$ vanishes by virtue of $R^\flat$ being perfect (see e.g.\ \cite[Lem.\ 3.5]{PrismaticBhatt}), derived deformation theory\footnote{More precisely, we are using the following assertion: Let $A$ be an animated ring and $B$ an $A$-algebra such that $L\Omega_{B/A}$ vanishes. For any map of animated $A$-algebras $C'\rightarrow C$ which is surjective on $\pi_0$ with locally nilpotent kernel, any $A$-algebra map $B\rightarrow C$ lifts uniquely to an $A$-algebra map $B\rightarrow C'$. This roughly follows by combining a variant of the proof of \cite[Prop.\ 11.2.1.2]{SAG} with a noetherian approximation argument as in the proof of \cite[Cor.\ 5.2.10]{FlatPurity}.} shows that the composition 
\begin{equation*}
R^\flat=\lim_{x\mapsto x^p} R/p\rightarrow R/p\rightarrow \Cone(\G_a^\sharp(S)\rightarrow S)/^\mathbb{L} p\;,
\end{equation*}
where the first map is projection onto the first coordinate, lifts uniquely to a map $R^\flat\rightarrow S/^\mathbb{L} p$ and composition with projection onto $\pi_0$ yields a map $R^\flat\rightarrow S/p$ (conversely, $R^\flat\rightarrow S/^\mathbb{L} p$ can uniquely be recovered from this map as $S/^\mathbb{L} p\rightarrow S/p$ is an isomorphism on $\pi_0$). Since also $L\Omega_{W(R^\flat)/p^n/\Z/p^n}$ vanishes for any $n\geq 1$ (indeed, this may be checked after reduction mod $p$ \EDIT{by derived Nakayama}, but $L\Omega_{W(R^\flat)/p^n/\Z/p^n}\tensorL_{\Z/p^n} \F_p\cong L\Omega_{R^\flat/\F_p}=0$), the same argument shows that this map lifts uniquely to maps $A_\inf(R)=W(R^\flat)\rightarrow S/p^n$. As $S$ is $p$-nilpotent, taking $n$ sufficiently large, we obtain the desired map $A_\inf(R)\rightarrow S/p^n=S$.
\end{ex}

The computation above made use of the following statement:

\begin{lem}
\label{lem:gasharp-acyclic}
There is a basis of the fpqc topology consisting of $\G_a^\sharp$-acyclic rings (i.e.\ rings for which all $\G_a^\sharp$-torsors are trivial).
\end{lem}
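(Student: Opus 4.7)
The plan is to exhibit a class $\mathcal{C}$ of rings that is cofinal in the fpqc topology and over which every $\G_a^\sharp$-torsor is trivial. Recalling that $\G_a^\sharp=\Spec\Z\langle t\rangle$ is represented by the $\Z$-flat Hopf algebra given by the free divided-power polynomial algebra, a $\G_a^\sharp$-torsor over $\Spec R$ is an affine faithfully flat $R$-scheme with compatible $\G_a^\sharp$-coaction, and it is trivial precisely when it admits an $R$-point, equivalently when its class in $H^1_{\mathrm{fpqc}}(\Spec R,\G_a^\sharp)$ vanishes.

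A natural candidate for $\mathcal{C}$ is the class of quasi-regular semiperfectoid rings (in the sense of Bhatt--Morrow--Scholze), whose abundance of divided power structures should allow one to trivialise torsors explicitly. Alternatively, one may take $\mathcal{C}$ to consist of w-contractible rings in the sense of Bhatt--Scholze. The first step of the proof is to invoke the standard covering result: any bounded $p$-adic ring admits a faithfully flat cover by a ring in $\mathcal{C}$, which immediately gives the cofinality required for the basis statement.

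The second step is to verify $\G_a^\sharp$-acyclicity for $R\in\mathcal{C}$. One natural approach is to filter $\Z\langle t\rangle$ by divided-power degree, exhibiting $\G_a^\sharp$ as an inverse limit of successive extensions of $\G_a$ by finite flat group schemes (such as the Frobenius kernels $\alpha_p$ in characteristic $p$), and then reduce torsor triviality to the vanishing $H^1_{\mathrm{fpqc}}(\Spec R,\G_a)=0$ (automatic on any affine scheme) together with cohomology vanishing on the finite flat pieces. A more direct alternative is to produce the required section of any $\G_a^\sharp$-torsor over $R$ geometrically, by exploiting the perfectoid structure of $R$ to lift a chosen element together with its system of divided powers.

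The main obstacle is the second step: since $\G_a^\sharp$ is not smooth (nor even formally smooth) in characteristic $p$, torsor triviality is a genuinely fpqc (not merely \'etale) phenomenon, and the passage from cohomology vanishing along the filtration layers to vanishing on the full $\G_a^\sharp$ requires careful control of an $\mathrm{R}^1\lim$-type contribution. The cleanest route is probably to bypass this via the direct geometric argument on a quasi-regular semiperfectoid base, where the existence of sufficiently many DP structures reduces the problem to an explicit construction.
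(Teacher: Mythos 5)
Your proposal does not actually carry out a proof, and the routes it sketches both face real obstacles; the paper's proof is quite different and more robust.

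The paper argues via a transfinite small-object construction in the style of \cite[Lem.\ 5.3.1]{FlatPurity}: since $\G_a^\sharp=\Spec\Z[t,\tfrac{t^2}{2!},\dots]$ is \emph{countably presented}, any $\G_a^\sharp$-torsor over $\Spec S$ is a countably presented faithfully flat $S$-algebra. For any $S$ one builds a tower $S\to S_1\to S_2\to\dots$ by tensoring together representatives of isomorphism classes of countably presented faithfully flat $S$-algebras, iterates transfinitely through the countable ordinals, and sets $S_{\omega_1}=\colim_\alpha S_\alpha$. This is still a faithfully flat $S$-algebra, and any countably presented faithfully flat cover of $S_{\omega_1}$ descends to some $S_\alpha$ and then splits by construction of $S_{\alpha+1}$. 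In particular all $\G_a^\sharp$-torsors over $S_{\omega_1}$ split, and the $S_{\omega_1}$ give the desired basis. The argument works uniformly for every ring $S$ and for any countably presented affine group scheme; nothing about perfectoids, divided powers, or the structure of $\G_a^\sharp$ beyond its presentability is used.

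Concretely, the gaps in your approach are: \emph{(i)} quasiregular semiperfectoid rings are $p$-complete, not $p$-nilpotent, so they are not even test objects of the site in question, and they form a basis of the quasisyntomic site, not the fpqc site; \emph{(ii)} w-contractible rings in the sense of Bhatt--Scholze are characterised by the splitting of \emph{pro-\'etale} covers, and since $\G_a^\sharp$ is not formally \'etale in characteristic $p$, a $\G_a^\sharp$-torsor is not a pro-\'etale cover, so w-contractibility gives no leverage without further argument; \emph{(iii)} in the filtration approach, the graded layers of $\G_a^\sharp$ involve group schemes like $\alpha_p$, and $H^1_{\mathrm{fpqc}}(\Spec R,\alpha_p)\cong\operatorname{coker}(\phi:R\to R)$ does \emph{not} vanish on a general affine $R$, so the reduction to $H^1(\G_a)=0$ already fails at each layer, not only at the $\mathrm{R}^1\lim$ stage you flag; and \emph{(iv)} the ``direct geometric construction'' is asserted but not given. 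Any successful version of your strategy would have to restrict to a carefully chosen class of base rings and establish both cofinality and acyclicity for that class, whereas the paper's cardinality argument sidesteps all of this.
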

\begin{proof}
As $\G_a^\sharp$ is countably presented, this follows from a variant of the proof of \cite[Lem.\ 5.3.1]{FlatPurity}, which we briefly \EDIT{outline here}: Namely, for any ring $S$, fix a set $\cal{S}$ of representatives \EEDIT{of} isomorphism classes of countably presented faithfully flat $S$-algebras and put 
\begin{equation*}
\EEDIT{S_1\coloneqq \bigotimes_{T\in\cal{S}} T\coloneqq \colim_{\cal{S}'\subseteq\cal{S}\text{ finite}} \bigotimes_{T\in\cal{S}'} T\;.}
\end{equation*}
Iterating this procedure, we obtain a tower of \EEDIT{faithfully flat} $S$-algebras $S_1\rightarrow S_2\rightarrow\dots$ and we put $S_\omega\coloneqq\colim_n S_n$. Continuing in this manner, we obtain $S$-algebras $S_\alpha$ for each countable ordinal $\alpha$ by transfinite induction and finally put $S_{\omega_1}\coloneqq\colim_\alpha S_\alpha$. 

\EEDIT{Now observe that the map $S\rightarrow S_{\omega_1}$ is a flat cover: Indeed, each $S\rightarrow \bigotimes_{T\in\cal{S}'} T$ for $\cal{S}'\subseteq\cal{S}$ is a flat cover since flat covers are stable under base change and composition; this then implies that $S_1$ is flat over $S$ as flatness is stable under filtered colimits. Moreover, note that any transition map in the colimit defining $S_1$ is also faithfully flat; in particular, all transition maps are injective. This means that, for any nonzero $S$-module $M$, we have 
\begin{equation*}
M\tensor_S S_1=\colim_{\cal{S}'\subseteq\cal{S}\text{ finite}} M\tensor_S \bigotimes_{T\in\cal{S}'} T\;,
\end{equation*}
where each term of the colimit is nonzero and each transition map is injective, hence $M\tensor_S S_1$ is nonzero itself and, consequently, $S_1$ is a faithfully flat $S$-algebra. Using similar arguments, one can now indeed deduce that $S\rightarrow S_{\omega_1}$ is a flat cover by transfinite induction.}

\EEDIT{Now note that} \EDIT{the ring $S_{\omega_1}$ admits no nonsplit countably presented flat cover $S_{\omega_1}\rightarrow R$: indeed, as $R$ is a countably presented $S_{\omega_1}$-algebra, it descends to a countably presented $S_\alpha$-algebra $\widetilde{R}$ for some countable ordinal $\alpha$ and $S_\alpha\rightarrow\widetilde{R}$ is faithfully flat since this can be checked after base change to $S_{\omega_1}$. However, by construction of $S_{\alpha+1}$, this implies that there is a morphism $\widetilde{R}\rightarrow S_{\alpha+1}\rightarrow S_{\omega_1}$ of $S_\alpha$-algebras, which yields the desired splitting of $S_{\omega_1}\rightarrow R$. In particular, we conclude that any $\G_a^\sharp$-torsor over $S_{\omega_1}$ is split, i.e.\ trivial.}
\end{proof}

In good cases, coherent cohomology on the de Rham stack of $X$ actually computes \EDIT{the} ($p$-completed) de Rham cohomology of $X$:

\begin{thm}
\label{thm:drstack-comparison}
Let $X$ be a \EEDIT{smooth qcqs} $p$-adic formal scheme and write $\pi_{X^\dR}: X^\dR\rightarrow \Spf\Z_p$. Then $\H_\dR(X)\coloneqq \pi_{X^\dR, *}\O_{X^\dR}$ identifies with $R\Gamma_\dR(X)$.
\end{thm}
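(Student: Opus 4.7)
The plan is to reduce to the affine smooth case by descent, then compare both sides via a perfectoid Čech cover, applying \cref{ex:drstack-perfd} termwise and invoking the crystalline--de Rham comparison for smooth formal schemes.

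First, I would verify that both functors $X\mapsto R\Gamma(X^\dR, \O_{X^\dR})$ and $X\mapsto R\Gamma_\dR(X)$ are Zariski sheaves in $X$. For the right-hand side this is standard; for the left-hand side, one checks using the description $X^\dR(S)=X(\G_a^\dR(S))$ that a Zariski cover $\{U_i\to X\}$ yields a Zariski cover $\{U_i^\dR\to X^\dR\}$, since any map $\Spec\G_a^\dR(S)\to X$ factors through one of the $U_i$ after Zariski-localising on $\pi_0 S$. This reduces us to $X=\Spf R$ with $R$ smooth over $\Z_p$; working further Zariski-locally, we may assume $R$ is étale over $\Z_p\langle x_1,\dots, x_d\rangle$.

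Next, I would introduce the $p$-completely faithfully flat perfectoid cover
\begin{equation*}
R\to R^{(0)}\coloneqq R\;\hat{\tensor}_{\Z_p\langle x_1,\dots, x_d\rangle}\; \Z_p\langle x_1^{1/p^\infty},\dots, x_d^{1/p^\infty}\rangle
\end{equation*}
and form its Čech nerve $R^{(\bullet)}$, whose terms all lie in the quasi-syntomic site of $R$. The goal is then the chain of isomorphisms
\begin{equation*}
R\Gamma(R^\dR, \O)\;\cong\;\Tot_n R\Gamma(R^{(n), \dR}, \O)\;\cong\;\Tot_n A_\crys(R^{(n)})\;\cong\;R\Gamma_\crys(R/p)\;\cong\;R\Gamma_\dR(R)\;,
\end{equation*}
where the first isomorphism is fpqc/Čech descent on the stack $R^\dR$ along the cover $R^{(0), \dR}\to R^\dR$; the second applies \cref{ex:drstack-perfd} termwise; the third is quasi-syntomic descent for crystalline cohomology (the cosimplicial object $n\mapsto A_\crys(R^{(n)})$ is exactly what computes crystalline cohomology in the quasi-syntomic setup of Bhatt--Morrow--Scholze); and the fourth is the Berthelot--Ogus comparison between crystalline cohomology of the mod-$p$ reduction and $p$-completed de Rham cohomology for smooth $R$.

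The main obstacle I expect is the first isomorphism, i.e.\ verifying that $R^{(0), \dR}\to R^\dR$ is indeed an fpqc cover of stacks. Intuitively, this should be inherited from faithful flatness of $R\to R^{(0)}$, but because the de Rham stack is built from the derived ring stack $\G_a^\dR$, one must work in the setting of animated rings: given an $S$-point of $R^\dR$ classified by a map $R\to\G_a^\dR(S)$, one must produce, after fpqc-localising on $S$, a compatible lift $R^{(0)}\to\G_a^\dR(S')$. The technical tool here is the acyclicity provided by \cref{lem:gasharp-acyclic}, which effectively lets one treat $\G_a^\dR(S')$ as close enough to $S'$ itself that flat descent for ordinary rings transfers to a descent statement for $(-)^\dR$. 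Once this cover property is in hand, coherent Čech descent on stacks supplies the rest.
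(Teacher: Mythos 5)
The overall strategy — Zariski reduction, then comparison via a quasi-syntomic cover — is a reasonable plan, and the paper itself just defers to \cite[Thm.\ 2.5.6]{FGauges}, whose proof uses the more elementary route of étale charts and the presentation of $X^{\dR}$ as a quotient of $\Spf R$ by a power of $\G_a^\sharp$. So a genuinely self-contained descent argument would be interesting. However, the middle two steps of your chain have a concrete error.

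The key issue is that $R^{(0)} = R\,\hat\tensor_{\Z_p\langle x_1,\dots,x_d\rangle}\,\Z_p\langle x_1^{1/p^\infty},\dots,x_d^{1/p^\infty}\rangle$ is \emph{not} a perfectoid ring, so \cref{ex:drstack-perfd} does not apply to it. In $\Z_p\langle\underline{x}^{1/p^\infty}\rangle$ there is no element $\pi$ with $\pi^p\mid p$ (working mod $p$, the ring $\F_p[\underline{x}^{1/p^\infty}]$ is a domain, so $\pi^p\mid p$ forces $\pi$ to be a unit), and this property is insensitive to passing to the étale extension $R^{(0)}$. What is true instead is that $(R^{(0)},(p))$ is a \emph{perfect prism}: $R^{(0)}$ is étale over $\Z_p\langle\underline{x}^{1/p^\infty}\rangle$, hence $R^{(0)}/p$ is étale over $\F_p[\underline{x}^{1/p^\infty}]$ and therefore perfect. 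But perfect prisms are a different class of rings from perfectoid rings (the former are the $A$'s, the latter are the $A/I$'s), and the conclusion of \cref{ex:drstack-perfd} for them is different: since $L_{(R^{(0)}/p)/\F_p}=0$, the same deformation-theoretic argument gives
\begin{equation*}
(R^{(0)})^\dR\;\cong\;\Spf R^{(0)}\;,
\end{equation*}
with no divided-power envelope appearing at all, rather than $\Spf A_\crys(R^{(0)})$ as your second isomorphism asserts. (Compare: $\Z_p^\dR\cong\Spf\Z_p$, not $\Spf A_\crys(\F_p)$; the latter is $\F_p^\dR$.)

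As a consequence, the third isomorphism is also off: the object $A_\crys(R^{(n)})$ is not defined (none of the $R^{(n)}$ are perfectoid, or even quasiregular semiperfectoid — no perfectoid ring can surject onto them for the same divisibility reason), and, even if you replace it by the correct value $R\Gamma((R^{(n)})^\dR,\O)$, for $n\geq 1$ the stack $(R^{(n)})^\dR=(R^{(0)})^\dR\times_{R^\dR}\cdots\times_{R^\dR}(R^{(0)})^\dR$ is no longer affine: $R^{(n)}/p$ is semiperfect but not perfect (e.g.\ $x^{1/p}-y^{1/p}$ is a nonzero element killed by Frobenius in $\F_p[x^{1/p^\infty}]\tensor_{\F_p[x]}\F_p[y^{1/p^\infty}]$), so the cotangent complex no longer vanishes mod $p$ and divided-power envelopes of the diagonal reappear. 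The proposal conflates these with the free divided-power envelope $A_\crys(-)$ of \cref{ex:drstack-perfd}, which is a different construction only applicable to genuinely perfectoid inputs. To salvage the descent strategy one would either have to work over a perfectoid base such as $\O_C$ (where $\O_C\langle\underline{x}^{1/p^\infty}\rangle$ really is perfectoid and the Čech nerve stays perfectoid) and then descend to $\Z_p$, or instead pass through the identification of $X^\dR$ with $\phi^*(X_{p=0})^\prism$ for smooth $X$ and carry out quasi-syntomic descent for the prismatisation in characteristic $p$; as written, the argument does not go through.
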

\begin{proof}
This is easily deduced from \cite[Thm.\ 2.5.6]{FGauges}.
\end{proof}

Motivated by this result, we make the following definition:

\begin{defi}
Let $X$ be a \EEDIT{smooth qcqs} $p$-adic formal scheme. For a quasi-coherent complex $E\in\D(X^\dR)$, we define the \emph{de Rham cohomology} of $X$ with coefficients in $E$ as
\begin{equation*}
\EDIT{
R\Gamma_\dR(X, E)\coloneqq R\Gamma(X^\dR, E)=\pi_{X^\dR, *}(E)\;.
}
\end{equation*}
\end{defi}

\begin{rem}
\label{rem:drstack-vect}
In fact, the above notion of coefficients for de Rham cohomology recovers a more classical such notion: Namely, vector bundles on $X^\dR$ are the same as vector bundles on $X$ equipped with a flat connection \EDIT{having locally nilpotent $p$-curvature after reduction mod $p$ (see \cite[§5]{Katz} for a definition of the notion of $p$-curvature)} -- this can be proved using \cite[Rem.\ 2.5.7]{FGauges} and a similar calculation as in \cite[Lem.\ 3.4.2]{NygaardHodge}.
\end{rem}

To additionally incorporate the Hodge-filtration, we have to consider the following deformation of the stack $X^\dR$ over $\A^1_-/\G_m$:

\begin{defi}
Over $\A^1_-/\G_m$, the canonical map \EEDIT{$\V(\O(1))^\sharp\rightarrow\G_a^\sharp\rightarrow\G_a$} defines a 1-truncated animated $\G_a$-algebra stack
\begin{equation*}
\EEDIT{
\G_a^{\dR, +}\coloneqq\Cone(\V(\O(1))^\sharp\xrightarrow{\can}\G_a)
}
\end{equation*}
over $\A^1_-/\G_m$. The \emph{Hodge-filtered de Rham stack} $X^{\dR, +}$ of $X$ is the stack $\pi_{X^{\dR, +}}: X^{\dR, +}\rightarrow\A^1_-/\G_m$ defined by
\begin{equation*}
X^{\dR, +}(\Spec S\rightarrow\A^1_-/\G_m)\coloneqq \Map(\Spec\G_a^{\dR, +}(S), X)\;,
\end{equation*}
where the mapping space is computed in derived algebraic geometry.
\end{defi}

\begin{rem}
\label{rem:fildrstack-unfiltereddr}
Observe that the preimage of $\G_m/\G_m\subseteq\A_1/\G_m$ under $\pi_{X^{\dR, +}}$ recovers the de Rham stack $X^\dR$. The preimage of $B\G_m\subseteq \A^1_-/\G_m$ is called the \emph{Hodge stack} of $X$ and denoted $X^\Hod$.
\end{rem}

\begin{ex}
\label{ex:fildrstack-perfd}
Generalising \cref{ex:drstack-perfd}, we claim that, for a perfectoid ring $R$, there is an isomorphism of stacks
\begin{equation}
\label{eq:fildrstack-perfd}
R^{\dR, +}\cong\Spf(A_\inf(R)[\tfrac{u^n}{n!}, t: n\geq 1]_{(p)}^\wedge/(ut-\xi))/\G_m
\end{equation}
over $\A^1_-/\G_m$, \EDIT{where $A=A_\inf(R)[\tfrac{u^n}{n!}, t: n\geq 1]_{(p)}^\wedge/(ut-\xi)$ is equipped with the $(p, \xi)$-adic topology (note that this agrees with the $p$-adic topology due to the existence of divided powers for $u$ and hence also for $\xi=ut$) and, as usual, $t$ has degree \EEDIT{$-1$} while $u$ has degree \EEDIT{$1$}. We note here that while $A$ is clearly derived $p$-complete, it is also classically $p$-complete since it is $p$-adically separated.} 

To prove (\ref{eq:fildrstack-perfd}), as before, it suffices to describe this isomorphism on $S$-valued points for $\G_a^\sharp$-acyclic $p$-nilpotent rings $S$ equipped with a morphism $\Spec S\rightarrow\A^1_-/\G_m$ by \cref{lem:gasharp-acyclic}. Then $R^{\dR, +}(S)$ is the groupoid of maps of animated rings \EEDIT{$R\rightarrow\Cone(\V(\O(1))^\sharp(S)\rightarrow S)$} by $\G_a^\sharp$-acyclicity of $S$ (any \EEDIT{$\V(\O(1))^\sharp$-torsor} over $S$ is also a $\G_a^\sharp$-torsor) and if the morphism $\Spec S\rightarrow\A^1_-/\G_m$ is classified by the generalised Cartier divisor $t: L\rightarrow S$, then \EEDIT{$\V(\O(1))^\sharp(S)$} identifies with the set of global sections of $L$ equipped with a system of divided powers. Now given any such map \EEDIT{$R\rightarrow\Cone(\V(\O(1))^\sharp(S)\rightarrow S)$}, we obtain a unique lift
\begin{equation*}
\EEDIT{
\begin{tikzcd}[ampersand replacement=\&]
A_\inf(R)\ar[r, dotted]\ar[d] \& S\ar[d] \\
R=A_\inf(R)/(\xi)\ar[r] \& \Cone(\V(\O(1))^\sharp(S)\rightarrow S)\nospacepunct{\;.}
\end{tikzcd}
}
\end{equation*}
as in \cref{ex:drstack-perfd} and then the commutativity of the diagram provides a unique factorisation $S\xrightarrow{u} L\xrightarrow{t} S$ of the multiplication-by-$\xi$-map together with a system of divided powers for $u$ (i.e.\ maps $\frac{u^n}{n!}: S\rightarrow L^{\tensor n}$). \EEDIT{Together with the map $A_\inf(R)\rightarrow S$ we obtained,} this is precisely the datum of an $S$-valued point of $\Spf A/\G_m$ and the construction is clearly reversible, so we are done.
\end{ex}

As expected, coherent cohomology on the Hodge-filtered de Rham stack of $X$ computes \EDIT{the} Hodge-filtered de Rham cohomology of $X$ in good cases:

\begin{thm}
\label{thm:fildrstack-comparison}
Let $X$ be a \EEDIT{smooth qcqs} $p$-adic formal scheme and consider its Hodge-filtered de Rham stack $\pi_{X^{\dR, +}}: X^{\dR, +}\rightarrow\A^1_-/\G_m$. Then $\H_{\dR, +}(X)\coloneqq \pi_{X^{\dR, +}, *}\O_{X^{\dR, +}}$ identifies with $\Fil^\bullet_{\Hod} R\Gamma_\dR(X)$ in $\widehat{\DF}(\Z_p)$ under the Rees equivalence.
\end{thm}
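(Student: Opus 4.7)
The plan is to run the same strategy as for \cref{thm:drstack-comparison} while keeping track of the $\G_m$-action, so that the filtered refinement recovered on the base $\A^1_-/\G_m$ matches the Hodge filtration. As in the unfiltered case, I expect this to reduce to (a filtered variant of) \cite[Thm.\ 2.5.6]{FGauges}, but let me sketch a more direct route that makes the filtration transparent.

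First, I would reduce to the affine case: both $R\Gamma_{\dR}(X)$ (with its Hodge filtration) and the pushforward $\pi_{X^{\dR,+},*}\O_{X^{\dR,+}}$ satisfy Zariski descent on $X$, the latter because the formation of $X^{\dR,+}$ sends Zariski covers to fpqc covers by the base change results of Appendix \ref{sect:basechange}. So we may assume $X=\Spf R$ with $R$ smooth over $\Z_p$. Note that restricting along the open embedding $\G_m/\G_m\hookrightarrow\A^1_-/\G_m$ recovers $\pi_{X^\dR,*}\O_{X^\dR}\cong R\Gamma_\dR(X)$ by \cref{rem:fildrstack-unfiltereddr} and \cref{thm:drstack-comparison}; so, under the Rees equivalence, $\H_{\dR,+}(X)$ already has the correct underlying object, and the task is to identify the filtration.

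To compute the filtration, I would choose an étale framing $\Z_p\langle x_1,\dots,x_d\rangle\to R$ and pass to the standard perfectoid cover $R\to R_\infty$ obtained by adjoining $p$-power roots of the $x_i$'s, forming the Čech nerve $R_\infty^\bullet$. Both sides of the claimed isomorphism satisfy fpqc descent: the left because $\D(R^{\dR,+})$ does (quasi-coherent complexes glue, together with the base change \cref{sect:basechange} applied to the faithfully flat cover), and the right because the Hodge-filtered de Rham complex of a smooth $p$-adic formal scheme satisfies quasisyntomic descent. Hence it suffices to compute both sides on each $R_\infty^n$ and match the simplicial structure. For perfectoid terms, \cref{ex:fildrstack-perfd} gives the explicit description
\begin{equation*}
R_\infty^{\dR,+}\cong\Spf\bigl(A_\inf(R_\infty)[\tfrac{u^n}{n!},t : n\geq 1]_{(p)}^\wedge/(ut-\xi)\bigr)/\G_m,
\end{equation*}
and the $\G_m$-weight decomposition of the coordinate ring picks out exactly the divided-power (equivalently, Nygaard-type) filtration on $A_\crys(R_\infty)$ whose image under the Fontaine map is the $\xi$-adic filtration on $R_\infty$.

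The main step, and I expect the main obstacle, is to show that descent of this explicit filtered perfectoid object along $R\to R_\infty^\bullet$ produces precisely the Hodge filtration on $R\Gamma_\dR(R)$. On associated gradeds, i.e.\ after restriction along $B\G_m=X^\Hod\hookrightarrow X^{\dR,+}$, this amounts to computing $R\Gamma(X^\Hod,\O)$ and matching it with $\bigoplus_i R\Gamma(X,\Omega^i_{X/\Z_p})[-i]$ placed in weights $-i$; after Čech descent, one reduces to the Koszul complex for the framing, where this match becomes a direct computation analogous to \cite[Rem.\ 2.5.7]{FGauges} and \cite[Lem.\ 3.4.2]{NygaardHodge}. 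Given the correct associated graded and the correct generic fibre, a deformation-theoretic argument (or an appeal to the filtered version of \cite[Thm.\ 2.5.6]{FGauges}) then upgrades this to the identification $\H_{\dR,+}(X)\cong\Fil^\bullet_\Hod R\Gamma_\dR(X)$ in $\widehat{\DF}(\Z_p)$, as desired.
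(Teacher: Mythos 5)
Your proposal ultimately rests on the same citation as the paper: the paper's entire proof is ``This is \cite[Thm.\ 2.5.6]{FGauges}.'' The additional material you sketch --- reduce to the affine case, pass to a standard perfectoid cover $R\to R_\infty^\bullet$, compute $R_\infty^{n,\dR,+}$ explicitly via \cref{ex:fildrstack-perfd}, and descend --- is indeed the right shape of an argument and is broadly how \cite{FGauges} proceeds, but you do not actually carry out the key step (the \v Cech descent identifying the descended PD filtration with the Hodge filtration), you flag it as ``the main obstacle,'' and your concluding sentence falls back on ``an appeal to the filtered version of \cite[Thm.\ 2.5.6]{FGauges},'' which is circular. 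Note also that \cite[Thm.\ 2.5.6]{FGauges} \emph{is} already the filtered statement: the paper's proof of \cref{thm:drstack-comparison} (the unfiltered comparison) says it is ``easily deduced from'' \cite[Thm.\ 2.5.6]{FGauges}, so there is no ``filtered variant'' to be found beyond what the paper cites.

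Two small inaccuracies worth flagging. First, the $\G_m$-weight decomposition of the coordinate ring in \cref{ex:fildrstack-perfd} picks out the \emph{divided-power} filtration on $A_\crys(R_\infty)$ --- one checks, since $\gamma_{i+k}(u)t^{k+1}=\gamma_{k+1}(\xi)\cdot(\text{degree }i-1\text{ stuff})$, that $\Fil^1$ is the PD ideal generated by $\gamma_n(\xi)$ for $n\geq 1$ --- but this is \emph{not} the Nygaard filtration; the Nygaard filtration on $A_\inf(R_\infty)$ is $\phi^{-1}(\xi)^i A_\inf(R_\infty)$ and lives on $A_\inf$ via the Frobenius, not on $A_\crys$. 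Your parenthetical ``(equivalently, Nygaard-type)'' conflates the two. Second, the phrase ``whose image under the Fontaine map is the $\xi$-adic filtration on $R_\infty$'' is not quite meaningful: $R_\infty$ is perfectoid and has no such filtration; you presumably mean that $\theta$ carries $\Fil^1 A_\crys(R_\infty)$ onto $0$, but that is a weaker observation than what the descent argument actually needs.

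If you want to turn this sketch into an honest alternative proof, the missing content is exactly the filtered version of the \v Cech computation: show that the simplicial filtered ring $\bigl(A_\crys(R_\infty^\bullet),\,\Fil^\bullet_{\mathrm{PD}}\bigr)$ has totalization the Hodge-filtered de Rham complex of $R$. On associated gradeds this reduces to the Cartier-type calculation you gesture at, and completeness of both filtrations then lets you promote the graded identification to a filtered one. But absent that computation, the argument as written is a restatement of the strategy, not a proof.
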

\begin{proof}
This is \cite[Thm.\ 2.5.6]{FGauges}.
\end{proof}

Note that the above statement also implies that the pushforward of $\O_{X^\Hod}$ to $\Z_p^\Hod=B\G_m$ identifies as a graded object with \EDIT{the} Hodge cohomology of $X$. Again, we are led to the following definition:

\begin{defi}
Let $X$ be a \EEDIT{smooth qcqs} $p$-adic formal scheme. For a quasi-coherent complex $E\in\D(X^{\dR, +})$, we define the \emph{Hodge-filtered de Rham cohomology} of $X$ with coefficients in $E$ as
\begin{equation*}
\EDIT{\Fil^\bullet_\Hod R\Gamma_\dR(X, E)\coloneqq \pi_{X^{\dR, +}, *}(E)\;.}
\end{equation*}
\end{defi}

\begin{rem}
\label{rem:fildrstack-vect}
\EDIT{
As in the case of the de Rham stack, one can show that the above notion of coefficients for Hodge-filtered de Rham cohomology agrees with a more classical one, \EEDIT{see \cite[Rem.\ 2.5.8]{FGauges}}: Namely, a vector bundle on $X^{\dR, +}$ is the same as a vector bundle $E$ on $X$ equipped with a decreasing filtration $\Fil^\bullet E$ by subbundles and a flat connection $\nabla: E\rightarrow E\tensor\Omega_{X/\Z_p}^1$ which has nilpotent $p$-curvature mod $p$ and satisfies Griffiths transversality with respect to the filtration $\Fil^\bullet E$, i.e.\ we have $\nabla(\Fil^i E)\subseteq \Fil^{i-1} E\tensor\Omega_{X/\Z_p}^1$ for all $i\in\Z$.
}
\end{rem}

\subsection{Filtered prismatisation}
\label{subsect:filprism}

Having seen stacky formulations of de Rham cohomology and the Hodge filtration, we now describe a similar story for prismatic cohomology and the Nygaard filtration. For the Nygaard-filtered prismatisation, we have chosen to make use of the alternative definition recently given by Gardner--Madapusi in \cite[§6.2]{GardnerMadapusi} as this will significantly simplify some of the arguments in the upcoming sections.

\begin{defi}
\label{defi:prismatisation-cwdiv}
For a $p$-nilpotent ring $S$, a \emph{Cartier--Witt divisor} on $S$ is a generalised Cartier divisor $\alpha: I\rightarrow W(S)$ on $W(S)$ satisfying the following two conditions:
\begin{enumerate}[label=(\roman*)]
\item The \EDIT{ideal generated by the} image of the map $I\xrightarrow{\alpha} W(S)\rightarrow S$ \EDIT{is nilpotent}.
\item The image of the map $I\xrightarrow{\alpha} W(S)\xrightarrow{\delta} W(S)$ generates the unit ideal.
\end{enumerate}
Here, $\delta: W(S)\rightarrow W(S)$ is the usual $\delta$-structure on $W(S)$.
\end{defi}

\begin{defi}
The \emph{prismatisation} $X^\prism$ is the stack over $\Spf\Z_p$ given by assigning to a $p$-nilpotent ring $S$ the groupoid of pairs 
\begin{equation*}
(I\xrightarrow{\alpha} W(S), \Spec \cofib(I\xrightarrow{\alpha} W(S))\rightarrow X)\;,
\end{equation*}
where the first entry is a Cartier--Witt divisor on $S$ and the second entry is a morphism of derived formal schemes.
\end{defi}

\begin{ex}
For any Cartier--Witt divisor $I\xrightarrow{\alpha} W(S)$, its Frobenius pullback $F^*I\xrightarrow{F^*\alpha} W(S)$ is again a Cartier--Witt divisor, see \cite[Rem. 5.1.10]{FGauges}, and this induces an endomorphism $F_X: X^\prism\rightarrow X^\prism$ called the \emph{Frobenius} on $X^\prism$. For $X=\Spf\Z_p$, we just write $F$ in place of $F_X$; in that case, using the presentation of $\Z_p^\prism$ given in \cite[Prop.\ 3.2.3]{APC} and the fact that the Witt vector Frobenius is faithfully flat, see \cite[Prop.\ 3.4.7]{APC}, one can show that $F: \Z_p^\prism\rightarrow\Z_p^\prism$ is an fpqc cover.
\end{ex}

\begin{ex}
\label{ex:prismatisation-cwdivtodiv}
If $I\xrightarrow{\alpha} W(S)$ is a Cartier--Witt divisor, then $I\tensor_{W(S)} S\rightarrow S$ is a generalised Cartier divisor on $S$ whose image lands inside the nilradical of $S$. Thus, we obtain a map $\mu_X: X^\prism\rightarrow\widehat{\A}^1_-/\G_m$; if $X=\Spf\Z_p$, we just write $\mu$ in place of $\mu_X$.
\end{ex}

\begin{ex}
\label{ex:prismatisation-qrsp}
By \cite[Thm.\ 5.5.7]{FGauges}, the prismatisation of a quasiregular semiperfectoid ring $R$ is given by $R^\prism\cong\Spf\Prism_R$, where $(\Prism_R, I)$ is the initial object of the absolute prismatic site of $R$ and $\Prism_R$ is equipped with the $(p, I)$-adic topology. In particular, if $R$ is perfectoid, we have $R^\prism\cong \Spf A_\inf(R)$.
\end{ex}

As before, in good situations, coherent cohomology of the structure sheaf on $X^\prism$ agrees with \EDIT{the} prismatic cohomology of $X$ computed via the absolute prismatic site:

\begin{thm}
\label{thm:prismatisation-comparison}
Let $X$ be a bounded $p$-adic formal scheme and \EEDIT{furthermore} assume that $X$ is $p$-quasisyntomic and qcqs. Then there is a natural isomorphism
\begin{equation*}
R\Gamma(X^\prism, \O_{X^\prism})\cong R\Gamma_\prism(X)\;.
\end{equation*}
\end{thm}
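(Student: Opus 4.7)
The plan is to reduce the statement to the case of a quasiregular semiperfectoid ring via quasisyntomic descent, where the identification is essentially built into \cref{ex:prismatisation-qrsp}. First, since both sides satisfy Zariski descent in $X$, we may assume $X = \Spf R$ is affine. Since $R$ is $p$-quasisyntomic, I would then choose a quasisyntomic cover $R \to S^0$ with $S^0$ quasiregular semiperfectoid---for instance, the $p$-completion of the free $R$-algebra obtained by adjoining compatible systems of $p$-power roots to a generating set---and form its quasisyntomic \v{C}ech nerve $S^\bullet$, whose terms are again quasiregular semiperfectoid by stability of the qrsp condition under relative tensor products of quasisyntomic $R$-algebras.

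Next, by the quasisyntomic descent theorem for absolute prismatic cohomology of Bhatt--Scholze, the right-hand side becomes $R\Gamma_\prism(R) \simeq \Tot \Prism_{S^\bullet}$. On the stacky side, I would analogously prove $R\Gamma(R^\prism, \cO_{R^\prism}) \simeq \Tot R\Gamma(S^{n,\prism}, \cO)$; combined with \cref{ex:prismatisation-qrsp}, this also identifies the totalisation with $\Tot \Prism_{S^\bullet}$. The natural comparison map between the two cohomologies will be induced by the following tautological morphism of sites: given a $p$-nilpotent test ring $S$ and a Cartier--Witt divisor $\alpha: I \to W(S)$ together with a map $\Spec \cofib(\alpha) \to X$, the pair $(W(S), \alpha)$ naturally yields a bounded prism together with a map $\Spec W(S)/\alpha \to X$, i.e.\ an object of the absolute prismatic site of $X$. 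At the level of a quasiregular semiperfectoid ring, this comparison map becomes the identity on $\Prism_R$, so once the descent diagrams on both sides are matched, the desired isomorphism will follow.

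The hard part will be verifying the quasisyntomic descent statement on the stacky side, i.e.\ that $R\Gamma((-)^\prism, \cO)$ descends along a quasisyntomic cover of affines. The key input is that both the Cartier--Witt divisor functor and the functor $S \mapsto W(S)$ should behave well under the relevant descent on $p$-nilpotent test rings, so that the cosimplicial diagram $S^{\bullet,\prism}$ ought to be a covering of $R^\prism$ in the appropriate $\infty$-categorical sense. One can hope to prove this directly by unwinding the definition of $R^\prism$ via Cartier--Witt divisors, but the cleanest route may be to appeal to the original argument of Bhatt--Lurie, where essentially this comparison is carried out.
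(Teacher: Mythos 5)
The paper's own ``proof'' of this statement is a one-line citation: it combines \cite[Cor.~8.17]{PFS} with \cite[Thm.~4.4.30]{APC}, the former being Bhatt--Lurie's identification of coherent cohomology on the prismatisation with (quasisyntomic-locally descended) absolute prismatic cohomology, and the latter the requisite quasisyntomic descent on the site-theoretic side. Your proposal is essentially an unpacking of the content behind those two citations: Zariski reduction to the affine case, a qrsp \v{C}ech cover, quasisyntomic descent on both the stacky side and the site-theoretic side, and matching the two via \cref{ex:prismatisation-qrsp}. This is the same mathematical route, just spelled out rather than delegated; your final remark correctly anticipates that the cleanest completion is to invoke Bhatt--Lurie's argument directly.

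One point in your sketch is slightly too quick and worth flagging, even though it does not derail the overall strategy. You assert that a Cartier--Witt divisor $\alpha:I\to W(S)$ together with a map $\Spec\cofib(\alpha)\to X$ ``naturally yields a bounded prism together with a map $\Spec W(S)/\alpha\to X$, i.e.\ an object of the absolute prismatic site of $X$.'' This is not quite a tautology: for a general $p$-nilpotent test ring $S$, the pair $(W(S),I)$ need not be a \emph{bounded} prism, and $\cofib(\alpha)$ is only an animated ring, not a priori a classical quotient living over $X$, so one does not literally obtain an object of the prismatic site in the sense of Bhatt--Scholze. The honest construction of the comparison map therefore does not go through a ``morphism of sites'' in this naive sense; instead one builds it by descent from the qrsp case, where \cref{ex:prismatisation-qrsp} gives $R^\prism\cong\Spf\Prism_R$ canonically and functorially, and the comparison is then the identity on $\Prism_{S^\bullet}$ at each cosimplicial level. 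Since you already route the hard verification through a qrsp \v{C}ech cover, replacing the ``tautological'' map with the descent-induced map makes the argument fully correct and in line with what the cited results actually prove.
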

\begin{proof}
Combine \cite[Cor.\ 8.17]{PFS} with \cite[Thm.\ 4.4.30]{APC}.
\end{proof}

\begin{ex}
\label{ex:prismatisation-bktwist}
There is a certain line bundle $\O_{\Z_p^\prism}\{1\}\in\D(\Z_p^\prism)$ called the \emph{Breuil--Kisin twist} which will play a key role in the sequel and whose pullback to $X^\prism$ we will denote by $\O_{X^\prism}\{1\}$. Roughly, one can obtain $\O_{\Z_p^\prism}\{1\}$ as follows: The category $\D(\Z_p^\prism)$ identifies with the category of prismatic crystals by \cite[Prop.\ 3.3.5]{APC} and, under this equivalence, the sheaf $\O_{\Z_p^\prism}\{1\}$ corresponds to the prismatic crystal assigning to every prism $(A, I)$ the Breuil--Kisin twist $A\{1\}$ from \cite[Def.\ 2.5.2]{APC}, which can be thought of heuristically as the infinite tensor product $\bigotimes_{n\geq 0} (\phi^n)^*I$, where $\phi$ is the Frobenius lift associated to the prism $(A, I)$. Most importantly for what follows, we have $\O_{\Z_p^\prism}\{1\}\tensor F^*\O_{\Z_p^\prism}\{-1\}\cong \cal{I}$, where $\cal{I}\coloneqq \mu^*\O(-1)$, see \cref{ex:prismatisation-cwdivtodiv}, and, in particular,
\begin{equation*}
\O_{\Z_p^\prism}\{p-1\}/p\cong\cal{I}^{-1}/p\;. \qedhere
\end{equation*}
\end{ex}

As before, this leads to a notion of prismatic cohomology with coefficients:

\begin{defi}
Let $X$ be a bounded $p$-adic formal scheme which is $p$-quasisyntomic and qcqs. For a quasi-coherent complex $E\in\D(X^\prism)$, we define the \emph{prismatic cohomology} of $X$ with coefficients in $E$ as
\begin{equation*}
R\Gamma_\prism(X, E)\coloneqq R\Gamma(X^\prism, E)\;.
\end{equation*}
\end{defi}

We now turn to defining the filtered refinement of $X^\prism$ computing the Nygaard filtration. For this, let $W$ be the ring scheme of $p$-typical Witt vectors, i.e.\ $W\cong\prod_{n\in\mathbb{N}} \Spec\Z[t_1, t_2, \dots]$ as schemes \EDIT{so that the functor of points of $W$ is given by $S\mapsto W(S)$ for any $p$-nilpotent ring $S$ via the Witt components}. As usual, we denote the Frobenius and the Verschiebung by $F$ and $V$, respectively. Recall from \cite[Cor.\ 2.6.8]{FGauges} that there is an isomorphism $F_*W/^\mathbb{L} p\cong\G_a^\dR$ of $W$-module schemes. Thus, we obtain a map
\begin{equation*}
\widetilde{\mu}: \Z_p^\prism\rightarrow (\A^1_-/\G_m)^\dR
\end{equation*}
by sending a Cartier--Witt divisor $I\xrightarrow{\alpha} W(S)$ to the generalised Cartier divisor $F_*I\tensor_{F_*W(S)} \G_a^\dR(S)\rightarrow\G_a^\dR(S)$ on the animated ring $\G_a^\dR(S)$.

\begin{defi}
The stack $\Z_p^\N$ is defined by the pullback diagram
\begin{equation*}
\begin{tikzcd}
\Z_p^\N\ar[r, "\pi"]\ar[d, "{(t, u)}", swap] & \Z_p^\prism\ar[d, "\widetilde{\mu}"] \\
\A^1_-/\G_m\times (\A^1_+/\G_m)^\dR\ar[r, "\text{mult}"] & (\A^1_-/\G_m)^\dR\nospacepunct{\;,}
\end{tikzcd}
\end{equation*}
where the bottom map is given by sending a pair $(L\rightarrow S, \G_a^\dR(S)\rightarrow L')$ to the generalised Cartier divisor $L\tensor_S {L'}^{-1}\rightarrow \G_a^\dR(S)$ on $\G_a^\dR(S)$.
The map $t: \Z_p^\N\rightarrow\A^1_-/\G_m$ is called the \emph{Rees map} while $\pi: \Z_p^\N\rightarrow\Z_p^\prism$ is called the \emph{structure map}.
\end{defi}

\begin{ex}
\label{ex:filprism-jdrjht}
The inclusion $\G_m/\G_m\subseteq\A^1_-/\G_m$ induces an open immersion
\begin{equation*}
j_\dR: \Z_p^\prism\hookrightarrow\Z_p^\N\;.
\end{equation*}
Moreover, by the pullback diagram
\begin{equation*}
\begin{tikzcd}
\Z_p^\prism\ar[r, "F"]\ar[d, "\mu", swap] & \Z_p^\prism\ar[d, "\widetilde{\mu}"] \\
\A^1_-/\G_m\ar[r] & (\A^1_-/\G_m)^\dR\nospacepunct{\;,}
\end{tikzcd}
\end{equation*}
which in turn comes from the pullback square
\begin{equation*}
\begin{tikzcd}
W\ar[r, "F"]\ar[d] & F_*W\ar[d] \\
\G_a\ar[r] & \G_a^\dR
\end{tikzcd}
\end{equation*}
(note that both rows have fibre $\G_a^\sharp$ by \cite[Rem.\ 2.6.2]{FGauges}), the inclusion $(\G_m/\G_m)^\dR\hookrightarrow (\A^1/\G_m)^\dR$ induces an open immersion
\begin{equation*}
j_\HT: \Z_p^\prism\hookrightarrow\Z_p^\N
\end{equation*}
whose composition with the structure map identifies with the Frobenius endomorphism $F: \Z_p^\prism\rightarrow\Z_p^\prism$ of $\Z_p^\prism$ defined in \cite[Constr.\ 3.6.1]{APC}.
\end{ex}

\begin{ex}
The two maps
\begin{equation*}
\A^1_-/\G_m\rightarrow \A^1_-/\G_m\times (\A^1_+/\G_m)^\dR\;, \hspace{0.3cm} (L\rightarrow S)\mapsto (L\rightarrow S, \G_a^\dR(S)\xrightarrow{0} L\tensor_S \G_a^\dR(S))
\end{equation*}
and $\A^1_-/\G_m\rightarrow\Spf\Z_p\rightarrow\Z_p^\prism$, where the latter map sends any $p$-nilpotent ring $S$ to the Cartier--Witt divisor $W(S)\xrightarrow{p} W(S)$, induce a map
\begin{equation*}
i_{\dR, +}: \A^1_-/\G_m\rightarrow \Z_p^\N
\end{equation*}
called the \emph{Hodge-filtered de Rham map}. Restricting $i_{\dR, +}$ to $\Spf\Z_p=\G_m/\G_m\subseteq\A^1_-/\G_m$ and $B\G_m\subseteq\A^1_-/\G_m$ yields maps $i_\dR: \Spf\Z_p\rightarrow\Z_p^\N$ and $i_\Hod: B\G_m\rightarrow\Z_p^\N$ called the \emph{de Rham map} and the \emph{Hodge map}, respectively. We warn the reader that, despite the notation, the maps \EEDIT{$i_{\dR, +}, i_\dR$ and $i_\Hod$ are \emph{not} closed immersions.}
\end{ex}

To construct $X^\N$ in general, we use the same procedure as in the case of de Rham cohomology and first construct $\G_a^\N$. For this, consider an $S$-point of $\Z_p^\N$; this is the data of (dual) generalised Cartier divisors $L\rightarrow S, \G_a^\dR(S)\rightarrow L'$ and a Cartier--Witt divisor $I\rightarrow W(S)$ together with an isomorphism 
\begin{equation*}
(F_*I\tensor_{F_*W(S)} \G_a^\dR(S)\rightarrow \G_a^\dR(S))\cong (L\tensor_S {L'}^{-1}\rightarrow \G_a^\dR(S))
\end{equation*}
of generalised Cartier divisors on $\G_a^\dR(S)$. In particular, we obtain a commutative diagram
\begin{equation*}
\begin{tikzcd}
F_*I\tensor_{F_*W(S)} \G_a^\dR(S)\ar[r]\ar[d, equals] & \G_a^\dR(S)\ar[d, equals] \\
L\tensor_S {L'}^{-1}\tensor_S \G_a^\dR(S)\ar[d]\ar[r] & \G_a^\dR(S)\ar[d, equals] \\
L\tensor_S \G_a^\dR(S)\ar[r] & \G_a^\dR(S)\nospacepunct{\;,}
\end{tikzcd}
\end{equation*}
where the bottom left arrow is induced by ${L'}^{-1}\rightarrow \G_a^\dR(S)$, and taking horizontal cofibres yields a map
\begin{equation*}
F_*W(S)/^\mathbb{L} F_*I\rightarrow \G_a^\dR(S)/^\mathbb{L} (F_*I\tensor_{F_*W(S)} \G_a^\dR(S))\rightarrow \G_a^\dR(S)/^\mathbb{L} (L\tensor_S \G_a^\dR(S))\;.
\end{equation*}

\begin{defi}
Let $(L\rightarrow S, \G_a^\dR(S)\rightarrow L', I\rightarrow W(S))$ be an $S$-valued point of $\Z_p^\N$ as above. The ring stack $\G_a^\N\rightarrow\Z_p^\N$ is defined by the pullback diagram
\begin{equation*}
\begin{tikzcd}
\G_a^\N(\Spec S\rightarrow\Z_p^\N)\ar[r]\ar[d] & F_*W(S)/^\mathbb{L} F_*I\ar[d] \\
S/^\mathbb{L} L\ar[r] & \G_a^\dR(S)/^\mathbb{L} (L\tensor_S \G_a^\dR(S))\nospacepunct{\;.}
\end{tikzcd}
\end{equation*}
The \emph{Nygaard-filtered prismatisation} $X^\N$ is the stack over $\Z_p^\N$ given by
\begin{equation*}
X^\N(\Spec S\rightarrow\Z_p^\N)\coloneqq \Map(\Spec\G_a^\N(\Spec S\rightarrow\Z_p^\N), X)\;,
\end{equation*}
where the mapping space is computed in derived algebraic geometry.
\end{defi}

All the maps that we have defined above induce corresponding maps for $X^\N$ in place of $\Z_p^\N$ etc. The situation is summarised by the following commutative diagram:
\begin{equation}
\label{eq:filprism-maps}
\EDIT{
\begin{tikzcd}[ampersand replacement=\&]
\&\& X^\prism \ar[drr, equals] \&\& \\
X^\prism\ar[rr, "j_\HT"] \ar[dd] \ar[urr, "F_X"] \&\& X^\N \ar[dd, "t_X" {yshift=10pt}] \ar[u, "\pi_X", swap] \&\& X^\prism \ar[ll, "j_\dR", swap] \ar[dd] \\
\& X^{\dR, +}\ar[ur, "i_{\dR, +}" {yshift=-2pt}]\ar[dr] \&\& X^\dR\ar[ur, "i_\dR" {yshift=-2pt}] \ar[dr]\ar[ll] \&  \\
\widehat{\A}^1_-/\G_m \ar[rr] \&\& \A^1_-/\G_m \&\& \G_m/\G_m \ar[ll]
\end{tikzcd}
}
\end{equation}

\begin{ex}
\label{ex:filprism-qrsp}
By \cite[Cor.\ 5.5.11]{FGauges}, the Nygaard-filtered prismatisation of a quasiregular semiperfectoid ring $R$ is given by 
\begin{equation*}
R^\N\cong \Spf(\Rees(\Fil^\bullet_\N\Prism_R))/\G_m\;,
\end{equation*}
where $(\Prism_R, I)$ is the initial prism of the absolute prismatic site of $R$ as above. In particular, if $R$ is perfectoid, using the notation from \cref{ex:drstack-perfd}, we have
\begin{equation*}
R^\N\cong \Spf(A_\inf(R)\langle u, t\rangle/(ut-\phi^{-1}(\xi)))/\G_m\;,
\end{equation*}
where $\phi$ is the Frobenius on $A_\inf(R)$ and $u$ and $t$ have degree $1$ and $-1$, respectively; we stress that the computation from \cite[Ex.\ 5.5.6]{FGauges} shows that the notations $u$ and $t$ for the variables here is compatible with the notation $(t, u)$ for the map $\Z_p^\N\rightarrow \A^1_-/\G_m\times (\A^1_+/\G_m)^\dR$.
\end{ex}

\begin{rem}
\label{rem:filprism-regular}
\EDIT{
There is an fpqc cover $\Spf\Z_p\langle u, t\rangle\rightarrow\Z_p^\N$ and hence the stack $\Z_p^\N$ is noetherian and regular. This can be proved by reduction to the perfectoid case treated in the previous example, see \cite[Ex.\ 5.5.20]{FGauges} for details.
}
\end{rem}

As one should have expected by now, in good situations, coherent cohomology on the \EEDIT{Nygaard-filtered prismatisation} of $X$ computes \EDIT{the} Nygaard-filtered prismatic cohomology of $X$ (here, we use the definition of the Nygaard filtration on absolute prismatic cohomology from \cite[Def. 5.5.3]{APC}):

\begin{thm}
\label{thm:filprism-comparison}
Let $X$ be a bounded $p$-adic formal scheme. Assume that $X$ is $p$-quasisyntomic and qcqs. Then $t_{X, *}\O_{X^\N}$ identifies with $\Fil^\bullet_\N R\Gamma_\prism(X)$ in $\widehat{\DF}(\Z_p)$ under the Rees equivalence.
\end{thm}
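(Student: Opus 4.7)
The plan is to proceed by quasisyntomic descent, reducing to the qrsp case where the result is essentially immediate from \cref{ex:filprism-qrsp}.

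Indeed, for $R$ qrsp, that example identifies $R^\N$ with $\Spf(\Rees(\Fil^\bullet_\N\Prism_R))/\G_m$ over $\A^1_-/\G_m$, with the Rees map $t_R$ corresponding to the evident projection. Pushing $\O_{R^\N}$ forward along $t_R$ yields precisely $\Fil^\bullet_\N\Prism_R$ under the Rees equivalence, and by construction of the absolute Nygaard filtration this equals $\Fil^\bullet_\N R\Gamma_\prism(R)$.

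For the general case, I would choose a hypercover $X_\bullet\to X$ in the quasisyntomic topology whose terms are disjoint unions of $\Spf R$ for qrsp rings $R$; such hypercovers exist by standard properties of the quasisyntomic site combined with the qcqs hypothesis on $X$. Since absolute Nygaard-filtered prismatic cohomology is a quasisyntomic sheaf by construction, one has $\Fil^\bullet_\N R\Gamma_\prism(X)\simeq\Tot\bigl(\Fil^\bullet_\N R\Gamma_\prism(X_\bullet)\bigr)$. The geometric dual is to show that $X_\bullet^\N\to X^\N$ is an fpqc hypercover of stacks; given this, quasi-compactness of $t$ together with uniform connectivity bounds on the filtered complexes implies $t_{X,*}\O_{X^\N}\simeq\Tot\bigl(t_{X_\bullet,*}\O_{X_\bullet^\N}\bigr)$, and the qrsp case applied termwise gives the claim.

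The main obstacle is establishing that $(-)^\N$ converts quasisyntomic covers into fpqc covers of stacks over $\Z_p^\N$. Working from the functor-of-points description, this amounts to showing that for any $S$-point of $\Z_p^\N$ and any quasisyntomic cover $A\to R$, the base change of $A\to R$ along the induced map $A\to\G_a^\N(\Spec S\to\Z_p^\N)$ remains faithfully flat. By the defining pullback square of $\G_a^\N$, this decomposes into analogous flatness statements for $F_*W$ (which hold since the Witt vector Frobenius is faithfully flat, as recalled above) and for the Hodge-filtered de Rham ring stack, where flatness is ensured by the divided-power structure built into its definition. Once these are in hand, the Čech-nerve descent, combined with the base change results of Appendix \ref{sect:basechange}, concludes the argument.
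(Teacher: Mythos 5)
Your overall strategy matches the paper's: quasisyntomic descent to the qrsp case, where \cref{ex:filprism-qrsp} identifies $R^\N$ explicitly with $\Spf(\Rees(\Fil^\bullet_\N\Prism_R))/\G_m$. (The paper's proof is one line, citing \cite[Cor.\ 5.5.11, Rem.\ 5.5.18]{FGauges} and \cite[Cor.\ 5.5.21]{APC} via quasisyntomic descent.) However, there is a gap in your treatment of the ``main obstacle'' you identify. That $(-)^\N$ carries quasisyntomic covers to flat covers of stacks is exactly \cref{lem:filprism-quasisyntomiccover}, which the paper cites from \cite[Cor.\ 6.12.8]{GardnerMadapusi}; you should simply invoke it. Your sketch of a direct proof does not hold up: the phrase ``base change of $A\to R$ along the induced map $A\to\G_a^\N(\Spec S\to\Z_p^\N)$'' presupposes an $S$-point of $(\Spf A)^\N$ rather than just of $\Z_p^\N$, and since $\G_a^\N(S)$ is a $1$-truncated animated ring, the notions of base change and faithful flatness already require care. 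More seriously, the assertion that the defining pullback square for $\G_a^\N$ ``decomposes into analogous flatness statements for $F_*W$ and the Hodge-filtered de Rham ring stack'' does not visibly yield flatness of the fibre product, and ``flatness is ensured by the divided-power structure built into its definition'' is hand-waving --- this is precisely the nontrivial content of the Gardner--Madapusi reference, not something that falls out of the definition.

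Two smaller points: your claim that Nygaard-filtered prismatic cohomology ``is a quasisyntomic sheaf by construction'' is also not quite right --- this requires an argument and is \cite[Cor.\ 5.5.21]{APC}. And your commutation of $t_{X,*}$ with totalisation is what \cref{lem:syntomicetale-colimtot} (or more precisely its dual) is for; invoking that is cleaner than appealing to unspecified ``uniform connectivity bounds''. With \cref{lem:filprism-quasisyntomiccover} and these citations in place of your sketched arguments, your plan agrees with the paper's.
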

\begin{proof}
This follows from \cite[Cor.\ 5.5.11, Rem.\ 5.5.18]{FGauges} and \cite[Cor.\ 5.5.21]{APC} via quasisyntomic descent.
\end{proof}

\begin{ex}
\label{ex:filprism-bktwist}
The line bundle
\begin{equation*}
\EEDIT{
\O_{\Z_p^\N}\{1\}\coloneqq \pi^*\O_{\Z_p^\prism}\{1\}\tensor t^*\O(1)\in \D(\Z_p^\N)
}
\end{equation*}
is called the \emph{Breuil--Kisin twist}; via pullback to $X^\N$, we obtain line bundles $\O_{X^\N}\{1\}\in\D(X^\N)$.
\end{ex}

\begin{defi}
Let $X$ be a bounded $p$-adic formal scheme which is $p$-quasisyntomic and qcqs. For any $E\in\D(X^\N)$, we define the \emph{Nygaard-filtered prismatic cohomology} of $X$ with coefficients in $E$ as
\begin{equation*}
\EDIT{\Fil^\bullet_\N R\Gamma_\prism(X, E)\coloneqq t_{X, *}(E)\;.}
\end{equation*}
\end{defi}

\EDIT{
Recalling that the stack $\Z_p^\N$ is noetherian by virtue of the cover $\Spf\Z_p\langle u, t\rangle\rightarrow \Z_p^\N$ from \cref{rem:filprism-regular}, we can also introduce a reasonable notion of coherent sheaves on $\Z_p^\N$:

\begin{defi}
A quasi-coherent complex on $\Z_p^\N$ is called \emph{coherent} if its pullback to $\Spf\Z_p\langle u, t\rangle$ along the map from \cref{rem:filprism-regular} is a finitely generated $\Z_p\langle u, t\rangle$-module. The full subcategory of $\Perf(\Z_p^\N)$ spanned by coherent sheaves is denoted $\Coh(\Z_p^\N)$.
\end{defi}

\begin{rem}
\label{rem:filprism-cohheartperf}
Since $\Z_p\langle u, t\rangle$ \EEDIT{is regular}, one may alternatively describe $\Coh(\Z_p^\N)$ as the heart of a canonical $t$-structure on $\Perf(\Z_p^\N)$ induced by the $t$-structure on $\Perf(\Z_p\langle u, t\rangle)$, see \cite[Rem.\ 5.5.19]{FGauges}.
\end{rem}
}

We conclude this section by giving an alternative description of the Nygaard-filtered prismatisation via descent which is sometimes helpful. For this, recall from \cref{ex:filprism-qrsp} that we can explicitly describe the Nygaard-filtered prismatisation of a quasiregular semiperfectoid ring. Subsequently, one can use the fact that quasiregular semiperfectoid rings form a basis of the quasisyntomic site introduced by Bhatt--Morrow--Scholze in \cite[Def. 4.10]{THHandPAdicHodgeTheory} to obtain a description of $X^\N$ for any $p$-adic formal scheme $X$ which is $p$-quasisyntomic and qcqs. For this, one needs to use the following property of the Nygaard-filtered prismatisation:

\begin{lem}
\label{lem:filprism-quasisyntomiccover}
Assume that $f: X\rightarrow Y$ is a quasi-syntomic cover of $p$-adic formal schemes which are $p$-quasisyntomic and qcqs. Then the induced map $X^\N\rightarrow Y^\N$ is a flat cover.
\end{lem}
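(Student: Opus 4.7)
The plan is to use quasisyntomic descent to reduce the claim to the case where both $X$ and $Y$ are formal spectra of quasi-regular semiperfectoid (QRSP) rings, and then to invoke the explicit description of $(-)^\N$ from \cref{ex:filprism-qrsp}. Concretely, since being a flat cover is fpqc-local on source and target, we may first assume $X = \Spf S$ and $Y = \Spf R$ for some quasisyntomic cover $R \to S$ of bounded $p$-complete quasisyntomic rings. Using that QRSP rings form a basis of the quasisyntomic site (see \cite[\S 4]{THHandPAdicHodgeTheory}), we then pick QRSP quasisyntomic covers $R \to R_\infty$ and $S \widehat{\otimes}_R^{\mathbb{L}} R_\infty \to S_\infty$, yielding a commutative square of quasisyntomic covers
\begin{equation*}
\begin{tikzcd}
R \ar[r] \ar[d] & S \ar[d] \\
R_\infty \ar[r] & S_\infty
\end{tikzcd}
\end{equation*}
whose bottom row is a quasisyntomic cover of QRSP rings.

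In this QRSP base case, \cref{ex:filprism-qrsp} identifies $R_\infty^\N$ and $S_\infty^\N$ with the $\G_m$-quotients of $\Spf(\Rees(\Fil^\bullet_\N \Prism_{R_\infty}))$ and $\Spf(\Rees(\Fil^\bullet_\N \Prism_{S_\infty}))$, respectively, and the induced map $S_\infty^\N \to R_\infty^\N$ becomes the $\G_m$-quotient of the obvious ring map. Pulling back along the flat $\G_m$-torsor $\Spf(\Rees(\Fil^\bullet_\N \Prism_{R_\infty})) \to R_\infty^\N$, it therefore suffices to show that
\begin{equation*}
\Rees(\Fil^\bullet_\N \Prism_{R_\infty}) \to \Rees(\Fil^\bullet_\N \Prism_{S_\infty})
\end{equation*}
is $p$-completely faithfully flat, or equivalently that each graded piece $\gr^i_\N \Prism_{R_\infty} \to \gr^i_\N \Prism_{S_\infty}$ is $p$-completely faithfully flat. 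Both statements are consequences of flat descent for absolute prismatic cohomology along quasisyntomic covers of QRSP rings together with the construction of the Nygaard filtration in \cite[\S 5.5]{APC}.

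Finally, to deduce the general statement, we argue by descent along the QRSP refinements: since flatness is local on source and surjectivity is local on target for the fpqc topology, the QRSP base case applied in each simplicial degree of the Čech nerve of $R \to R_\infty$ (further refined so each degree is QRSP) propagates the flat-cover property to $X^\N \to Y^\N$. The main obstacle is the QRSP base case itself: while $(p,I)$-complete faithful flatness of $\Prism_{R_\infty} \to \Prism_{S_\infty}$ is reasonably standard, upgrading this to faithful flatness of the Rees algebras requires tracking the compatibility of the Nygaard filtration with quasisyntomic covers, which is the technical core of the absolute prismatic theory of \cite{APC}.
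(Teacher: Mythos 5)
The route you sketch runs into a circularity at the final descent step. To propagate the flat-cover property from $S_\infty^\N \to R_\infty^\N$ down to $S^\N \to R^\N$ along the Čech nerve of the QRSP refinements, you need to know that the maps $R_\infty^\N \to R^\N$ and $S_\infty^\N \to S^\N$ are themselves flat covers. But $R \to R_\infty$ (with $R$ a general $p$-quasisyntomic ring and $R_\infty$ QRSP) is precisely a quasi-syntomic cover, so the assertion that $R_\infty^\N \to R^\N$ is a flat cover is an instance of the lemma you are trying to prove. Without an independent argument for these auxiliary covers, the reduction to the QRSP case does not close. Put differently: the explicit Rees-algebra identification of $R^\N$ from \cref{ex:filprism-qrsp} is only available when $R$ is QRSP, so you cannot directly compare $R^\N$ and $R_\infty^\N$ by an algebra computation; you would have to invoke the mapping-space definition of $R^\N$ for general $R$, which is a different argument altogether.

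The paper itself does not argue by QRSP reduction at all: it simply cites \cite[Cor.\ 6.12.8]{GardnerMadapusi}, where the statement is proved directly from the Gardner--Madapusi (mapping-space) definition of $\G_a^\N$, essentially by showing that the space of lifts of a map $\Spec\G_a^\N(T)\rightarrow Y$ along a quasi-syntomic cover $X\rightarrow Y$ is controlled deformation-theoretically by the cotangent complex $L_{X/Y}$, whose $p$-complete Tor-amplitude in $[-1,0]$ yields the flatness. If you want to salvage the QRSP approach, you would need to first establish the lemma in the special case where the target is arbitrary and the source is a QRSP refinement (e.g.\ by a direct deformation-theoretic argument for those particular covers), which is the content that is being borrowed without justification. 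The QRSP base case you isolate — faithful flatness of $\Rees(\Fil^\bullet_\N\Prism_{R_\infty})\rightarrow\Rees(\Fil^\bullet_\N\Prism_{S_\infty})$ — is plausibly correct and can be checked on Nygaard-graded pieces via the conjugate filtration, but it alone does not suffice.
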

\begin{proof}
See \cite[Cor.\ 6.12.8]{GardnerMadapusi}.
\end{proof}

\subsection{Syntomification}
\label{sect:syntomification}

\begin{defi}
\label{defi:syntomification-def}
The \emph{syntomification} $X^\Syn$ of $X$ is defined as the pushout 
\begin{equation*}
\begin{tikzcd}[column sep=large]
X^\prism\sqcup X^\prism\ar[r, "j_\HT\sqcup j_\dR"]\ar[d] & X^\N\ar[d, "j_\N"] \\
X^\prism\ar[r, "j_\prism"] & X^\Syn\;.
\end{tikzcd}
\end{equation*}
\end{defi}

\begin{defi}
The category $\D(X^\Syn)$ is called the category of \emph{$F$-gauges} on $X$ and denoted $\FGauge_\prism(X)$.
\end{defi}

Observe that there is an equaliser diagram
\begin{equation}
\label{eq:syntomification-equaliser}
\begin{tikzcd}
\D(X^\Syn)\ar[r, "j_\N^*"] & \D(X^\N)\ar[r,shift left=.75ex,"j_\HT^*"]
  \ar[r,shift right=.75ex,swap,"j_\dR^*"] & \D(X^\prism)\nospacepunct{\;.}
\end{tikzcd}
\end{equation}
In particular, for any $E\in\D(X^\Syn)$, there is a fibre sequence
\begin{equation}
\label{eq:syntomification-fibreseq}
\begin{tikzcd}[column sep=large]
R\Gamma(X^\Syn, E)\ar[r] & R\Gamma(X^\N, j_\N^*E)\ar[r, "j_\HT^*-j_\dR^*"] & R\Gamma(X	^\prism, j_\prism^*E)\nospacepunct{\;.}
\end{tikzcd}
\end{equation}

\begin{ex}
\label{ex:syntomification-bktwists}
One can check that the Breuil--Kisin twist $\O_{X^\N}\{1\}$ descends to a line bundle $\O_{X^\Syn}\{1\}$. We also call $\O_{X^\Syn}\{1\}$ the \emph{Breuil--Kisin twist} and denote its $n$-th tensor power by $\O_{X^\Syn}\{n\}$.
\end{ex}

\begin{ex}
\label{ex:syntomification-fgaugesperfd}
Let $R$ be a perfectoid ring. Due to the isomorphism
\begin{equation*}
R^\N\cong\Spf(A_\inf(R)\langle u, t\rangle/(ut-\phi^{-1}(\xi)))/\G_m
\end{equation*}
from \cref{ex:filprism-qrsp}, a sheaf $E$ on $R^\N$ carries the same data as a diagram
\begin{equation*}
\begin{tikzcd}
\dots \ar[r,shift left=.5ex,"t"]
  & \ar[l,shift left=.5ex, "u"] M^{i+1} \ar[r,shift left=.5ex,"t"] & \ar[l,shift left=.5ex, "u"] M^i \ar[r,shift left=.5ex,"t"] & \ar[l,shift left=.5ex, "u"] M^{i-1} \ar[r,shift left=.5ex,"t"] & \ar[l,shift left=.5ex, "u"] \dots
\end{tikzcd}
\end{equation*}
of $(p, \xi)$-complete $A_\inf(R)$-complexes such that $ut=tu=\phi^{-1}(\xi)$. Writing $M^{-\infty}=(\colim_i M^{-i})_{(p, \xi)}^\wedge$ and $M^\infty=(\colim_i M^i)_{(p, \xi)}^\wedge$ for the $(p, \xi)$-completed colimits along the $t$- and $u$-maps, respectively, the pullback $j_\dR^*E$ identifies with $M^{-\infty}$ while $j_\HT^*E$ identifies with $\phi^*M^\infty$. Thus, specifying a descent datum of $E$ to $R^\Syn$ amounts to specifying an isomorphism $\tau: \phi^*M^\infty\cong M^{-\infty}$. In this situation, observing that $R\Gamma(R^\N, j_\N^* E)=M^0$, we see that the fibre sequence (\ref{eq:syntomification-fibreseq}) yields 
\begin{equation*}
R\Gamma(R^\Syn, E)=\fib(M^0\xrightarrow{t^\infty-\tau u^\infty} M^{-\infty})\;.
\end{equation*}
Here, $t^\infty: M^0\rightarrow M^{-\infty}$ and $u^\infty: M^0\rightarrow M^\infty$ denote the maps induced by the $t$- and $u$-maps, respectively, and to define the composition $\tau u^\infty$, we use the identification $M^\infty\cong\phi^*M^\infty$.
\end{ex}

\begin{defi}
\label{defi:nygaardhodge-htweights}
Let $X$ be a bounded $p$-adic formal scheme and $E\in\D(X^\Syn)$ an $F$-gauge on $X$. Then the pullback of $E$ along
\begin{equation*}
\EEDIT{
\begin{tikzcd}[ampersand replacement=\&]
X\times B\G_m\ar[r] \& X^{\Hod}\ar[r, "i_\Hod"] \& X^\N\ar[r, "j_\N"] \& X^\Syn\;,
\end{tikzcd}
}
\end{equation*}
where the first map is induced by the canonical map \EEDIT{$\G_a\rightarrow B\V(\O(1))^\sharp\oplus \G_a=\G_a^\Hod$ of stacks over $B\G_m$,} identifies with a graded quasi-coherent complex $M^\bullet$ on $X$ and the set of integers $i$ such that \EEDIT{the $i$-th graded piece $M^i$} is nonzero is called the set of \emph{Hodge--Tate weights} of $E$.
\end{defi}

\begin{ex}
The $F$-gauge $\O_{X^\Syn}\{i\}$ only has a single Hodge--Tate weight, which equals $-i$.
\end{ex}

Similar to what we have seen previously, the syntomification and the category of $F$-gauges provide a sensible notion of syntomic cohomology with coefficients:

\begin{defi}
Let $X$ be a bounded $p$-adic formal scheme which is $p$-quasisyntomic and qcqs. For an $F$-gauge $E$ on $X$, we define the \emph{syntomic cohomology} of $X$ with coefficients in $E$ as
\begin{equation*}
R\Gamma_\Syn(X, E)\coloneqq R\Gamma(X^\Syn, E)\;.
\end{equation*}
If $E=\O_{X^\Syn}\{n\}$ is a Breuil-Kisin twist, we also write $R\Gamma_\Syn(X, \Z_p(n))$ instead of $R\Gamma_\Syn(X, E)$ and we use $R\Gamma_\Syn(X, \Q_p(n))$ to denote $R\Gamma_\Syn(X, E)[\tfrac{1}{p}]$.
\end{defi}

In some sense, the category of $F$-gauges on $X$ should capture a universal notion of coefficients for various $p$-adic cohomology theories. Evidence for this is given by the existence of certain \emph{realisation functors} from $F$-gauges to coefficients for various other cohomology theories.

\begin{ex}
\label{ex:syntomification-tcrys}
By functoriality of the syntomification applied to $X_{p=0}\rightarrow X$, we obtain a pullback functor 
\begin{equation*}
T_\crys: \FGauge_\prism(X)\rightarrow\FGauge_\prism(X_{p=0})\;,
\end{equation*}
which we call the \emph{crystalline realisation}.
\end{ex}

\begin{ex}
\label{ex:syntomification-tdr+}
The Hodge-filtered de Rham map $i_{\dR, +}: X^{\dR, +}\rightarrow X^\N$ composed with the map $j_\N: X^\N\rightarrow X^\Syn$ induces a pullback functor
\begin{equation*}
T_{\dR, +}: \FGauge_\prism(X)\rightarrow \D(X^{\dR, +})\;,
\end{equation*}
which we call the \emph{Hodge-filtered de Rham realisation}. Pulling back further to $X^\dR$, we obtain a functor
\begin{equation*}
\EDIT{
T_\dR: \FGauge_\prism(X)\rightarrow \D(X^\dR)\;,
}
\end{equation*}
which we call the \emph{de Rham realisation}.
\end{ex}

There is also an \emph{étale realisation}
\begin{equation*}
T_\et: \Perf(X^\Syn)\rightarrow\D^b_\lisse(X_\eta, \Z_p)\;.
\end{equation*}
Here, $X_\eta$ denotes the generic fibre of $X$ regarded as an adic space over $\Q_p$ and $\D^b_\lisse(X_\eta, \Z_p)$ denotes the full subcategory of $\D(X_{\eta, \proet}, \Z_p)$ spanned by derived $p$-complete locally bounded objects whose mod $p$ reduction has locally constant cohomology sheaves with finitely generated stalks.

Using the arc-topology introduced in \cite{arcTopology}, it suffices to describe $T_\et$ in the case where $X=\Spf R$ is perfectoid by descent, see \cite[Constr.\ 6.3.2]{FGauges}. Here, in the notation of \cref{ex:syntomification-fgaugesperfd}, we see that $u^\infty$ and $t^\infty$ induce isomorphisms
\begin{equation*}
M^\infty[\tfrac{1}{\phi^{-1}(\xi)}]\xleftarrow{u^\infty} M^0[\tfrac{1}{\phi^{-1}(\xi)}]\xrightarrow{t^\infty} M^{-\infty}[\tfrac{1}{\phi^{-1}(\xi)}]\;.
\end{equation*}
Hence, $\tau$ induces an isomorphism
\begin{equation*}
\phi^*M^{-\infty}[\tfrac{1}{\xi}]\cong \phi^*M^\infty[\tfrac{1}{\xi}]\cong M^{-\infty}[\tfrac{1}{\xi}]
\end{equation*}
endowing $M^{-\infty}$ with the structure of a prismatic $F$-crystal on $R$. Thus, we obtain a natural map
\begin{equation}
\label{eq:syntomification-fgaugetolaurentfcrystal}
\Perf(R^\Syn)\rightarrow \Perf^\phi(A_\inf(R)[\tfrac{1}{\xi}]^\wedge_{(p)})\;,
\end{equation}
\EDIT{where the right-hand side denotes the category of perfect complexes over $A_\inf(R)[\tfrac{1}{\xi}]^\wedge_{(p)}$ equipped with an automorphism linear over the Frobenius of $A_\inf(R)[\tfrac{1}{\xi}]^\wedge_{(p)}$.} By \cite[Ex.\ 3.5]{FCrystals}, the latter category identifies with $\D^b_\lisse(\Spa(R[\tfrac{1}{p}], R), \Z_p)$ via the construction
\begin{equation}
\label{eq:syntomification-laurentfcrystaltolocsys}
M\mapsto (M\tensorL_{A_\inf(R)} W(R[\tfrac{1}{p}]^\flat))^{\phi=1}\;,
\end{equation}
where $\phi$ acts diagonally \EEDIT{and the $\phi$-invariants are taken in the derived sense, i.e.\ the right-hand side denotes the fibre of the map}
\begin{equation*}
\EEDIT{
\phi-\id: M\tensorL_{A_\inf(R)} W(R[\tfrac{1}{p}]^\flat)\rightarrow M\tensorL_{A_\inf(R)} W(R[\tfrac{1}{p}]^\flat)\;.
}
\end{equation*}

\begin{rem}
\label{rem:syntomification-etalerealisationglobal}
It is also possible to describe the étale realisation globally (i.e.\ without using descent): Namely, pullback along either one of the maps $X^\prism\rightarrow X^\N\rightarrow X^\Syn$ given by $j_\N\circ j_\HT$ or $j_\N\circ j_\dR$, respectively, induces a functor
\begin{equation}
\label{eq:syntomification-etalerealisationglobal}
\Perf(X^\Syn)\rightarrow \Perf^\phi(X_\prism)\;,
\end{equation}
where the right-hand side denotes the category of prismatic $F$-crystals in perfect complexes on $X$ as defined in \cite[Rem.\ 4.2]{FCrystals} and the Frobenius structure comes from the gluing description of $X^\Syn$ using the fact that $j_\HT$ and $j_\dR$ differ by a Frobenius twist, see \cite[Rem.\ 6.3.4]{FGauges} for details. Then the étale realisation is given by the composition
\begin{equation*}
\Perf(X^\Syn)\rightarrow \Perf^\phi(X_\prism)\rightarrow \Perf^\phi(X_\prism, \O_\prism[1/\cal{I}_\prism]_{(p)}^\wedge)\cong \D^b_\lisse(X_\eta, \Z_p)\;;
\end{equation*}
here, the last equivalence is \cite[Cor.\ 3.7]{FCrystals}.
\end{rem}

\begin{ex}
One can show that the étale realisation carries the Breuil--Kisin twists $\O_{X^\Syn}\{n\}$ to the usual Tate twist $\Z_p(n)$, see \cite[Ex.\ 4.9]{FCrystals}.
\end{ex}

In fact, in the case $X=\Spf\Z_p$, it turns out that the étale realisation defines an equivalence between a certain full subcategory of $\Perf(\Z_p^\Syn)$ and the category $\Rep_{\Z_p}^\crys(G_{\Q_p})$ of crystalline $G_{\Q_p}$-representations in finite free $\Z_p$-modules, \EDIT{as we will now recall; here, $G_{\Q_p}$ denotes the absolute Galois group of $\Q_p$.}

\begin{prop}
\label{prop:syntomification-refltofcrystal}
Let $R$ be a perfectoid valuation ring. Then the following categories are equivalent:
\begin{enumerate}[label=(\roman*)]
\item The category $\Vect^\phi(A_\inf(R))$ of prismatic $F$-crystals in vector bundles on $R$.
\item The category $\Coh^\refl(R^\Syn)$ of reflexive $F$-gauges on $R$ as defined in \cite[Def.\ 6.6.4]{FGauges}
\end{enumerate}
\end{prop}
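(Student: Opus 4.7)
The plan is to exploit the explicit presentation of $R^\Syn$ for perfectoid $R$ supplied by \cref{ex:filprism-qrsp} and \cref{ex:syntomification-fgaugesperfd}. According to the latter, an $F$-gauge on $R$ corresponds to a $\Z$-indexed diagram $(M^i)_{i \in \Z}$ of $(p,\xi)$-complete $A_\inf(R)$-complexes linked by transition maps $t$ (lowering the index) and $u$ (raising it) satisfying $ut = tu = \phi^{-1}(\xi)$, together with a Frobenius isomorphism $\tau\colon \phi^*M^\infty \xrightarrow{\sim} M^{-\infty}$. The goal is to match this datum, under the reflexivity constraint of \cite[Def.~6.6.4]{FGauges}, with a finite projective $A_\inf(R)$-module $N$ equipped with a Frobenius isomorphism $\phi_N\colon \phi^*N[\tfrac{1}{\xi}]^{\wedge}_{(p)} \xrightarrow{\sim} N[\tfrac{1}{\xi}]^{\wedge}_{(p)}$, which is exactly an object of $\Vect^\phi(A_\inf(R))$.

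The forward functor from an $F$-gauge reads off $N := M^{-\infty}$ equipped with the Frobenius induced by $\tau$, where reflexivity is needed to promote $N$ from an object over $A_\inf(R)[\tfrac{1}{\xi}]$ to a genuine vector bundle over $A_\inf(R)$. Conversely, given $(N,\phi_N)$, one defines the $F$-gauge via the Nygaard-type formula
\[
M^i := \bigl\{x \in \phi^*N\bigl[\tfrac{1}{\phi^{-1}(\xi)}\bigr]^{\wedge}_{(p)} : \phi_N(x) \in \xi^i N\bigr\},
\]
with the $t$-maps being the natural inclusions $M^{i+1} \hookrightarrow M^i$ and the $u$-maps multiplication by $\phi^{-1}(\xi)$, so that the relation $ut = tu = \phi^{-1}(\xi)$ is automatic, and with $\tau$ induced by $\phi_N$.

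The main obstacle is verifying that these two functors are mutually inverse, which is precisely the step where reflexivity is indispensable: one must show that for a reflexive $F$-gauge each $M^i$ coincides with the Nygaard piece recovered from $M^{-\infty}$ by the above formula. Here the assumption that $R$ is a \emph{valuation} ring becomes essential, as it ensures that $A_\inf(R)$ enjoys enough two-dimensional regularity for reflexive modules to be determined by their localisations in codimension $\leq 1$; this forces two reflexive submodules of a common Laurent-type ambient module that agree generically to coincide. This is the same style of reflexivity argument used in \cite{FCrystals} to identify prismatic $F$-crystals in vector bundles with Laurent $F$-crystals in the perfectoid setting, and I expect that adapting it to the filtered/gauge context will be the bulk of the work.
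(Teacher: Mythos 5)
The paper does not actually prove this statement; it just cites \cite[Cor.\ 6.6.5]{FGauges}. Your sketch is broadly in line with the hints the paper gives in \cref{rem:syntomification-descriptionrefltofcrystal}, but the formula you write for the inverse functor has a concrete defect. Taking $M^i = \{x \in \phi^*N[\tfrac{1}{\phi^{-1}(\xi)}]^{\wedge}_{(p)} : \phi_N(x) \in \xi^i N\}$ lets $x$ range over the whole Laurent-localised module, and since $\phi^{-1}(\xi)$ divides $\xi$ (so that $\xi$ is invertible there after $p$-completion), the colimit $\bigcup_i M^{-i}$ is itself a module over $A_\inf(R)[\tfrac{1}{\xi}]^{\wedge}_{(p)}$; its $(p,\xi)$-completion is then zero, so this $F$-gauge has $M^{-\infty}=0$, which is clearly not what you want. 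The correct formula (as in \cref{rem:syntomification-descriptionrefltofcrystal}) restricts $x$ to lie in $\phi^*N$ itself, i.e.\ $\Fil^i M = \phi_*(\xi^i M \cap \phi^*M)$; then the colimit is $\phi_*\phi^*M\cong M$ and the $\phi_*$ twist---which your formula also omits---is precisely what makes $M^{-\infty}$ recover $N$ rather than $\phi^*N$ and makes the module structure compatible with the Rees presentation $R^\N\cong\Spf(A_\inf(R)\langle u,t\rangle/(ut-\phi^{-1}(\xi)))/\G_m$.

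A smaller imprecision: you say reflexivity is needed to ``promote $N$ from an object over $A_\inf(R)[\tfrac{1}{\xi}]$ to a genuine vector bundle over $A_\inf(R)$.'' In fact $M^{-\infty}=j_\dR^*E$ is by definition already a quasi-coherent complex on $R^\prism\cong\Spf A_\inf(R)$; what reflexivity buys you is that $M^{-\infty}$ is finite projective rather than a more general coherent module, and that the Nygaardian filtration on it is the saturated one. Your identification of the role of the valuation hypothesis (controlling reflexive modules over $A_\inf(R)$ via codimension-$\leq 1$ behaviour, in the style of \cite{FCrystals}) is a sensible heuristic, and you are right that the real work---verifying the two functors are mutually inverse on reflexive objects---is not carried out here; the paper itself does not carry it out either but defers entirely to the cited source.
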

\begin{proof}
See \cite[Cor.\ 6.6.5]{FGauges}.
\end{proof}

\begin{rem}
\label{rem:syntomification-descriptionrefltofcrystal}
The functor $\Vect^\phi(A_\inf(R))\rightarrow\Coh^\refl(R^\Syn)$ can actually be described as follows: To a prismatic $F$-crystal $M$ in vector bundles on $R$ with $\tau: \phi^*M[\tfrac{1}{\xi}]\cong M[\tfrac{1}{\xi}]$, we associate the $\phi^{-1}(\xi)$-adically filtered $A_\inf(R)$-module $\Fil^\bullet M$ given by $\Fil^i M=\phi_*(\xi^i M\cap \phi^*M)$, where the intersection takes place inside $\phi^*M[\tfrac{1}{\xi}]\cong M[\tfrac{1}{\xi}]$ via $\tau$; by \cref{ex:syntomification-fgaugesperfd}, this data will determine an $F$-gauge on $R$.
\end{rem}

\begin{thm}
\label{thm:syntomification-reflcrys}
There is an equivalence of categories
\begin{equation*}
\Coh^\refl(\Z_p^\Syn)\cong \Vect^\phi((\Z_p)_\prism)\cong\Rep_{\Z_p}^\crys(G_{\Q_p})
\end{equation*}
induced by étale realisation. (Here, an $F$-gauge on $\Z_p$ is called \emph{reflexive} if its pullback to $\O_C^\Syn$ is reflexive.)
\end{thm}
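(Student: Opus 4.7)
The plan is to establish the two stated equivalences separately and then to verify that their composite is induced by the étale realisation.

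For the first equivalence $\Coh^\refl(\Z_p^\Syn)\cong \Vect^\phi((\Z_p)_\prism)$, I would descend from the already-handled perfectoid case of \cref{prop:syntomification-refltofcrystal}. The map $\Spf\O_C\to \Spf\Z_p$ is $p$-quasisyntomic, so (by the syntomification variant of \cref{lem:filprism-quasisyntomiccover}, which follows by gluing from its prismatic and Nygaard-filtered counterparts) the induced map $\O_C^\Syn\to\Z_p^\Syn$ is a flat cover. Flat descent then identifies an $F$-gauge on $\Z_p$ with an $F$-gauge on $\O_C$ equipped with a descent datum along the Čech nerve of this cover, and by definition $\Coh^\refl(\Z_p^\Syn)$ consists of exactly those $F$-gauges whose pullback to $\O_C^\Syn$ lies in $\Coh^\refl(\O_C^\Syn)$. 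Via \cref{prop:syntomification-refltofcrystal}, such objects correspond to prismatic $F$-crystals in vector bundles on $\O_C$, i.e.\ pairs $(M,\tau)$ with $M$ a finite projective $A_\inf(\O_C)$-module and $\tau: \phi^*M[\tfrac{1}{\xi}]\cong M[\tfrac{1}{\xi}]$. Since the perfect prism $(A_\inf(\O_C),\xi)$ provides a cover of the absolute prismatic site of $\Z_p$, a descent datum for such a pair is precisely the data needed to promote it to a prismatic $F$-crystal in vector bundles on $(\Z_p)_\prism$.

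For the second equivalence $\Vect^\phi((\Z_p)_\prism)\cong\Rep_{\Z_p}^\crys(G_{\Q_p})$, I would simply invoke the main theorem of Bhatt--Scholze in \cite{FCrystals} specialised to the $p$-adic field $K=\Q_p$, which identifies prismatic $F$-crystals in vector bundles on $\O_K$ with crystalline $\Z_p$-lattice representations of $G_K$.

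Finally, to see that the resulting composite equivalence is induced by the étale realisation, I would appeal to the global description of $T_\et$ recorded in \cref{rem:syntomification-etalerealisationglobal}: the étale realisation factors through $\Perf^\phi((\Z_p)_\prism)$ via exactly the functor constructed in the first step, and the subsequent passage to $\D^b_\lisse(\Spa(\Q_p,\Z_p),\Z_p)$ is by construction the underlying Galois-representation functor of the Bhatt--Scholze correspondence. Tracing through, a reflexive $F$-gauge on $\Z_p$ is carried first to its associated prismatic $F$-crystal in vector bundles and then to the corresponding crystalline representation, as claimed.

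The most delicate point here is the descent step in the first equivalence: one needs the flat-cover assertion for syntomifications in place of Nygaard-filtered prismatisations, and one needs \cref{prop:syntomification-refltofcrystal} to be sufficiently functorial to produce an equivalence of Čech nerves of $\Spf\O_C\to\Spf\Z_p$ on both sides. Both points are addressed in \cite{FGauges}; given them, the rest of the argument is an essentially formal assembly of known inputs.
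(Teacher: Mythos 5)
The paper's own proof of this statement consists of a single citation to \cite[Thm.\ 6.6.13]{FGauges}, so your write-up is a reconstruction of the underlying argument rather than a comparison against a written-out proof. Your outline — descend the perfectoid-case equivalence along $\O_C^\Syn\to\Z_p^\Syn$, invoke the Bhatt--Scholze correspondence of \cite{FCrystals} for the second equivalence, and read off compatibility with $T_\et$ from its global description in \cref{rem:syntomification-etalerealisationglobal} — is the correct general strategy and is in the spirit of what the cited reference does.

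There is, however, one point that is more than the ``essentially formal assembly'' you claim, and it is worth being precise about it because you underspecify it when you refer to it as a ``functoriality'' issue. \cref{prop:syntomification-refltofcrystal} as stated in this paper is for perfectoid \emph{valuation} rings. You apply it only to $\O_C$, which is fine, but your descent argument implicitly requires the analogous equivalence over the higher simplices of the \v{C}ech nerve of $\O_C^\Syn\to\Z_p^\Syn$: these are (syntomifications of) perfectoid rings of the form $\O_C\widehat{\tensor}_{\Z_p}\O_C$ and so on, which are perfectoid but certainly not valuation rings. So you need either the general perfectoid-ring version of \cite[Cor.\ 6.6.5]{FGauges} applied compatibly along the entire \v{C}ech nerve, or a separate argument identifying the descent data on the two sides that avoids invoking the equivalence at the higher levels. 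As written, citing only the valuation-ring statement leaves a genuine gap at exactly the step you are most relying on. Relatedly, you are also implicitly using that $\O_C^\Syn\times_{\Z_p^\Syn}\O_C^\Syn$ identifies with the syntomification of the completed tensor product; in the formal-stack setting this is a base-change assertion (cf.\ the issues discussed in Appendix~\ref{sect:basechange}) rather than a formality, and should be flagged.

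Given those two points, the rest of the argument — the Bhatt--Scholze input for $\Vect^\phi((\Z_p)_\prism)\cong\Rep_{\Z_p}^\crys(G_{\Q_p})$ and the identification of the composite with $T_\et$ — is correct and matches the intended chain of equivalences.
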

\begin{proof}
See \cite[Thm.\ 6.6.13]{FGauges}.
\end{proof}

Actually, it turns out that this allows one to show that the étale realisation of all coherent sheaves on $\Z_p^\Syn$ is a crystalline $G_{\Q_p}$-representations as soon as one inverts $p$ and that, moreover, in this setting, cohomology of a coherent sheaf agrees with the crystalline part of the Galois cohomology of its étale realisation:

\begin{prop}
\label{prop:syntomification-cohcrystalline}
For any $M\in\Coh(\Z_p^\Syn)$, the $G_{\Q_p}$-representation $T_\et(M)[\tfrac{1}{p}]$ is crystalline and the map
\begin{equation*}
R\Gamma(\Z_p^\Syn, M)[\tfrac{1}{p}]\rightarrow R\Gamma(G_{\Q_p}, T_\et(M)[\tfrac{1}{p}])
\end{equation*}
induced by étale realisation has the following properties:
\begin{enumerate}[label=(\roman*)]
\item It induces an isomorphism on $H^0$.

\item It induces an injective map on $H^1$ with image the subspace of $H^1(G_{\Q_p}, T_\et(M)[\tfrac{1}{p}])$ spanned by crystalline extensions of $\Q_p$ by $T_\et(M)[\tfrac{1}{p}]$.

\item All $H^i(\Z_p^\Syn, M)[\tfrac{1}{p}]$ for $i\geq 2$ vanish.
\end{enumerate}
\end{prop}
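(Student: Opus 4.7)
The proof splits into two parts: first, reduce to the case where $M$ is reflexive; second, in the reflexive case, combine the equivalence $\Coh^\refl(\Z_p^\Syn)\cong\Rep^\crys_{\Z_p}(G_{\Q_p})$ from \cref{thm:syntomification-reflcrys} with the Beilinson fibre square \cref{thm:intro-beilfibsq} for $X=\Spf\Z_p$, and match the resulting description of $R\Gamma(\Z_p^\Syn, M)[\tfrac{1}{p}]$ with the Bloch--Kato formula for $R\Gamma_f(G_{\Q_p}, V)$ coming from the fundamental exact sequence $0\to\Q_p\to B_\crys^{\phi=1}\to B_\dR/B_\dR^+\to 0$. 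The crystallinity of $V\coloneqq T_\et(M)[\tfrac{1}{p}]$ will then read off from \cref{thm:syntomification-reflcrys}.

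\textbf{Reduction to reflexive.} Since $\Z_p^\Syn$ is noetherian and regular (cf.\ \cref{rem:filprism-regular}), every coherent sheaf $M$ admits a natural map $M\to M^{\vee\vee}$ whose kernel is the torsion subsheaf and whose cokernel is supported in codimension at least two. Using the long exact sequences associated with the two resulting short exact sequences, together with the observation that after inverting $p$ the relevant torsion either vanishes or contributes only to cohomological degrees where the claims already predict vanishing, one should be able to reduce the statement to the case $M=M^{\vee\vee}$.

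\textbf{Matching with Bloch--Kato.} In the reflexive case, \cref{thm:intro-beilfibsq} applied to $X=\Spf\Z_p$ yields a pullback square in $\D(\Q_p)$:
\begin{equation*}
\begin{tikzcd}
R\Gamma(\Z_p^\Syn, M)[\tfrac{1}{p}]\ar[r]\ar[d] & R\Gamma(\F_p^\Syn, T_\crys(M))[\tfrac{1}{p}]\ar[d] \\
R\Gamma(\Z_p^{\dR,+}, T_{\dR,+}(M))[\tfrac{1}{p}]\ar[r] & R\Gamma(\Z_p^\dR, T_\dR(M))[\tfrac{1}{p}]\;.
\end{tikzcd}
\end{equation*}
Under \cref{thm:syntomification-reflcrys} and the definitions of the realisation functors, the three non-syntomic vertices identify with the two-term complex $[D_\crys(V)\xrightarrow{1-\phi} D_\crys(V)]$, the filtered piece $\Fil^0 D_\dR(V)$, and the underlying vector space $D_\crys(V)$, respectively. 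The resulting description of $R\Gamma(\Z_p^\Syn, M)[\tfrac{1}{p}]$ as the fibre of
\begin{equation*}
\Fil^0 D_\dR(V)\oplus \bigl[D_\crys(V)\xrightarrow{1-\phi} D_\crys(V)\bigr]\longrightarrow D_\crys(V)
\end{equation*}
matches exactly the Bloch--Kato formula for $R\Gamma_f(G_{\Q_p}, V)$ obtained by tensoring the fundamental exact sequence with $V$ and passing to Galois cohomology. Comparing with the map induced by étale realisation then yields (i), (ii), and (iii); the vanishing in (iii) is automatic since the fibre of a morphism between complexes concentrated in degrees $[0,1]$ is again concentrated in $[0,1]$.

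\textbf{Main obstacle.} The most delicate point will be the reduction step: coherent $F$-gauges on $\Z_p^\Syn$ may have non-reflexive pieces beyond simple $p$-torsion, and their contributions to cohomology must be controlled using the geometry of the boundary strata of $\Z_p^\Syn$ and of its generic fibre. A secondary subtlety is verifying that the horizontal arrows of the Beilinson fibre square correspond, under the Fontaine dictionary, to the natural ``forget the Hodge filtration'' and ``forget the Frobenius'' maps appearing in the Bloch--Kato complex; this is essentially formal but requires a compatibility check between the construction of \cref{thm:intro-beilfibsq} and the usual definition of $D_\crys$ and $D_\dR$.
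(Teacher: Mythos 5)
The paper does not prove this proposition itself; its entire ``proof'' is a citation to \cite[Prop.~6.7.3]{FGauges}. Your proposal, however, has a fatal circularity within the logical structure of this paper: the Beilinson fibre square \cref{thm:beilfibsq-main} (which you invoke as the key input, after restricting to $X = \Spf\Z_p$) is proved in Section~\ref{subsect:beilfibsq-main} \emph{using} \cref{prop:syntomification-cohcrystalline}. Concretely, the proof of \cref{thm:beilfibsq-main} appeals to \cref{prop:syntomification-cohcrystalline} three separate times: first to know that $T_\et(E)$ is crystalline for coherent $E$ (so that $D_\crys(T_\et(E)[\tfrac{1}{p}])$ makes sense), then to conclude that $R\Gamma(\Z_p^\Syn, E)[\tfrac{1}{p}]$ is concentrated in degrees $0$ and $1$, and finally to conclude that the map $H^1(\Z_p^\Syn, E)[\tfrac{1}{p}] \to \Ext^1_{G_{\Q_p}}(\Q_p, T_\et(E))$ lands in crystalline extensions so that one obtains the needed factorisation and quasi-isomorphism. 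So you cannot use the fibre square to prove the very statement on which its proof rests.

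Your ``matching with Bloch--Kato'' observation is a correct and illuminating reinterpretation of the proposition \emph{once both it and the fibre square are known}, but it is not an admissible route to proving the proposition here. The actual argument in \cite[Prop.~6.7.3]{FGauges} is a direct computation on the reduced locus $\Z_{p,\red}^\Syn$, exploiting the gluing description (\ref{eq:syntomification-cohomologyreduced}), the complete syntomic filtration of \cref{defi:syntomification-fil}, and the identification (\ref{eq:syntomification-synet}) relating $R\Gamma(\Z_p^\Syn, E)[\tfrac{1}{v_1}]$ to Galois cohomology; the crystallinity and the $H^1$-image description are then read off stratum by stratum. Two further remarks: (a) your reduction to the reflexive case needs the fact that every coherent $F$-gauge on $\Z_p^\Syn$ is isogenous to a reflexive one, which this paper takes from \cite[Prop.~6.7.1]{FGauges} and which is not simply the double-dual argument you sketch; (b) the notion of ``reflexive $F$-gauge'' relevant to \cref{thm:syntomification-reflcrys} is defined via pullback to $\O_C^\Syn$, not by the condition $M \cong M^{\vee\vee}$ over $\Z_p^\Syn$, so the reduction as written conflates two a priori different notions.
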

\begin{proof}
See \cite[Prop.\ 6.7.3]{FGauges}.
\end{proof}

We now focus on studying $\Z_p^\Syn$ further and will hence omit the subscript $\Z_p^\Syn$ from the notation for the Breuil--Kisin twists for the purpose of simplification. Namely, we will describe a special locus $\Z_{p, \red}^\Syn$ on $\Z_p^\Syn$ called the \emph{reduced locus} and this will require us to construct a certain section $v_1\in H^0(\Z_p^\Syn, \O\{p-1\}/p)$. There are various ways to do this, e.g.\ via descent from the quasiregular semiperfectoid case, using a moduli-theoretic description or via the prismatic logarithm, see \cite[Constr.\ 6.2.1]{FGauges}, \cite[§5.10.8]{Prismatization} and \cite[Constr.\ 2.7]{EtaleTateTwists}. Here, however, we outline a construction of $v_1$ that seems to be entirely new and that is based on Gardner--Madapusi's definition of $\Z_p^\N$ that we are working with in this paper.

For this, note that, while the sheaf $\O(-1)\in \D(B\G_m)$ does not descend to $B\G_m^\dR$, in characteristic $p$, its $p$-th tensor power $\O(-p)\in\D((B\G_m)_{p=0})$ does descend to $(B\G_m^\dR)_{p=0}$. This is due to the fact that changing a section $u\in \G_m(S)$ by a section $(v_1, v_2, \dots)\in\G_m^\sharp(S)$ for a characteristic $p$ ring $S$ does not change the $p$-th power of $u$ due to
\begin{equation*}
(uv_1)^p=u^p(1+(v_1-1))^p=u^p(1+(v_1-1)^p)=u^p\;;
\end{equation*}
this is because elements admitting divided powers have vanishing $p$-th power in characteristic $p$. By essentially the same argument, one sees that $u^p: \O\rightarrow\O(p)$ descends from $(\A^1_+/\G_m)_{p=0}$ to $(\A^1_+/\G_m)^\dR_{p=0}$ as well.

\begin{lem}
\label{lem:syntomification-v1lem}
On $(\Z_p^\N)_{p=0}$, there is an isomorphism $t^*\O(-1)\tensor u^*\O(p)\cong \O\{p-1\}/p$.
\end{lem}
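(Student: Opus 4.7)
I would first unwind both line bundles using the definitions, thereby reducing the claim to a concrete computation in the perfectoid case. From Example~\ref{ex:filprism-bktwist} we have $\O_{\Z_p^\N}\{1\} = \pi^*\O_{\Z_p^\prism}\{1\} \otimes t^*\O(1)$, so taking the $(p-1)$-st tensor power, reducing modulo $p$ and invoking the formula $\O_{\Z_p^\prism}\{p-1\}/p \cong \cal{I}^{-1}/p$ from Example~\ref{ex:prismatisation-bktwist} (with $\cal{I} = \mu^*\O(-1)$) gives $\O\{p-1\}/p \cong \pi^*(\cal{I}^{-1}/p) \otimes t^*\O(p-1)$. So the lemma is equivalent to producing a natural isomorphism
\[
t^*\O(-1) \otimes u^*\O(p) \;\cong\; \pi^*(\cal{I}^{-1}/p) \otimes t^*\O(p-1)
\]
on $(\Z_p^\N)_{p=0}$. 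In particular, combining with the defining pullback square of $\Z_p^\N$, which yields $\pi^*\widetilde{\mu}^*\O(-p) \cong t^*\O(-p) \otimes u^*\O(p)$ via $\mathrm{mult}^*\O(1) \cong \O(1) \boxtimes \O(-1)$, the claim further reduces to verifying $\widetilde{\mu}^*\O(-p)^\dR/p \cong \cal{I}^{-1}/p$ on $(\Z_p^\prism)_{p=0}$, where $\O(-p)^\dR$ denotes the descent just discussed.

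I would then verify this by reducing to the perfectoid case via quasisyntomic descent (Lemma~\ref{lem:filprism-quasisyntomiccover}): pull back along $R^\N \to \Z_p^\N$ for any perfectoid $\Z_p$-algebra $R$ quasisyntomically covering $\Spf\Z_p$ (e.g.\ $R = \Z_p^{\mathrm{cycl}}$). On $R^\N$, Example~\ref{ex:filprism-qrsp} gives the explicit description $R^\N \cong \Spf\mathcal{A}/\G_m$ with $\mathcal{A} = A_\inf(R)\langle u, t\rangle/(ut - \phi^{-1}(\xi))$, and both sides of the desired isomorphism become rank-$1$ free graded $\mathcal{A}/p$-modules in the same grading degree, so the issue becomes matching their natural generators.

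The crucial identity is that, in $\mathcal{A}/p$,
\[
\xi \;\equiv\; \phi(\phi^{-1}(\xi)) \;=\; \phi(ut) \;\equiv\; u^p t^p \pmod{p},
\]
which holds because the Witt vector Frobenius $\phi$ on $A_\inf(R)$ reduces modulo $p$ to the absolute Frobenius $x\mapsto x^p$ on $R^\flat = A_\inf(R)/p$. This relation $\xi = u^p t^p$ in $\mathcal{A}/p$ is precisely what pins down the isomorphism: it matches the generator of $t^*\O(-1) \otimes u^*\O(p)$ built from the pullbacks of the canonical sections $t$ and $u^p$ with the generator of $\pi^*\cal{I}^{-1}/p \otimes t^*\O(p-1)$ built from $\xi^{-1}$ and the canonical trivialization of $t^*\O(p-1)$. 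The main obstacle is keeping the sign/grading bookkeeping in the Rees picture consistent so that the correct $p$-th root of $\cal{I}^p/p$ (namely $\cal{I}^{-1}/p$ rather than $\cal{I}/p$) is picked out; once that is in place, the naturality of the construction in $R$ shows the perfectoid-level isomorphism is compatible with the descent data and hence descends to the required isomorphism on $(\Z_p^\N)_{p=0}$.
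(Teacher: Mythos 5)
Your proof takes a genuinely different route from the paper's. You carry out the same initial reduction: using $\O\{1\}=\pi^*\O_{\Z_p^\prism}\{1\}\otimes t^*\O(1)$, $\O_{\Z_p^\prism}\{p-1\}/p\cong\cal{I}^{-1}/p$, and $\mathrm{mult}^*\O(-p)\cong\O(-p)\boxtimes\O(p)$, the claim becomes $\widetilde{\mu}^*\O(-p)\cong\cal{I}^{-1}/p$ on $(\Z_p^\prism)_{p=0}$. The paper then proceeds abstractly: it builds a candidate map $\widetilde{\mu}^*\O(p)\to\cal{I}/p$ by showing the mod-$p$ Hodge--Tate locus $B\G_m^\sharp\subseteq(\Z_p^\prism)_{p=0}$ (classified by the Cartier--Witt divisor $p\colon W(\F_p)\to W(\F_p)$) lands inside the vanishing locus of the descended section $t^p$ on $(\A^1_-/\G_m)^\dR_{p=0}$, and then proves this map is an isomorphism by pulling back along the Frobenius cover $F\colon\Z_p^\prism\to\Z_p^\prism$ and using the pullback square relating $\mu$ and $\widetilde{\mu}$ to identify both sides with $\cal{I}^p/p$. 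You instead pass to a perfectoid quasisyntomic cover and exploit the identity $\xi\equiv(\phi^{-1}\xi)^p=(ut)^p=u^pt^p\pmod p$ in $A_\inf(R)\langle u,t\rangle/(ut-\phi^{-1}(\xi))$. That identity is correct and really does encapsulate the content of the lemma; it is a nice way to see \emph{why} $v_1$ exists. (Minor slip: $\phi(ut)$ does not parse, since $\phi$ is the Frobenius of $A_\inf(R)$ while $u,t$ are adjoined variables. The intended chain is $\xi=\phi(\phi^{-1}\xi)\equiv(\phi^{-1}\xi)^p=(ut)^p\pmod p$, using that the Witt vector Frobenius reduces mod $p$ to the absolute Frobenius on the perfect ring $R^\flat$.)

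There is, however, a gap in your descent step. After pulling back to $R^\N$ both line bundles become free of rank one and the identity $\xi=u^pt^p$ relates particular trivialisations, but your appeal to ``naturality of the construction in $R$'' does not by itself show that the induced isomorphism satisfies the cocycle condition on $R^\N\times_{\Z_p^\N}R^\N$ and hence descends. The trivialisations you use are built from the generator $\xi$ and the coordinates $u,t$, none of which is descent-invariant; in particular $\cal{I}^{-1}$ has no canonical global section on $(\Z_p^\prism)_{p=0}$, and $t^*\O(p-1)$ has no canonical trivialisation on $(\Z_p^\N)_{p=0}$, contrary to what your last paragraph suggests. The compatibility therefore has genuine content and needs to be checked. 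The paper avoids this entirely by first producing a global candidate morphism and only invoking a flat cover to check that it is invertible. To salvage your approach while keeping its computational flavour, you could exhibit a section of $\O\{p-1\}/p\otimes\bigl(t^*\O(-1)\otimes u^*\O(p)\bigr)^{-1}$ defined over $(\Z_p^\N)_{p=0}$ itself, and then use the $\xi=u^pt^p$ identity on the perfectoid cover as the check that this section is a unit.
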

\begin{proof}
We work over $\F_p$ throughout and thus mostly suppress the subscript $(-)_{p=0}$ from the notation. Recalling that $\O_{\Z_p^\prism}\{p-1\}/p\cong \cal{I}^{-1}/p$ from \cref{ex:prismatisation-bktwist}, all we have to show is that
\begin{equation*}
u^*\O(p)\cong \pi^*\cal{I}^{-1}/p\tensor t^*\O(p)\;.
\end{equation*}
To this end, consider the pullback diagram
\begin{equation*}
\begin{tikzcd}
\Z_p^\N\ar[d, "{(t, u)}", swap]\ar[r, "\pi"] & \Z_p^\prism\ar[d, "\widetilde{\mu}"] \\
\A^1_-/\G_m\times (\A^1_+/\G_m)^\dR\ar[r, "\mathrm{mult}"] & (\A^1_-/\G_m)^\dR
\end{tikzcd}
\end{equation*}
defining $\Z_p^\N$ and note that $\mathrm{mult}^*\O(-p)\cong \O(-p)\boxtimes\O(p)$ and that the further pullback of this along $(t, u)$ is given by $t^*\O(-p)\tensor u^*\O(p)$. Thus, considering the pullback of $\O(-p)$ from $(\A^1_-/\G_m)^\dR$ to $\Z_p^\N$ along the other path in the diagram, it suffices to prove that the pullback of $\O(-p)$ along $\widetilde{\mu}: \Z_p^\prism\rightarrow (\A^1_-/\G_m)^\dR$ is given by $\cal{I}^{-1}/p$.

We first claim that there is a map $\widetilde{\mu}^*\O(p)\rightarrow\cal{I}/p$. For this, it suffices to show that the closed substack of $(\Z_p^\prism)_{p=0}$ cut out by the ideal sheaf $\cal{I}/p$ lands inside the closed substack of $(\A^1_-/\G_m)^\dR$ cut out by $t^p: \O(p)\rightarrow\O$, which is a well-defined section by the discussion above. To see this, we invoke the fact that the former is the (mod $p$ reduction of the) Hodge--Tate stack of $\Z_p$ and thus isomorphic to $B\G_m^\sharp$, the classifying stack of the PD hull of $\G_m$ at $1$, with the quotient map $\Spec\F_p\rightarrow B\G_m^\sharp\subseteq (\Z_p^\prism)_{p=0}$ classifying the Cartier--Witt divisor $p: W(\F_p)\rightarrow W(\F_p)$, see \cite[§3.4]{APC}. As we may work fpqc locally, all we have to check is that this particular Cartier--Witt divisor lands in the vanishing locus of $t^p$ under $\widetilde{\mu}$, but this is true by the moduli description of the map $\widetilde{\mu}$.

To check that the map $\widetilde{\mu}^*\O(p)\rightarrow\cal{I}/p$ constructed above is an isomorphism, we may now pull back along the fpqc cover $F: \Z_p^\prism\rightarrow\Z_p^\prism$. However, using the commutative diagram (it is in fact a pullback square)
\begin{equation*}
\begin{tikzcd}
\Z_p^\prism\ar[r, "F"]\ar[d, "\mu", swap] & \Z_p^\prism\ar[d, "\widetilde{\mu}"] \\
\A^1_-/\G_m\ar[r] & (\A^1_-/\G_m)^\dR
\end{tikzcd}
\end{equation*}
from \cref{ex:filprism-jdrjht}, we may compute $F^*\widetilde{\mu}^*\O(p)$ alternatively as the pullback of $\O(p)$ along $\mu: \Z_p^\prism\rightarrow\A^1_-/\G_m$, which comes out to be $\cal{I}^p/p$. Moreover, as $F: \Z_p^\prism\rightarrow\Z_p^\prism$ lifts the Frobenius on $(\Z_p^\prism)_{p=0}$ by \cite[Rem.\ 3.6.2]{APC}, the pullback of $\cal{I}/p$ along $F$ is $\cal{I}^p/p$ as well and one can check that, under these identifications, the map constructed is just the identity on $\cal{I}^p/p$.
\end{proof}

\begin{defi}
Under the isomorphism $\O\{p-1\}/p\cong t^*\O(-1)\tensor u^*\O(p)$ from the lemma above, we define the section
\begin{equation*}
v_1\in H^0(\Z_p^\N, \O\{p-1\}/p)
\end{equation*}
as the one identifying with the pullback of $u^pt$.
\end{defi}

Tracing through the identifications, to see that $v_1$ actually descends to $\Z_p^\Syn$, we have to identify the pullback of the section $u^p: \O(-p)\rightarrow\O$ along $\Z_p^\prism\rightarrow (\A^1_+/\G_m)^\dR$ with the pullback of the section $t: \O(1)\rightarrow\O$ along $\mu$. However, this actually already follows from the proof above.

\begin{defi}
The \emph{reduced locus} $\Z_{p, \red}^\Syn$ of $\Z_p^\Syn$ is the vanishing locus of $(p, v_1)$. Its pullback to $\Z_p^\N$ is called the \emph{reduced locus} of $\Z_p^\N$ and denoted $\Z_{p, \red}^\N$.
\end{defi}

It turns out that there is a very concrete way to describe $\Z_{p, \red}^\Syn$ and cohomology on it. To explain this, we need to consider several components of $\Z_{p, \red}^\Syn$. Note that we are now working over $\F_p$.

\begin{enumerate}[label=(\roman*)]
\item The \emph{conjugate-filtered Hodge--Tate component} $\Z_{p, \HT, c}^\N$ is the closed substack of $(\Z_p^\N)_{p=0}$ cut out by the equation $t=0$ and one can show that $\Z_{p, \HT, c}^\N\cong \G_{a, +}/(\G_a^\sharp\rtimes\G_m)$, see \cite[Prop.\ 5.3.7]{FGauges}. Thus, the stack $\Z_{p, \HT, c}^\N$ admits the following stratification:
\begin{enumerate}[label=(\arabic*)]
\item The substack $\Z_{p, \HT}^\N\coloneqq B\Stab_{\G_a^\sharp\rtimes\G_m}(1\in\G_a)\cong B\G_m^\sharp$ is open and one can show that it identifies with $j_\HT(\Z_p^\prism)\cap \Z_{p, \red}^\N$.

\item The substack $\Z_{p, \Hod}^\N\coloneqq B\Stab_{\G_a^\sharp\rtimes\G_m}(0\in\G_a)\cong B(F_*\G_a^\sharp\rtimes\G_m)$ is closed and identifies with the reduced complement of $\Z_{p, \HT}^\N$.
\end{enumerate}

\item The \emph{Hodge-filtered de Rham component} $\Z_{p, \dR, +}^\N$ is the closed substack of $(\Z_p^\N)_{p=0}$ cut out by the equation $u^p=0$. As $i_{\dR, +}: \A^1_-/\G_m\rightarrow \Z_{p, \dR, +}^\N$ is an fpqc cover splitting the restriction $t_{\dR, +}: \Z_{p, \dR, +}^\N\rightarrow\A^1_-/\G_m$ of the Rees map, we have $\Z_{p, \dR, +}^\N\cong B_{\A^1_-/\G_m}\cal{G}$, where $\cal{G}$ denotes the group scheme $\Aut(i_{\dR, +})$. One can explicitly compute $\cal{G}$, see \cite[Ch.\ 7]{Prismatization}; for us, however, it is only important that $\Z_{p, \dR, +}^\N$ admits the following stratification:
\begin{enumerate}[label=(\arabic*)]
\item The preimage of $\Spec\F_p\subseteq\A^1_-/\G_m$ under $t_{\dR, +}$ identifies with the open substack $j_\dR(\Z_p^\prism)\cap \Z_{p, \red}^\N$ and \EDIT{is} denoted $\Z_{p, \dR}^\N$. Up to a Frobenius twist, which is trivial here, it is isomorphic to $\Z_{p, \HT}^\N$.

\item The preimage of $B\G_m\subseteq\A^1_-/\G_m$ under $t_{\dR, +}$ is isomorphic to $\Z_{p, \Hod}^\N$.
\end{enumerate}
\end{enumerate}

To obtain $\Z_{p, \red}^\N$ from these components, one glues $\Z_{p, \HT, c}^\N$ and $\Z_{p, \dR, +}^\N$ along $\Z_{p, \Hod}^\N$. Further gluing $\Z_{p, \HT}^\N$ and $\Z_{p, \dR}^\N$ along a Frobenius twist, which is trivial in our situation, then yields $\Z_{p, \red}^\Syn$. Thus, to study cohomology on $\Z_{p, \red}^\Syn$, we need to study cohomology on the loci described above, starting with $\Z_{p, \dR}^\N$ and $\Z_{p, \Hod}^\N$:

\begin{enumerate}[label=(\arabic*)]
\item For $\Z_{p, \dR}^\N\cong B\G_m^\sharp$, one can show that
\begin{equation*}
\D(\Z_{p, \dR}^\N)\cong \D_{(\Theta^p-\Theta)-\nilp}(\F_p[\Theta])\;,
\end{equation*}
where the latter category consists of $\F_p$-complexes $V$ equipped an operator $\Theta$ such that the action of $\Theta^p-\Theta$ on cohomology is locally nilpotent, see \cite[Thm.\ 3.5.8]{APC}. Under this equivalence, the Breuil--Kisin twist $\O\{n\}$ corresponds to $(\F_p, \Theta=n)$ and if $E\in\D(\Z_{p, \dR}^\N)$ corresponds to $(V, \Theta)$, then
\begin{equation*}
R\Gamma(\Z_{p, \dR}^\N, E)=\fib(V\xrightarrow{\Theta} V)\;,
\end{equation*}
see \cite[Ex.\ 3.5.6, Prop.\ 3.5.11]{APC}.

\item For $\Z_{p, \Hod}^\N\cong B(F_*\G_a^\sharp\rtimes\G_m)$, one can show that
\begin{equation*}
\D(\Z_{p, \Hod}^\N)\cong \D_{\gr, \Theta-\nilp}(\F_p[\Theta])\;,
\end{equation*}
where $\Theta$ has degree $-p$, i.e.\ the latter category consists of $\Z$-indexed collections of $\F_p$-complexes $V_i$ equipped with operators $\Theta_i: V_i\rightarrow V_{i-p}$ such that the action of $\Theta=\bigoplus_i \Theta_i$ on cohomology is locally nilpotent. Under this equivalence, the Breuil--Kisin twist $\O\{n\}$ corresponds to the vector space $\F_p$ sitting in grading degree $-n$ and if $E\in\D(\Z_{p, \Hod}^\N)$ corresponds to \EEDIT{$(\{V_i\}, \{\Theta_i\})$}, then
\begin{equation*}
R\Gamma(\Z_{p, \Hod}^\N, E)=\fib(V_0\xrightarrow{\Theta} V_{-p})\;.
\end{equation*}
\end{enumerate}

On $\Z_{p, \dR, +}^\N$ and $\Z_{p, \HT, c}^\N$, we obtain the following descriptions of quasi-coherent complexes:

\begin{enumerate}[label=(\roman*)]
\item For $\Z_{p, \HT, c}^\N\cong \G_a/(\G_a^\sharp\rtimes\G_m)$, one can show that 
\begin{equation*}
\D(\Z_{p, \HT, c}^\N)\cong\D_{\gr, D-\nilp}(\cal{A}_1)\;,
\end{equation*}
where $\cal{A}_1\coloneqq\F_p\{x, D\}/(Dx-xD-1)$ with $D$ having degree \EEDIT{$-1$} and $x$ having degree \EEDIT{$1$}; here, the notation $\F_p\{x, D\}$ refers to the free (non-commutative!) $\F_p$-algebra on the variables $x$ and $D$. Alternatively, via the Rees equivalence, we may describe any $E\in\D(\Z_{p, \HT, c}^\N)$ as an increasingly filtered $\F_p$-complex $\Fil_\bullet V$ together with operators $D: \Fil_\bullet V\rightarrow \Fil_{\bullet-1} V$ which are locally nilpotent in cohomology such that $Dx=xD+1$, where $x: \Fil_\bullet V\rightarrow\Fil_{\bullet+1} V$ denotes the transition maps. \EEDIT{We warn the reader that, in particular, this relation means that the operators $xD: \Fil_\bullet V\rightarrow\Fil_\bullet V$ are \emph{not} compatible with the filtration; however, since we are working over $\F_p$ and hence $xD-p=xD$, the operator $xD$ \emph{is} compatible with the coarser filtration
\begin{equation*}
\Fil_{p\bullet} V = (\dots\rightarrow \Fil_{-p} V\rightarrow \Fil_0 V\rightarrow\Fil_p V\rightarrow\dots)\;.
\end{equation*}
Under the above description of $\D(\Z_{p, \HT, c}^\N)$, the Breuil--Kisin twist $\O\{n\}$ corresponds to the filtered complex $\Fil_\bullet V$ with $\Fil_\bullet V=\F_p$ if $\bullet\geq -n$ and zero else with the transition maps between nonzero terms being the identity; moreover, the operator $D: \Fil_i V\rightarrow\Fil_{i-1} V$ is given by multiplication by $i$ for all $i\in\Z$.}

If $E$ corresponds to a graded $\cal{A}_1$-module $M$, then restriction to $\Z_{p, \dR}^\N$ corresponds to passing to $(M[1/x^p]_{\deg=0}, \Theta=xD)$ and one may view this as passing from \EEDIT{$\Fil_{p\bullet} V$} to the underlying non-filtered complex and taking $\Theta=xD$. Restriction to $\Z_{p, \Hod}^\N$, however, corresponds to passing to the associated graded $\gr_\bullet V$ of $\Fil_\bullet V$ and taking $\Theta=D^p$. Finally, we have
\begin{equation*}
R\Gamma(\Z_{p, \HT, c}^\N, E)=\fib(\Fil_0 V\xrightarrow{D} \Fil_{-1} V)\;.
\end{equation*}

\item For $\Z_{p, \dR, +}^\N\cong B_{\A^1_-/\G_m}\cal{G}$, one can show that any $E\in\D(\Z_{p, \dR, +}^\N)$ corresponds to a \EEDIT{decreasingly} filtered complex $\Fil^\bullet V\in\DF(\F_p)$ together with an operator $\Theta: \Fil^\bullet V\rightarrow\Fil^{\bullet-p} V$ such that the action of $\Theta^p-t^{p^2-p}\Theta$ on $\Rees(\Fil^\bullet V)$ is locally nilpotent in cohomology, see \cite[Prop.\ 6.5.6]{FGauges}. Under this correspondence, the Breuil--Kisin twist $\O\{n\}$ corresponds to the filtered complex $\Fil^\bullet V$ with $\Fil^\bullet V=\F_p$ if $\bullet\leq -n$ and zero else with the transition maps between nonzero terms being the identity and $\Theta=n$. For any $E\in\D(\Z_{p, \dR, +}^\N)$ corresponding to $(\Fil^\bullet V, \Theta)$, restriction to $\Z_{p, \dR}^\N$ corresponds to passage to the underlying non-filtered complex and restriction to $\Z_{p, \Hod}^\N$ corresponds to passage to the associated graded; moreover, we have
\begin{equation*}
R\Gamma(\Z_{p, \dR, +}^\N, E)=\fib(\Fil^0V\xrightarrow{\Theta} \Fil^{-p}V)\;.
\end{equation*}
\end{enumerate}

For $E\in\D(\Z_{p, \red}^\Syn)$, let us introduce the notation $F_\dR(E)$ for $R\Gamma(\Z_{p, \dR}^\N, E|_{\Z_{p, \dR}^\N})$ and define $F_\Hod(E), F_{\HT, c}(E)$ and $F_{\dR, +}(E)$ analogously. Then we have natural restriction maps
\begin{equation*}
\begin{split}
a_E: F_{\dR, +}(E)&\rightarrow F_\dR(E)\oplus F_\Hod(E)\;, \\
b_E: F_{\HT, c}(E)&\rightarrow F_\dR(E)\oplus F_\Hod(E)
\end{split}
\end{equation*}
\EEDIT{and} the gluing description of $\Z_{p, \red}^\Syn$ shows that
\begin{equation}
\label{eq:syntomification-cohomologyreduced}
R\Gamma(\Z_{p, \red}^\Syn, E)=\fib(F_{\dR, +}(E)\oplus F_{\HT, c}(E)\xrightarrow{a_E-b_E} F_\dR(E)\oplus F_\Hod(E))\;.
\end{equation}

As we now have a good handle on cohomology on $\Z_{p, \red}^\Syn$, we want to make use of this to understand cohomology on $\Z_p^\Syn$.

\begin{defi}
\label{defi:syntomification-fil}
Let \EEDIT{$E\in\D((\Z_p^\Syn)_{p=0})$}. The filtration
\begin{equation*}
\begin{tikzcd}[column sep=scriptsize]
\dots\ar[r, "v_1"] & R\Gamma(\Z_p^\Syn, E\{-(p-1)\})\ar[r, "v_1"] & R\Gamma(\Z_p^\Syn, E)\ar[r, "v_1"] & R\Gamma(\Z_p^\Syn, E\{p-1\})\ar[r, "v_1"] & \dots  
\end{tikzcd}
\end{equation*}
is called the \emph{syntomic filtration} and denoted $\Fil_\bullet^\Syn R\Gamma(\Z_p^\Syn, E)[\frac{1}{v_1}]$; here, $E\{n\}$ denotes the tensor product $E\tensor\O\{n\}$ for any $n$. We denote the underlying unfiltered object by $R\Gamma(\Z_p^\Syn, E)[\frac{1}{v_1}]$.
\end{defi}

\EDIT{As one can show that $v_1$ is topologically nilpotent, the syntomic filtration is complete, see \cref{prop:syntomicetale-filtrationetale}. Moreover, notice} that, for any \EEDIT{$E\in\D((\Z_p^\Syn)_{p=0})$}, there is a canonical isomorphism
\begin{equation}
\label{eq:syntomification-grsyn}
\gr^\Syn_\bullet R\Gamma(\Z_p^\Syn, E)[\tfrac{1}{v_1}]\cong R\Gamma(\Z_{p, \red}^\Syn, E/v_1\{\bullet(p-1)\})\;,
\end{equation}
where $E/v_1\coloneqq\cofib(E\{-(p-1)\}\xrightarrow{v_1} E)$. Moreover, \EEDIT{if $E\in\Perf((\Z_p^\Syn)_{p=0})$}, one can check that there is a natural isomorphism
\begin{equation}
\label{eq:syntomification-synet}
R\Gamma(\Z_p^\Syn, E)[\tfrac{1}{v_1}]\cong R\Gamma(G_{\Q_p}, T_\et(E))\;,
\end{equation}
see \cite[Eq. (6.4.1)]{FGauges}.

\section{The stacky Beilinson fibre square}
\label{sect:beilfibsq}

\EDIT{Recall the following theorem of Antieau--Mathew--Morrow--Nikolaus relating syntomic cohomology to Hodge-filtered de Rham cohomology:}

\begin{thm}
\label{thm:beilfibsq-motivation}
Let $X$ be a \EEDIT{smooth qcqs} $p$-adic formal scheme. For each $i\geq 0$, there is a functorial pullback square
\begin{equation*}
\begin{tikzcd}
R\Gamma_\Syn(X, \Z_p(i))[\frac{1}{p}]\ar[r]\ar[d] & R\Gamma_\Syn(X_{p=0}, \Z_p(i))[\frac{1}{p}]\ar[d] \\
\Fil^i_\Hod R\Gamma_\dR(X)[\frac{1}{p}]\ar[r] & R\Gamma_\dR(X)[\frac{1}{p}]\nospacepunct{\;.}
\end{tikzcd}
\end{equation*}
\end{thm}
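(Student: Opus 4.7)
The plan is to deduce the fibre square from a commutative square of stacks and then verify the pullback property via a direct cohomological analysis. The commutative square I have in mind is
\begin{equation*}
\begin{tikzcd}
X^{\dR} \ar[r] \ar[d] & X^{\dR, +} \ar[d] \\
(X_{p=0})^{\Syn} \ar[r] & X^{\Syn}\nospacepunct{\;,}
\end{tikzcd}
\end{equation*}
where the right vertical is the composite $j_{\N}\circ i_{\dR, +}$, the top horizontal is the inclusion of $X^{\dR}$ as the preimage of $\G_m/\G_m\subseteq\A^1_-/\G_m$ in $X^{\dR, +}$, the bottom horizontal is the functoriality of $(-)^{\Syn}$ applied to $X_{p=0}\hookrightarrow X$, and the left vertical is obtained by combining the identification $X^{\dR}\cong (X_{p=0})^{\prism}$ (valid for smooth $X$ as recalled in the introduction) with one of the open immersions $j_\dR$ or $j_\HT$ into $(X_{p=0})^\N$ and then with $j_\N$. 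Commutativity can be checked by a diagram chase using (\ref{eq:filprism-maps}), taking some care to align conventions for the Frobenius twist arising in the comparison $X^\dR\cong (X_{p=0})^\prism$.

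Next, I would apply the functor $R\Gamma(-, \O\{i\})[\tfrac{1}{p}]$ to this square. By \cref{thm:drstack-comparison}, \cref{thm:fildrstack-comparison} and the Rees equivalence, the top row recovers the inclusion $\Fil^i_{\Hod}R\Gamma_{\dR}(X)[\tfrac{1}{p}]\hookrightarrow R\Gamma_{\dR}(X)[\tfrac{1}{p}]$, while the bottom row is by definition the map $R\Gamma_{\Syn}(X, \Z_p(i))[\tfrac{1}{p}]\to R\Gamma_{\Syn}(X_{p=0}, \Z_p(i))[\tfrac{1}{p}]$. To show the resulting square is a pullback, I would use the equaliser description (\ref{eq:syntomification-equaliser}) for both $\D(X^{\Syn})$ and $\D((X_{p=0})^\Syn)$. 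Applied to $\O\{i\}$, this gives compatible fibre sequences expressing each syntomic cohomology group as the equaliser of $j_{\HT}^*-j_{\dR}^*$ on Nygaard-filtered prismatic cohomology. The key claim then is that after inverting $p$ the cofibre of $R\Gamma(X^\prism, \O\{i\})\to R\Gamma((X_{p=0})^\prism, \O\{i\})$ vanishes (a consequence of the prismatic–crystalline comparison once combined with $X^\dR\cong (X_{p=0})^\prism$), whereas the cofibre of $R\Gamma(X^\N, \O\{i\})\to R\Gamma((X_{p=0})^\N, \O\{i\})$ identifies, rationally, with the cofibre $\Fil^i_\Hod R\Gamma_\dR(X)[\tfrac{1}{p}]\to R\Gamma_\dR(X)[\tfrac{1}{p}]$ via the comparison of the Nygaard and Hodge filtrations after inverting $p$. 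A straightforward diagram chase then assembles these cofibre computations into the desired pullback identification.

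The hard part will be to rigorously verify this last cofibre identification. Concretely, one must show that after inverting $p$ the Nygaard-filtered prismatic cohomology of $X$ becomes the Hodge-filtered de Rham cohomology of $X$, and moreover that this identification is compatible with the restriction map to the special fibre in such a way that the relative terms glue correctly with the prismatic–crystalline comparison used above. This involves carefully tracking the Frobenius twists hiding in the identifications $X^\dR\cong (X_{p=0})^\prism$ and in the Breuil–Kisin twist $\O\{i\}$ on $X^{\dR, +}$, as the Frobenius structure, while becoming invertible after inverting $p$, is precisely what controls the passage from Nygaard to Hodge filtration. Once this compatibility is established, the pullback property — and with it the theorem — follows formally.
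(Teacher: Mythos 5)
Your proposal takes a genuinely different route from the paper, but it contains a gap that I do not see how to repair. The paper does not attempt a fresh stacky proof of this statement: it simply observes that the affine case is exactly \cite[Thm.~6.17]{BeilFibSq} and globalises via Zariski descent together with \cref{lem:syntomicetale-colimtot}. The stacky machinery in the paper (\cref{thm:beilfibsq-main}, \cref{cor:beilfibsq-coeffs}) is developed for a different purpose — namely to generalise to $F$-gauge coefficients — and even then it only recovers \cref{thm:beilfibsq-motivation} in the \emph{proper} case, because the argument crucially needs \cref{prop:finiteness-main} to push $E$ forward to a perfect complex on $\Z_p^\Syn$ before the crystalline-representation analysis of \cref{prop:beilfibsq-crys} can be applied. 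The paper even remarks that a stacky argument removing the properness assumption exists in unpublished work of Lurie but is ``significantly'' different, so you should expect the general qcqs case to be genuinely delicate.

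The concrete gap is your ``key claim'' that the cofibre of $R\Gamma(X^\prism, \O\{i\}) \to R\Gamma((X_{p=0})^\prism, \O\{i\})$ vanishes after inverting $p$. You assert this follows from the prismatic–crystalline comparison, but the comparison identifies $(X_{p=0})^\prism$ (up to Frobenius twist) with $X^\dR$, not with $X^\prism$; it says nothing about the restriction map from $X^\prism$ to the substack $(X_{p=0})^\prism$. That restriction map is \emph{not} a rational equivalence in general. Already for $X = \Spf\Z_p$ and $i = 1$ one has $R\Gamma(\F_p^\prism, \O\{1\})[\tfrac{1}{p}] \cong \Q_p$ (the twist restricts to a free rank-one module over the crystalline prism of $\F_p$), whereas $R\Gamma(\Z_p^\prism, \O\{1\})[\tfrac{1}{p}]$ vanishes, so the cofibre is $\Q_p \neq 0$. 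The fibre sequence (\ref{eq:syntomification-fibreseq}) does express $R\Gamma(X^\Syn,\O\{i\})$ as a fibre over the prismatisation, but the Beilinson fibre square arises precisely because \emph{both} the Nygaard and the prismatic terms change as one passes from $X$ to $X_{p=0}$, and the two changes conspire to cancel against the Hodge filtration. Isolating only the Nygaard cofibre and discarding the prismatic one, as your argument does, gives the wrong answer. Rigorously establishing the cancellation would in effect re-prove the Beilinson fibre square and require either the THH input of \cite{BeilFibSq} or the crystalline-representation analysis that the paper carries out for $\Spf\Z_p$ in \cref{prop:beilfibsq-crys} — neither of which appears in your outline. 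Filling this hole is the actual content of the theorem, not a bookkeeping exercise with Frobenius twists.
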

\begin{proof}
\EDIT{The affine version is \cite[Thm.\ 6.17]{BeilFibSq} and we deduce the general version using Zariski descent and \cref{lem:syntomicetale-colimtot}.}
\end{proof}

\EDIT{We want to prove a stacky version of this result and thereby also generalise it to allow arbitrary coefficients, at least in the case where $X$ is proper.} Namely, the main theorem we are going to prove is the following:

\begin{thm}
\label{thm:beilfibsq-main}
There is a commutative square of stacks
\begin{equation*}
\begin{tikzcd}
\Z_p^\dR\ar[r]\ar[d] & \Z_p^{\dR, +}\ar[d, "i_{\dR, +}"] \\
\F_p^\Syn\ar[r] & \Z_p^\Syn
\end{tikzcd}
\end{equation*}
which is an almost pushout up to $p$-isogeny. \EDIT{More precisely,} for any $E\in\Perf(\Z_p^\Syn)$, it induces a pullback diagram
\begin{equation*}
\begin{tikzcd}
R\Gamma(\Z_p^\Syn, E)[\frac{1}{p}]\ar[r]\ar[d] & R\Gamma(\F_p^\Syn, T_\crys(E))[\frac{1}{p}]\ar[d] \\
R\Gamma(\Z_p^{\dR, +}, T_{\dR, +}(E))[\frac{1}{p}]\ar[r] & R\Gamma(\Z_p^\dR, T_\dR(E))[\frac{1}{p}]\nospacepunct{\;.}
\end{tikzcd}
\end{equation*}
\end{thm}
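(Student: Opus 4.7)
The plan has two parts: first, construct the commutative square of stacks; second, prove that applying $R\Gamma(-, E)$ to it yields a pullback square after inverting $p$.

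For the first part, I construct the four maps as follows. The map $\F_p^\Syn \to \Z_p^\Syn$ comes from functoriality of syntomification applied to the closed immersion $\Spf\F_p \hookrightarrow \Spf\Z_p$. The map $\Z_p^{\dR,+} \to \Z_p^\Syn$ is the composite $j_\N \circ i_{\dR,+}$ defined in Section~\ref{subsect:filprism}. The map $\Z_p^\dR \to \Z_p^{\dR,+}$ is the open immersion obtained by pulling back $\G_m/\G_m \hookrightarrow \A^1_-/\G_m$ along the structure map, cf.\ \cref{rem:fildrstack-unfiltereddr}. Finally, the map $\Z_p^\dR \to \F_p^\Syn$ uses the identification $\Z_p^\dR \cong \F_p^\prism$ coming from the comparison $(X_{p=0})^\prism \cong X^\dR$ mentioned in the introduction, followed by the composite $\F_p^\prism \hookrightarrow \F_p^\N \to \F_p^\Syn$. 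Commutativity of the square is then checked directly from the moduli-theoretic descriptions of these four maps.

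For the second part, observe that both sides of the claimed pullback are exact functors of $E \in \Perf(\Z_p^\Syn)[\tfrac{1}{p}]$, so the pullback property is stable under fibre sequences, shifts and retracts. In the special case $E = \O_{\Z_p^\Syn}\{i\}$ of a Breuil--Kisin twist, $T_\crys(E)$, $T_{\dR,+}(E)$ and $T_\dR(E)$ become the respective Breuil--Kisin twists on the other three stacks, and using the identifications from Section~\ref{sect:stacks} together with $R\Gamma_\dR(\Spf\Z_p)[\tfrac{1}{p}] = \Q_p$, the cohomological square reduces exactly to \cref{thm:beilfibsq-motivation} applied to $X = \Spf\Z_p$. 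Hence the claim is known for all Breuil--Kisin twists.

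The main obstacle is extending this from Breuil--Kisin twists to arbitrary perfect $F$-gauges, i.e.\ showing that $\Perf(\Z_p^\Syn)[\tfrac{1}{p}]$ agrees with the thick subcategory generated by the Breuil--Kisin twists $\O\{i\}$, $i\in\Z$. One route is to exploit the fpqc cover $\Spf\Z_p\langle u, t\rangle \to \Z_p^\N$ from \cref{rem:filprism-regular}, where perfect complexes are built from shifts and twists of the structure sheaf, and then combine this with the gluing description of $\Z_p^\Syn$ via the equaliser (\ref{eq:syntomification-equaliser}) to transfer generation from $\Z_p^\N$ to $\Z_p^\Syn$ after inverting $p$. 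Alternatively, for $E \in \Coh(\Z_p^\Syn)$ one may invoke \cref{prop:syntomification-cohcrystalline} to identify the four corners of the square with invariants of the crystalline $G_{\Q_p}$-representation $T_\et(E)[\tfrac{1}{p}]$, reducing the pullback property to a direct computation in Bloch--Kato crystalline Galois cohomology, and then pass from coherent to perfect $F$-gauges using the $t$-structure of \cref{rem:filprism-cohheartperf}.
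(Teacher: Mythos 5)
Your construction of the square is fine and matches \cref{prop:beilfibsq-square}. For the pullback property, you offer two routes. The second of these (restrict first to $E$ coherent using the bounded $t$-structure of \cref{rem:filprism-cohheartperf}, then translate the claim via \cref{prop:syntomification-cohcrystalline} into a pullback square in crystalline Galois cohomology) is precisely the paper's strategy: it is executed through \cref{prop:beilfibsq-crys} together with \cref{lem:beilfibsq-rationalcohfpsyn}, \cref{lem:beilfibsq-etalephimod}, \cref{lem:beilfibsq-rationalcoha1gm} and \cref{lem:beilfibsq-etalefiltered}, which supply the nontrivial identifications of the four corners that your sketch takes for granted, plus an argument (using \cref{prop:syntomification-cohcrystalline} again) that $R\Gamma(\Z_p^\Syn, E)[\tfrac{1}{p}]$ actually matches the crystalline $\RHom$.

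Your first route, however, would fail. You propose to show that $\Perf(\Z_p^\Syn)[\tfrac{1}{p}]$ is the thick subcategory generated by the Breuil--Kisin twists $\O\{i\}$. This is false: by \cref{prop:beilfibsq-perfzpsyncrys}, $\Perf(\Z_p^\Syn)[\tfrac{1}{p}]\cong \D^b(\Rep_{\Q_p}^\crys(G_{\Q_p}))$, under which the $\O\{i\}$ map to the Tate twists $\Q_p(i)$, and the thick subcategory of $\D^b(\Rep_{\Q_p}^\crys(G_{\Q_p}))$ generated by the $\Q_p(i)$ is a strict subcategory --- an irreducible two-dimensional crystalline representation (for instance one attached to a modular form at a good prime) does not lie in it, since the Frobenius eigenvalues of its $D_\crys$ need not be powers of $p$, whereas everything built from $\{\Q_p(i)\}$ by shifts, fibres and retracts has Frobenius eigenvalues that are. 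The suggestion to transfer generation from the cover $\Spf\Z_p\langle u,t\rangle\rightarrow\Z_p^\N$ through the equaliser (\ref{eq:syntomification-equaliser}) cannot circumvent this, because a perfect $F$-gauge on $\Z_p^\Syn$ carries the extra datum of the gluing isomorphism $j_\HT^*(-)\cong j_\dR^*(-)$, and that datum is exactly what is lost when one only retains twists of the structure sheaf on $\Z_p^\N$.
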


This allows us to generalise \cref{thm:beilfibsq-motivation} to accommodate $F$-gauge coefficients in the case where $X$ is proper in the following way:

\begin{cor}
\label{cor:beilfibsq-coeffs}
Let $X$ be a $p$-adic formal scheme which is smooth and proper over $\Spf\Z_p$. For any perfect $F$-gauge $E\in\Perf(X^\Syn)$, there is a natural pullback square
\begin{equation*}
\begin{tikzcd}
R\Gamma_\Syn(X, E)[\frac{1}{p}]\ar[r]\ar[d] & R\Gamma_\Syn(X_{p=0}, T_\crys(E))[\frac{1}{p}]\ar[d] \\
\EDIT{\Fil^0_\Hod R\Gamma_\dR(X, T_{\dR, +}(E))[\frac{1}{p}]}\ar[r] & R\Gamma_\dR(X, T_\dR(E))[\frac{1}{p}]\nospacepunct{\;.}
\end{tikzcd}
\end{equation*}
\end{cor}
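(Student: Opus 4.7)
The plan is to reduce the statement to \cref{thm:beilfibsq-main} by pushing $E$ forward along the structure morphism $f\colon X\to\Spf\Z_p$. Functoriality of the stacky constructions produces maps $f^?\colon X^?\to\Z_p^?$ for each of $?\in\{\Syn,\N,\prism,\dR,\dR{+},\crys\}$, and by Appendix \ref{sect:basechange} one has Cartesian squares
\[
\begin{tikzcd}
X^?\ar[r]\ar[d, "f^?"] & X^\Syn\ar[d, "f^\Syn"] \\
\Z_p^?\ar[r] & \Z_p^\Syn
\end{tikzcd}
\]
together with the corresponding base change isomorphism $(i_?)^*f^\Syn_*\cong f^?_*(i_?^X)^*$ (here $\F_p^\Syn$ plays the role of the crystalline realisation target on the $\Z_p$-level, via \cref{ex:syntomification-tcrys}).

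The first step is to verify that $f^\Syn_*E$ lies in $\Perf(\Z_p^\Syn)$. Using the equaliser description (\ref{eq:syntomification-equaliser}) combined with base change applied to the Cartesian squares $X^\N\to X^\Syn$ over $\Z_p^\N\to\Z_p^\Syn$ and $X^\prism\to X^\Syn$ over $\Z_p^\prism\to\Z_p^\Syn$, this reduces to checking that $f^\N_*$ and $f^\prism_*$ preserve perfect complexes. These finiteness statements ultimately follow from the perfectness of Nygaard-filtered and unfiltered absolute prismatic cohomology for smooth proper $p$-adic formal schemes, via the comparisons \cref{thm:prismatisation-comparison} and \cref{thm:filprism-comparison}.

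Once perfectness is in hand, applying \cref{thm:beilfibsq-main} to $f^\Syn_*E$ yields a pullback square in $\D(\Q_p)$ whose top-left corner is
\[
R\Gamma\bigl(\Z_p^\Syn, f^\Syn_*E\bigr)[\tfrac{1}{p}]=R\Gamma(X^\Syn, E)[\tfrac{1}{p}]=R\Gamma_\Syn(X, E)[\tfrac{1}{p}].
\]
For the remaining three corners, the base change isomorphism above gives $T_?(f^\Syn_*E)\cong f^?_*T_?(E)$, whence
\[
R\Gamma\bigl(\Z_p^?, T_?(f^\Syn_*E)\bigr)[\tfrac{1}{p}]=R\Gamma\bigl(X^?, T_?(E)\bigr)[\tfrac{1}{p}]
\]
reproduces the expected corner (for $?\in\{\dR,\dR{+},\crys\}$, with $R\Gamma(X^\crys,T_\crys(E))=R\Gamma_\Syn(X_{p=0},T_\crys(E))$ and $R\Gamma(X^?,T_?(E))[\tfrac1p]$ identified with $R\Gamma_\dR(X,T_\dR(E))[\tfrac1p]$ resp.\ $\Fil^0_\Hod R\Gamma_\dR(X,T_{\dR,+}(E))[\tfrac1p]$). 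Assembling this produces the claimed pullback square.

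The main obstacle is the perfectness of $f^\Syn_*E$: both properness (for coherence of the pushforward) and smoothness (for bounded cohomological amplitude and for the prismatic comparison theorems) enter essentially in this step, which is precisely why the hypotheses on $X$ cannot be relaxed within this proof strategy. This is consistent with the remark following \cref{thm:intro-beilfibsq} in the introduction that Lurie's unpublished alternative approach is required to remove the properness assumption. A secondary technical point is that the base change squares involve the non-flat maps $i_{\dR,+}$ and $\F_p^\Syn\hookrightarrow\Z_p^\Syn$, but this is precisely what Appendix \ref{sect:basechange} is set up to handle.
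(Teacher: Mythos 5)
Your top-level strategy matches the paper's: push $E$ forward along $\pi_{X^\Syn}\colon X^\Syn\to\Z_p^\Syn$, establish perfectness of the pushforward, apply \cref{thm:beilfibsq-main}, and identify the corners of the resulting square via base change as in Appendix~\ref{sect:basechange}. The base change step is fine and is indeed what the appendix is designed to provide.

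The gap is in your justification of perfectness. You reduce (via the equaliser~(\ref{eq:syntomification-equaliser}) and the fact that $j_\N\colon\Z_p^\N\to\Z_p^\Syn$ is a cover) to showing that $f^\N_*$ and $f^\prism_*$ preserve perfect complexes, and then claim that this ``ultimately follows from the perfectness of Nygaard-filtered and unfiltered absolute prismatic cohomology\ldots via \cref{thm:prismatisation-comparison} and \cref{thm:filprism-comparison}.'' But those comparison theorems concern $t_{X,*}\O_{X^\N}\in\widehat{\DF}(\Z_p)$ and $R\Gamma(X^\prism,\O_{X^\prism})\in\widehat{\D}(\Z_p)$, i.e.\ pushforward all the way to $\A^1_-/\G_m$ (resp.\ $\Spf\Z_p$), whereas what you need is perfectness of $f^\N_*F$ as an object of $\D(\Z_p^\N)$, which records strictly more information and cannot be recovered from its further pushforward along $t\colon\Z_p^\N\to\A^1_-/\G_m$ (a non-flat, non-covering map). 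Moreover, those theorems are statements about the structure sheaf only, not about arbitrary perfect coefficients. The statement you need is exactly \cref{prop:finiteness-main}, which the paper invokes directly; its proof is nontrivial and runs through a reduction mod $(p,v_1)$ to the reduced locus $\Z_{p,\red}^\Syn$, a gluing argument over the components $\Z_{p,\HT,c}^\N$ and $\Z_{p,\dR,+}^\N$, and explicit filtered/graded cohomology computations (\cref{prop:syntomicetale-cohomologyhod}, \cref{lem:finiteness-pushforwardcomplete}) --- none of which is captured by appealing to \cref{thm:prismatisation-comparison} or \cref{thm:filprism-comparison}. So the proof stands once you replace your finiteness sketch with a citation of \cref{prop:finiteness-main}, but as written the key input is unjustified and the claimed source is not the right one.
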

\begin{proof}
Using \cref{prop:finiteness-main}, this follows immediately from \cref{thm:beilfibsq-main}.
\end{proof}

In particular, by the comparison from \cref{thm:fildrstack-comparison}, the above recovers the result of \cref{thm:beilfibsq-motivation} \EDIT{in the proper case} by putting $E=\O\{i\}$.

\subsection{Construction of the square}
\label{subsect:beilfibsq-construction}

We start by constructing the stacky Beilinson fibre square. Namely, using the open immersion $\Z_p^\dR\rightarrow\Z_p^{\dR, +}$ from \cref{rem:fildrstack-unfiltereddr} and the isomorphism $\F_p^\prism\cong\Spf\Z_p\cong\Z_p^\dR$ from \cref{ex:prismatisation-qrsp}, we obtain a square
\begin{equation}
\label{eq:beilfibsq-constructsquare}
\begin{tikzcd}
\F_p^\prism\cong\Z_p^\dR\ar[d, "j_\prism", swap]\ar[r] & \Z_p^{\dR, +}\ar[d, "i_{\dR, +}"] \\
\F_p^\Syn\ar[r] & \Z_p^\Syn\nospacepunct{\;.}
\end{tikzcd}
\end{equation}

\begin{prop}
\label{prop:beilfibsq-square}
The diagram (\ref{eq:beilfibsq-constructsquare}) is commutative.
\end{prop}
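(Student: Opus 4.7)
The plan is to factor both compositions through the Nygaard stack $\Z_p^\N$ via $j_\N$ and reduce to an equality of two morphisms $\F_p^\prism \rightrightarrows \Z_p^\prism$.

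Specifically, I would first rewrite the top-right composition as $j_\N \circ i_\dR \circ (\F_p^\prism \cong \Z_p^\dR)$, using the fact that $\Z_p^\dR \hookrightarrow \Z_p^{\dR,+} \xrightarrow{i_{\dR,+}} \Z_p^\N$ equals the de Rham map $i_\dR$ by construction. For the bottom-left composition, the pushout definition of $\F_p^\Syn$ (see \cref{defi:syntomification-def}) allows us to factor $j_\prism$ as $j_\N \circ j_\dR$; combined with naturality of $j_\dR$ and $j_\N$ in the formal scheme, this yields the expression $j_\N \circ j_\dR \circ \iota$, where $\iota: \F_p^\prism \to \Z_p^\prism$ is the canonical morphism induced by $\Spf \F_p \hookrightarrow \Spf \Z_p$. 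Commutativity therefore reduces to proving the equality
\begin{equation*}
i_\dR \circ (\F_p^\prism \cong \Z_p^\dR) \;=\; j_\dR \circ \iota
\end{equation*}
of maps $\F_p^\prism \to \Z_p^\N$.

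Both sides factor through the open immersion $j_\dR: \Z_p^\prism \hookrightarrow \Z_p^\N$: this is automatic on the right, while on the left it follows from the fact that $i_{\dR,+}$ is constructed using the map $\A^1_-/\G_m \to \Spf \Z_p \to \Z_p^\prism$ classifying the standard Cartier--Witt divisor $W(S) \xrightarrow{p} W(S)$. Since $\pi \circ j_\dR = \id_{\Z_p^\prism}$ by \cref{ex:filprism-jdrjht}, it suffices to verify the equality after post-composing with the structure map $\pi: \Z_p^\N \to \Z_p^\prism$. The left-hand side becomes $\pi \circ i_\dR$ precomposed with $\F_p^\prism \cong \Z_p^\dR$, which by the preceding observation classifies the Cartier--Witt divisor $W(S) \xrightarrow{p} W(S)$; the right-hand side becomes $\iota$, which under the identification $\F_p^\prism \cong \Spf \Z_p$ from \cref{ex:prismatisation-qrsp} (coming from the initial prism $(\Z_p, (p))$ of $\F_p$) also classifies this same Cartier--Witt divisor.

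The main bookkeeping obstacle is making sure the isomorphism $\F_p^\prism \cong \Z_p^\dR$, which arises from the crystalline comparison $(X_{p=0})^\prism \cong X^\dR$ applied to $X = \Spf \Z_p$, is compatible with the two moduli-theoretic descriptions above; once this identification is unwound, the agreement of the two universal Cartier--Witt divisors is essentially tautological.
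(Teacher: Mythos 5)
Your proof is correct and follows essentially the same route as the paper: reduce to showing that the two composites $\F_p^\prism \to \Z_p^\N$ agree, use naturality of $j_\dR$ to replace the left composite by $\F_p^\prism \to \Z_p^\prism \xrightarrow{j_\dR} \Z_p^\N$, and then identify $\F_p^\prism \to \Z_p^\prism$ as the constant map classifying the Cartier--Witt divisor $W(S) \xrightarrow{p} W(S)$. The paper cites the proof of \cite[Prop.\ 5.4.2]{FGauges} for this last identification and then observes directly that postcomposing the constant-$p$ map with $j_\dR$ gives $i_\dR$, whereas you package the final step slightly differently by invoking the retraction $\pi \circ j_\dR = \id$ to pass to a check in $\Z_p^\prism$; this is a stylistic difference only.
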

\begin{proof}
Clearly, it suffices to show that
\begin{equation*}
\begin{tikzcd}
\F_p^\prism\cong\Z_p^\dR\ar[d, "j_\dR", swap]\ar[r] & \Z_p^{\dR, +}\ar[d, "i_{\dR, +}"] \\
\F_p^\N\ar[r] & \Z_p^\N\nospacepunct{\;.}
\end{tikzcd}
\end{equation*}
commutes and this equates to showing that the composite map $\F_p^\prism\xrightarrow{j_\dR}\F_p^\N\rightarrow\Z_p^\N$ identifies with the de Rham map $i_\dR: \Z_p^\dR\rightarrow\Z_p^\N$. By naturality of the map $j_\dR$, the diagram
\begin{equation*}
\begin{tikzcd}
\F_p^\prism\ar[r]\ar[d, "j_\dR", swap] & \Z_p^\prism\ar[d, "j_\dR"] \\
\F_p^\N\ar[r] & \Z_p^\N
\end{tikzcd}
\end{equation*}
commutes and thus we may consider the composition $\F_p^\prism\rightarrow\Z_p^\prism\xrightarrow{j_\dR}\Z_p^\N$ instead. However, under the identification $\F_p^\prism\cong\Spf\Z_p$ from \cref{ex:prismatisation-qrsp}, the map $\F_p^\prism\rightarrow\Z_p^\prism$ identifies with the map $\Spf\Z_p\rightarrow\Z_p^\prism$ sending any $p$-nilpotent ring $S$ to the Cartier--Witt divisor $W(S)\xrightarrow{p} W(S)$ by the proof of \cite[Prop.\ 5.4.2]{FGauges} and this shows that the composition $\F_p^\prism\rightarrow\Z_p^\prism\xrightarrow{j_\dR}\Z_p^\N$ identifies with $i_\dR: \Spf\Z_p\cong\Z_p^\dR\rightarrow\Z_p^\N$, as claimed.
\end{proof}

\subsection{A Beilinson-type fibre square for crystalline representations}
\label{subsect:beilfibsq-crys}

Our strategy to prove \cref{thm:beilfibsq-main} will be to make use of \cref{prop:syntomification-cohcrystalline}. Thus, we will start by quickly reviewing some of the theory of crystalline representations. For more details on this topic, we refer to \cite{Fontaine}.

Recall that we use $C$ to denote a \EDIT{fixed} completed algebraic closure of $\Q_p$ and recall the rings
\begin{equation*}
\EEDIT{
A_\inf\coloneqq A_\inf(\O_C)=W(\O_C^\flat)\;,\hspace{0.5cm} A_\crys\coloneqq A_\crys(\O_C)=A_\inf[\tfrac{\xi^n}{n!}: n\geq 0]_{(p)}^\wedge
}
\end{equation*}
from \cref{ex:drstack-perfd}, where \EEDIT{$\xi$} \EDIT{is a generator of} the kernel of the Fontaine map $\theta: A_\inf\rightarrow\O_C$. Choosing once and for all a compatible system $\epsilon=(1, \zeta_p, \zeta_{p^2}, \dots)\in\O_C^\flat$ of $p$-power roots of unity, one can actually take
\begin{equation*}
\EEDIT{
\xi=\frac{[\epsilon]-1}{[\epsilon^{1/p}]-1}=1+[\epsilon^{1/p}]+\dots+[\epsilon^{(p-1)/p}]\;,
}
\end{equation*}
where $[\hspace{2pt}\cdot\hspace{2pt}]: \O_C^\flat\rightarrow W(\O_C^\flat)$ is the Teichmüller representative. \EDIT{One can show that} $A_\crys$ contains \EDIT{the element}
\begin{equation}
\label{eq:beilfibsq-t}
t=\log[\epsilon]=-\sum_{k=1}^\infty \frac{(1-[\epsilon])^k}{k}\;,
\end{equation}
\EDIT{see \cite[Prop.\ 7.6]{Fontaine},} and \EDIT{thus} we define rings
\begin{equation*}
B_\crys^+\coloneqq A_\crys[\tfrac{1}{p}]\;,\hspace{0.5cm} B_\crys\coloneqq B_\crys^+[\tfrac{1}{t}]\;;
\end{equation*}
note that one can actually show that $B_\crys=A_\crys[\frac{1}{t}]$. The ring $B_\crys$ is equipped with a natural $G_{\Q_p}$-action and, perhaps most importantly, the Frobenius $\phi$ of $A_\inf$ can be continued to an endomorphism of $B_\crys$. We moreover define a ring
\begin{equation*}
B_\dR^+\coloneqq A_\inf[\tfrac{1}{p}]_{(\xi)}^\wedge\;,
\end{equation*}
which is a complete discrete valuation ring with uniformiser $\xi$ and hence its fraction field $B_\dR\coloneqq B_\dR[\tfrac{1}{\xi}]$ is equipped with a natural descending \EEDIT{$\xi$-adic} filtration
\begin{equation*}
\Fil^i B_\dR\coloneqq \xi^iB_\dR^+\;.
\end{equation*}
Note that $B_\crys^+\subseteq B_\dR^+$ and $B_\crys\subseteq B_\dR$.

\begin{defi}
A finite-dimensional $G_{\Q_p}$-representation $V$ over $\Q_p$ is called \emph{crystalline} if there is a $G_{\Q_p}$-equivariant isomorphism
\begin{equation*}
V\tensor_{\Q_p} B_\crys\cong B_\crys^{\dim V}\;.
\end{equation*}
We use $\Rep_{\Q_p}^\crys(G_{\Q_p})$ to denote the category of crystalline $G_{\Q_p}$-representations.
\end{defi}

For a crystalline representation $V$, we write
\begin{equation*}
D_\crys(V)\coloneqq (V\tensor_{\Q_p} B_\crys)^{G_{\Q_p}}\;,
\end{equation*}
where $G_{\Q_p}$ acts diagonally. \EDIT{From} $B_\dR^{G_{\Q_p}}=B_\crys^{G_{\Q_p}}=\Q_p$, \EDIT{see \cite[Prop.\ 6.26]{Fontaine}, we conclude that} $D_\crys(V)$ is a $\Q_p$-vector space of dimension $\dim V$. By virtue of the Frobenius on $B_\crys$, it comes equipped with a natural Frobenius action $\phi$ and due to 
\begin{equation*}
D_\crys(V)=(V\tensor_{\Q_p} B_\dR)^{G_{\Q_p}}\;,
\end{equation*}
\EDIT{which again comes from $B_\dR^{G_{\Q_p}}=B_\crys^{G_{\Q_p}}$ and $V$ being crystalline}, it also inherits a natural filtration from $B_\dR$. Thus, $D_\crys$ defines a functor
\begin{equation*}
D_\crys: \Rep_{\Q_p}^\crys(G_{\Q_p})\rightarrow\MF^\phi(\Q_p)
\end{equation*}
from crystalline representations into the category of filtered $\phi$-modules over $\Q_p$, which is defined as follows:

\begin{defi}
A \emph{$\phi$-module} over $\Q_p$ is a $\Q_p$-vector space $M$ together with the data of an isomorphism $\phi^*M\cong M$, where $\phi$ denotes the Frobenius on $\Q_p$, and we denote the category of $\phi$-modules over $\Q_p$ by $\Mod^\phi(\Q_p)$. A \emph{filtered $\phi$-module} over $\Q_p$ is a filtered $\Q_p$-vector space $D\in\MF(\Q_p)\coloneqq\Fun((\Z, \geq), \Mod(\Q_p))$ equipped with the data of a $\phi$-module structure on $\ul{D}=\colim_i D(i)$.
\end{defi}

Note that our definition of a filtered $\phi$-module differs slightly from the usual one in that we also allow non-honest filtrations (i.e.\ filtrations with non-injective transition maps) and that we furthermore do not require the filtration to be \EDIT{complete} as one would usually do. This has the advantage that the category $\MF^\phi(\Q_p)$ is clearly abelian (while this is not the case with the usual definition) and it is also more convenient for our goal of proving \cref{thm:beilfibsq-main}.

It is a theorem of Fontaine that the functor $D_\crys$ defines an equivalence from $\Rep_{\Q_p}^\crys(G_{\Q_p})$ onto an abelian subcategory of $\MF^\phi(\Q_p)$ denoted $\MF^\phi_\adm(\Q_p)$, \EDIT{see \EEDIT{\cite[§5.5.2.(iii)]{Semistable}},} and a filtered $\phi$-module in the essential image of $D_\crys$ is called \emph{admissible}.

Note that we have exact forgetful functors
\begin{equation*}
\begin{split}
T_{\dR, +}: \MF^\phi(\Q_p)&\rightarrow \MF(\Q_p)\;, \\
T_\crys: \MF^\phi(\Q_p)&\rightarrow\Mod^\phi(\Q_p)\;, \\
T_\dR: \MF^\phi(\Q_p)&\rightarrow\Mod(\Q_p)
\end{split}
\end{equation*}
defined by forgetting the $\phi$-structure, the filtration or both, respectively. Note that we use the same notation as for the realisation functors for $F$-gauges, but it will always be clear from the context which functor we mean. Our main goal in this section is to prove the following statement:

\begin{prop}
\label{prop:beilfibsq-crys}
Let $D\in\D^b(\MF^\phi_\adm(\Q_p))$ be a bounded complex of admissible filtered $\phi$-modules. Then the forgetful functors above induce a pullback square
\begin{equation*}
\begin{tikzcd}
\RHom_{\MF^\phi_\adm(\Q_p)}(\Q_p, D)\ar[r]\ar[d] & \RHom_{\Mod^\phi(\Q_p)}(\Q_p, T_\crys(D))\ar[d] \\
\RHom_{\MF(\Q_p)}(\Q_p, T_{\dR, +}(D))\ar[r] & \RHom_{\Q_p}(\Q_p, T_\dR(D))\nospacepunct{\;.}
\end{tikzcd}
\end{equation*}
\end{prop}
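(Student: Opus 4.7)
Both sides of the asserted pullback square define exact functors of triangulated categories from $\D^b(\MF^\phi_\adm(\Q_p))$ to $\D(\Q_p)$, and the forgetful functors induce a natural comparison map from $\RHom_{\MF^\phi_\adm(\Q_p)}(\Q_p, -)$ to the pullback on the right. Hence it suffices to verify that this comparison is an equivalence when $D$ is concentrated in a single degree.

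For such a $D \in \MF^\phi_\adm(\Q_p)$, I would first compute the three $\RHom$'s on the right. The trivially-filtered $\Q_p$ is projective in $\MF(\Q_p) = \Fun((\Z, \geq), \Vec_{\Q_p})$ since it is the representable functor at $0$, so $\RHom_{\MF(\Q_p)}(\Q_p, T_{\dR, +}(D)) \simeq \Fil^0 D$ in degree zero. Since $\Mod^\phi(\Q_p) = \Mod(\Q_p[\phi^{\pm 1}])$ and $\Q_p$ is resolved by $\Q_p[\phi^{\pm 1}] \xrightarrow{\phi - 1} \Q_p[\phi^{\pm 1}]$, we have $\RHom_{\Mod^\phi(\Q_p)}(\Q_p, T_\crys(D)) \simeq \fib(D \xrightarrow{\phi - 1} D)$. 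Finally $\RHom_{\Q_p}(\Q_p, T_\dR(D)) \simeq D$, with the two incoming maps being the obvious ones. A long exact sequence calculation for the resulting pullback $P$ shows that $H^0(P) = (\Fil^0 D)^{\phi = 1}$, that $H^i(P) = 0$ for $i \geq 2$, and that $H^1(P)$ fits in a short exact sequence $0 \to D/(\Fil^0 D + D^{\phi = 1}) \to H^1(P) \to D_\phi \to 0$, where $D_\phi := D/(\phi - 1)D$.

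To compute the left-hand side, the crucial observation is that $\MF^\phi_\adm(\Q_p) \subseteq \MF^\phi(\Q_p)$ is a full abelian subcategory stable under extensions. Indeed, by the Colmez--Fontaine theorem, admissibility in $\MF^\phi(\Q_p)$ coincides with weak admissibility, and weak admissibility is preserved by short exact sequences since the slope invariants $t_H$ and $t_N$ are additive on exact sequences and the defining inequality $t_H(D') \leq t_N(D')$ for subobjects is inherited through intersection with the given subobject. A standard argument on Yoneda extensions then yields $\Ext^*_{\MF^\phi_\adm(\Q_p)}(\Q_p, D) = \Ext^*_{\MF^\phi(\Q_p)}(\Q_p, D)$. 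Classifying extensions in $\MF^\phi(\Q_p)$ by hand, one obtains
\[
\RHom_{\MF^\phi(\Q_p)}(\Q_p, D) \simeq \fib\bigl(D \xrightarrow{(\can,\, \phi - 1)} D/\Fil^0 D \oplus D\bigr),
\]
which is concentrated in degrees $[0, 1]$ with $H^0 = (\Fil^0 D)^{\phi = 1}$ and $H^1$ fitting into the same short exact sequence as $H^1(P)$. A compatibility check via the natural comparison map and the five lemma completes the proof. The main non-trivial input is Colmez--Fontaine; the rest reduces to explicit computations with resolutions.
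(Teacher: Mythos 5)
Your computation of the three outer $\RHom$'s agrees with the paper's, and the reduction to $D$ concentrated in a single degree is valid. The gap lies in the claim that a ``standard argument on Yoneda extensions'' yields $\Ext^*_{\MF^\phi_\adm(\Q_p)}(\Q_p, D) = \Ext^*_{\MF^\phi(\Q_p)}(\Q_p, D)$. Closure of $\MF^\phi_\adm(\Q_p)$ under extensions (together with closure under kernels and cokernels, which does hold because $D_\crys$ is a fully faithful exact embedding onto an abelian subcategory) gives this identification only in degrees $0$ and $1$. For $n\geq 2$ there is no such standard argument: the middle terms of a Yoneda $n$-extension, and of the chains of maps witnessing equivalence of two such extensions, need not lie in the subcategory, so for a weak Serre subcategory $\cal{B}\subseteq\cal{A}$ the natural map $\Ext^n_{\cal{B}}(M,N)\to\Ext^n_{\cal{A}}(M,N)$ is in general neither injective nor surjective when $n\geq 2$.

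Relatedly, ``classifying extensions by hand'' only justifies your displayed formula for $\RHom_{\MF^\phi(\Q_p)}(\Q_p, D)$ in cohomological degrees $0$ and $1$; it does not establish the vanishing of $\Ext^{\geq 2}_{\MF^\phi(\Q_p)}(\Q_p, D)$, and it is not apparent that $\Q_p$ has projective dimension $\leq 1$ there (nor can one appeal to injectives, since $\MF^\phi_\adm(\Q_p)$ has none). The concentration of $\RHom_{\MF^\phi_\adm(\Q_p)}(\Q_p, D)$ in degrees $[0,1]$, recorded in \cref{rem:beilfibsq-rhommfphionetrunc}, is precisely the nontrivial content of the proposition, and the paper obtains it by citing \cite[Cor.\ 2.4.4]{CrystallineExt}, which computes $\RHom_{\MF^\phi_\adm(\Q_p)}(\Q_p, D)$ directly as $\fib(\ul{D}^{\phi=1}\to\ul{D}/\Fil^0\ul{D})$. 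Filling the gap in your argument would require an independent proof of that vanishing, and passing to the ambient category $\MF^\phi(\Q_p)$ does not by itself supply one.
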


Note that, as the category $\MF^\phi_\adm(\Q_p)$ does not have enough injectives \EDIT{(in fact, it does not have any nonzero injective object by \cite[Rem.\ 1.9]{Bannai})}, the notation $\RHom_{\MF^\phi_\adm(\Q_p)}(\Q_p, D)$ does not make sense a priori. However, whenever we have an abelian category $\cal{A}$, we can embed it into its Ind-category $\Ind(\cal{A})$ and this will have enough injectives. By \cite[Thm.\ 3.5]{Oort}, we have
\begin{equation*}
\Ext^i_{\cal{A}}(M, N)\cong H^i(\RHom_{\Ind(\cal{A})}(M, N))\;,
\end{equation*}
where we use Yoneda Ext on the left-hand side, and hence we may define 
\begin{equation*}
\RHom_{\cal{A}}(M, N)\coloneqq\RHom_{\Ind(\cal{A})}(M, N)
\end{equation*}
as this recovers the usual definition in the case where $\cal{A}$ has enough injectives.

\begin{proof}[Proof of \cref{prop:beilfibsq-crys}]
First observe that we can readily calculate
\begin{equation*}
\begin{split}
\RHom_{\Q_p}(\Q_p, T_\dR(D))&=\ul{D}\;, \\
\RHom_{\MF(\Q_p)}(\Q_p, T_{\dR, +}(D))&=\Fil^0\ul{D}\;, \\
\RHom_{\Mod^\phi(\Q_p)}(\Q_p, T_\crys(D))&=\ul{D}^{\phi=1}\;,
\end{split}
\end{equation*}
\EDIT{where 
\begin{equation*}
\ul{D}^{\phi=1}\coloneqq\fib(\ul{D}\xrightarrow{\phi-1}\ul{D})
\end{equation*}
denotes the derived $\phi$-invariants of $\ul{D}$,} and hence we only need to show that
\begin{equation*}
\RHom_{\MF^\phi_\adm(\Q_p)}(\Q_p, D)\cong \fib(\ul{D}^{\phi=1}\rightarrow\ul{D}/\Fil^0\ul{D})\;,
\end{equation*}
where the quotient $\ul{D}/\Fil^0\ul{D}$ is to be taken in the derived sense, i.e.\ 
\begin{equation*}
\ul{D}/\Fil^0\ul{D}\coloneqq\cofib(\Fil^0\ul{D}\rightarrow\ul{D})\;.
\end{equation*}
However, this follows from \cite[Cor.\ 2.4.4]{CrystallineExt}.
\end{proof}

\begin{rem}
\label{rem:beilfibsq-rhommfphionetrunc}
The above proof also shows that, if $D$ is an admissible filtered $\phi$-module, then the complex $\RHom_{\MF^\phi_\adm(\Q_p)}(\Q_p, D)$ is concentrated in degrees $0$ and $1$: \EDIT{Indeed, in this case the map $\Fil^0\ul{D}\rightarrow\ul{D}$ is injective and hence $\ul{D}/\Fil^0\ul{D}$ is concentrated in degree zero.}
\end{rem}

\subsection{{É}tale realisation and the Beilinson fibre square}

\label{sect:etrealisationandbeilfibsq}

Recall from \cref{prop:syntomification-cohcrystalline} that, for any $E\in\Coh(\Z_p^\Syn)$, the Galois representation $T_\et(E)[\tfrac{1}{p}]$ is crystalline. Hence, we can associate to $E$ a filtered $\phi$-module $D_\crys(T_\et(E)[\tfrac{1}{p}])$ and, in this section, we will show how this filtered $\phi$-module can also be obtained in a geometric manner.

To explain this, recall from \cref{ex:syntomification-fgaugesperfd} that an $F$-gauge $E$ on $\F_p$ corresponds to a diagram 
\begin{equation}
\label{eq:beilfibsq-fgaugefp}
\begin{tikzcd}
\dots \ar[r,shift left=.5ex,"t"]
  & \ar[l,shift left=.5ex, "u"] M^{i+1} \ar[r,shift left=.5ex,"t"] & \ar[l,shift left=.5ex, "u"] M^i \ar[r,shift left=.5ex,"t"] & \ar[l,shift left=.5ex, "u"] M^{i-1} \ar[r,shift left=.5ex,"t"] & \ar[l,shift left=.5ex, "u"] \dots
\end{tikzcd}
\end{equation}
of $p$-complete $\Z_p$-complexes such that $ut=tu=p$ together with an isomorphism $\tau: \phi^*M^\infty\cong M^{-\infty}$. Base changing everything to $\Q_p$, we see that all maps in the diagram above become isomorphisms and the only remaining data is $M^{-\infty}$ together with the isomorphism $\phi^*M^{-\infty}[\frac{1}{p}]\cong M^{-\infty}[\frac{1}{p}]$ induced by $\tau$. Sending $E$ to $M^{-\infty}[\frac{1}{p}]$ equipped with its Frobenius structure determines a functor
\begin{equation}
\label{eq:beilfibsq-dfpsynisog}
(-)[\tfrac{1}{p}]: \D(\F_p^\Syn)\rightarrow\Mod_{\Q_p[x, x^{-1}]}(\D(\Q_p))\cong\D(\Mod^\phi(\Q_p))\;,
\end{equation}
where the last equivalence is due to \cite[Thm.\ 7.1.3.1]{HA}, and we remind the reader that \EDIT{the restriction of this functor to $\Perf(\F_p^\Syn)$ exactly recovers} the functor from (\ref{eq:syntomification-fgaugetolaurentfcrystal}). \EDIT{Finally, recalling that the pullback $j_\prism^*E\in\D(\F_p^\prism)\cong\widehat{\D}(\Z_p)$, where we have used \cref{ex:prismatisation-qrsp}, identifies with $M^{-\infty}$, we see that the functor (\ref{eq:beilfibsq-dfpsynisog}) yields a commutative diagram
\begin{equation}
\label{eq:beilfibsq-crysfrob}
\begin{tikzcd}[ampersand replacement=\&, column sep=large]
\D(\F_p^\Syn)\ar[r, "j_\prism^*"]\ar[dr, "{(-)[\tfrac{1}{p}]}", swap] \& \D(\F_p^\prism)\cong \widehat{\D}(\Z_p)\ar[r, "{(-)[\tfrac{1}{p}]}"] \& \D(\Q_p) \\
\&  \D(\Mod^\phi(\Q_p))\ar[ur] \& \nospacepunct{\;,}
\end{tikzcd}
\end{equation}
where $\D(\Mod^\phi(\Q_p))\rightarrow \D(\Q_p)$ is the forgetful functor.}

\begin{lem}
\label{lem:beilfibsq-rationalcohfpsyn}
For any $E\in\D(\F_p^\Syn)$, the functor $(-)[\frac{1}{p}]$ induces an isomorphism
\begin{equation*}
R\Gamma(\F_p^\Syn, E)[\tfrac{1}{p}]\cong \RHom_{\Mod^\phi(\Q_p)}(\Q_p, E[\tfrac{1}{p}])\;.
\end{equation*}
\end{lem}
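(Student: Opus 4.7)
The plan is to compute both sides explicitly using the description of $F$-gauges on $\F_p$ from \cref{ex:syntomification-fgaugesperfd} (which applies with $\xi$ replaced by $p$, since $(\Prism_{\F_p}, I) = (\Z_p, (p))$) and observe that they agree after rationalisation.

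First I would apply the fibre sequence (\ref{eq:syntomification-fibreseq}) together with the explicit computation of cohomology on $R^\Syn$ in the quasiregular semiperfectoid case. For an $F$-gauge $E \in \D(\F_p^\Syn)$ corresponding to a diagram (\ref{eq:beilfibsq-fgaugefp}) together with a gluing isomorphism $\tau : \phi^* M^\infty \cong M^{-\infty}$, this yields
\begin{equation*}
R\Gamma(\F_p^\Syn, E) = \fib\bigl(M^0 \xrightarrow{t^\infty - \tau u^\infty} M^{-\infty}\bigr).
\end{equation*}

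The main step is then to invert $p$. Since $ut = tu = p$, both $t$ and $u$ in (\ref{eq:beilfibsq-fgaugefp}) become isomorphisms after inverting $p$, so I expect the same for the induced maps $u^\infty : M^0[\tfrac{1}{p}] \xrightarrow{\cong} M^\infty[\tfrac{1}{p}]$ and $t^\infty : M^0[\tfrac{1}{p}] \xrightarrow{\cong} M^{-\infty}[\tfrac{1}{p}]$. Writing $V \coloneqq E[\tfrac{1}{p}]$ for the common value (identified with $M^{-\infty}[\tfrac{1}{p}]$ via $t^\infty$), the above fibre sequence rewrites as
\begin{equation*}
R\Gamma(\F_p^\Syn, E)[\tfrac{1}{p}] \cong \fib\bigl(V \xrightarrow{1 - \phi} V\bigr),
\end{equation*}
where $\phi = (t^\infty)^{-1} \circ \tau \circ u^\infty$ is precisely the $\phi$-module structure carried by $V$ under the functor (\ref{eq:beilfibsq-dfpsynisog}), as one sees by tracing the constructions.

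For the right-hand side, I would exploit that the Frobenius on $\Q_p$ is the identity, so that $\Mod^\phi(\Q_p)$ is equivalent to $\Mod(\Q_p[x^{\pm 1}])$ with $\Q_p$ corresponding to $x = 1$. The two-term free resolution $\Q_p[x^{\pm 1}] \xrightarrow{x - 1} \Q_p[x^{\pm 1}] \twoheadrightarrow \Q_p$ then computes
\begin{equation*}
\RHom_{\Mod^\phi(\Q_p)}(\Q_p, V) \cong \fib\bigl(V \xrightarrow{\phi - 1} V\bigr),
\end{equation*}
which matches the expression above up to sign. All identifications being natural in $E$, this will give the desired functorial isomorphism. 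The only potentially delicate point I anticipate is justifying that the $p$-completed colimits defining $M^{\pm \infty}$ behave as expected after rationalisation; this should be automatic since the relevant transition maps become isomorphisms after inverting $p$, making completion harmless.
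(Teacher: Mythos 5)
Your proposal is correct and follows essentially the same route as the paper: identify $R\Gamma(\F_p^\Syn, E)$ with $\fib(M^0 \xrightarrow{t^\infty - \tau u^\infty} M^{-\infty})$ via the perfectoid description, note that $t^\infty$ and $u^\infty$ become isomorphisms after inverting $p$, and rewrite the fibre as the derived $\phi$-invariants. The extra detail you supply — spelling out the Koszul-type resolution $\Q_p[x^{\pm 1}] \xrightarrow{x-1} \Q_p[x^{\pm 1}]$ to compute $\RHom_{\Mod^\phi(\Q_p)}(\Q_p, -)$, and flagging the harmlessness of the $p$-completed colimits after rationalisation — is sound but is exactly what the paper treats as understood.
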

\begin{proof}
Recall from \cref{ex:syntomification-fgaugesperfd} that
\begin{equation*}
R\Gamma(\F_p^\Syn, E)=\fib(M^0\xrightarrow{t^\infty-\tau u^\infty} M^{-\infty})
\end{equation*}
in the above notation. However, as $t^\infty$ and $u^\infty$ become isomorphisms upon inverting $p$, this yields
\begin{equation*}
R\Gamma(\F_p^\Syn, E)[\tfrac{1}{p}]\cong\fib(M^{-\infty}[\tfrac{1}{p}]\xrightarrow{1-\phi} M^{-\infty}[\tfrac{1}{p}])\cong\RHom_{\Mod^\phi(\Q_p)}(\Q_p, M^{-\infty}[\tfrac{1}{p}])\;,
\end{equation*}
so we are done.
\end{proof}

\begin{lem}
\label{lem:beilfibsq-etalephimod}
For any $E\in\Coh(\Z_p^\Syn)$, there is a natural isomorphism
\begin{equation*}
T_\crys(E)[\tfrac{1}{p}]\cong T_\crys(D_\crys(T_\et(E)[\tfrac{1}{p}]))\;.\footnote{\EDIT{Beware that, on the right-hand side, we use the functor $T_\crys: \MF^\phi(\Q_p)\rightarrow\Mod^\phi(\Q_p)$, while $T_\crys(E)$ on the left-hand side denotes the pullback of $E$ to $\F_p^\Syn$ as defined in \cref{ex:syntomification-tcrys}.}}
\end{equation*}
\end{lem}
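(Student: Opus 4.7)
The strategy is to compute both sides concretely in terms of the prismatic $F$-crystal $\cal{M}$ on $\Z_p$ associated to $E$ (via the equivalence of $\D(\Z_p^\prism)$ with prismatic crystals recalled in \cref{ex:prismatisation-bktwist}) and identify them using the crystalline period isomorphism.

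First, I would unwind the left-hand side. By the commutative diagram (\ref{eq:beilfibsq-crysfrob}), the functor $(-)[\tfrac{1}{p}]$ applied to $T_\crys(E) \in \D(\F_p^\Syn)$ is computed by pulling back along $j_\prism: \F_p^\prism \hookrightarrow \F_p^\Syn$ and inverting $p$, with the Frobenius coming from the gluing description of the syntomification. The composite $\F_p^\prism \to \F_p^\Syn \to \Z_p^\Syn$ factors as $\F_p^\prism \to \Z_p^\prism \xrightarrow{j_\prism} \Z_p^\Syn$, and the proof of \cref{prop:beilfibsq-square} together with the identification $\F_p^\prism \cong \Spf \Z_p$ of \cref{ex:prismatisation-qrsp} shows that the map $\F_p^\prism \to \Z_p^\prism$ classifies the Cartier--Witt divisor $W(S) \xrightarrow{p} W(S)$; in prismatic language, this is evaluation on the prism $(\Z_p, (p))$. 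Hence $T_\crys(E)[\tfrac{1}{p}]$ is, as a $\phi$-module, the pullback $\cal{M}(\Z_p, (p))[\tfrac{1}{p}]$ equipped with its canonical Frobenius.

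Second, I would analyse the right-hand side. Letting $\cal{M}_{\O_C}$ denote the pullback of $\cal{M}$ to the perfect prism $(A_\inf, (\xi))$ of $\O_C$, the description of the étale realization via (\ref{eq:syntomification-fgaugetolaurentfcrystal})--(\ref{eq:syntomification-laurentfcrystaltolocsys}) together with the classical crystalline period comparison furnish a $(\phi, G_{\Q_p})$-equivariant isomorphism
\begin{equation*}
T_\et(E)[\tfrac{1}{p}] \otimes_{\Q_p} B_\crys \cong \cal{M}_{\O_C}[\tfrac{1}{p}] \otimes_{A_\inf[\tfrac{1}{p}]} B_\crys\;.
\end{equation*}
Taking $G_{\Q_p}$-invariants identifies the $\phi$-module $T_\crys(D_\crys(T_\et(E)[\tfrac{1}{p}]))$ with $(\cal{M}_{\O_C}[\tfrac{1}{p}] \otimes_{\Q_p} B_\crys)^{G_{\Q_p}}$.

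Finally, to connect the two sides I would invoke the prismatic--crystalline comparison: base changing to $A_\crys = A_\crys(\O_C)$, which accommodates both prisms $(\Z_p, (p))$ and $(A_\inf, (\xi))$ via the maps $\Z_p \hookrightarrow A_\inf \hookrightarrow A_\crys$, furnishes a natural $\phi$-equivariant isomorphism $\cal{M}(\Z_p, (p)) \otimes_{\Z_p} A_\crys \cong \cal{M}_{\O_C} \otimes_{A_\inf} A_\crys$. Since $B_\crys^{G_{\Q_p}} = \Q_p$, taking Galois invariants after inverting $p$ in both descriptions recovers $\cal{M}(\Z_p, (p))[\tfrac{1}{p}]$ on one side and $D_\crys(T_\et(E)[\tfrac{1}{p}])$ on the other, compatibly with Frobenius, which gives the desired natural isomorphism. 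The main obstacle is verifying the prismatic--crystalline comparison in precisely this form for arbitrary coherent (rather than reflexive) $E$; if this proves delicate, one can alternatively dévisse to the reflexive case, where the equivalence $\Coh^\refl(\Z_p^\Syn) \cong \Rep_{\Z_p}^\crys(G_{\Q_p})$ of \cref{thm:syntomification-reflcrys} combined with the explicit construction of the reflexive-to-crystal functor from \cref{rem:syntomification-descriptionrefltofcrystal} directly yields the compatibility, and one then extends to all of $\Coh(\Z_p^\Syn)$ by exactness of both functors.
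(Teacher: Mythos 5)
Your approach is essentially the one the paper takes: identify $T_\crys(E)[\tfrac{1}{p}]$ with the Frobenius-equipped module $\cal{E}(\Z_p)[\tfrac{1}{p}]$, express $T_\et(E)\otimes B_\crys$ in terms of $\cal{E}(A_\inf)\otimes B_\crys$ via a BMS-type argument, pass between the two through $A_\crys$ using the crystal property, and take $G_{\Q_p}$-invariants. Two remarks on where your proposal diverges from the written proof and where your fallback reasoning slips.

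First, the paper does not attempt the direct argument for general coherent $E$: its very first move is to reduce to the reflexive case, and the justification is not exactness but the fact, from \cite[Prop.\ 6.7.1]{FGauges}, that every coherent $F$-gauge on $\Z_p$ is $p$-isogenous to a reflexive one. Since the statement to be proved lives entirely in the $p$-isogeny category, this reduction is immediate and requires no dévissage. Your instinct that the direct argument "for arbitrary coherent $E$" is delicate is correct — the input \cite[Lem.\ 4.26]{BMS} concerns honest Breuil--Kisin--Fargues modules, and there is no clean form of it for a general perfect complex equipped with a Frobenius structure — so in practice you cannot sidestep the reduction. But "extend by exactness of both functors" is not a valid substitute: the $F$-gauges that actually arise from reflexive ones by finite resolutions are not all of $\Coh(\Z_p^\Syn)$, and nothing in your sketch controls the kernel/cokernel terms; the $p$-isogeny equivalence is the tool you need, and it should be stated as such.

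Second, the "classical crystalline period comparison" you invoke needs to be \cite[Lem.\ 4.26]{BMS}, and when you apply it you must be careful with normalisations: with the conventions of the paper it is $\phi^*\cal{E}(A_\inf)$, not $\cal{E}(A_\inf)$, that is a Breuil--Kisin--Fargues module in the sense of \cite[Def.\ 4.22]{BMS}, so the element playing the role of $\mu$ differs by a Frobenius twist. This does not affect the outcome because $\mu$ is invertible in $B_\crys$ either way, but the isomorphism $T_\et(E)\otimes_{\Z_p}A_\inf[\tfrac{1}{\mu}]\cong\cal{E}(A_\inf)[\tfrac{1}{\mu}]$ should be stated with the twist made explicit; otherwise the Frobenius-equivariance claim is not quite what \cite{BMS} gives you.
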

\begin{proof}
Since any coherent $F$-gauge on $\Z_p$ is isomorphic to a reflexive $F$-gauge up to $p$-isogeny by \cite[Prop.\ 6.7.1]{FGauges}, we may assume that $E$ is reflexive. Let $\cal{E}\in\Vect^\phi((\Z_p)_\prism)$ denote the prismatic $F$-crystal in vector bundles corresponding to $E$ by \cref{thm:syntomification-reflcrys}. \EEDIT{Identifying $T_\crys(E)$ with a diagram (\ref{eq:beilfibsq-fgaugefp}) and writing $M^{-\infty}\coloneqq(\colim_i M^{-i})_{(p)}^\wedge$ as usual, the module} $M^{-\infty}$ identifies with the pullback of $\cal{E}$ to the absolute prismatic site of $\F_p$, i.e.\ $T_\crys(E)[\tfrac{1}{p}]\cong\cal{E}(\Z_p)[\frac{1}{p}]$, so we have to show
\begin{equation}
\label{eq:beilfibsq-etalephimod}
\cal{E}(\Z_p)[\tfrac{1}{p}]\cong (T_\et(E)\tensor_{\Z_p} B_\crys)^{G_{\Q_p}}
\end{equation}
as $\phi$-modules. Recalling that
\begin{equation*}
T_\et(E)=(\cal{E}(A_\inf)\tensor_{A_\inf} W(C^\flat))^{\phi=1}
\end{equation*}
\EEDIT{from (\ref{eq:syntomification-laurentfcrystaltolocsys}),} we see that \cite[Lem.\ 4.26]{BMS} implies that there is a natural isomorphism
\begin{equation}
\label{eq:beilfibsq-tetcrystal}
T_\et(E)\tensor_{\Z_p} A_\inf[\tfrac{1}{\mu}]\cong \cal{E}(A_\inf)[\tfrac{1}{\mu}]\;,
\end{equation}
where $\mu\coloneqq [\epsilon^{1/p}]-1$; note that our definition of $\mu$ differs from the one in loc.\ cit.\ by a Frobenius twist since not $\cal{E}(A_\inf)$, but $\phi^*\cal{E}(A_\inf)$ is a Breuil--Kisin--Fargues module in the sense of \cite[Def. 4.22]{BMS}. \EEDIT{Using the maps $\O_C\rightarrow \O_C/p\leftarrow\F_p$ and the fact that $(A_\crys, (p))$ is the initial object of the absolute prismatic site of $\O_C/p$, see \cite[Not. 5.1]{FCrystals}, we obtain natural maps of prisms 
\begin{equation*}
(A_\inf, (\xi))\rightarrow (A_\crys, (p))\leftarrow (\Z_p, (p))
\end{equation*}
and hence $\cal{E}$ being a prismatic crystal yields isomorphisms}
\begin{equation}
\label{eq:beilfibsq-fcrystalbasechange}
\cal{E}(A_\inf)\tensor_{A_\inf} A_\crys\cong \cal{E}(A_\crys)\cong \cal{E}(\Z_p)\tensor_{\Z_p} A_\crys\;.
\end{equation}
\EEDIT{Finally, as} $\mu$ is invertible in $B_\crys$ (note that $t\in \xi\mu B_\crys^+$), we overall obtain
\begin{equation*}
\EEDIT{
T_\et(E)\tensor_{\Z_p} B_\crys\cong \cal{E}(A_\inf)\tensor_{A_\inf} B_\crys\cong \cal{E}(\Z_p)\tensor_{\Z_p} B_\crys\;.
}
\end{equation*}
As this isomorphism is both $\phi$- and $G_{\Q_p}$-equivariant, taking $G_{\Q_p}$-fixed points yields (\ref{eq:beilfibsq-etalephimod}).
\end{proof}

Observe that there is also a functor
\begin{equation}
\label{eq:beilfibsq-da1gmisog}
(-)[\tfrac{1}{p}]: \D(\Z_p^{\dR, +})\cong\D(\A^1_-/\G_m)\cong\widehat{\DF}(\Z_p)\rightarrow\DF(\Q_p)
\end{equation}
\EDIT{given by sending a $p$-complete filtered complex $M^\bullet$ over $\Z_p$ to the filtered complex $M^\bullet[\tfrac{1}{p}]$ over $\Q_p$ and that this yields an analogous result to \cref{lem:beilfibsq-rationalcohfpsyn}. Namely, we have the following:}

\begin{lem}
\label{lem:beilfibsq-rationalcoha1gm}
\EDIT{For any $E\in\D(\Z_p^{\dR, +})$, the functor $(-)[\tfrac{1}{p}]$ induces an isomorphism}
\begin{equation*}
\EDIT{R\Gamma(\Z_p^{\dR, +}, E)[\tfrac{1}{p}]\cong \RHom_{\MF(\Q_p)}(\Q_p, E[\tfrac{1}{p}])\;.}
\end{equation*}
\end{lem}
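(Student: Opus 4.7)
The plan is to show that both sides compute $\Fil^0$ of the rationalised filtered complex corresponding to $E$, mirroring the argument for \cref{lem:beilfibsq-rationalcohfpsyn}. First, under the Rees equivalence $\D(\Z_p^{\dR,+})\cong\D(\A^1_-/\G_m)\cong\widehat{\DF}(\Z_p)$, the pushforward along the structure map $\pi\colon\A^1_-/\G_m\to\Spf\Z_p$ corresponds to the functor $\Fil^0\colon\widehat{\DF}(\Z_p)\to\widehat{\D}(\Z_p)$. Indeed, factoring $\pi$ as $\A^1_-/\G_m\to B\G_m\to\Spf\Z_p$, the first map is affine and its pushforward merely forgets the $\Z_p[t]$-action on the Rees module while preserving the grading, and the second pushforward takes a graded $\Z_p$-module to its degree-$0$ piece. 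In the sign convention of the paper, the Rees module $\Rees(\Fil^\bullet M)=\bigoplus_n\Fil^n M$ has $\Fil^n M$ sitting in degree $n$, so the degree-zero part is precisely $\Fil^0 M$.

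Second, for any $N\in\DF(\Q_p)$, one has $\RHom_{\MF(\Q_p)}(\Q_p, N)\cong\Fil^0 N$, where $\Q_p$ is endowed with the trivial filtration (i.e.\ $\Fil^n\Q_p=\Q_p$ for $n\leq 0$ and $\Fil^n\Q_p=0$ for $n>0$). This is because the inclusion $\Mod(\Q_p)\hookrightarrow\MF(\Q_p)$ sending a vector space to its trivially filtered version is left adjoint to the functor $\Fil^0\colon\MF(\Q_p)\to\Mod(\Q_p)$, and both functors are exact, so the adjunction upgrades directly to the derived level without any resolution issues.

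Combining these two computations and using that inverting $p$ commutes with the filtered pushforward (as it is an exact localisation), both sides of the desired isomorphism identify with $\Fil^0(E[\tfrac{1}{p}])$, where $E[\tfrac{1}{p}]$ is the filtered $\Q_p$-complex obtained by termwise rationalisation of the filtered $\Z_p$-complex corresponding to $E$. One checks directly that the map induced by the functor $(-)[\tfrac{1}{p}]$ of (\ref{eq:beilfibsq-da1gmisog}) is the identity under these identifications. Given the formal character of the argument, no real obstacle is expected; the only point that requires care is the bookkeeping of the grading conventions in the Rees equivalence and the compatibility between rationalisation and the filtered pushforward, both of which are routine.
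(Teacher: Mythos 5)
Your proof is correct and follows essentially the same route as the paper's (very terse) argument: identify $E$ with a $p$-complete filtered complex via the Rees equivalence, observe that global sections on $\Z_p^{\dR,+}$ compute $\Fil^0$, and observe that $\RHom_{\MF(\Q_p)}(\Q_p,-)$ also computes $\Fil^0$. You simply spell out the two identifications (the factorisation through $B\G_m$ and the trivial-filtration/$\Fil^0$ adjunction) that the paper leaves implicit.
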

\begin{proof}
\EDIT{Identifying $E$ with a $p$-complete filtered complex $M^\bullet$ via the Rees equivalence, this follows by observing that}
\begin{equation*}
\EDIT{
R\Gamma(\Z_p^{\dR, +}, E)[\tfrac{1}{p}]\cong M^0[\tfrac{1}{p}]\cong\RHom_{\MF(\Q_p)}(\Q_p, E[\tfrac{1}{p}])\;.}
\qedhere
\end{equation*}
\end{proof}

Furthermore, we also get a statement analogous to \cref{lem:beilfibsq-etalephimod}:

\begin{lem}
\label{lem:beilfibsq-etalefiltered}
For any $E\in\Coh(\Z_p^\Syn)$, there is a natural isomorphism
\begin{equation*}
T_{\dR, +}(E)[\tfrac{1}{p}]\cong T_{\dR, +}(D_\crys(T_\et(E)[\tfrac{1}{p}]))\;.\footnote{\EDIT{Similarly to \cref{lem:beilfibsq-etalephimod}, beware that, on the right-hand side, we use the functor $T_{\dR, +}: \MF^\phi(\Q_p)\rightarrow\MF(\Q_p)$, while $T_{\dR, +}(E)$ on the left-hand side denotes the pullback of $E$ to $\A^1_-/\G_m$ as defined in \cref{ex:syntomification-tdr+}.}}
\end{equation*}
\end{lem}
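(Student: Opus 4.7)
The plan is to mimic the proof of \cref{lem:beilfibsq-etalephimod} while tracking filtration data. As in that proof, using \cite[Prop.\ 6.7.1]{FGauges} we may reduce to the case where $E$ is a reflexive $F$-gauge and let $\cal{E}\in\Vect^\phi((\Z_p)_\prism)$ denote the prismatic $F$-crystal corresponding to $E$ under \cref{thm:syntomification-reflcrys}.

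First, identify both sides as filtered $\Q_p$-complexes. For the right-hand side, unwinding definitions, $T_{\dR,+}(D_\crys(T_\et(E)[\tfrac{1}{p}]))$ has underlying $\Q_p$-vector space $(T_\et(E)[\tfrac{1}{p}]\tensor_{\Q_p} B_\crys)^{G_{\Q_p}}$, and its filtration is obtained by intersecting with $\Fil^\bullet B_\dR=\xi^\bullet B_\dR^+$ inside $T_\et(E)[\tfrac{1}{p}]\tensor_{\Q_p} B_\dR$. For the left-hand side, by pulling back $E$ along the quasi-syntomic cover $\Z_p\rightarrow\O_C$ and applying \cref{rem:syntomification-descriptionrefltofcrystal} together with \cref{ex:fildrstack-perfd}, the filtered $A_\inf$-module underlying $T_{\dR,+}(E|_{\O_C^\Syn})$ is $\Fil^i\cal{E}(A_\inf)=\phi_*(\xi^i\cal{E}(A_\inf)\cap\phi^*\cal{E}(A_\inf))$ inside $\cal{E}(A_\inf)[\tfrac{1}{\xi}]\cong\phi^*\cal{E}(A_\inf)[\tfrac{1}{\xi}]$ via $\tau$; by Galois descent along $\Z_p\rightarrow\O_C$, this determines the filtration on $T_{\dR,+}(E)[\tfrac{1}{p}]$.

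Now construct the comparison. Recall from the proof of \cref{lem:beilfibsq-etalephimod} the chain of natural isomorphisms
\begin{equation*}
\cal{E}(\Z_p)[\tfrac{1}{p}]\tensor_{\Q_p} B_\dR\cong \cal{E}(A_\inf)\tensor_{A_\inf} B_\dR\cong T_\et(E)\tensor_{\Z_p} B_\dR\;,
\end{equation*}
which are both $\phi$- and $G_{\Q_p}$-equivariant. Taking $G_{\Q_p}$-invariants (in the derived sense) of the middle term, after passing to $\Fil^0$ and checking that $\tau$ identifies the Nygaard-type filtration above with the $\xi$-adic filtration on $B_\dR$ after base change, yields the desired identification of filtered $\Q_p$-vector spaces.

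The main obstacle is the verification of the filtration compatibility in the second step: one has to argue that under the identification $\cal{E}(\Z_p)[\tfrac{1}{p}]\tensor B_\dR\cong \cal{E}(A_\inf)\tensor B_\dR$, the image of $\Fil^i\cal{E}(A_\inf)[\tfrac{1}{p}]$ agrees with $\xi^i B_\dR^+\cdot\cal{E}(\Z_p)[\tfrac{1}{p}]$. This is essentially the statement that, once we allow denominators in $p$ and base change to $B_\dR^+$, the intersection-type filtration $\phi_*(\xi^i\cal{E}(A_\inf)\cap\phi^*\cal{E}(A_\inf))$ coincides with the naive $\xi$-adic filtration---a fact that reflects the fact that $B_\dR^+$ is the $\xi$-adic completion of $A_\inf[\tfrac{1}{p}]$ and that $\phi$ becomes invertible on $B_\dR$.
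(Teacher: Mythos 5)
Your overall skeleton matches the paper's proof: reduce to $E$ reflexive, identify $T_{\dR,+}(E)$ via descent from $\O_C^\Syn$ using \cref{rem:syntomification-descriptionrefltofcrystal} and \cref{ex:fildrstack-perfd}, identify $D_\crys$ via its $\xi$-adic filtration inside $B_\dR$, and then reduce to comparing the two filtrations after base change to $B_\dR$. Both you and the paper correctly reduce further to filtration degree $0$ because both sides are $\xi$-adically filtered.

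The problem is in the justification you give for the crucial step. You write that the filtration compatibility ``reflects the fact that $B_\dR^+$ is the $\xi$-adic completion of $A_\inf[\tfrac{1}{p}]$ and that $\phi$ becomes invertible on $B_\dR$.'' The second assertion is false: there is no Frobenius on $B_\dR$ at all, since the Frobenius of $B_\crys$ does not extend $\xi$-adically continuously (this is a standard but easy-to-forget subtlety). So the appeal to invertibility of $\phi$ cannot carry the argument. The actual mechanism in the paper is different: one starts from the [BMS] isomorphism $T_\et(E)\tensor_{\Z_p} A_\inf[\tfrac{1}{\mu}]\cong \cal{E}(A_\inf)[\tfrac{1}{\mu}]$ with $\mu=[\epsilon^{1/p}]-1$, and observes that $\mu$ is already invertible in $B_\dR^+$ because its image $\zeta_p-1$ in the residue field $C$ is nonzero. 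This is what lets the unfiltered isomorphism over $B_\dR$ restrict to an isomorphism $T_\et(E)\tensor_{\Z_p} B_\dR^+\cong \cal{E}(A_\inf)\tensor_{A_\inf} B_\dR^+$ in filtration degree $0$, which then identifies with the union $\bigcup_i(\xi^i\cal{E}(A_\inf)\cap\phi^*\cal{E}(A_\inf))\tensor\xi^{-i}B_\dR^+$ by unravelling the definition of the saturated Nygaard filtration. You need to replace the incorrect claim about $\phi$ on $B_\dR$ with this concrete invertibility-of-$\mu$ argument for the proof to go through.
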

\begin{proof}
\EDIT{As in the proof of of \cref{lem:beilfibsq-etalephimod}}, we may assume that $E$ is reflexive and let $\cal{E}\in\Vect^\phi((\Z_p)_\prism)$ denote the corresponding prismatic $F$-crystal in vector bundles. Recall that the filtration on
\begin{equation*}
D_\crys(T_\et(E)\EDIT{[\tfrac{1}{p}]})=(T_\et(E)\tensor_{\Z_p} B_\crys)^{G_{\Q_p}}=(T_\et(E)\tensor_{\Z_p} B_\dR)^{G_{\Q_p}}
\end{equation*}
is given by 
\begin{equation*}
\Fil^i D_\crys(T_\et(E))=(T_\et(E)\tensor_{\Z_p} \xi^i B_\dR^+)^{G_{\Q_p}}\;.
\end{equation*}
Furthermore, note that the filtered object $T_{\dR, +}(E)$ can be defined by descent from $T_{\dR, +}(E_{\O_C})$, where $E_{\O_C}$ denotes the pullback of $E$ to $\O_C^\Syn$. Now observe that, identifying $E_{\O_C}$ with a graded $A_\inf[u, t]/(ut-\phi^{-1}(\xi))$-module $M$ using \cref{ex:syntomification-fgaugesperfd}, the calculation from \cref{ex:fildrstack-perfd} implies that $T_{\dR, +}(E_{\O_C})$ corresponds to the graded $A_\inf[\frac{u^n}{n!}, t: n\geq 1]/(ut-\xi)$-module $\phi^*M\tensor_{A_\inf} A_\crys$. Hence, by \cref{prop:syntomification-refltofcrystal} and \cref{rem:syntomification-descriptionrefltofcrystal}, we conclude that
\begin{equation*}
T_{\dR, +}(E)[\tfrac{1}{p}]=(\phi^*\cal{E}(A_\inf)\tensor_{A_\inf} B_\crys^+)^{G_{\Q_p}}\;,
\end{equation*}
where \EDIT{the filtration on $\phi^*\cal{E}(A_\inf)\tensor_{A_\inf} B_\crys^+$ is the tensor product filtration and, in turn, the filtration on $\phi^*\cal{E}(A_\inf)$ is given by}
\begin{equation*}
\Fil^i\phi^*\cal{E}(A_\inf)=\xi^i\cal{E}(A_\inf)\cap \phi^*\cal{E}(A_\inf)\;;
\end{equation*}
\EDIT{here,} the intersection takes place inside $\phi^*\cal{E}(A_\inf)[\frac{1}{\xi}]\cong \cal{E}(A_\inf)[\frac{1}{\xi}]$ via the Frobenius structure on $\cal{E}$. Using (\ref{eq:beilfibsq-fcrystalbasechange}), however, we see that
\begin{equation*}
\begin{split}
(\phi^*\cal{E}(A_\inf)\tensor_{A_\inf} B_\crys^+)^{G_{\Q_p}}&\cong (\phi^*\cal{E}(\Z_p)\tensor_{\Z_p} B_\crys^+)^{G_{\Q_p}}\cong \phi^*\cal{E}(\Z_p)[\tfrac{1}{p}] \\
&\cong (\phi^*\cal{E}(\Z_p)\tensor_{\Z_p} B_\dR)^{G_{\Q_p}}\cong (\phi^*\cal{E}(A_\inf)\tensor_{A_\inf} B_\dR)^{G_{\Q_p}}
\end{split}
\end{equation*}
and hence we also have
\begin{equation*}
T_{\dR, +}(E)[\tfrac{1}{p}]=(\phi^*\cal{E}(A_\inf)\tensor_{A_\inf} B_\dR)^{G_{\Q_p}}\;.
\end{equation*}
We conclude that it suffices to show that the isomorphism
\begin{equation}
\label{eq:beilfibsq-tetcrystaldr}
T_\et(E)\tensor_{\Z_p} B_\dR\cong \cal{E}(A_\inf)\tensor_{A_\inf} B_\dR\cong\phi^*\cal{E}(A_\inf)\tensor_{A_\inf} B_\dR
\end{equation}
obtained from (\ref{eq:beilfibsq-tetcrystal}) using $\phi^*\cal{E}(A_\inf)[\frac{1}{\xi}]\cong \cal{E}(A_\inf)[\frac{1}{\xi}]$ is compatible with the filtrations, \EDIT{where again the right-hand side is equipped with the tensor product filtration.} As both sides are $\xi$-adically filtered, \EDIT{i.e.\ their filtrations have the property that $\Fil^i=\xi^i\Fil^0$ for all $i\in\Z$}, it suffices to check this in filtration degree $0$. \EDIT{In other words,} we have to show that the isomorphism above restricts to an isomorphism
\begin{equation*}
T_\et(E)\tensor_{\Z_p} B_\dR^+\cong \bigcup_{i\in\Z} (\xi^i\cal{E}(A_\inf)\cap \phi^*\cal{E}(A_\inf))\tensor_{A_\inf} \xi^{-i} B_\dR^+=\cal{E}(A_\inf)\tensor_{A_\inf} B_\dR^+\;,
\end{equation*}
where the right-hand side is a submodule of $\phi^*\cal{E}(A_\inf)\tensor_{A_\inf} B_\dR$ via the isomorphism $\phi^*\cal{E}(A_\inf)[\frac{1}{\xi}]\cong \cal{E}(A_\inf)[\frac{1}{\xi}]$. However, this again follows from (\ref{eq:beilfibsq-tetcrystal}): As \EDIT{$\mu=[\epsilon^{1/p}]-1$} maps to the nonzero element $\zeta_p-1$ in the residue field of $B_\dR^+$, it is already invertible in $B_\dR^+$ and hence the isomorphism (\ref{eq:beilfibsq-tetcrystaldr}) restricts to an isomorphism
\begin{equation*}
T_\et(E)\tensor_{\Z_p} B_\dR^+\cong \cal{E}(A_\inf)\tensor_{A_\inf} B_\dR^+\;,
\end{equation*}
as desired.
\end{proof}

\subsection{Proof of the main theorem}
\label{subsect:beilfibsq-main}

We are now ready to put everything together and prove \cref{thm:beilfibsq-main}. We begin with a general lemma about $t$-structures on stable $\infty$-categories, which is probably well-known, see also \cite[Lem.\ 3.2]{Bridgeland}. \EDIT{For this, first recall the following definition:}

\begin{defi}
\label{defi:beilfibsq-tstructbounded}
\EDIT{
Let $\cal{C}$ be a stable $\infty$-category equipped with a $t$-structure $(\cal{C}^{\geq 0}, \cal{C}^{\leq 0})$ and put
\begin{equation*}
\EEDIT{
\cal{C}^-\coloneqq \bigcup_{n\in\Z} \cal{C}^{\leq n}\;, \hspace{0.5cm} \cal{C}^+\coloneqq \bigcup_{n\in\Z} \cal{C}^{\geq -n}\;, \hspace{0.5cm} \cal{C}^b\coloneqq \cal{C}^-\cap\cal{C}^+\;.
}
\end{equation*}
Then the $t$-structure $(\cal{C}^{\geq 0}, \cal{C}^{\leq 0})$ is called \emph{bounded} if $\cal{C}^b=\cal{C}$. It is called \emph{non-degenerate} if}
\begin{equation*}
\EDIT{
\bigcap_{n\in\Z} \cal{C}^{\leq n}=\bigcap_{n\in\Z} \cal{C}^{\geq n}=0\;.
}
\end{equation*}
\end{defi}

\begin{rem}
\label{rem:beilfibsq-nondegeneratetstruct}
\EDIT{
Recall that a $t$-structure $(\cal{C}^{\geq 0}, \cal{C}^{\leq 0})$ on a stable $\infty$-category $\cal{C}$ induces canoncial truncation functors
\begin{equation*}
\tau^{\leq n}: \cal{C}\rightarrow\cal{C}^{\leq n}\;,\hspace{0.5cm} \tau^{\geq n}: \cal{C}\rightarrow\cal{C}^{\geq n}
\end{equation*}
for any $n\in\Z$ which are right-adjoint, respectively left-adjoint, to the inclusions $\cal{C}^{\leq n}\rightarrow\cal{C}$, respectively $\cal{C}^{\geq n}\rightarrow\cal{C}$. Moreover, we obtain cohomology functors
\begin{equation*}
H^n(-)=(\tau^{\geq n}\circ\tau^{\leq n})(-)[n]: \cal{C}\rightarrow \cal{C}^{\geq 0}\cap\cal{C}^{\leq 0}=\cal{C}^\heartsuit\;.
\end{equation*}
If the $t$-structure is non-degenerate, this implies that an object $C\in\cal{C}$ with $H^n(C)=0$ for all $n\in\Z$ must vanish. Moreover, it also implies that $C\in\cal{C}^{\geq 0}$ if and only if $H^n(C)=0$ for all $n<0$ and, similarly, $C\in\cal{C}^{\leq 0}$ if and only if $H^n(C)=0$ for all $n>0$, see \cite[Rem.\ 1.5]{tStructures}.
}
\end{rem}

\begin{lem}
\label{lem:beilfibsq-boundedtstructure}
Let $\cal{C}$ be a stable $\infty$-category equipped with a bounded non-degenerate $t$-structure. Then $\cal{C}$ is generated under shifts and \EDIT{fibres} by its heart $\cal{C}^\heartsuit$.
\end{lem}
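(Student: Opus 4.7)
The plan is to induct on cohomological amplitude after reducing to a stable-subcategory question. Let $\cal{D} \subseteq \cal{C}$ denote the smallest full subcategory containing $\cal{C}^\heartsuit$ which is closed under shifts and fibres; since $\cal{C}$ is stable, the identity $\cofib(f) \simeq \fib(f)[1]$ makes $\cal{D}$ closed under cofibres as well, hence a stable subcategory of $\cal{C}$. I aim to show $\cal{D} = \cal{C}$. For any $C \in \cal{C}$, boundedness of the $t$-structure produces integers $a \leq b$ with $C \in \cal{C}^{\geq a} \cap \cal{C}^{\leq b}$, and I will induct on the amplitude $b - a \geq 0$.

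For the base case $a = b$, one has $C \simeq \tau^{\leq a} C$, and the standard cofibre sequence $\tau^{\leq a-1} C \to C \to H^a(C)[-a]$ has first term of identically vanishing cohomology, hence zero by non-degeneracy; thus $C \simeq H^a(C)[-a]$ is a shift of an object of the heart. For the inductive step $a < b$, I would apply the same cofibre sequence at height $b$, namely
\[
\tau^{\leq b-1} C \to C \to H^b(C)[-b].
\]
The third term is manifestly in $\cal{D}$ as a shift of the heart. For the first term, one computes $H^m(\tau^{\leq b-1} C) = H^m(C)$ for $m \leq b-1$ and zero otherwise, placing $\tau^{\leq b-1} C$ in $\cal{C}^{\geq a} \cap \cal{C}^{\leq b-1}$ via non-degeneracy, so that the induction hypothesis gives $\tau^{\leq b-1} C \in \cal{D}$. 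Stability of $\cal{D}$ (equivalently, realising $C$ as the fibre of a map $H^b(C)[-b] \to \tau^{\leq b-1} C[1]$ between objects of $\cal{D}$) then forces $C \in \cal{D}$.

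There is no genuine obstacle here; the statement is essentially formal. The only subtle point is keeping track of the exact role of non-degeneracy, which enters precisely as the criterion that an object with zero cohomology in every degree must itself be zero -- this is what ensures both that the base case collapses to a shift of the heart and that truncation detects membership in $\cal{C}^{\geq a}$ in the inductive step. Everything else follows from the standard truncation triangle and the fact that in a stable $\infty$-category, closure under shifts promotes closure under fibres to closure under all finite limits and colimits.
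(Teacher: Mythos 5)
Your proof is correct and takes essentially the same approach as the paper's: both peel off the top cohomology group via a truncation triangle, use non-degeneracy to recognise the resulting cofibre as a shift of a heart object, and descend by induction (yours indexed on amplitude $b-a$, the paper's on the number of nonzero $H^i$, which is the same induction). Your framing via the smallest stable subcategory $\cal{D}$ containing the heart, with the observation that closure under shifts and fibres already forces closure under cofibres, makes explicit a point the paper leaves implicit and is a clean way to package the argument.
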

\begin{proof}
\EDIT{By boundedness of the $t$-structure,} any $E\in\cal{C}$ has only finitely many non-vanishing cohomology groups and we use induction on the number of such nonzero $H^i(E)$. If all $H^i(E)$ vanish, then $E$ is itself zero as the $t$-structure is non-degenerate. Otherwise, let $k$ be maximal such that $H^k(E)\neq 0$ and put
\begin{equation*}
E'\coloneqq\fib(E\rightarrow\tau^{\leq k-1}E)\;.
\end{equation*}
As $H^i(\tau^{\leq k-1} E)=H^i(E)$ except for $i=k$, in which case $H^k(\tau^{\leq k-1}E)=0$, the long exact cohomology sequence shows that $H^i(E')=0$ for all $i\neq k$ and hence $E'[k]$ lies in $\cal{C}^\heartsuit$ by non-degeneracy of the $t$-structure. Thus, we are done by induction.
\end{proof}

\begin{proof}[Proof of \cref{thm:beilfibsq-main}]
The relevant square was already constructed in \cref{prop:beilfibsq-square}, so it remains to show that it is an almost pushout up to $p$-isogeny. \EDIT{First note that the $t$-structure on $\Perf(\Z_p^\Syn)$ from \cref{rem:filprism-cohheartperf} satisfies the hypotheses of \cref{lem:beilfibsq-boundedtstructure}: Indeed, this follows from the fact that this $t$-structure is induced from the one on $\Perf(\Z_p\langle u, t\rangle)$, which is clearly bounded and non-degenerate. As all terms of the square we claim to be a pullback commute with \EEDIT{shifts and fibres}, we may thus assume that $E$ is coherent by \cref{lem:beilfibsq-boundedtstructure}.} In this case, \EEDIT{recalling from \cref{prop:syntomification-cohcrystalline} that $T_\et(E)$ is crystalline and} writing $D=D_\crys(T_\et(E)[\frac{1}{p}])$, we know by \cref{prop:beilfibsq-crys} that there is a pullback square
\begin{equation*}
\begin{tikzcd}
\RHom_{\MF^\phi_\adm(\Q_p)}(\Q_p, D)\ar[r]\ar[d] & \RHom_{\Mod^\phi(\Q_p)}(\Q_p, T_\crys(D))\ar[d] \\
\RHom_{\MF(\Q_p)}(\Q_p, T_{\dR, +}(D))\ar[r] & \RHom_{\Q_p}(\Q_p, T_\dR(D))\nospacepunct{\;.}
\end{tikzcd}
\end{equation*}
\EEDIT{As} \cref{lem:beilfibsq-rationalcohfpsyn}, \cref{lem:beilfibsq-etalephimod}, \EEDIT{\cref{lem:beilfibsq-rationalcoha1gm}} and \cref{lem:beilfibsq-etalefiltered} allow us to rewrite this as
\begin{equation*}
\begin{tikzcd}
\RHom_{\MF^\phi_\adm(\Q_p)}(\Q_p, D)\ar[r]\ar[d] & R\Gamma(\F_p^\Syn, T_\crys(E))[\tfrac{1}{p}]\ar[d] \\
R\Gamma(\Z_p^{\dR, +}, T_{\dR, +}(E))[\tfrac{1}{p}]\ar[r] & R\Gamma(\Z_p^\dR, T_\dR(E))[\tfrac{1}{p}]\;,
\end{tikzcd}
\end{equation*}
it remains to show that $T_\et$ induces a quasi-isomorphism
\begin{equation*}
R\Gamma(\Z_p^\Syn, E)[\tfrac{1}{p}]\cong\RHom_{\MF^\phi_\adm(\Q_p)}(\Q_p, D)\;.
\end{equation*}
For this, first note that the embedding $\MF^\phi_\adm(\Q_p)\cong\Rep_{\Q_p}^\crys(G_{\Q_p})\hookrightarrow\Rep_{\Q_p}(G_{\Q_p})$ induces a morphism
\begin{equation*}
\RHom_{\MF^\phi_\adm(\Q_p)}(\Q_p, D)\rightarrow R\Gamma(G_{\Q_p}, T_\et(E)[\tfrac{1}{p}])
\end{equation*}
and let $Q$ denote its cofibre. We have to show that the composite
\begin{equation}
\label{eq:beilfibsq-compositezero}
R\Gamma(\Z_p^\Syn, E)[\tfrac{1}{p}]\rightarrow R\Gamma(G_{\Q_p}, T_\et(E)[\tfrac{1}{p}])\rightarrow Q\;,
\end{equation}
where the first map is induced by $T_\et$, is zero. From the long exact sequence
\begin{equation*}
\begin{tikzcd}
0\ar[r] & \Hom_{\MF^\phi_\adm(\Q_p)}(\Q_p, D)\ar[r, "\cong"] & \Hom_{G_{\Q_p}}(\Q_p, T_\et(E)[\tfrac{1}{p}])\ar[r] & H^0(Q) \ar[overlay, dll, out=-10, in=170] \\
& \Ext^1_{\MF^\phi_\adm(\Q_p)}(\Q_p, D)  \ar[r, hookrightarrow] & \Ext^1_{G_{\Q_p}}(\Q_p, T_\et(E)[\tfrac{1}{p}])\ar[r] & H^1(Q) \ar[r] & 0\nospacepunct{\;,}
\end{tikzcd}
\end{equation*}
where the last zero is due to the fact that $\RHom_{\MF^\phi_\adm(\Q_p)}(\Q_p, D)$ is concentrated in degrees $0$ and $1$ \EDIT{by \cref{rem:beilfibsq-rhommfphionetrunc},} we see that $H^0(Q)=0$ and that $H^1(Q)$ is the cokernel of the injection $\Ext^1_{\MF^\phi_\adm(\Q_p)}(\Q_p, D)\hookrightarrow \Ext^1_{G_{\Q_p}}(\Q_p, T_\et(E)[\tfrac{1}{p}])$. However, by \cref{prop:syntomification-cohcrystalline}, the complex $R\Gamma(\Z_p^\Syn, E)[\tfrac{1}{p}]$ is concentrated in degrees $0$ and $1$ and the map 
\begin{equation*}
H^1(\Z_p^\Syn, E)[\tfrac{1}{p}]\rightarrow\Ext^1_{G_{\Q_p}}(\Q_p, T_\et(E))
\end{equation*}
factors through $\Ext^1_{\MF^\phi_\adm(\Q_p)}(\Q_p, D)$, hence the map in (\ref{eq:beilfibsq-compositezero}) induces the zero map on all cohomology groups and thus is itself zero. This means that we obtain a unique factorisation
\begin{equation*}
R\Gamma(\Z_p^\Syn, E)[\tfrac{1}{p}]\rightarrow \RHom_{\MF^\phi_\adm(\Q_p)}(\Q_p, D)\rightarrow R\Gamma(G_{\Q_p}, T_\et(E)[\tfrac{1}{p}])
\end{equation*}
and, again by \cref{prop:syntomification-cohcrystalline}, the first map is a quasi-isomorphism, as desired.
\end{proof}

\subsection{Comparison with Fontaine--Messing syntomic cohomology}

As a corollary of \cref{thm:beilfibsq-main}, we can prove a comparison between syntomic cohomology in our sense and the syntomic cohomology introduced by Fontaine--Messing in \cite{FontaineMessing}.

\begin{defi}
Let $X$ be a \EEDIT{smooth qcqs} $p$-adic formal scheme. For $0\leq i\leq p-2$, we define the integral \emph{Fontaine--Messing syntomic cohomology} of $X$ in weight $i$ as
\begin{equation*}
R\Gamma_{\Syn, \FM}(X, \Z_p(i))\coloneqq \fib(\Fil^i_\Hod R\Gamma_\dR(X)\xrightarrow{1-\phi/p^i} R\Gamma_\dR(X))
\end{equation*}
and there is also a rational version defined for all $i\geq 0$ by 
\begin{equation*}
R\Gamma_{\Syn, \FM}(X, \Q_p(i))\coloneqq \fib(\Fil^i_\Hod R\Gamma_\dR(X)[\tfrac{1}{p}]\xrightarrow{\phi-p^i} R\Gamma_\dR(X)[\tfrac{1}{p}])\;.
\end{equation*}
Here, $\phi$ denotes the crystalline Frobenius on $R\Gamma_\dR(X)$.
\end{defi}

Note that the above definition indeed makes sense: By quasisyntomic descent and \cite[Prop.\ 6.8]{BeilFibSq}, the Frobenius on $\Fil^i_\Hod R\Gamma_\dR(X)$ is divisible by $p^i$ for $0\leq i\leq p-1$. In loc.\ cit., Antieau--Mathew--Morrow--Nikolaus prove the following comparison between syntomic cohomology and Fontaine--Messing syntomic cohomology:

\begin{thm}
\label{thm:beilfibsq-fmratmotivation}
Let $X$ be a \EEDIT{smooth qcqs} $p$-adic formal scheme. For $i\geq 0$, there is a natural isomorphism
\begin{equation*}
R\Gamma_{\Syn, \FM}(X, \Q_p(i))\cong R\Gamma_\Syn(X, \Q_p(i))\;.
\end{equation*}
\end{thm}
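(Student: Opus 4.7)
My approach is to extract the Fontaine--Messing comparison directly from the Beilinson fibre square \cref{thm:beilfibsq-motivation}, which already gives
\[
R\Gamma_\Syn(X, \Q_p(i)) \simeq \Fil^i_\Hod R\Gamma_\dR(X)[\tfrac{1}{p}] \times_{R\Gamma_\dR(X)[\tfrac{1}{p}]} R\Gamma_\Syn(X_{p=0}, \Z_p(i))[\tfrac{1}{p}]\,.
\]
The plan is therefore to identify the top-right corner explicitly as the fibre of a Frobenius-type operator on rational crystalline cohomology, so that the total pullback collapses to the Fontaine--Messing fibre.

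To do this, I would first compute $R\Gamma_\Syn(X_{p=0}, \Z_p(i))[\tfrac{1}{p}]$ via the equaliser description \eqref{eq:syntomification-fibreseq}. Since $X_{p=0}$ lives over $\F_p$, the absolute prismatic cohomology of $X_{p=0}$ identifies with its crystalline cohomology, and the Nygaard filtration trivialises rationally (its graded pieces being annihilated by a power of $p$, see \cite[Prop.\ 6.8]{BeilFibSq}); thus both $R\Gamma((X_{p=0})^\N, \O\{i\})[\tfrac{1}{p}]$ and $R\Gamma((X_{p=0})^\prism, \O\{i\})[\tfrac{1}{p}]$ identify with $R\Gamma_\crys(X_{p=0})[\tfrac{1}{p}]$. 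The two pullbacks $j_\HT^*$ and $j_\dR^*$ differ by the Frobenius structure on the Breuil--Kisin twist, and unwinding definitions (using $\pi_X\circ j_\HT = F_X$, $\pi_X\circ j_\dR = \id$ from \cref{ex:filprism-jdrjht}, together with the relation $\O\{1\} \tensor F^*\O\{-1\} \cong \cal{I}$ from \cref{ex:prismatisation-bktwist}) should show that, under the above identification, $j_\HT^* - j_\dR^*$ corresponds to $\phi/p^i - 1$ acting on $R\Gamma_\crys(X_{p=0})[\tfrac{1}{p}]$. This yields
\[
R\Gamma_\Syn(X_{p=0}, \Z_p(i))[\tfrac{1}{p}] \simeq \fib\bigl(R\Gamma_\crys(X_{p=0})[\tfrac{1}{p}] \xrightarrow{1 - \phi/p^i} R\Gamma_\crys(X_{p=0})[\tfrac{1}{p}]\bigr)\,.
\]

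Next, I would invoke the Berthelot--Ogus comparison $R\Gamma_\crys(X_{p=0})[\tfrac{1}{p}] \simeq R\Gamma_\dR(X)[\tfrac{1}{p}]$ and check that, under this identification, the map into $R\Gamma_\dR(X)[\tfrac{1}{p}]$ appearing in the Beilinson fibre square is the natural projection from the fibre above. Substituting into the pullback and applying the elementary identity $A \times_C \fib(C \xrightarrow{h} C) \simeq \fib(A \xrightarrow{h} C)$ for any map $A \to C$ of complexes, I would obtain
\[
R\Gamma_\Syn(X, \Q_p(i)) \simeq \fib\bigl(\Fil^i_\Hod R\Gamma_\dR(X)[\tfrac{1}{p}] \xrightarrow{1 - \phi/p^i} R\Gamma_\dR(X)[\tfrac{1}{p}]\bigr)\,.
\]
Finally, since $\phi - p^i = -p^i(1 - \phi/p^i)$ and $-p^i$ is invertible on rational de Rham cohomology, this fibre coincides with $R\Gamma_{\Syn, \FM}(X, \Q_p(i))$.

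The main obstacle is the second paragraph: precisely tracking the Frobenius twist of the Breuil--Kisin bundle through the syntomification equaliser so as to produce the factor $p^i$ with the correct sign. This forces one to be careful about which generator of $\cal{I}$ is used on $\F_p^\prism$ and to verify compatibility with the divisibility of $\phi$ on $\Fil^i_\Hod$; once this bookkeeping is settled, the remaining identifications and the pullback manipulation are formal.
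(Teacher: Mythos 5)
The paper's own proof is a one-line citation: the affine case is \cite[Thm.\ 6.22]{BeilFibSq} and the general case follows by Zariski descent using \cref{lem:syntomicetale-colimtot}. You instead re-derive the Fontaine--Messing comparison from the Beilinson fibre square \cref{thm:beilfibsq-motivation}, by identifying the corner $R\Gamma_\Syn(X_{p=0},\Z_p(i))[\tfrac{1}{p}]$ as a $\phi$-eigenfibre of rational crystalline cohomology and then collapsing the pullback. This is essentially how Antieau--Mathew--Morrow--Nikolaus themselves deduce their Thm.\ 6.22 from Thm.\ 6.17, recast in the stacky language; it also parallels the paper's later \cref{thm:beilfibsq-fmrat} and its corollary, which achieve the same with arbitrary $F$-gauge coefficients but under a properness hypothesis, whereas your specialisation to $E=\O\{i\}$ avoids any need for \cref{prop:finiteness-main} and so reaches the full smooth qcqs statement. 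The route is sound, but as you acknowledge, two real verifications are left open: tracking the Breuil--Kisin Frobenius twist through the equaliser (\ref{eq:syntomification-fibreseq}) to extract exactly $\phi/p^i$ with the correct sign, and checking that the map $R\Gamma_\Syn(X_{p=0},\Z_p(i))[\tfrac{1}{p}]\to R\Gamma_\dR(X)[\tfrac{1}{p}]$ occurring in the Beilinson square coincides with the projection from the fibre under the Berthelot--Ogus identification. One smaller point: \cite[Prop.\ 6.8]{BeilFibSq} concerns Frobenius divisibility on the Hodge filtration and does not directly give the rational triviality of the Nygaard filtration over $\F_p$; that fact does hold, but for the simpler reason that the Nygaard graded pieces over an $\F_p$-base are subquotients of $\ol{\Prism}=\Prism/p$ and hence killed by $p$, so the quotient $\Prism/\Fil^i_\N$ is killed by $p^i$.
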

\begin{proof}
The affine version is \cite[Thm.\ 6.22]{BeilFibSq} \EDIT{and we deduce the general version using Zariski descent and \cref{lem:syntomicetale-colimtot}.}
\end{proof}

\EDIT{To generalise this result to arbitrary coefficients, at least in the case where $X$ is proper, we first observe that de Rham cohomology with $F$-gauge coefficients is also equipped with a crystalline Frobenius, just as in the case with trivial coefficients:}

\begin{rem}
\label{rem:beilfibsq-crysfrob}
\EDIT{
Let $X$ be a bounded $p$-adic formal scheme and $E\in\D(X^\Syn)$ an $F$-gauge on $X$. Then the rational de Rham cohomology $R\Gamma_\dR(X, T_\dR(E))[\tfrac{1}{p}]$ is equipped with an automorphism $\tau$ linear over the Frobenius of $\Z_p$ in the following way: By the stacky version of the crystalline comparison, i.e.\ by $\phi^*\F_p^\prism\cong \Z_p^\dR$, see \cite[Rem.\ 2.5.12, Constr.\ 3.1.1]{FGauges}, we have
\begin{equation*}
\EEDIT{
R\Gamma_\dR(X, T_\dR(E))[\tfrac{1}{p}]\cong \phi^*R\Gamma_\prism(X_{p=0}, j_\prism^*T_\crys(E))[\tfrac{1}{p}]=\phi^*j_\prism^*\pi_{(X_{p=0})^\Syn, *}T_\crys(E)[\tfrac{1}{p}]\;.
}
\end{equation*}
However, by (\ref{eq:beilfibsq-crysfrob}), the complex \EEDIT{$j_\prism^*\pi_{(X_{p=0})^\Syn, *}T_\crys(E)[\tfrac{1}{p}]$} is naturally equipped with a Frobenius automorphism since it is pulled back from $\F_p^\Syn$ along $j_\prism$. We call the corresponding automorphism $\tau$ of $R\Gamma_\dR(X, T_\dR(E))[\tfrac{1}{p}]$ the \emph{crystalline Frobenius}.
}
\end{rem}

\begin{thm}
\label{thm:beilfibsq-fmrat}
Let $E\in\Perf(\Z_p^\Syn)$. Then the Hodge-filtered de Rham map induces a fibre sequence
\begin{equation*}
R\Gamma(\Z_p^\Syn, E)[\tfrac{1}{p}]\rightarrow R\Gamma(\Z_p^{\dR, +}, T_{\dR, +}(E))[\tfrac{1}{p}]\xrightarrow{1-\tau} R\Gamma(\Z_p^\dR, T_\dR(E))[\tfrac{1}{p}]\;,
\end{equation*}
where $\tau$ denotes \EDIT{the crystalline Frobenius from \cref{rem:beilfibsq-crysfrob}.}
\end{thm}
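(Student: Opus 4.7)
The plan is to deduce this from the Beilinson fibre square of \cref{thm:beilfibsq-main} by rewriting the upper-right corner. Applying that theorem to $E$, I obtain a pullback square whose four terms I abbreviate as $A,B,C,D$, where
\begin{equation*}
A=R\Gamma(\Z_p^\Syn, E)[\tfrac{1}{p}],\quad B=R\Gamma(\Z_p^{\dR,+}, T_{\dR,+}(E))[\tfrac{1}{p}],\quad C=R\Gamma(\F_p^\Syn, T_\crys(E))[\tfrac{1}{p}],\quad D=R\Gamma(\Z_p^\dR, T_\dR(E))[\tfrac{1}{p}],
\end{equation*}
and denote by $\alpha\colon B\to D$ the bottom arrow (the Hodge-filtered de Rham map) and by $\beta\colon C\to D$ the right-hand arrow. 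If I can identify $C$ with $\fib(D\xrightarrow{1-\tau} D)$ in such a way that $\beta$ becomes the canonical inclusion of this fibre, then the pasting lemma for pullbacks gives
\begin{equation*}
A\cong B\times_D C\cong B\times_D \fib(D\xrightarrow{1-\tau} D)\cong \fib(B\xrightarrow{(1-\tau)\circ\alpha} D),
\end{equation*}
which is the asserted fibre sequence.

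For the identification of $C$, I invoke \cref{lem:beilfibsq-rationalcohfpsyn} to get $C\cong \RHom_{\Mod^\phi(\Q_p)}(\Q_p, T_\crys(E)[\tfrac{1}{p}])$, which is the fibre of $\phi-1$ (equivalently, up to sign, of $1-\phi$) acting on the underlying $\Q_p$-complex of $T_\crys(E)[\tfrac{1}{p}]\in\D(\Mod^\phi(\Q_p))$. By the commutative triangle (\ref{eq:beilfibsq-crysfrob}), this underlying $\Q_p$-complex is $j_\prism^*T_\crys(E)[\tfrac{1}{p}]$, and by \cref{prop:beilfibsq-square} the composite $\Z_p^\dR\cong\F_p^\prism\xrightarrow{j_\prism}\F_p^\Syn\to\Z_p^\Syn$ coincides with the canonical map $\Z_p^\dR\to\Z_p^\Syn$; hence this complex is canonically $D$. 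By the very construction of the crystalline Frobenius in \cref{rem:beilfibsq-crysfrob}, the $\phi$-module structure transported to $D$ under this identification is precisely $\tau$ (noting that the Frobenius on $\Z_p$ is the identity, so the $\phi^*$ in \cref{rem:beilfibsq-crysfrob} does not twist anything).

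The main point of care is identifying the map $\beta\colon C\to D$, induced by pullback along $\Z_p^\dR\cong\F_p^\prism\xrightarrow{j_\prism}\F_p^\Syn$, with the inclusion of the fibre of $1-\tau$; the risk is that it might instead be the composite $C\hookrightarrow D\xrightarrow{1-\tau} D$, which would destroy the argument. To settle this, I trace through the proofs of \cref{ex:syntomification-fgaugesperfd} and \cref{lem:beilfibsq-rationalcohfpsyn}: representing $T_\crys(E)$ by the diagram (\ref{eq:beilfibsq-fgaugefp}), the group $R\Gamma(\F_p^\Syn, T_\crys(E))$ is $\fib(M^0\xrightarrow{t^\infty-\tau u^\infty} M^{-\infty})$ and the pullback along $j_\prism = j_\N\circ j_\dR$ is the composition with $t^\infty\colon M^0\to M^{-\infty}$, i.e.\ with the natural map into the target of $t^\infty-\tau u^\infty$. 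After inverting $p$, the identification $M^0[\tfrac{1}{p}]\cong M^{-\infty}[\tfrac{1}{p}]$ via $t^\infty$ turns $\beta$ into the inclusion of the fibre of $1-(t^\infty)^{-1}\tau u^\infty$, which is (a sign of) $1-\tau$. Combining this with the pullback computation above concludes the proof.
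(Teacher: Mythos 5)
Your proof is correct and uses essentially the same approach as the paper: both rely on \cref{thm:beilfibsq-main} together with \cref{lem:beilfibsq-rationalcohfpsyn} and the identification $\phi^*\F_p^\prism\cong\Z_p^\dR$, and then rearrange the resulting pullback square into the asserted fibre sequence (you via pasting of pullbacks, the paper via an equivalent $3\times 3$ diagram chase). Your extra care in verifying that $\beta\colon C\to D$ is the fibre inclusion rather than something twisted is a genuine subtlety that the paper's proof handles implicitly, and your argument for it is sound.
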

\begin{proof}
Recall from \cref{lem:beilfibsq-rationalcohfpsyn} that there is a fibre sequence
\begin{equation*}
R\Gamma(\F_p^\Syn, T_\crys(E))[\tfrac{1}{p}]\rightarrow R\Gamma(\F_p^\prism, j_\prism^*T_\crys(E))[\tfrac{1}{p}]\xrightarrow{1-\tau} R\Gamma(\F_p^\prism, j_\prism^*T_\crys(E))[\tfrac{1}{p}]\;.
\end{equation*}
\EDIT{Again using the stacky version $\phi^*\F_p^\prism\cong \Z_p^\dR$ of the crystalline comparison as in \cref{rem:beilfibsq-crysfrob}}, the fibre sequence above becomes
\begin{equation*}
R\Gamma(\F_p^\Syn, T_\crys(E))[\tfrac{1}{p}]\rightarrow R\Gamma(\Z_p^\dR, T_\dR(E))[\tfrac{1}{p}]\xrightarrow{1-\tau} R\Gamma(\Z_p^\dR, T_\dR(E))[\tfrac{1}{p}]
\end{equation*}
and hence \cref{thm:beilfibsq-main} implies that it suffices to show that
\begin{equation*}
\begin{tikzcd}
\fib(R\Gamma(\Z_p^{\dR, +}, T_{\dR, +}(E))[\frac{1}{p}]\xrightarrow{1-\tau}R\Gamma(\Z_p^\dR, T_\dR(E))[\frac{1}{p}])\ar[r]\ar[d] & R\Gamma(\Z_p^\dR, T_\dR(E))[\frac{1}{p}]^{\tau=1}\ar[d] \\
R\Gamma(\Z_p^{\dR, +}, T_{\dR, +}(E))[\frac{1}{p}]\ar[r] & R\Gamma(\Z_p^\dR, T_\dR(E))[\frac{1}{p}]
\end{tikzcd}
\end{equation*}
is a pullback diagram. For this, let $M$ denote the term in the upper left corner and write
\begin{equation*}
N\coloneqq R\Gamma(\Z_p^\dR, T_\dR(E))[\tfrac{1}{p}]/R\Gamma(\Z_p^{\dR, +}, T_{\dR, +}(E))[\tfrac{1}{p}]\;.
\end{equation*}
\EDIT{Since we already know that} all rows and columns except the leftmost column in the commutative diagram
\begin{equation*}
\begin{tikzcd}
M\ar[r]\ar[d] & R\Gamma(\Z_p^{\dR, +}, T_{\dR, +}(E))[\frac{1}{p}]\ar[r, "1-\tau"]\ar[d] & R\Gamma(\Z_p^\dR, T_\dR(E))[\frac{1}{p}]\ar[d] \\
R\Gamma(\Z_p^\dR, T_\dR(E))[\frac{1}{p}]^{\tau=1}\ar[r]\ar[d] & R\Gamma(\Z_p^\dR, T_\dR(E))[\frac{1}{p}]\ar[r, "1-\tau"]\ar[d] & R\Gamma(\Z_p^\dR, T_\dR(E))[\frac{1}{p}]\ar[d] \\
N\ar[r] & N\ar[r] & 0
\end{tikzcd}
\end{equation*}
are fibre sequences, we conclude that also the left column is a fibre sequence and this finishes the proof.
\end{proof}

\begin{cor}
\EDIT{
Let $X$ be a $p$-adic formal scheme which is smooth and proper over $\Spf\Z_p$. For any perfect $F$-gauge $E\in\Perf(X^\Syn)$, there is a natural fibre sequence
\begin{equation*}
R\Gamma_\Syn(X, E)[\tfrac{1}{p}]\rightarrow \Fil^0_\Hod R\Gamma_\dR(X, T_{\dR, +}(E))[\tfrac{1}{p}]\xrightarrow{1-\tau} R\Gamma_\dR(X, T_\dR(E))[\tfrac{1}{p}]\;,
\end{equation*}
where $\tau$ again denotes the crystalline Frobenius from \cref{rem:beilfibsq-crysfrob}.
}
\end{cor}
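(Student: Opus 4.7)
The strategy is to deduce this global statement from Theorem \ref{thm:beilfibsq-fmrat} by pushing forward $E$ along the map $f\colon X^\Syn\to \Z_p^\Syn$ induced by the structure morphism $X\to\Spf\Z_p$. Concretely, since $X$ is smooth and proper over $\Spf\Z_p$, the previously invoked \cref{prop:finiteness-main} should ensure that $f_*E\in\Perf(\Z_p^\Syn)$, so that Theorem \ref{thm:beilfibsq-fmrat} applies to it and gives a fibre sequence
\begin{equation*}
R\Gamma(\Z_p^\Syn, f_*E)[\tfrac{1}{p}]\to R\Gamma(\Z_p^{\dR,+}, T_{\dR,+}(f_*E))[\tfrac{1}{p}]\xrightarrow{1-\tau} R\Gamma(\Z_p^\dR, T_\dR(f_*E))[\tfrac{1}{p}]\;.
\end{equation*}

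The first term identifies with $R\Gamma_\Syn(X,E)[\tfrac{1}{p}]$ by the adjunction $(f^*,f_*)$. For the remaining two terms, I would apply base change along the commutative squares
\begin{equation*}
\begin{tikzcd}
X^{\dR,+}\ar[r]\ar[d] & X^\Syn\ar[d, "f"] & & X^\dR\ar[r]\ar[d] & X^\Syn\ar[d,"f"] \\
\Z_p^{\dR,+}\ar[r, "i_{\dR,+}"] & \Z_p^\Syn\;, & & \Z_p^\dR\ar[r, "i_\dR"] & \Z_p^\Syn
\end{tikzcd}
\end{equation*}
as justified by the appendix on base change, to obtain natural isomorphisms $i_{\dR,+}^*f_*E\cong g_{\dR,+,*}T_{\dR,+}(E)$ and $i_{\dR}^*f_*E\cong g_{\dR,*}T_\dR(E)$, where $g_{\dR,+}$ and $g_\dR$ are the structure maps of $X^{\dR,+}$ and $X^\dR$. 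Rewriting the pushforwards in terms of cohomology, this yields
\begin{equation*}
R\Gamma(\Z_p^{\dR,+}, T_{\dR,+}(f_*E))[\tfrac{1}{p}]\cong \Fil^0_\Hod R\Gamma_\dR(X, T_{\dR,+}(E))[\tfrac{1}{p}]
\end{equation*}
using \cref{thm:fildrstack-comparison} to recognise the filtration, and similarly $R\Gamma(\Z_p^\dR, T_\dR(f_*E))[\tfrac{1}{p}]\cong R\Gamma_\dR(X, T_\dR(E))[\tfrac{1}{p}]$ via \cref{thm:drstack-comparison}.

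Finally, one checks that the map $1-\tau$ on the right-hand side really is induced by the crystalline Frobenius from \cref{rem:beilfibsq-crysfrob} applied to $T_\dR(E)$, not just that on $T_\dR(f_*E)$. This is the step requiring the most care: since $\tau$ is defined by identifying de Rham cohomology with $\phi^*j_\prism^*\pi_*T_\crys(-)$, one must verify that the base change isomorphism above intertwines the Frobenius inherited by $f_*E$ through its pullback to $\F_p^\Syn$ with the crystalline Frobenius on $T_\dR(E)$. This reduces to the naturality of the crystalline comparison $\phi^*\F_p^\prism\cong\Z_p^\dR$ and of pushforward along $X\to \Spf\Z_p$ with respect to the stacks in the diagram (\ref{eq:filprism-maps}), which should be straightforward from functoriality but is the only non-formal point in the argument.
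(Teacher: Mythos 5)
Your proof is correct and follows essentially the same route as the paper, whose own proof is just the one-liner ``Using \cref{prop:finiteness-main}, this follows immediately from \cref{thm:beilfibsq-fmrat}''; you have simply unfolded that statement, making explicit the pushforward $f_*E$, the perfectness needed for \cref{thm:beilfibsq-fmrat} to apply, the base-change identifications of the terms (which combine \cref{lem:basechange-nygaarddr} and \cref{lem:basechange-nygaardsyn}), and the compatibility of the crystalline Frobenius with pushforward, all of which the paper leaves implicit.
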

\begin{proof}
\EDIT{Using \cref{prop:finiteness-main}, this follows immediately from \cref{thm:beilfibsq-fmrat}.}
\end{proof}

\EDIT{In particular, by the comparison from \cref{thm:fildrstack-comparison}, the above recovers the result of \cref{thm:beilfibsq-fmratmotivation} in the proper case by putting $E=\O\{i\}$.}

\begin{rem}
In fact, the result of \cref{thm:beilfibsq-fmratmotivation} holds already integrally for $i\leq p-2$. This integral version can also be obtained via $F$-gauges as shown in \cite[Rem.\ 5.21]{FontaineLaffaille} and \cite[Rem.\ 6.5.15]{FGauges}, hence \cref{thm:beilfibsq-fmrat} admits an integral analogue for perfect $F$-gauges $E$ with Hodge--Tate weights all at least $-(p-2)$.
\end{rem}

\subsection{The isogeny category of perfect $F$-gauges on $\Z_p$}

Taking the commutative square from \cref{prop:beilfibsq-square} and passing to categories of perfect complexes, we obtain a commutative diagram
\begin{equation*}
\begin{tikzcd}
\Perf(\Z_p^\Syn)\ar[r, "T_\crys"]\ar[d, "T_{\dR, +}", swap] & \Perf(\F_p^\Syn)\ar[d] \\
\Perf(\Z_p^{\dR, +})\ar[r] & \Perf(\Z_p^\dR)
\end{tikzcd}
\end{equation*}
of stable $\infty$-categories and hence a functor
\begin{equation*}
\Perf(\Z_p^\Syn)\rightarrow \Perf(\Z_p^{\dR, +})\times_{\Perf(\Z_p^\dR)} \Perf(\F_p^\Syn)\;.
\end{equation*}
Passing to isogeny categories and composing with the functors \EDIT{from (\ref{eq:beilfibsq-dfpsynisog}) and (\ref{eq:beilfibsq-da1gmisog}),} we obtain a functor
\begin{equation*}
\Perf(\Z_p^\Syn)[\tfrac{1}{p}]\rightarrow \DF(\Q_p)\times_{\D(\Q_p)} \D(\Mod^\phi(\Q_p))\;.
\end{equation*}
The category on the right-hand side (or, more precisely, a small variant thereof) has been called the \emph{category of $p$-adic Hodge complexes} in \cite{Bannai}. In this section, we will show that the Beilinson fibre square admits the following categorical extension:

\begin{thm}
\label{thm:beilfibsq-categorical}
The functor
\begin{equation*}
\Perf(\Z_p^\Syn)[\tfrac{1}{p}]\rightarrow \DF(\Q_p)\times_{\D(\Q_p)} \D(\Mod^\phi(\Q_p))
\end{equation*}
described above is a fully faithful embedding.
\end{thm}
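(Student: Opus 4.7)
The strategy is to reduce fully faithfulness to the Beilinson fibre square (Theorem \ref{thm:beilfibsq-main}) applied to the perfect $F$-gauge $\sRHom(E, F) = E^\vee \tensor F$ for $E, F \in \Perf(\Z_p^\Syn)$. On the source side, since passing to the isogeny category localizes mapping spaces at $p$,
\[
\Map_{\Perf(\Z_p^\Syn)[\frac{1}{p}]}(E, F) = R\Gamma(\Z_p^\Syn, \sRHom(E, F))[\tfrac{1}{p}].
\]
On the target side, mapping spaces in the pullback $\DF(\Q_p) \times_{\D(\Q_p)} \D(\Mod^\phi(\Q_p))$ are computed as pullbacks of the mapping spaces in the three constituent factors.

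The key step is to identify each of these three factor mapping spaces with $R\Gamma$ on the corresponding stack. The realization functors $T_{\dR, +}, T_\dR$ and $T_\crys$ are symmetric monoidal (as pullbacks along maps of stacks), hence commute with internal hom on perfect objects. Combined with \cref{lem:beilfibsq-rationalcoha1gm} for $\DF(\Q_p)$, its evident analogue for $\D(\Q_p)$ using $\Z_p^\dR = \Spf \Z_p$, and \cref{lem:beilfibsq-rationalcohfpsyn} for $\D(\Mod^\phi(\Q_p))$, the three mapping spaces are identified with
\[
R\Gamma(\Z_p^{\dR, +}, T_{\dR, +}\sRHom(E, F))[\tfrac{1}{p}], \quad R\Gamma(\Z_p^\dR, T_\dR\sRHom(E, F))[\tfrac{1}{p}], \quad R\Gamma(\F_p^\Syn, T_\crys\sRHom(E, F))[\tfrac{1}{p}].
\]

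Applying \cref{thm:beilfibsq-main} to $\sRHom(E, F)$ then exhibits the pullback of these three corners as $R\Gamma(\Z_p^\Syn, \sRHom(E, F))[\tfrac{1}{p}]$, in agreement with the source mapping space. Since the Beilinson fibre square arises from the genuine commutative square of stacks in \cref{prop:beilfibsq-square} and the realization functors on both sides are literally pullback along the arrows of that square, the resulting equivalence of mapping spaces is induced by the functor of the theorem, yielding fully faithfulness. The main obstacle I expect is purely formal bookkeeping rather than new mathematical input: namely, verifying that $(-)[\tfrac{1}{p}]$ on $\D(\F_p^\Syn)$ and on $\D(\Z_p^{\dR, +})$ is symmetric monoidal on perfect objects (so that it commutes with $\sRHom$), and that all the mapping-space identifications above can be assembled into a natural equivalence in $(E, F)$ compatible with the functor of the theorem.
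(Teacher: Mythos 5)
Your proof is correct and takes a genuinely different route than the paper's, though both ultimately rest on \cref{prop:beilfibsq-crys}. Both approaches reduce full faithfulness to the statement that a certain square of $\RHom$-complexes is cartesian. Where the paper then translates everything into the filtered $\phi$-module picture---invoking \cref{prop:beilfibsq-perfzpsyncrys} to identify $\RHom(E_1, E_2)[\tfrac{1}{p}]$ with $\RHom_{\MF^\phi_\adm(\Q_p)}(D_1, D_2)$ and using \cref{lem:beilfibsq-etalephimod} and \cref{lem:beilfibsq-etalefiltered} to identify the target corners, before applying \cref{prop:beilfibsq-crys} directly---you instead stay on the geometric side: you combine \cref{lem:beilfibsq-rationalcoha1gm} and \cref{lem:beilfibsq-rationalcohfpsyn} with symmetric monoidality of the realisation functors and of $(-)[\tfrac{1}{p}]$ to rewrite each target corner as stacky cohomology of $\sRHom(E, F)$, and then apply the already-proven \cref{thm:beilfibsq-main} as a black box. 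This is more economical---it bypasses the Galois-theoretic dictionary entirely and makes the categorical statement visibly a formal consequence of the pullback statement for each individual perfect $F$-gauge---at the cost of the bookkeeping you flag, namely checking that $(-)[\tfrac{1}{p}]$ on $\widehat{\DF}(\Z_p)$ and on $\D(\F_p^\Syn)$ is symmetric monoidal on perfect objects; this is indeed needed (so that it commutes with $\sRHom$, in turn feeding into \cref{lem:beilfibsq-rationalcoha1gm} and \cref{lem:beilfibsq-rationalcohfpsyn}) but unproblematic, since for perfect complexes the completed and uncompleted tensor products agree and localisation at $p$ is monoidal. You also correctly observe that the naturality of the resulting identification in $(E, F)$---so that it actually agrees with the map induced by the functor---comes from the fact that every ingredient is pullback along the arrows of the commutative square of stacks from \cref{prop:beilfibsq-square}.
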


On our way of proving the above result, \EDIT{we will also obtain the following statement, which we learnt from Bhargav Bhatt in private communication, see also \cite[Rem.\ 6.7.5]{FGauges}:}

\begin{prop}
\label{prop:beilfibsq-perfzpsyncrys}
The functor $T_\et$ induces an equivalence of categories
\begin{equation*}
\Perf(\Z_p^\Syn)[\tfrac{1}{p}]\cong \D^b(\Rep^\crys_{\Q_p}(G_{\Q_p}))
\end{equation*}
between the isogeny category of perfect complexes on $\Z_p^\Syn$ and the bounded derived category of crystalline $\Q_p$-representations of $G_{\Q_p}$.
\end{prop}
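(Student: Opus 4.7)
The plan is to check that the functor $F\coloneqq T_\et\circ(-)[\tfrac{1}{p}]:\Perf(\Z_p^\Syn)[\tfrac{1}{p}]\to \D^b(\Rep^\crys_{\Q_p}(G_{\Q_p}))$ is an equivalence by verifying three points: (i) both sides are generated under shifts and fibres by the images of $\Coh(\Z_p^\Syn)[\tfrac{1}{p}]$ and $\Rep^\crys_{\Q_p}(G_{\Q_p})$ respectively; (ii) $F$ restricts to an equivalence between these generating subcategories; and (iii) $F$ induces isomorphisms on $R\Hom$ between their objects. Point (i) for the source follows from \cref{lem:beilfibsq-boundedtstructure} applied to the bounded non-degenerate $t$-structure on $\Perf(\Z_p^\Syn)$ supplied by \cref{rem:filprism-cohheartperf}, combined with the fact that $p$-inversion preserves shifts and fibres; the target version is standard. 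For (ii), I would combine \cref{thm:syntomification-reflcrys} with Proposition~6.7.1 of \cite{FGauges} (cited in the proof of \cref{lem:beilfibsq-etalephimod}), which says every coherent $F$-gauge on $\Z_p$ is isogenous to a reflexive one, yielding
\begin{equation*}
\Coh(\Z_p^\Syn)[\tfrac{1}{p}]\cong \Coh^\refl(\Z_p^\Syn)[\tfrac{1}{p}]\cong \Rep^\crys_{\Q_p}(G_{\Q_p})
\end{equation*}
compatibly with $T_\et$; this also shows that $F$ actually lands in $\D^b(\Rep^\crys_{\Q_p}(G_{\Q_p}))$, since every perfect complex is an iterated fibre of shifts of coherent complexes.

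For (iii), I would use that $\Perf(\Z_p^\Syn)$ is a rigid symmetric monoidal $\infty$-category, so that $R\Hom(M, N)=R\Gamma(\Z_p^\Syn, M^\vee\otimes N)$, and analogously on the target side. Thus (iii) reduces to showing that for every $E\in \Perf(\Z_p^\Syn)$ there is a natural isomorphism
\begin{equation*}
R\Gamma(\Z_p^\Syn, E)[\tfrac{1}{p}]\cong R\Hom_{\Rep^\crys_{\Q_p}(G_{\Q_p})}(\Q_p, T_\et(E)[\tfrac{1}{p}])\;,
\end{equation*}
and since both sides are exact functors in $E$, I may invoke (i) once more to reduce to the case $E\in \Coh(\Z_p^\Syn)$. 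In that case \cref{prop:syntomification-cohcrystalline} computes the left-hand side and shows it is concentrated in degrees $0$ and $1$, while \cref{rem:beilfibsq-rhommfphionetrunc} (combined with the equivalence $D_\crys$) gives the analogous vanishing on the right; the $H^0$-comparison is immediate, and the $H^1$-comparison holds because both sides are identified with the subspace of crystalline extensions of $\Q_p$ by $T_\et(E)[\tfrac{1}{p}]$.

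The main obstacle is the rigorous verification that $T_\et$ is symmetric monoidal as an $\infty$-functor, which legitimises the reduction of $R\Hom$ to $R\Gamma$ of a tensor product (in particular requires $T_\et(M^\vee)\cong T_\et(M)^\vee$); this is morally clear from the pullback construction in \cref{rem:syntomification-etalerealisationglobal}, but should be spelled out carefully. Once (i)--(iii) are established, fully faithfulness of $F$ follows from (i) and (iii), while essential surjectivity follows from (i) and (ii): the essential image of $F$ is closed under shifts and fibres and contains the entire heart of the target, hence must equal all of $\D^b(\Rep^\crys_{\Q_p}(G_{\Q_p}))$.
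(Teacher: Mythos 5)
Your proposal is correct and follows essentially the same strategy as the paper's proof: reduce the $\RHom$ comparison to the case $\RHom(\O, E)$ via rigidity/tensor-hom (which requires monoidality of the étale realisation, a point the paper handles by noting $T_\et$ is a pullback along an open immersion composed with a symmetric monoidal equivalence), reduce to coherent $E$ via the bounded $t$-structure and \cref{lem:beilfibsq-boundedtstructure}, and then conclude by \cref{prop:syntomification-cohcrystalline} and \cref{rem:beilfibsq-rhommfphionetrunc}; essential surjectivity likewise comes from \cref{thm:syntomification-reflcrys} and closure of the essential image under shifts and fibres. The obstacle you flag about the symmetric monoidality of $T_\et$ is real but is handled by the paper with exactly the brief justification you anticipate.
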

\begin{proof}
To prove full faithfulness, it suffices to show that, for any $E_1, E_2\in\Perf(\Z_p^\Syn)$, we have
\begin{equation*}
\RHom(E_1, E_2)[\tfrac{1}{p}]\cong \RHom_{\MF_\adm^\phi(\Q_p)}(D_1, D_2)\;,
\end{equation*}
where $D_i\coloneqq D_\crys(T_\et(E_i)[\tfrac{1}{p}])$ for $i=1, 2$. \EEDIT{For $E_1=\O$, we already know this: indeed, as all functors in sight commute with shifts and fibres, we may assume that $E_2$ is a coherent sheaf by an application of \cref{lem:beilfibsq-boundedtstructure} and then the claim follows from the proof of \cref{thm:beilfibsq-main}. However, the special case $E_1=\O$ now implies the general case: since} also $\underline{\RHom}(E_1, E_2)$ is a perfect complex, the tensor-hom adjunction yields
\begin{equation*}
\begin{split}
\RHom(E_1, E_2)[\tfrac{1}{p}]&\cong \RHom(\O, \sRHom(E_1, E_2))[\tfrac{1}{p}] \\
&\cong \RHom_{\MF^\phi_\adm(\Q_p)}(\Q_p, D_\crys(T_\et(\sRHom(E_1, E_2))[\tfrac{1}{p}])) \\
&\cong \RHom_{\MF^\phi_\adm(\Q_p)}(\Q_p, \sRHom_{\MF^\phi_\adm(\Q_p)}(D_1, D_2)) \\
&\cong \RHom_{\MF^\phi_\adm(\Q_p)}(D_1, D_2)\;,
\end{split}
\end{equation*}
as desired. Note that we have used that
\begin{equation*}
D_\crys(T_\et(\sRHom(E_1, E_2))[\tfrac{1}{p}])\cong \sRHom_{\MF^\phi_\adm(\Q_p)}(D_1, D_2)\;;
\end{equation*}
\EDIT{this} follows from the fact that $D_\crys$ is \EDIT{a symmetric monoidal equivalence} while the étale realisation is the composition of a pullback along an open immersion with a symmetric monoidal equivalence, \EDIT{both of which preserve internal hom} \EEDIT{between perfect complexes}. Finally, \EDIT{by \cref{lem:beilfibsq-boundedtstructure},} essential surjectivity is immediate from \cref{thm:syntomification-reflcrys} and the fact that the étale realisation preserves fibres \EDIT{(since it commutes with shifts and colimits)}.
\end{proof}

From here, it is quite straightforward to prove \cref{thm:beilfibsq-categorical}.

\begin{proof}[Proof of \cref{thm:beilfibsq-categorical}]
\EDIT{Since $\DF(\Q_p)\cong \D(\MF(\Q_p))$}, all we have is to show is that, for any $E_1, E_2\in\Perf(\Z_p^\Syn)$, the given functor induces a pullback diagram
\begin{equation*}
\begin{tikzcd}
\RHom(E_1, E_2)[\tfrac{1}{p}]\ar[r]\ar[d] & \RHom_{\Mod^\phi(\Q_p)}(T_\crys(E_1)[\tfrac{1}{p}], T_\crys(E_2)[\tfrac{1}{p}])\ar[d] \\
\RHom_{\MF(\Q_p)}(T_{\dR, +}(E_1)[\tfrac{1}{p}], T_{\dR, +}(E_2)[\tfrac{1}{p}])\ar[r] & \RHom_{\Q_p}(T_\dR(E_1)[\tfrac{1}{p}], T_\dR(E_2)[\tfrac{1}{p}])\nospacepunct{\;.}
\end{tikzcd}
\end{equation*}
However, \EEDIT{using \cref{prop:beilfibsq-perfzpsyncrys} and} writing $D_i=D_\crys(T_\et(E_i)[\tfrac{1}{p}])$ for $i=1, 2$ again, it follows from \cref{lem:beilfibsq-etalephimod} and \cref{lem:beilfibsq-etalefiltered} that it is equivalent to prove that the diagram
\begin{equation*}
\begin{tikzcd}
\RHom_{\MF^\phi_\adm(\Q_p)}(D_1, D_2)\ar[r]\ar[d] & \RHom_{\Mod^\phi(\Q_p)}(T_\crys(D_1), T_\crys(D_2))\ar[d] \\
\RHom_{\MF(\Q_p)}(T_{\dR, +}(D_1), T_{\dR, +}(D_2))\ar[r] & \RHom_{\Q_p}(T_\dR(D_1), T_\dR(D_2))
\end{tikzcd}
\end{equation*}
is cartesian. However, this follows from \cref{prop:beilfibsq-crys} using the tensor-hom adjunction and the fact that $T_\crys, T_{\dR, +}$ and $T_\dR$ commute with internal hom.
\end{proof}

\section{Comparing syntomic cohomology and étale cohomology}
\label{sect:syntomicetale}

In this section, we prove a comparison theorem between syntomic cohomology and étale cohomology with coefficients in sufficiently negative Hodge--Tate weights. This generalises previous results by Abhinandan, Colmez--Nizio{\l} and Tsuji, see \EDIT{\cite[Cor.\ 1.11]{Abhinandan}}, \cite[Cor.\ 5.21]{ColmezNiziol} and \cite[Thm.\ 3.3.4]{Tsuji}. Most notably, while the aforementioned results all concern the rational case, i.e.\ one needs to invert $p$, this is not necessary in our setting and our results hold integrally.

\begin{thm}
\label{thm:syntomicetale-maincoarse}
Let $X$ be a smooth qcqs $p$-adic formal scheme of relative dimension $d$ over $\Spf\Z_p$. For any $F$-gauge $E\in\Perf(X^\Syn)$ with Hodge--Tate weights all at most $-d-2$, there is a natural isomorphism
\begin{equation*}
R\Gamma_\Syn(X, E)\cong R\Gamma_\proet(X_\eta, T_\et(E))\;.
\end{equation*}
\end{thm}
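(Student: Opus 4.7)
The plan is to reduce the claim modulo $p$, invoke a relative version of the isomorphism (\ref{eq:syntomification-synet}), and show that the canonical map from $R\Gamma_\Syn(X, E/p)$ into its $v_1$-localisation is already an isomorphism. Concretely: both sides of the desired comparison are derived $p$-complete, so it suffices to produce a natural comparison map (coming from the étale realisation via \cref{rem:syntomification-etalerealisationglobal}) and to check it is an isomorphism after derived reduction modulo $p$. On the mod-$p$ side, I expect that the proof of (\ref{eq:syntomification-synet}) upgrades, via quasisyntomic descent from the perfectoid case, to a relative statement of the form
\begin{equation*}
R\Gamma(X^\Syn, E/p)[\tfrac{1}{v_1}] \cong R\Gamma_\proet(X_\eta, T_\et(E)/p)\;,
\end{equation*}
where the left-hand side denotes the colimit of the syntomic filtration along multiplication by $v_1$; granting this, the comparison map factors through this colimit, and it suffices to prove that the canonical map $R\Gamma(X^\Syn, E/p) \to R\Gamma(X^\Syn, E/p)[\tfrac{1}{v_1}]$ is an equivalence.

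By completeness of the syntomic filtration (cf. \cref{defi:syntomification-fil} and the topological nilpotence of $v_1$), this last statement is in turn equivalent to showing that the associated graded vanishes, i.e.\ that
\begin{equation*}
R\Gamma\bigl(X^\Syn, (E/p)/v_1\, \otimes\, \O\{n(p-1)\}\bigr) = 0 \quad \text{for all } n \geq 0\;,
\end{equation*}
where $(E/p)/v_1 = \cofib(E/p\{-(p-1)\} \xrightarrow{v_1} E/p)$ is supported on the reduced locus $X_{\red}^\Syn$, cf.\ (\ref{eq:syntomification-grsyn}). The Hodge--Tate weight hypothesis $\leq -d-2$ on $E$ combined with the twist by $\O\{n(p-1)\}$ (which decreases all Hodge--Tate weights further, since $n(p-1)\geq 0$) translates, via \cref{defi:nygaardhodge-htweights}, into a concentration condition on the graded $\F_p$-complexes attached to each component of $X_{\red}^\Syn$.

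To conclude, I would compute $R\Gamma$ on the reduced locus using its gluing description from Section \ref{sect:syntomification}. Pulling back (\ref{eq:syntomification-cohomologyreduced}) to $X$ via base change along $X \to \Spf\Z_p$ expresses the desired cohomology as a fibre sequence involving the four pieces $F_{\dR}, F_{\Hod}, F_{\HT,c}, F_{\dR,+}$ of $E/v_1$. On each piece, the explicit descriptions of quasi-coherent complexes (as filtered/graded $\F_p$-complexes with operators $D, \Theta$) combined with the relative dimension bound on $X$ for the cohomological dimension of the factor-wise pushforward should force the required vanishing, with the threshold $-d-2$ precisely providing the single extra unit of slack needed beyond the cohomological dimension $d$ of the geometric fibre.

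The principal obstacles I anticipate are twofold. First, establishing the relative version of (\ref{eq:syntomification-synet}) in sufficient generality — while the perfectoid case is (\ref{eq:syntomification-synet}) itself, promoting it to arbitrary smooth qcqs $X$ requires a quasisyntomic descent argument using \cref{lem:filprism-quasisyntomiccover} and a check that the $v_1$-inversion process commutes with the descent spectral sequence. Second, the cohomological vanishing in the final step: carrying out the computation on each of the four strata of $X_{\red}^\Syn$ and showing that the contributions combine to give total vanishing after twisting by $\O\{n(p-1)\}$ for every $n \geq 0$ requires a careful bookkeeping of gradings, the action of the operators $D$ and $\Theta$, and the interplay with the cohomological dimension inherited from $X$. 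This second step is where the precise numerical bound of $-d-2$ enters the proof.
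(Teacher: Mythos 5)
Your plan matches the paper's approach structurally: reduce modulo $p$ (both sides being $p$-complete), invoke the identification of $v_1$-inverted syntomic cohomology with pro-étale cohomology (the paper establishes this as \cref{prop:syntomicetale-filtrationetale}, by quasisyntomic descent along exactly the lines you propose), and then force vanishing of graded pieces of the $v_1$-filtration via the reduced-locus formalism. However, the final step as you have written it contains an error that would derail the argument if carried out.

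You claim that the map $R\Gamma(X^\Syn, E/p) \to R\Gamma(X^\Syn, E/p)[\tfrac{1}{v_1}]$ being an equivalence is equivalent to showing $R\Gamma(X^\Syn, (E/p)/v_1 \otimes \O\{n(p-1)\}) = 0$ for all $n \geq 0$. The range $n \geq 0$ is off by one. The cofibre of $\Fil_0^\Syn \to \colim_n \Fil_n^\Syn$ is filtered only by $\gr_k^\Syn$ for $k \geq 1$; the piece $\gr_0^\Syn$ is the cofibre of $\Fil_{-1}^\Syn \to \Fil_0^\Syn$ and is irrelevant. Worse, the $n = 0$ vanishing is actually false: already for $X = \Spf\Z_p$ (so $d = 0$), $E = \O\{2\}$ (Hodge--Tate weight $-2 = -d-2$) and $p = 2$, a direct computation from (\ref{eq:syntomification-cohomologyreduced}) and the explicit descriptions of the strata gives $H^1(\Z_{p,\red}^\Syn, \O\{2\}) \neq 0$, since $F_\Hod$ and $F_{\dR,+}$ are both $\F_p[-1]$ here. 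The underlying issue is that your parenthetical ``since $n(p-1) \geq 0$'' treats the twist as a free bonus, whereas the factor of $p-1$ is essential to the numerology. The operators $\Theta$ on the reduced locus shift the grading by $p$ (e.g.\ $F_\Hod(-) = \fib(\gr^0 V \xrightarrow{\Theta} \gr^{-p} V)$), so killing $F_\Hod$ requires the Hodge--Tate weights after pushforward to $\Z_{p,\red}^\Syn$ to lie $\leq -p-1$; since pushforward along $X$ of relative dimension $d$ can raise weights by $d$ (\cref{prop:syntomicetale-cohomologyhod}), the weights on $X_\red^\Syn$ must be $\leq -p-d-1$, which your hypothesis $\leq -d-2$ yields only after the further shift by $p-1$ supplied by a single nontrivial twist. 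So the slack is not ``one extra unit beyond $d$''; it is the combination of the $-2$ in the hypothesis with the $p-1$ from inverting $v_1$, and precisely for this reason the $n = 0$ term, having no such twist, escapes control.
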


If we restrict to vector bundle $F$-gauges, we can obtain an even finer comparison. However, note that the range of our result differs slightly from those of the results of Colmez--Nizio{\l} and Tsuji mentioned above, which is due to the fact that they are working with log structures; see also the discussion in \cite[Rem.s 1.3, 1.12.(i)]{Abhinandan}.

\begin{thm}
\label{thm:syntomicetale-mainfine}
Let $X$ be a smooth qcqs $p$-adic formal scheme. For any vector bundle $F$-gauge $E\in\Vect(X^\Syn)$ with Hodge--Tate weights all at most $-i-1$ for some $i\geq 0$, the natural morphism
\begin{equation*}
R\Gamma_\Syn(X, E)\rightarrow R\Gamma_\proet(X_\eta, T_\et(E))
\end{equation*}
induces an isomorphism
\begin{equation*}
\tau^{\leq i} R\Gamma_\Syn(X, E)\cong\tau^{\leq i} R\Gamma_\proet(X_\eta, T_\et(E))
\end{equation*}
and an injection on $H^{i+1}$.
\end{thm}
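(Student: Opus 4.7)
The plan is to factor the comparison map through the $v_1$-inverted syntomic cohomology. As both $R\Gamma_\Syn(X, E)$ and $R\Gamma_\proet(X_\eta, T_\et(E))$ are derived $p$-complete, it suffices to verify the statement after reduction modulo $p$. A globalization of (\ref{eq:syntomification-synet}) yields a natural isomorphism $R\Gamma(X^\Syn, E/p)[\tfrac{1}{v_1}] \cong R\Gamma_\proet(X_\eta, T_\et(E)/p)$ under which the mod-$p$ comparison map identifies with the canonical map $R\Gamma(X^\Syn, E/p) \to R\Gamma(X^\Syn, E/p)[\tfrac{1}{v_1}]$. It therefore suffices to show that the cofiber of this canonical map lies in cohomological degrees $> i$.

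Writing the $v_1$-inverted cohomology as the sequential colimit of $R\Gamma(X^\Syn, (E/p)\{m(p-1)\})$ along the $v_1$-transition maps, each successive cofiber identifies with $R\Gamma(X^\Syn_{\red}, (E/p)\{(m+1)(p-1)\}|_{X^\Syn_{\red}})$ since $v_1$ cuts out the reduced locus. Because $E$ has Hodge--Tate weights all $\leq -i-1$, the relevant restriction has Hodge--Tate weights all $\leq -i-p$ for every $m \geq 0$. Hence the problem reduces to the vanishing claim: for every vector bundle $F$ on $X^\Syn_{\red}$ with all Hodge--Tate weights $\leq -i-p$, the complex $R\Gamma(X^\Syn_{\red}, F)$ is concentrated in cohomological degrees $> i$.

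To prove this claim, I would use the gluing description of $X^\Syn_{\red}$ obtained by base change from the four-piece decomposition of $\Z^\Syn_{p, \red}$ in Section \ref{sect:syntomification}, along with the Mayer--Vietoris fibre sequence analogous to (\ref{eq:syntomification-cohomologyreduced}). On each of the four components $X^\N_\dR$, $X^\N_\Hod$, $X^\N_{\HT, c}$ and $X^\N_{\dR, +}$, the cohomology of $F$ admits an explicit description as a fibre of $\Theta$-type operators on graded or filtered pieces of $F$ with respect to the Nygaard or conjugate filtrations. The vector bundle assumption reduces these to Hodge-cohomology type expressions on $X_{p=0}$ with coefficients in graded vector bundles inheriting the weight bound, and the hypothesis $\leq -i-p$ forces the contributions in degrees $\leq i$ to vanish on each piece. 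The main obstacle will be the careful bookkeeping on each of the four pieces -- especially the ``continuous'' pieces $X^\N_{\HT, c}$ and $X^\N_{\dR, +}$, where one must combine the weight bound on graded pieces with a Hodge-cohomology style vanishing on $X_{p=0}$ -- and ensuring that the Mayer--Vietoris assembly preserves the degree bound.
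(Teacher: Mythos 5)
Your proposal is correct and follows essentially the same route as the paper's proof: reduce mod $p$, pass to the $v_1$-inverted cohomology via \cref{prop:syntomicetale-filtrationetale}, reduce via (\ref{eq:syntomicetale-grsyn}) to a degree bound for $R\Gamma(X^\Syn_\red,-)$ of a sheaf with Hodge--Tate weights $\leq -i-p$, and establish that bound component-by-component on the reduced locus using \cref{prop:syntomicetale-cohomologyhod} and the gluing formula (\ref{eq:syntomification-cohomologyreduced}). The paper pushes forward to $\Z_{p,\red}^\Syn$ first and then analyses the pieces there, whereas you phrase the analysis on the components of $X^\Syn_\red$ directly, but by base change these are the same computation.
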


\comment{
In the geometric case, i.e.\ if $X$ admits a morphism to $\Spf\O_C$, we can obtain the comparison above in an even larger range. In particular, using $p\geq 2$, this recovers the result from \cite[Thm.\ 10.1]{THHandPAdicHodgeTheory}.

\begin{thm}
\label{thm:syntomicetale-mainfineoc}
Let $X$ be a smooth qcqs $p$-adic formal scheme over $\Spf\O_C$. For any vector bundle $F$-gauge $E\in\Vect(X^\Syn)$ with Hodge--Tate weights all at most $-i+p-2$ for some $i\geq 0$, the natural morphism
\begin{equation*}
R\Gamma_\Syn(X, E)\rightarrow R\Gamma_\proet(X_\eta, T_\et(E))
\end{equation*}
induces an isomorphism
\begin{equation*}
\tau^{\leq i} R\Gamma_\Syn(X, E)\cong\tau^{\leq i} R\Gamma_\proet(X_\eta, T_\et(E))
\end{equation*}
and an injection on $H^{i+1}$.
\end{thm}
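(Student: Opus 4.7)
The plan is to combine the $v_1$-syntomic filtration from \cref{defi:syntomification-fil} with the fact that inverting $v_1$ recovers étale cohomology, as captured by (\ref{eq:syntomification-synet}) in the case of $\Z_p^\Syn$. Both sides of the claimed comparison are derived $p$-complete---the left because $X^\Syn$ lives over $\Spf\Z_p$, the right because $T_\et(E)$ is a $\Z_p$-local system---so derived Nakayama reduces the assertion to its mod $p$ analogue. That is, it suffices to prove that the natural map
\begin{equation*}
R\Gamma(X^\Syn, E/p)\rightarrow R\Gamma_\proet(X_\eta, T_\et(E)/p)
\end{equation*}
is an isomorphism on $H^{\leq i}$ and injective on $H^{i+1}$.

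The first task is to establish the globalisation of (\ref{eq:syntomification-synet})
\begin{equation*}
R\Gamma(X^\Syn, E/p)[\tfrac{1}{v_1}]\cong R\Gamma_\proet(X_\eta, T_\et(E)/p)\;,
\end{equation*}
which I would prove by quasisyntomic descent using \cref{lem:filprism-quasisyntomiccover} to reduce to the case where $X = \Spf R$ is quasiregular semiperfectoid (or even perfectoid), where the explicit presentation of $R^\Syn$ from \cref{ex:filprism-qrsp} combined with the formula (\ref{eq:syntomification-fgaugetolaurentfcrystal}) for étale realisation makes the identification transparent. Given this, the $v_1$-tower
\begin{equation*}
R\Gamma(X^\Syn, E/p)\xrightarrow{v_1} R\Gamma(X^\Syn, E\{p-1\}/p)\xrightarrow{v_1} R\Gamma(X^\Syn, E\{2(p-1)\}/p)\xrightarrow{v_1} \cdots
\end{equation*}
has filtered colimit $R\Gamma_\proet(X_\eta, T_\et(E)/p)$ and the cofibre of its $n$-th transition map is
\begin{equation*}
R\Gamma(X^\Syn_\red, E\{(n+1)(p-1)\}/v_1)\;.
\end{equation*}

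It therefore remains to prove that, when $E$ has Hodge--Tate weights all at most $-i-1$, each such cofibre lives in degrees $\geq i+2$; this connectivity bound is the main obstacle. I would attack it via the stratification of $\Z_{p, \red}^\Syn$ recalled in Section \ref{sect:syntomification} into the four substacks $\Z_{p, \dR}^\N$, $\Z_{p, \Hod}^\N$, $\Z_{p, \HT, c}^\N$, $\Z_{p, \dR, +}^\N$, together with the explicit descriptions of cohomology on each stratum in terms of the operators $\Theta$ and $D$. The Hodge--Tate weight bound, combined with the further twist $\{m(p-1)\}$ for $m\geq 1$, is engineered to make the low-weight graded pieces $V_0, V_{-p}, \Fil^0 V, \Fil_0 V$ (etc.) entering those fibre sequences vanish through degree $i+1$, and reducing to the quasiregular semiperfectoid case again makes the required module-theoretic descriptions explicit. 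The final bound is then assembled from the stratum-by-stratum calculation via the gluing formula (\ref{eq:syntomification-cohomologyreduced}).
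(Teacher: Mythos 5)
Your framework — derived Nakayama to reduce mod $p$, the global version of (\ref{eq:syntomification-synet}) via quasisyntomic descent, and the $v_1$-tower whose graded pieces are computed on $X^\Syn_\red$ — matches the paper, and the reduction to a coconnectivity bound on $\gr^\Syn_k$ for $k\geq 1$ is exactly right. The gap lies in how you propose to establish that bound. You plan to run the same four-stratum analysis of $\Z_{p,\red}^\Syn$ (via $\Z_{p,\dR}^\N$, $\Z_{p,\Hod}^\N$, $\Z_{p,\HT,c}^\N$, $\Z_{p,\dR,+}^\N$) that underlies \cref{thm:syntomicetale-mainfine}, asserting that the pieces $\gr^0 V$, $\gr^{-p}V$, $\Fil^0 V$, $\Fil_0 V$ appearing there vanish through degree $i+1$. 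But that argument requires the twisted Hodge--Tate weight bound $\leq -i-p$: the Sen operator $\Theta$ on $\Z_{p,\Hod}^\N$ has degree $-p$, so $F_\Hod(E)=\fib(\gr^0 V\xrightarrow{\Theta}\gr^{-p}V)$ involves $\gr^{-p}V$, which under a weight bound of only $\leq -i-1$ can carry cohomology down to degree $i+1-p$. And $\leq -i-1$ is all you can extract from the hypothesis $\leq -i+p-2$ after the first $v_1$-twist by $\{p-1\}$, so the $\Z_p$-stratum argument falls short by exactly $p-1$ degrees and only recovers the range of \cref{thm:syntomicetale-mainfine}.

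What is missing is the $\O_C$-structure on $X$, which your proposal never uses. The paper's proof pushes forward along $\pi'_{X^\Syn}\colon X^\Syn\to\O_C^\Syn$ rather than to $\Z_p^\Syn$ and exploits the explicit perfectoid presentation from \cref{ex:filprism-qrsp}, so that $\pi'_{X^\Syn,*}E$ becomes a diagram $(M^\bullet, u, t, \tau)$ over $A_\inf(\O_C)/p$ with $v_{1,\O_C}=u^pt$, and the relation $u^pt=0$ holds on $\O_{C,\red}^\Syn$. Combining the coconnectivity of the graded pieces (via the $\O_C$-analogue of \cref{prop:syntomicetale-cohomologyhod}) with this relation, one shows that $u^p\colon M^{-1}\to M^{p-1}$ is injective through degree $i+1$, hence $t\colon M^0\to M^{-1}$, and so $t^\infty$, is zero there; the required bound on
\begin{equation*}
R\Gamma(\O_{C,\red}^\Syn,E)=\fib\bigl(M^0\xrightarrow{\,t^\infty-\tau u^\infty\,}M^{-\infty}\bigr)
\end{equation*}
then follows because the map reduces to the isomorphism $-\tau u^\infty$ in the relevant degrees. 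There is no counterpart of the equation $u^pt=0$ in the $\Z_p$-stratification that would let you gain back the missing $p-1$ degrees, so some genuinely $\O_C$-specific input of this kind is indispensable.
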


As in the arithmetic case, one can also formulate a version that works for arbitrary perfect complexes on $X^\Syn$ instead of just vector bundles:

\begin{thm}
\label{thm:syntomicetale-maincoarseoc}
Let $X$ be a smooth qcqs $p$-adic formal scheme of relative dimension $d$ over $\Spf\O_C$. For any $F$-gauge $E\in\Perf(X^\Syn)$ with Hodge--Tate weights all at most $-d+p-2$ for some $i\geq 0$, there is a natural isomorphism
\begin{equation*}
R\Gamma_\Syn(X, E)\cong R\Gamma_\proet(X_\eta, T_\et(E))\;.
\end{equation*}
\end{thm}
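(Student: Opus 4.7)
The plan is to compare both sides by realising étale cohomology as a localisation of syntomic cohomology along the section $v_1 \in H^0(\Z_p^\Syn, \O\{p-1\}/p)$. First, I would reduce the problem to one about the mod-$p$ reductions, since both sides are derived $p$-complete and the HT-weight bound on $E$ is preserved by passing to $E/p \in \Vect((X^\Syn)_{p=0})$. Second, I would extend the identification (\ref{eq:syntomification-synet}) from $\Z_p^\Syn$ to $X^\Syn$, so that $R\Gamma_\Syn(X, E/p)[\tfrac{1}{v_1}]$ computes the mod-$p$ étale cohomology of $X_\eta$ with coefficients in $T_\et(E/p)$; this should follow by descent since the mod-$p$ statement is already known over $\Spf\Z_p$.

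With these reductions in place, the task becomes bounding the connectivity of the fibre of the natural map $R\Gamma_\Syn(X, E/p) \to R\Gamma_\Syn(X, E/p)[\tfrac{1}{v_1}]$. The syntomic filtration $\Fil^\Syn_\bullet$ of \cref{defi:syntomification-fil}, specialised to $E/p$, has graded pieces computed on the reduced locus via (\ref{eq:syntomification-grsyn}): $\gr^\Syn_n R\Gamma_\Syn(X, E/p)[\tfrac{1}{v_1}] \cong R\Gamma((X^\Syn)_{\red}, (E/v_1)\{n(p-1)\})$. Completeness of the filtration (which follows from topological nilpotence of $v_1$, as noted in the forthcoming \cref{prop:syntomicetale-filtrationetale}) then reduces the connectivity bound on the fibre to one on each graded piece with $n \geq 1$.

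To verify the latter, I would apply the explicit description of quasi-coherent complexes and cohomology on the four components $\Z^{\N}_{p,\dR}$, $\Z^{\N}_{p,\Hod}$, $\Z^{\N}_{p,\HT,c}$, $\Z^{\N}_{p,\dR,+}$ of the reduced locus of $\Z_p^\N$, together with the gluing formula (\ref{eq:syntomification-cohomologyreduced}); via base change these descriptions control the cohomology on $(X^\Syn)_{\red}$ as well. The vector-bundle hypothesis on $E$ ensures that the underlying complexes on each component are concentrated in cohomological degree $0$, and the HT-weight condition $\leq -i-1$ (via \cref{defi:nygaardhodge-htweights}) forces the relevant graded pieces $V_j$ and filtration pieces $\Fil^j V$, $\Fil_j V$ to vanish unless $j \leq -i-1-n(p-1)$, so that the fibres of the operators $\Theta$ and $D$ computing cohomology on each component are concentrated in degrees $\geq i+1$ whenever $n \geq 1$.

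The main obstacle will be the combinatorial bookkeeping across the four components of $\Z_{p,\red}^\Syn$, in particular keeping careful track of the shifts introduced by Breuil--Kisin twists and the Frobenius twists appearing in the gluing maps, and verifying that the connectivity bound survives both the gluing on $\Z^{\N}_{p,\red}$ and the further gluing producing $\Z^{\Syn}_{p,\red}$. The vector-bundle hypothesis is essential for the sharp bound $-i-1$: for a general perfect complex one would lose extra degrees corresponding to the amplitude of the complex, which is presumably what accounts for the weaker bound $-d-2$ needed in \cref{thm:syntomicetale-maincoarse}.
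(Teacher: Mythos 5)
Your proposal is on the wrong track for this statement, for two distinct reasons.

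First, you have drifted from the statement at hand. The present theorem concerns $E\in\Perf(X^\Syn)$ over $\Spf\O_C$ with Hodge--Tate weights at most $-d+p-2$, and asks for a full isomorphism (no truncation). Your sketch speaks throughout of a ``vector-bundle hypothesis'', a weight bound $\leq -i-1$, and of bounding connectivity in degrees $\geq i+1$ --- that is the content of \cref{thm:syntomicetale-mainfine} (the arithmetic case for vector bundles), not of the theorem you were asked about. Here there is no vector-bundle hypothesis, the bound does not involve $i$, and the target is the \emph{vanishing} of all positive graded pieces of the syntomic filtration rather than a truncation statement.

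Second, and more fundamentally, the engine of your argument cannot produce the stated bound. You propose to push forward to $\Z_{p,\red}^\Syn$ and analyse the cohomology via the four-component decomposition $\Z^\N_{p,\dR}$, $\Z^\N_{p,\Hod}$, $\Z^\N_{p,\HT,c}$, $\Z^\N_{p,\dR,+}$ together with the gluing formula~(\ref{eq:syntomification-cohomologyreduced}). That is precisely the machinery of the arithmetic proofs (\cref{thm:syntomicetale-mainfine} and \cref{thm:syntomicetale-maincoarse}), and it yields the arithmetic bounds $-i-1$ and $-d-2$; nothing in that computation sees the perfectoid base. The $\O_C$ statement is stronger by $p$ in the weight bound, and this improvement genuinely requires the perfectoid structure. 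The route the paper takes is instead to push forward along $X^\Syn\to\O_C^\Syn$ and exploit the explicit presentation
\begin{equation*}
\O_C^\N\cong\Spf\bigl(A_\inf(\O_C)\langle u,t\rangle/(ut-\phi^{-1}(\xi))\bigr)/\G_m
\end{equation*}
from \cref{ex:filprism-qrsp}, in which the pushforward of $E$ is a graded diagram of modules $M^\bullet$ connected by $u,t$-maps, and the reduced locus is cut out by $p=v_{1,\O_C}=u^pt=0$. The weight hypothesis gives $\gr^\bullet V=0$ for all $\bullet\geq 0$ (via the $\O_C$-analogue of \cref{prop:syntomicetale-cohomologyhod}); by completeness the maps $u:M^\bullet\to M^{\bullet+1}$ are isomorphisms for $\bullet\geq -1$, so both $u^\infty$ and $u^p:M^{-1}\to M^{p-1}$ are isomorphisms, and the relation $u^pt=0$ then forces $t:M^0\to M^{-1}$ to vanish. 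Consequently
\begin{equation*}
R\Gamma(\O^\Syn_{C,\red},E)=\fib\bigl(M^0\xrightarrow{t^\infty-\tau u^\infty}M^{-\infty}\bigr)
\end{equation*}
is the fibre of an isomorphism, hence zero. This step --- the weight bound makes $u^p$ invertible, hence $t=0$ --- is exactly what is invisible through the $\Z_p$-decomposition, and it is the missing idea in your proposal. Without it, the best you could hope to extract is the bound $-d-2$, which is strictly weaker than the $-d+p-2$ you need.
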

}

\subsection{The reduced locus of $X^\Syn$}
\label{subsect:syntomicetale-red}

To prove \cref{thm:syntomicetale-maincoarse} and \cref{thm:syntomicetale-mainfine}, we will define and describe a certain locus inside $X^\N$ and $X^\Syn$, similarly to what we have done for the reduced locus of $\Z_p^\N$ and $\Z_p^\Syn$. For the rest of this section, let $X$ be a bounded $p$-adic formal scheme.

\begin{defi}
The \emph{reduced locus} $X_\red^\Syn$ of $X^\Syn$ is defined as the pullback
\begin{equation*}
\begin{tikzcd}
X^\Syn_\red\ar[d]\ar[r] & X^\Syn\ar[d] \\
\Z_{p, \red}^\Syn\ar[r] & \Z_p^\Syn\nospacepunct{\;.}
\end{tikzcd}
\end{equation*}
Similarly, we define the reduced locus $X_\red^\N$ of $X^\N$.
\end{defi}

Alternatively, one may describe the reduced locus of $X^\Syn$ as the locus cut out by the equations $p=v_{1, X}=0$, where $v_{1, X}\in H^0(X^\Syn, \O\{p-1\}/p)$ denotes the pullback of the section $v_1\in H^0(\Z_p^\Syn, \O\{p-1\}/p)$ to $X^\Syn$. Then $X_\red^\N$ can also be obtained as the pullback of $X_\red^\Syn$ to $X^\N$.

To pass between cohomology on $X_\red^\Syn$ and $X^\Syn$, we again introduce a syntomic filtration similarly to the case $X=\Spf\Z_p$ \EEDIT{treated in \cref{defi:syntomification-fil}:}

\begin{defi}
Let \EEDIT{$E\in\D((X^\Syn)_{p=0})$}. The filtration
\begin{equation*}
\begin{tikzcd}[column sep=scriptsize]
\dots\ar[r, "v_{1, X}"] & R\Gamma(X^\Syn, E\{-(p-1)\})\ar[r, "v_{1, X}"] & R\Gamma(X^\Syn, E)\ar[r, "v_{1, X}"] & R\Gamma(X^\Syn, E\{p-1\})\ar[r, "v_{1, X}"] & \dots  
\end{tikzcd}
\end{equation*}
is called the \emph{syntomic filtration} and denoted $\Fil_\bullet^\Syn R\Gamma(X^\Syn, E)[\frac{1}{v_1}]$. We denote the underlying unfiltered object by $R\Gamma(X^\Syn, E)[\frac{1}{v_1}]$.
\end{defi}

\EDIT{Note that, similarly to (\ref{eq:syntomification-grsyn}), for any \EEDIT{$E\in\D((X^\Syn)_{p=0})$}, there is a canonical isomorphism 
\begin{equation}
\label{eq:syntomicetale-grsyn}
\gr^\Syn_\bullet R\Gamma(X^\Syn, E)[\tfrac{1}{v_1}]\cong R\Gamma(X_\red^\Syn, E/v_{1, X}\{\bullet(p-1)\})\;,
\end{equation}
where $E/v_{1, X}\coloneqq\cofib(E\{-(p-1)\}\xrightarrow{v_{1, X}} E)$. Moreover, as in the case $X=\Spf\Z_p$, \EEDIT{see (\ref{eq:syntomification-synet})}, we have the following result relating the syntomic filtration to étale cohomology:
}

\begin{prop}
\label{prop:syntomicetale-filtrationetale}
Let $X$ be a bounded \EEDIT{qcqs} $p$-adic formal scheme. For any \EEDIT{$E\in\D((X^\Syn)_{p=0})$}, \EDIT{the syntomic filtration is complete}. \EEDIT{If $X$ is moreover $p$-quasisyntomic and $E\in\Perf((X^\Syn)_{p=0})$, then} there is a natural isomorphism
\begin{equation*}
R\Gamma(X^\Syn, E)[\tfrac{1}{v_1}]\cong R\Gamma_\proet(X_\eta, T_\et(E))\;.
\end{equation*}
\end{prop}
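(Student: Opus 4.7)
The plan is to handle both assertions by quasisyntomic descent, reducing to the affine perfectoid case where everything can be computed explicitly using the descriptions from \cref{ex:filprism-qrsp} and \cref{ex:syntomification-fgaugesperfd}.

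For the identification with pro-\'etale cohomology, I would first verify that both sides satisfy quasisyntomic descent in $X$. For the left-hand side, this follows from \cref{lem:filprism-quasisyntomiccover} together with the gluing description of $X^\Syn$ in \cref{defi:syntomification-def}; for the right-hand side it is a standard property of pro-\'etale cohomology with lisse $\Z_p$-coefficients on the adic generic fibre. Since quasiregular semiperfectoid rings form a basis of the quasisyntomic topology on $X$, this reduces the problem to the case where $R$ is quasiregular semiperfectoid, and an arc-descent argument along the lines of \cite[Constr.\ 6.3.2]{FGauges} further reduces to the perfectoid case.

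In the perfectoid case, an $F$-gauge $E$ on $(R^\Syn)_{p=0}$ corresponds to a diagram of modules $M^\bullet$ with transition maps $t, u$ satisfying $ut = tu = \phi^{-1}(\xi)$, together with a gluing isomorphism $\tau\colon \phi^*M^\infty \cong M^{-\infty}$. Tracing through the identification in \cref{lem:syntomification-v1lem}, the section $v_1$ corresponds (up to a Frobenius twist which is trivial modulo $p$) to the product $u^pt$; inverting $v_1$ therefore forces both $t$ and $u$ to act invertibly in the limit. Combining with the fibre sequence (\ref{eq:syntomification-fibreseq}) and the description of cohomology on $R^\N$ from \cref{ex:syntomification-fgaugesperfd}, one then identifies $R\Gamma(R^\Syn, E)[\tfrac{1}{v_1}]$ with the derived $\phi$-invariants of $M^{-\infty}[\tfrac{1}{v_1}]$. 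Via the formula (\ref{eq:syntomification-laurentfcrystaltolocsys}), this matches $R\Gamma_\proet(\Spa(R[\tfrac{1}{p}], R), T_\et(E))$, using that for an affinoid perfectoid $\Spa(R[\tfrac{1}{p}], R)$, pro-\'etale cohomology with coefficients in a lisse $\Z_p$-sheaf is computed by the derived $\phi$-invariants of its Breuil--Kisin--Fargues model.

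For the completeness of the syntomic filtration, the strategy is to show that multiplication by $v_1$ is topologically nilpotent on $R\Gamma(X^\Syn, E)$ for any $E \in \D((X^\Syn)_{p=0})$, so that the filtration separates in the limit $n \to -\infty$. The graded pieces are computed by (\ref{eq:syntomicetale-grsyn}) as cohomology on the reduced locus $X^\Syn_\red$, on which $v_1$ vanishes by definition, so topological nilpotence can be checked after pullback to the stratification of $\Z^\Syn_{p,\red}$ described in \cref{sect:syntomification} and then transported to $X^\Syn_\red$.

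The main obstacle I expect is the descent step: verifying that the operation of inverting $v_1$ commutes with the relevant descent colimits (and in particular with the arc-descent presentation of $T_\et$) requires a careful compatibility argument, and transferring the perfectoid identification to the quasiregular semiperfectoid level may require additional finiteness inputs to commute the $v_1$-colimit past the descent limit.
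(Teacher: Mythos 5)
Your plan for the pro-étale comparison is essentially the paper's approach: construct the natural map using that étale realisation inverts $v_{1,X}$, reduce to the quasiregular semiperfectoid case by quasisyntomic descent via \cref{lem:filprism-quasisyntomiccover}, and then appeal to the explicit construction of the étale realisation. You also correctly identify the main technical obstacle in that step --- the commutation of the $v_1$-colimit with the descent totalisation --- which the paper handles with the commutation lemma \cref{lem:syntomicetale-colimtot}.

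However, your argument for completeness of the syntomic filtration has a genuine gap. You propose to establish topological nilpotence of $v_1$ ``after pullback to the stratification of $\Z_{p,\red}^\Syn$ \ldots\ and then transported to $X^\Syn_\red$''. But the reduced locus is, by definition, the vanishing locus of $(p, v_1)$, so $v_1 = 0$ on $\Z_{p,\red}^\Syn$ tautologically; checking anything about $v_1$ there tells you nothing about its behaviour on $\Z_p^\Syn$, and a fortiori nothing about whether its pullback to an arbitrary $\Spec S \to \Z_p^\Syn$ is nilpotent. The actual work in the paper is to trace $v_1$ back through its construction as the pullback of the section $u^p t$ on $\A^1_-/\G_m \times (\A^1_+/\G_m)^\dR$, and to reduce the claim --- via the pullback square defining $\Z_p^\N$ and the fpqc cover $F\colon \Z_p^\prism \to \Z_p^\prism$ --- to the concrete moduli-theoretic fact that $\mu\colon \Z_p^\prism \to \A^1_-/\G_m$ factors through $\widehat{\A}^1_-/\G_m$, which holds because the image of a Cartier--Witt divisor in $S$ lies in the nilradical (\cref{ex:prismatisation-cwdivtodiv}). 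Your plan does not supply any argument of this kind, and quasisyntomic descent does not help here either: since $v_{1,X}$ is pulled back from $\Z_p^\Syn$, the question is purely about $\Spf\Z_p$ and must be settled by a direct computation, not by reduction to a cover of $X$.
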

\begin{proof}
We work over $\F_p$ throughout and thus mostly suppress the subscript $(-)_{p=0}$ from the notation. We first claim that the section $v_{1, X}$ is topologically nilpotent, i.e.\ its pullback to any scheme $\Spec S\rightarrow X^\Syn$ is nilpotent; as $v_{1, X}$ arises from $v_1$ via pullback, it suffices to prove this for $X=\Spf\Z_p$. However, recalling that we have defined $v_1$ as the pullback of the section $u^pt$ of the line bundle $\O(-1)\boxtimes\O(p)$ on $\A^1_-/\G_m\times (\A^1_+/\G_m)^\dR$, it suffices to show that $(ut)^p$ is topologically nilpotent on $\Z_p^\N$. As this is the pullback of the section $t^p: \O\rightarrow\O(-p)$ along the composite map 
\begin{equation*}
\Z_p^\N\xrightarrow{(t, u)}\A^1_-/\G_m\times (\A^1_+/\G_m)^\dR\xrightarrow{\mathrm{mult}} (\A^1_-/\G_m)^\dR\;,
\end{equation*}
it suffices to show that the pullback of $t^p$ along $\Z_p^\prism\xrightarrow{\widetilde{\mu}} (\A^1_-/\G_m)^\dR$ is topologically nilpotent and we may do this after a further pullback along the fpqc cover $F: \Z_p^\prism\rightarrow\Z_p^\prism$. Using the commutative diagram (it is in fact a pullback square)
\begin{equation*}
\begin{tikzcd}
\Z_p^\prism\ar[r, "F"]\ar[d, "\mu", swap] & \Z_p^\prism\ar[d, "\widetilde{\mu}"] \\
\A^1_-/\G_m\ar[r] & (\A^1_-/\G_m)^\dR
\end{tikzcd}
\end{equation*}
from \cref{ex:filprism-jdrjht}, it is then enough to check that the map $\mu: \Z_p^\prism\rightarrow\A^1_-/\G_m$ factors through $\widehat{\A}^1_-/\G_m\subseteq \A^1_-/\G_m$, but this is true by \cref{ex:prismatisation-cwdivtodiv}.

To prove the second statement, first observe that there is a natural map
\begin{equation*}
R\Gamma(X^\Syn, E)[\tfrac{1}{v_1}]\rightarrow R\Gamma_\proet(X_\eta, T_\et(E))
\end{equation*}
as the étale realisation carries $v_{1, X}$ to an isomorphism by \cite[Ex.\ 6.3.3]{FGauges}. To show that this map is an isomorphism, \cref{lem:syntomicetale-colimtot} allows us to reduce to the case where $X=\Spf R$ is quasiregular semiperfectoid using quasisyntomic descent and \cref{lem:filprism-quasisyntomiccover}. However, in this case, the claim follows from the description of the étale realisation functor from \cite[Constr.\ 6.3.1]{FGauges}. 
\end{proof}

\subsection{The components of the reduced locus of $X^\Syn$}
\label{subsect:syntomicetale-compred}

We now explain how to obtain the reduced locus $X^\Syn_\red$ by gluing several components which have a fairly explicit description.  \EDIT{Before proceeding, we encourage the reader to revisit the description of the reduced locus $\Z_{p, \red}^\Syn$ of $\Z_p^\Syn$ given towards the end of Section \ref{sect:syntomification} as the case $X=\Spf\Z_p$ will be prototypical.} We let $Y\coloneqq X_{p=0}$ denote the special fibre of $X$.

\begin{defi}
The \emph{de Rham component} $X^\N_\dR$ of $X^\N$ is defined as the pullback
\begin{equation*}
\begin{tikzcd}
X^\N_\dR\ar[r]\ar[d] & X^\N\ar[d] \\
\Z_{p, \dR}^\N\ar[r] & \Z_p^\N\nospacepunct{\;.}
\end{tikzcd}
\end{equation*}
\end{defi}

Note that, by definition of $\Z_{p, \dR}^\N$, the map $i_\dR$ induces an fpqc cover $\Spec\F_p\rightarrow\Z_{p, \dR}^\N$ which classifies the $\F_p$-point $(\F_p\xrightarrow{\id}\F_p, \G_a^\dR(\F_p)\xrightarrow{0} \G_a^\dR(\F_p), W(\F_p)\xrightarrow{p} W(\F_p))$ of $\Z_p^\N$ and it is straightforward to check that the pullback of $\G_a^\N$ along this cover identifies with $\G_a^\dR$. Thus, we see that this induces a pullback square
\begin{equation}
\label{eq:syntomicetale-coverdr}
\begin{tikzcd}
(X^\dR)_{p=0}\ar[r]\ar[d] & X^\N_\dR\ar[d] \\
\Spec\F_p\ar[r] & \Z_{p, \dR}^\N\nospacepunct{\;.}
\end{tikzcd}
\end{equation}

\begin{rem}
If $X$ is smooth and qcqs, using (\ref{eq:syntomicetale-coverdr}) \EEDIT{and \cref{rem:drstack-vect}}, one can give an explicit description of vector bundles on $X^\N_\dR$ in terms of vector bundles on $Y$ with a flat connection \EEDIT{having locally nilpotent $p$-curvature} equipped with an additional Sen operator $\Theta$ satisfying certain compatibilities.
\end{rem}

Although we will not need it in the sequel, we also shortly want to discuss the Hodge--Tate component $X^\N_\HT$ defined as the pullback
\begin{equation*}
\begin{tikzcd}
X^\N_\HT\ar[r]\ar[d] & X^\N\ar[d] \\
\Z_{p, \HT}^\N\ar[r] & \Z_p^\N\nospacepunct{\;.}
\end{tikzcd}
\end{equation*}
To this end, recall that $\Z_{p, \HT}^\N$ can be described as \EEDIT{$j_\HT(\Z_p^\prism)\cap\Z_{p, \HT, c}^\N=j_\HT(\Z_p^\prism)\cap (\Z_p^\N)_{p=t=0}$} and hence $X^\N_\HT$ identifies with $(X^\prism)_{p=\mu=0}$ via the map $j_\HT$. In other words, we have $X^\N_\HT\cong (X^\HT)_{p=0}$, where $X^\HT$ is the \emph{Hodge--Tate stack} of $X$, see \cite[Constr.\ 3.7]{PFS}, and this means that there is a pullback square
\begin{equation}
\label{eq:syntomicetale-coverht}
\begin{tikzcd}
(X^\dHod)_{p=0}\ar[r]\ar[d] & X^\N_\HT\ar[d] \\
\Spec\F_p\ar[r] & \Z_{p, \HT}^\N\nospacepunct{\;,}
\end{tikzcd}
\end{equation}
where $X^\dHod$ denotes the \emph{diffracted Hodge stack} of $X$ as introduced in \cite[Constr.\ 3.8]{PFS}, see also \cite[Def.\ 3.2.1]{NygaardHodge}, and the lower map classifies the $\F_p$-point $(\F_p\xrightarrow{0}\F_p, \G_a^\dR(\F_p)\xrightarrow{\id} \G_a^\dR(\F_p), W(\F_p)\xrightarrow{p} W(\F_p))$ of $\Z_p^\N$.

\begin{defi}
The \emph{Hodge-filtered de Rham component} $X^\N_{\dR, +}$ of $X^\N$ is defined as the pullback
\begin{equation*}
\begin{tikzcd}
X^\N_{\dR, +}\ar[r]\ar[d] & X^\N\ar[d] \\
\Z_{p, \dR, +}^\N\ar[r] & \Z_p^\N\nospacepunct{\;.}
\end{tikzcd}
\end{equation*}
\end{defi}

Note that, by definition of \EDIT{$\Z_{p, \dR, +}^\N$}, there is an fpqc cover \EDIT{$\A^1_-/\G_m\rightarrow\Z_{p, \dR, +}^\N$} induced by the Hodge-filtered de Rham map and it is straightforward to check that the pullback of $\G_a^\N$ along this cover identifies with $\G_a^{\dR, +}$. In other words, this cover induces a pullback square
\begin{equation}
\label{eq:syntomicetale-coverdr+}
\begin{tikzcd}
(X^{\dR, +})_{p=0}\ar[r]\ar[d] & X^\N_{\dR, +}\ar[d] \\
\A^1_-/\G_m\ar[r] & \Z_{p, \dR, +}^\N\nospacepunct{\;.}
\end{tikzcd}
\end{equation}

\begin{rem}
If $X$ is smooth and qcqs, using (\ref{eq:syntomicetale-coverdr+}) \EEDIT{and \cref{rem:fildrstack-vect}}, one can describe vector bundles on $X_{\dR, +}^\N$ explicitly in terms of filtered vector bundles on $Y$ equipped with a flat connection satisfying Griffiths transversality and a Sen operator.
\end{rem}

\begin{defi}
The \emph{conjugate-filtered Hodge--Tate component} $X^\N_{\HT, c}$ of $X^\N$ is defined as the pullback
\begin{equation*}
\begin{tikzcd}
X^\N_{\HT, c}\ar[r]\ar[d] & X^\N\ar[d] \\
\Z_{p, \HT, c}^\N\ar[r] & \Z_p^\N\nospacepunct{\;.}
\end{tikzcd}
\end{equation*}
\end{defi}

Recalling that $\Z_{p, \HT, c}^\N\cong \G_{a, +}^\dR/\G_m$, we see that there is an fpqc cover $\A^1_+/\G_m\rightarrow\Z_{p, \HT, c}^\N$ and, by definition of the \emph{conjugate-filtered diffracted Hodge stack} $X^{\dHod, c}$ of $X$ as introduced in \cite[Def.\ 3.3.1]{NygaardHodge}, this cover induces a pullback square
\begin{equation*}
\begin{tikzcd}
(X^{\dHod, c})_{p=0}\ar[r]\ar[d] & X^\N_{\HT, c}\ar[d] \\
\A^1_+/\G_m\ar[r] & \Z_{p, \HT, c}^\N\nospacepunct{\;.}
\end{tikzcd}
\end{equation*}

\begin{defi}
The \emph{Hodge component} $X^\N_\Hod$ of $X^\N$ is defined as the pullback
\begin{equation*}
\begin{tikzcd}
X^\N_\Hod\ar[r]\ar[d] & X^\N\ar[d] \\
\Z_{p, \Hod}^\N\ar[r] & \Z_p^\N\nospacepunct{\;.}
\end{tikzcd}
\end{equation*}
\end{defi}

Note that base changing the fpqc cover $\A^1_-/\G_m\rightarrow\Z_{p, \dR, +}^\N$ from above to $\Z_{p, \Hod}^\N\subseteq\Z_{p, \dR, +}^\N$ yields an fpqc cover $B\G_m\rightarrow\Z_{p, \Hod}^\N$. Moreover, from the pullback square (\ref{eq:syntomicetale-coverdr+}), we infer that there is a pullback square
\begin{equation}
\label{eq:syntomicetale-coverhod}
\begin{tikzcd}
(X^\Hod)_{p=0}\ar[r]\ar[d] & X^\N_\Hod\ar[d] \\
B\G_m\ar[r] & \Z_{p, \Hod}^\N\nospacepunct{\;.}
\end{tikzcd}
\end{equation}

For our analysis of the syntomic cohomology of $X$, it will be vital to have a good understanding of vector bundles on $X_\Hod^\N$ and their cohomology. For this, we will first need to recall some facts about Higgs bundles. Thus, let $Y$ be a scheme or formal scheme which is smooth over a base \EDIT{(formal) scheme} $S$. For us, the case $S=\Spec\F_p$ will be the most relevant as we will want to apply the following definitions in our context of $Y$ being the the special fibre of some smooth $p$-adic formal scheme $X$.

\begin{defi}
Let $V\in\Vect(Y)$ be a vector bundle on $Y$. A \emph{Higgs field} on \EEDIT{$V$} is an $\O_Y$-linear morphism $\Phi: V\rightarrow V\tensor\Omega_{Y/S}^1$ such that $\Phi\wedge\Phi=0$. Here, $\Phi\wedge\Phi: V\rightarrow V\tensor\Omega_{Y/S}^2$ denotes the composition
\begin{equation*}
V\xrightarrow{\mathrlap{\hspace{0.25cm}\Phi}\hphantom{\Phi\tensor\id}} V\tensor\Omega^1_{Y/S}\xrightarrow{\Phi\tensor\id} V\tensor\Omega^1_{Y/S}\tensor\Omega^1_{Y/S}\xrightarrow{\id\tensor\wedge}V\tensor\Omega^2_{Y/S}\;,
\end{equation*}
\EDIT{where $\wedge: \Omega^1_{Y/S}\tensor\Omega^1_{Y/S}\rightarrow\Omega^1_{Y/S}\wedge\Omega^1_{Y/S}\cong \Omega^2_{Y/S}$ is the canonical map.}
\end{defi}

From a Higgs field $\Phi$ on some $V\in\Vect(Y)$, we obtain induced morphisms $\Phi: V\tensor\Omega_{Y/S}^i\rightarrow V\tensor\Omega_{Y/S}^{i+1}$ given by
\begin{equation*}
V\tensor\Omega_{Y/S}^i\xrightarrow{\Phi\tensor\id}V\tensor\Omega_{Y/S}^1\tensor\Omega_{Y/S}^i\xrightarrow{\id\tensor\wedge} V\tensor\Omega_{Y/S}^{i+1}
\end{equation*}
and thus a variant of the Hodge complex 
\begin{equation*}
V\xrightarrow{\Phi} V\tensor\Omega^1_{Y/S}\xrightarrow{\Phi} V\tensor\Omega^2_{Y/S}\xrightarrow{\Phi}\dots\;.
\end{equation*}

By the tensor-hom adjunction, for any vector bundle $V$ on $Y$, an $\O_Y$-linear morphism $\Phi: V\rightarrow V\tensor\Omega_{Y/S}^1$ is equivalent to the data of an $\O_Y$-linear morphism
\begin{equation*}
\Phi_{(-)}: \T_{Y/S}\rightarrow \sEnd_{\O_Y}(V)\;,
\end{equation*}
which can explicitly be described as sending any local section $\theta\in\T_{Y/S}(U)$ to the endomorphism
\begin{equation*}
\Phi_\theta: V|_U\xrightarrow{\mathrlap{\hspace{0.3cm}\Phi}\hphantom{\id\tensor\theta^\vee}} (V\tensor\Omega^1_{Y/S})|_U\xrightarrow{\id\tensor\theta^\vee} V|_U\;;
\end{equation*}
\EDIT{here, $\T_{Y/S}$ denotes the tangent bundle of $Y$ over $S$, as usual.} One can then check that the condition $\Phi\wedge\Phi=0$ is exactly equivalent to requiring that $\Phi_\theta$ and $\Phi_{\theta'}$ commute for any local sections $\theta, \theta'\in\T_{Y/S}(U)$. Thus, we conclude that equipping $V$ with a Higgs field is equivalent to equipping $V$ with the structure of a $\Sym^\bullet_{\O_Y}\T_{Y/S}$-module, \EDIT{where we remind the reader that $\Sym^\bullet_{\O_Y}\T_{Y/S}$ denotes the symmetric algebra of the $\O_Y$-module $\T_{Y/S}$}. Finally, we say that a Higgs field $\Theta$ on $V\in\Vect(Y)$ is \emph{locally nilpotent} if for any local sections $\theta\in\T_{Y/S}(U), s\in V(U)$, we have $\Phi_\theta^n(s)=0$ for some $n>0$.

Finally, we note that the above definitions immediately generalise to the case of arbitrary quasi-coherent complexes $V\in\D(Y)$ instead of vector bundles: A Higgs bundle on $Y$ is an $\O_Y$-linear morphism $\Phi: V\tensor\Omega_{Y/S}^1\rightarrow\Omega_{Y/S}^1$ and this is equivalent to the datum of equipping $V$ with the structure of a $\Sym^\bullet_{\O_Y}\T_{Y/S}$-complex; it is locally nilpotent if for any local section $\theta\in\T_{Y/S}(U)$, the operator $\Phi_\theta$ acts locally nilpotently on the sections of the cohomology sheaves of $V$ over $U$. Again, we obtain an analogue of the Hodge complex given by the total complex
\begin{equation*}
\Tot(V\xrightarrow{\Phi} V\tensor\Omega^1_{Y/S}\xrightarrow{\Phi} V\tensor\Omega^2_{Y/S}\xrightarrow{\Phi}\dots)\;.
\end{equation*}

We now return to our particular case of $Y=X_{p=0}$ for a smooth $p$-adic formal scheme $X$. Using the pullback square (\ref{eq:syntomicetale-coverhod}), it turns out that the main part of the work lies in analysing vector bundles on $(X^\Hod)_{p=0}$ and their cohomology.

\begin{lem}
\label{lem:syntomicetale-identifyhod}
Let $X$ be a smooth qcqs $p$-adic formal scheme \EDIT{and put $Y=X_{p=0}$}. Then there is an isomorphism
\begin{equation*}
\EEDIT{
(X^\Hod)_{p=0}\cong B_{Y\times B\G_m}\V(\T_{Y/\F_p}(1))^\sharp
}
\end{equation*}
over $Y\times B\G_m$.
\end{lem}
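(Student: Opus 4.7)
The plan is to produce the isomorphism via deformation theory, by first constructing a comparison map and then computing its fibres. The canonical morphism of ring stacks $\G_a \hookrightarrow B\V(\O(1))^\sharp \oplus \G_a = \G_a^\Hod$ over $B\G_m$ featured in \cref{defi:nygaardhodge-htweights} induces by functoriality a map $X \times B\G_m \to X^\Hod$ over $B\G_m$; reducing mod $p$ yields a morphism $\pi \colon Y \times B\G_m \to (X^\Hod)_{p=0}$ over $Y \times B\G_m$.

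The next step is to show that $\pi$ realises $(X^\Hod)_{p=0}$ as the classifying stack $B_{Y \times B\G_m} \V(\T_{Y/\F_p}(1))^\sharp$. For this, I would compute the fibres of $\pi$ via deformation theory. Given a test $\F_p$-algebra $S$ equipped with a line bundle $L$ and a morphism $\phi \colon \Spec S \to Y$, the fibre of $\pi$ above $(\phi, L)$ consists of those morphisms $\tilde\phi \colon \Spec \G_a^\Hod(S) \to X$ whose restriction to $\Spec S$ along the quotient ring map $\G_a^\Hod \to \G_a$ recovers $\phi$. Viewing $\G_a^\Hod(S)$ as a $1$-truncated animated ring stack arising as a square-zero extension of $S$ by $\V(L)^\sharp(S)[1]$, smoothness of $X$ over $\Spf\Z_p$ identifies this space of lifts with the mapping space
\begin{equation*}
\Map\bigl(\phi^*\Omega^1_{Y/\F_p}, \V(L)^\sharp(S)[1]\bigr)
\end{equation*}
in the category of abelian group stacks over $\Spec S$. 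Invoking the universal property of the PD envelope, a morphism from a vector bundle $E$ to $\V(L)^\sharp$ over $\Spec S$ is the datum of a section of $\V(E^\vee \otimes L)^\sharp$ (the higher divided powers being determined by the section together with the PD relations), so the above mapping space identifies with $B\V(\phi^*\T_{Y/\F_p} \otimes L)^\sharp(S)$. Since $\V(\phi^*\T_{Y/\F_p} \otimes L)^\sharp$ is the pullback of $\V(\T_{Y/\F_p}(1))^\sharp$ along the classifying map $(\phi, L) \colon \Spec S \to Y \times B\G_m$, this gives the desired identification of the fibre.

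The main obstacle is making this deformation-theoretic computation rigorous, since $\V(L)^\sharp$ is genuinely a nonlinear abelian group scheme -- its only natural scalar action is $\lambda \cdot (s_n) = (\lambda^n s_n)$, which is not additive in $\lambda$ -- so the standard cotangent complex formalism classifying square-zero deformations by modules does not literally apply. The right interpretation is to treat $\G_a^\Hod$ as a $1$-truncated commutative ring stack whose deformation theory is controlled by a variant of the cotangent complex allowing abelian group scheme coefficients; the universal property of the PD envelope then provides the bridge between such coefficients and linear ones, which is what ultimately produces the twist by $\V(\T_{Y/\F_p}(1))^\sharp$ on the tangent side.
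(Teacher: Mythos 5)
Your overall strategy mirrors the paper's: use the canonical splitting $\G_a \to \G_a^\Hod$ to produce the comparison map and then compute the fibres by deformation theory. However, there is a genuine error in your analysis of the group scheme $\V(L)^\sharp$, and it propagates into the category in which you take your mapping space.

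You assert that the scalar action $\lambda \cdot (s_n) = (\lambda^n s_n)$ on $\V(L)^\sharp$ is not additive in $\lambda$, and you build the final paragraph around the premise that $\V(L)^\sharp$ is therefore not a $\G_a$-module scheme. This is false. The group law on $\V(L)^\sharp$ is the divided-power convolution $(s_n) + (t_n) = (\sum_{i+j=n} s_i t_j)$, and one checks directly that
\begin{equation*}
\lambda \cdot (s_n) + \mu \cdot (s_n) = \Bigl(\textstyle\sum_{i+j=n} \lambda^i \mu^j\, s_i s_j\Bigr) = \Bigl(\textstyle\sum_{i+j=n} \lambda^i\mu^j \tbinom{n}{i}\, s_n\Bigr) = \bigl((\lambda+\mu)^n s_n\bigr) = (\lambda+\mu)\cdot (s_n)\,,
\end{equation*}
using the PD relation $s_i s_j = \tbinom{i+j}{i} s_{i+j}$. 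So $\V(L)^\sharp$ \emph{is} a $\G_a$-module scheme, and the map $\V(\O(1))^\sharp \to \G_a$ is a quasi-ideal in the sense of Drinfeld. This is precisely the structure that makes $\G_a^\Hod = \Cone(\V(\O(1))^\sharp \xrightarrow{0} \G_a)$ a ring stack and allows the usual square-zero deformation theory of ring stacks to apply verbatim; the ``main obstacle'' you identify is illusory, and there is no need for a non-standard variant of the cotangent complex.

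The error has a concrete downstream consequence: you compute $\Map(\phi^*\Omega^1_{Y/\F_p}, \V(L)^\sharp(S)[1])$ ``in the category of abelian group stacks over $\Spec S$'', but this is the wrong category. Over non-reduced $S$ there are abelian-group-scheme maps from a vector group into $\V(L)^\sharp$ that are not $\O_S$-linear: for instance over $S = \F_p[\epsilon]/(\epsilon^p)$ the assignment $y \mapsto (\epsilon y^p, 0, \dots)$ gives a nonzero homomorphism $\G_a \to \G_a^\sharp$ which is not $\G_a$-linear. So the identification with $B\V(\phi^*\T_{Y/\F_p}\tensor L)^\sharp(S)$ fails in your category. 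The fibre computation must be done in $\O_S$-module stacks (equivalently $\G_a$-module stacks); the Leibniz rule forces the deforming derivation $\O_Y \to B\V(L)^\sharp(S)$ to be $\O_Y$-linear via $\phi^\sharp$, and in that category your claimed identification of $\Hom(\V(E), \V(L)^\sharp)$ with $\V(E^\vee\tensor L)^\sharp(S)$ is correct (the homogeneity forced by $\G_a$-linearity is exactly what pins down the $s_n$ as sections of $\Sym^n(E^\vee)\tensor L^{\tensor n}$ and turns the Hopf-compatibility into the PD relations). With these two corrections the argument becomes the one the paper gives.
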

\begin{proof}
Note that, over the stack $B\G_m$, the map $\G_a^\Hod\cong \G_a\oplus B\V(\O(1))^\sharp\rightarrow \G_a$ is a square-zero extension of the target by $B\V(\O(1))^\sharp$ and thus the induced map
\begin{equation*}
(X^\Hod)_{p=0}\rightarrow (B\G_m)_{p=0}
\end{equation*}
of stacks over $B\G_m$ is a \EEDIT{$\V(\T_{Y/\F_p}(1))^\sharp$-gerbe}, \EDIT{where $\T_{Y/\F_p}$ denotes the tangent sheaf of $Y$ over $\F_p$}, by derived deformation theory.\footnote{More precisely, we are using the following assertion in derived algebraic geometry: Let $Y$ be a finite type $\F_p$-scheme and $A'\rightarrow A$ a square-zero extension of animated $\F_p$-algebras with fibre $N\in\D^{\leq 0}(A)$. Then the fibre of the map $Y(A')\rightarrow Y(A)$ over a point $\eta: \Spec A\rightarrow X$ is a torsor for $\operatorname{Der}_{\F_p}(\O_Y, \eta_*N)\cong\Map_A(\eta^*L_{Y/\F_p}, N)$; \EDIT{the proof is similar to \cite[Thm.\ 5.1.13]{FlatPurity}.} Note that, if furthermore $N=L[1]\in\D^{\leq -1}(A)$ and $Y$ is smooth, we have $\Map_A(\eta^*L_{Y/\F_p}, N)\cong B(\eta^*\T_{Y/\F_p}\tensor_A L)$.} However, as the square-zero extension above splits via the canonical injection, the gerbe is also split and we are done.
\end{proof}

Due to the description of $(X^\Hod)_{p=0}$ provided by the lemma above, we will make use of the following general statement about $\V(E)^\sharp$-representations for a vector bundle $E$:

\begin{lem}
\label{lem:syntomicetale-vesharpreps}
Let $X$ be a scheme and $E\in\Vect(X)$ a vector bundle on $X$. Then there is a natural equivalence
\begin{equation*}
\D(B\V(E)^\sharp)\cong\D(\widehat{\V(E^\vee)})
\end{equation*}
\EEDIT{of stable $\infty$-categories} compatible with the forgetful functors on both sides; here, $\widehat{\V(E^\vee)}$ denotes the completion of $\V(E^\vee)$ at the zero section. Moreover, if $V$ is a quasi-coherent complex on $B\V(E)^\sharp$ and $\pi: B\V(E)^\sharp\rightarrow X$ is the projection, then
\begin{equation*}
\pi_*V\cong \Tot(V\rightarrow V\tensor E^\vee\rightarrow V\tensor \wedge^2 E^\vee\rightarrow\dots)\;,
\end{equation*}
where the maps are induced by the natural $\Sym^\bullet_{\O_X} E$-module structure on $V$ via the equivalence above. Note that we abuse notation and also use $V$ to denote the pullback of $V$ to $X$.
\end{lem}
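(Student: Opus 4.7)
The plan is to argue fpqc-locally on $X$ so that, by descent for $\D(-)$, I may reduce to the case where $X=\Spec R$ is affine and $E=R^n$ is free of finite rank. In this situation, $\V(E)^\sharp$ identifies with $\Spec(\Gamma^\bullet_R E^\vee)$, where $\Gamma^\bullet_R E^\vee$ is the free divided power algebra on $E^\vee$, endowed with its natural cocommutative Hopf algebra structure in which the elements of $E^\vee\subseteq \Gamma^1_R E^\vee$ are primitive. The fpqc cover $\Spec R\rightarrow B\V(E)^\sharp$ gives via Čech descent a description of $\D(B\V(E)^\sharp)$ as the totalisation of the cosimplicial category associated to the Hopf algebra $\Gamma^\bullet_R E^\vee$, i.e.\ as the derived category of comodules over $\Gamma^\bullet_R E^\vee$.

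The main step, which I expect to be the heart of the argument, is to establish a Cartier-duality equivalence between the derived category of $\Gamma^\bullet_R E^\vee$-comodules and $\D(\widehat{\V(E^\vee)})$. Concretely, the graded $R$-linear dual of $\Gamma^\bullet_R E^\vee$ is $\Sym^\bullet_R E = R[x_1,\dots,x_n]$, with the coproduct dualising to the algebra structure; the completion at the augmentation ideal $(x_1,\dots,x_n)$ yields $\widehat{\Sym}^\bullet_R E = R[[x_1,\dots,x_n]]$, whose formal spectrum is precisely $\widehat{\V(E^\vee)}$. On underlying complexes, the equivalence sends a $\Gamma^\bullet_R E^\vee$-comodule to itself, viewed as a topologically nilpotent (equivalently, derived $(x_1,\dots,x_n)$-complete) module over $\Sym^\bullet_R E$, the action being obtained by pairing the coaction with each $x_i$. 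Compatibility with the forgetful functors to $\D(R)$ is then built into the construction, and one checks that the equivalence is intrinsic (independent of the chosen basis) and hence descends to the general case. The delicate point is to package this classical Cartier-type duality correctly in the derived $\infty$-categorical framework, ensuring that both sides are controlled by the same bar/cobar complex.

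Granting this equivalence, the formula for $\pi_\ast$ is purely formal. Under the identification, $\pi_\ast V$ computes derived $\V(E)^\sharp$-invariants, which translates to $\RHom_{\Sym^\bullet_R E}(R,V)$. Resolving $R=\Sym^\bullet_R E/(E)$ by the Koszul complex
\begin{equation*}
\cdots\rightarrow \Sym^\bullet_R E\tensor_R \wedge^2 E\rightarrow \Sym^\bullet_R E\tensor_R E\rightarrow \Sym^\bullet_R E\rightarrow R\rightarrow 0\;,
\end{equation*}
applying $\Hom_{\Sym^\bullet_R E}(-,V)$, and using $\Hom_R(\wedge^k E, V)\cong V\tensor_R \wedge^k E^\vee$ for $E$ locally free of finite rank, yields precisely $\Tot(V\rightarrow V\tensor E^\vee\rightarrow V\tensor \wedge^2 E^\vee\rightarrow\cdots)$ with differentials induced by the $\Sym^\bullet_R E$-action on $V$. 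Globalising via fpqc descent recovers the stated formula for an arbitrary scheme $X$ and vector bundle $E$.
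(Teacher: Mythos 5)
Your treatment of the second assertion (the Koszul/Čech complex formula for $\pi_*V$) is exactly the paper's: reduce to $X=\Spec R$, identify $\pi_*V\cong\RHom_{\Sym^\bullet_R E}(R,V)$ via the equivalence, and resolve $R$ by the Koszul complex, so that part matches.

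For the first assertion, the paper simply cites \cite[Prop.\ 2.4.5]{FGauges} rather than reproving it. You instead sketch a Cartier-duality argument: identify $\cO(\V(E)^\sharp)$ with the divided power Hopf algebra $\Gamma^\bullet_R E^\vee$ after trivialising $E$, dualise to $\Sym^\bullet_R E$ completed at the augmentation ideal to get $\widehat{\V(E^\vee)}$, and interpret the equivalence as a derived Čech/cobar descent for the cover $\Spec R\to B\V(E)^\sharp$. This is indeed the content of the cited result, so the outline is sound and gives a self-contained route where the paper prefers a reference. The two caveats worth flagging in your write-up: (a) the move from $\Gamma^\bullet_R E^\vee$-comodules to derived $(x_1,\dots,x_n)$-complete $\Sym^\bullet_R E$-modules passes through the MGM/Greenlees--May equivalence between locally $I$-torsion and derived $I$-complete complexes, which is worth saying explicitly rather than leaving as ``topologically nilpotent (equivalently, derived complete)''; and (b) you yourself acknowledge that packaging this in the $\infty$-categorical setting (bar/cobar) is the ``delicate point,'' so as written this is a proof sketch with a known-to-be-nontrivial gap, which is precisely the gap that \cite[Prop.\ 2.4.5]{FGauges} fills. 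Either cite it or spell out the cobar comparison; otherwise the first half is an outline, not a proof.
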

\begin{proof}
The first assertion is \cite[Prop.\ 2.4.5]{FGauges}. For the second one, we reduce to the case where $X=\Spec R$ is affine. Then we have
\begin{equation*}
\pi_*V=R\Gamma(B\V(E)^\sharp, V)\cong\RHom_{B\V(E)^\sharp}(\O, V)\cong \RHom_{\Sym^\bullet_R E}(R, V)\;,
\end{equation*}
where the action of $E$ on $R$ is trivial in the rightmost term. Using the Koszul resolution
\begin{equation*}
(\dots\rightarrow\wedge^2 E\tensor_R \Sym^\bullet_R E\rightarrow E\tensor_R \Sym^\bullet_R E\rightarrow\Sym^\bullet_R E)\rightarrow R
\end{equation*}
to compute the RHom above, we obtain the result.
\end{proof}

\begin{prop}
\label{prop:syntomicetale-vecthod}
Let $X$ be a smooth qcqs $p$-adic formal scheme \EDIT{and put $Y=X_{p=0}$}. Then giving a quasi-coherent complex on $(X^\Hod)_{p=0}$ amounts to specifying a graded quasi-coherent complex $V=\bigoplus_i V_i$ on $Y$ equipped with a Higgs field $\Phi: V\rightarrow V\tensor\Omega_{Y/\F_p}^1$ which is locally nilpotent and decreases degree by $1$, i.e.\ the restriction of $\Phi$ to $V_i$ comes with a factorisation through $V_{i-1}\tensor\Omega_{Y/\F_p}^1$.
\end{prop}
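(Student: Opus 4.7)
The plan is to string together the two preparatory lemmas (\cref{lem:syntomicetale-identifyhod} and \cref{lem:syntomicetale-vesharpreps}) and then unpack the resulting module data into the language of graded Higgs bundles.

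First, I will apply \cref{lem:syntomicetale-identifyhod} to replace $(X^\Hod)_{p=0}$ by the relative classifying stack $B_{Y\times B\G_m}\V(\T_{Y/\F_p}(1))^\sharp$. Next, I will argue that the relative version of \cref{lem:syntomicetale-vesharpreps} still holds: for a vector bundle $E$ on any (formal) stack $\cal Z$, one has a symmetric monoidal equivalence
\begin{equation*}
\D(B_{\cal Z}\V(E)^\sharp)\cong \D(\widehat{\V_{\cal Z}(E^\vee)})\;,
\end{equation*}
which can be reduced to the affine case (i.e.\ to the statement already proved) by fpqc descent along a cover of $\cal Z$ by affines, since both sides are compatible with base change in $\cal Z$. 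Applying this with $\cal Z=Y\times B\G_m$ and $E=\T_{Y/\F_p}(1)$ shows that a quasi-coherent complex on $(X^\Hod)_{p=0}$ is the same as a $\Sym^\bullet_{\O_{Y\times B\G_m}}\T_{Y/\F_p}(1)^\vee$-complex on $Y\times B\G_m$ on which the action of $\T_{Y/\F_p}(1)^\vee$ is locally nilpotent (in the sense made precise below).

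Now I will unravel what such a module amounts to. A quasi-coherent complex on $Y\times B\G_m$ is the same as a graded quasi-coherent complex $V=\bigoplus_i V_i$ on $Y$, with $V_i$ sitting in grading degree $i$. Since $\T_{Y/\F_p}(1)^\vee = \Omega^1_{Y/\F_p}\tensor\O(-1)$ sits in grading degree $-1$, specifying a $\Sym^\bullet_{\O_{Y\times B\G_m}}\T_{Y/\F_p}(1)^\vee$-module structure on $V$ is the same as specifying an $\O_Y$-linear map
\begin{equation*}
\Phi\colon V\rightarrow V\tensor_{\O_Y}\Omega^1_{Y/\F_p}
\end{equation*}
which decreases grading degree by $1$ (i.e.\ restricts to $V_i\to V_{i-1}\tensor\Omega^1_{Y/\F_p}$) and satisfies $\Phi\wedge\Phi=0$, i.e.\ a Higgs field in the sense discussed before the proposition. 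The local nilpotence condition coming from the completion at the zero section of $\V(\T_{Y/\F_p}(1)^\vee)$ then translates, via the identification $\Phi_\theta$ from the tensor–hom adjunction made in the discussion before the proposition, into the locally nilpotent Higgs field condition. Putting these observations together yields the claim.

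The main obstacle will be the bookkeeping for the relative/graded version of \cref{lem:syntomicetale-vesharpreps} --- in particular verifying that the local nilpotence constraint encoded by completion at the zero section matches precisely the notion of local nilpotence for Higgs fields introduced just before the proposition, and that the grading shift induced by the Breuil--Kisin-style twist $\O(1)$ is correctly $-1$ rather than $+1$. Once these compatibilities are settled, no further calculation is needed.
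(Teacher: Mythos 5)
Your approach is essentially identical to the paper's, which simply says that the proposition ``follows by combining \cref{lem:syntomicetale-identifyhod} with the $\G_m$-equivariant version of \cref{lem:syntomicetale-vesharpreps}.'' Your fleshing out — proving the relative/equivariant version of \cref{lem:syntomicetale-vesharpreps} by fpqc descent along a cover of $Y\times B\G_m$, then unwinding the $B\G_m$-grading and the $\Sym$-module structure — is exactly the intended argument.

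One concrete slip in the bookkeeping you flagged: the paper's convention is $\V(E)=\Spec\Sym^\bullet E^\vee$, so the equivalence $\D(B\V(E)^\sharp)\cong\D(\widehat{\V(E^\vee)})$ applied with $E=\T_{Y/\F_p}(1)$ identifies $\D((X^\Hod)_{p=0})$ with (suitably complete) $\Sym^\bullet_{\O_{Y\times B\G_m}}\T_{Y/\F_p}(1)$-complexes, \emph{not} $\Sym^\bullet\T_{Y/\F_p}(1)^\vee$-complexes as you wrote. This matters because it is the action of $\T_{Y/\F_p}$ that, via the tensor--hom adjunction, gives a map $\Phi\colon V\to V\tensor\Omega^1_{Y/\F_p}$ — i.e.\ a Higgs field in the sense set up just before the proposition; an action of $\T_{Y/\F_p}^\vee=\Omega^1_{Y/\F_p}$ would instead produce a map $V\to V\tensor\T_{Y/\F_p}$, which is not what you want. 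With the correct $\Sym^\bullet\T_{Y/\F_p}(1)$, the degree shift does come out to $-1$, using that in the paper's conventions $\O(1)$ on $B\G_m$ corresponds to the graded line in degree $-1$ (consistent with $\O\{1\}$ having Hodge--Tate weight $-1$); so your final conclusion is right even though the intermediate steps carry this transposition.
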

\begin{proof}
This now follows by combining \cref{lem:syntomicetale-identifyhod} \EDIT{with the $\G_m$-equivariant version of} \cref{lem:syntomicetale-vesharpreps}.
\end{proof}

\begin{prop}
\label{prop:syntomicetale-cohomologyhod}
Let $X$ be a smooth qcqs $p$-adic formal scheme \EDIT{and put $Y=X_{p=0}$}. If $E\in\D((X^\Hod)_{p=0})$ corresponds to the data $(V_\bullet, \Phi)$ via \cref{prop:syntomicetale-vecthod}, then the pushforward of $E$ to $B\G_m$ identifies with the graded complex whose degree $i$ term is given by
\begin{equation*}
R\Gamma(Y, \Tot(V_i\xrightarrow{\Phi}V_{i-1}\tensor\Omega_{Y/\F_p}^1\xrightarrow{\Phi} V_{i-2}\tensor\Omega_{Y/\F_p}^2\xrightarrow{\Phi}\dots))\;.
\end{equation*}
\end{prop}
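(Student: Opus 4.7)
The plan is to factor the pushforward map $(X^\Hod)_{p=0}\rightarrow B\G_m$ as
\begin{equation*}
(X^\Hod)_{p=0}\xrightarrow{\pi} Y\times B\G_m\xrightarrow{p} B\G_m\;,
\end{equation*}
where $\pi$ is the structure map of the (split) $\V(\T_{Y/\F_p}(1))^\sharp$-gerbe identified in \cref{lem:syntomicetale-identifyhod}. In particular, I can compute $\pi_* E$ using the $\G_m$-equivariant version of \cref{lem:syntomicetale-vesharpreps} applied to the vector bundle $\T_{Y/\F_p}(1)$ on $Y\times B\G_m$. The lemma yields
\begin{equation*}
\pi_* E\cong \Tot(V\to V\tensor \T_{Y/\F_p}(1)^\vee\to V\tensor \wedge^2\T_{Y/\F_p}(1)^\vee\to\dots)
\end{equation*}
on $Y\times B\G_m$, with differentials induced by the $\Sym^\bullet_{\O_Y}\T_{Y/\F_p}(1)$-module structure on $V$, i.e., by the Higgs field $\Phi$.

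Next I will unwind the gradings. Under the paper's convention that $\O(1)$ has $\G_m$-weight $-1$, the vector bundle $\T_{Y/\F_p}(1)$ has weight $-1$, so $\wedge^k\T_{Y/\F_p}(1)^\vee\cong \Omega^k_{Y/\F_p}(-k)$ has weight $+k$. Consequently, the weight-$i$ summand of the $k$-th term of the Koszul-type complex above is $V_{i-k}\tensor\Omega^k_{Y/\F_p}$, with the differentials given by iterates of $\Phi$; this is indeed consistent with the fact that $\Phi$ decreases degree by $1$ as observed in \cref{prop:syntomicetale-vecthod}.

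Finally, pushing further along $p: Y\times B\G_m\rightarrow B\G_m$ simply amounts to taking $R\Gamma(Y, -)$ on each graded piece. Combining the two steps yields the claimed description of the degree-$i$ part of the pushforward as
\begin{equation*}
R\Gamma(Y, \Tot(V_i\xrightarrow{\Phi}V_{i-1}\tensor\Omega^1_{Y/\F_p}\xrightarrow{\Phi}V_{i-2}\tensor\Omega^2_{Y/\F_p}\xrightarrow{\Phi}\dots))\;.
\end{equation*}

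The main point to be careful about is verifying that the differentials in the Koszul-type complex furnished by \cref{lem:syntomicetale-vesharpreps} indeed match the Higgs-field differentials in the expected formula; this boils down to tracking through the correspondence between quasi-coherent complexes on $B\V(E)^\sharp$ and $\Sym^\bullet_{\O_Y} E$-modules in the $\G_m$-equivariant setting, ensuring that the contraction with $E^\vee$ arising on the Koszul side agrees with the Higgs-field action corresponding to the $\Sym^\bullet_{\O_Y}\T_{Y/\F_p}$-module structure described in \cref{prop:syntomicetale-vecthod}.
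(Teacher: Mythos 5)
Your proposal is correct and takes essentially the same approach as the paper, whose proof simply cites \cref{lem:syntomicetale-identifyhod} together with the $\G_m$-equivariant version of \cref{lem:syntomicetale-vesharpreps}. You have spelled out the grading bookkeeping (namely that $\O(1)$ sits in $\G_m$-degree $-1$, consistent with $\O\{n\}$ having Hodge--Tate weight $-n$) and the factorisation through $Y\times B\G_m$ that the paper leaves implicit.
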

\begin{proof}
Immediate from \cref{lem:syntomicetale-identifyhod} and \EDIT{the $\G_m$-equivariant version of} \cref{lem:syntomicetale-vesharpreps}.
\end{proof}

Finally, let us explain how to obtain $X^\N_\red$ and $X^\Syn_\red$ from the components above by gluing: From the analogous statements for $X=\Spf\Z_p$, we see that gluing $X_{\dR, +}^\N$ and $X_{\HT, c}^\N$ along $X_\Hod^\N$ yields the stack $X^\N_\red$. To obtain $X^\Syn_\red$, we then have to further glue $X_\dR^\N$ and $X_\HT^\N$ along a Frobenius twist.

\subsection{Proof of the main theorems}

We now move on to proving our main statements concerning the comparison between syntomic cohomology and étale cohomology announced in the beginning. As in the case $X=\Spf\Z_p$, we use the notation $F_\dR(-), F_\Hod(-), F_{\dR, +}(-)$ and $F_{\HT, c}(-)$ to denote cohomology on the components of $X^\Syn_\red$.

\begin{proof}[Proof of \cref{thm:syntomicetale-mainfine}]
Let $d$ be the relative dimension of $X$ over $\Spf\Z_p$. We are going to show that $\cofib(R\Gamma_\Syn(X, E)\rightarrow R\Gamma_\proet(X, T_\et(E)))$ is concentrated in degrees at least $i+1$, \EEDIT{which suffices to establish the statement}. By $p$-completeness, it suffices to check this after reducing mod $p$, i.e.\ we may assume that $E$ is a vector bundle on $(X^\Syn)_{p=0}$. Subsequently, \cref{prop:syntomicetale-filtrationetale} reduces us to proving that the complexes \EDIT{$\gr^\Syn_k R\Gamma(X^\Syn, E)[\tfrac{1}{v_1}]$} are concentrated in \EDIT{cohomological} degrees at least $i+1$ for all $k\geq 1$ \EDIT{as this will imply that the cofibre of the map
\begin{equation*}
R\Gamma(X^\Syn, E)\rightarrow R\Gamma(X^\Syn, E)[\tfrac{1}{v_1}]
\end{equation*}
is concentrated in cohomological degrees at least $i+1$, from which our result follows.} \EDIT{Using the isomorphism
\begin{equation*}
\gr^\Syn_k R\Gamma(X^\Syn, E)[\tfrac{1}{v_1}]\cong R\Gamma(X^\Syn_\red, E/v_{1, X}\{k(p-1)\})
\end{equation*}
from (\ref{eq:syntomicetale-grsyn}), we now replace $E$ by $E/v_{1, X}\{k(p-1)\}$; then our assumption becomes that $E$ is a vector bundle on $X^\Syn_\red$ with Hodge--Tate weights all at most $-i-p$ and we have to} prove that $R\Gamma(X^\Syn_\red, E)$ is concentrated in \EDIT{cohomological} degrees at least $i+1$. For this, we identify the pushforward $\pi_{X^\Syn, *} E$ of $E$ to $\Z_{p, \red}^\Syn$ with the data $(V, \Fil^\bullet V, \Fil_\bullet V)$ with $\gr^\bullet V=\gr_\bullet V$ equipped with operators $D$ and $\Theta$ according to the description of the reduced locus of $\Z_p^\Syn$. Now we make the following observation: by \cref{prop:syntomicetale-cohomologyhod}, the Hodge--Tate weights of $\pi_{X^\Syn, *} E$ are all at most $-i-p+d$; moreover, for $0\leq k\leq d$, the complex $\gr^{-i-p+d-k} V$ is concentrated in degrees at least $d-k$. In particular, we deduce that
\begin{equation*}
F_\Hod(E)=\fib(\gr^0 V\xrightarrow{\Theta} \gr^{-p} V)
\end{equation*}
is concentrated in degrees at least $i+1$. As the filtration $\Fil^\bullet V$ is \EDIT{complete by \cref{lem:finiteness-pushforwardcomplete} and base change for the cartesian square (\ref{eq:syntomicetale-coverdr+})}, we may also conclude that $\Fil^0 V$ is concentrated in degrees at least $i+p$ while $\Fil^{-p} V$ is concentrated in degrees at least $i$. Thus, the complex
\begin{equation*}
F_{\dR, +}(E)=\fib(\Fil^0 V\xrightarrow{\Theta} \Fil^{-p} V)
\end{equation*}
is concentrated in degrees at least $i+1$. Finally, we may also infer that the natural map $\Fil_{-1} V\rightarrow V$ is an isomorphism in cohomological degrees at most \EEDIT{$i+p-1$ while} $\Fil_0 V\rightarrow V$ \EEDIT{is an isomorphism in cohomological degrees at most $i+p$}. Thus, the natural map
\begin{equation*}
\EEDIT{
F_{\HT, c}(E)=\fib(\Fil_0 V\xrightarrow{D} \Fil_{-1} V)\rightarrow\fib(V\xrightarrow{\Theta} V)=F_\dR(E)
}
\end{equation*}
is also an isomorphism in cohomological degrees at most $i+p$. Overall, using the formula
\begin{equation*}
R\Gamma(X^\Syn_\red, E)=\fib(F_{\dR, +}(E)\oplus F_{\HT, c}(E)\xrightarrow{a_E-b_E} F_\dR(E)\oplus F_\Hod(E))
\end{equation*} 
from (\ref{eq:syntomification-cohomologyreduced}), we see that $R\Gamma(X^\Syn_\red, E)$ is indeed concentrated in degrees at least $i+1$, as wanted.
\end{proof}

\begin{proof}[Proof of \cref{thm:syntomicetale-maincoarse}]
As in the \EDIT{proof of \cref{thm:syntomicetale-mainfine}, we may assume that $E$ is a vector bundle on $(X^\Syn)_{p=0}$ by $p$-completeness of both sides and then \cref{prop:syntomicetale-filtrationetale} reduces us to proving that
\begin{equation*}
\gr^\Syn_k R\Gamma(X^\Syn, E)[\tfrac{1}{v_1}]\cong R\Gamma(X^\Syn_\red, E/v_{1, X}\{k(p-1)\})
\end{equation*}
vanishes for all $k\geq 1$, where we have again used (\ref{eq:syntomicetale-grsyn}). Replacing $E$ by $E/v_{1, X}\{k(p-1)\}$, we thus have to show that,} for $E\in\Perf(X^\Syn_\red)$ with Hodge--Tate weights all at most $-p-d-1$, the cohomology $R\Gamma(X^\Syn_\red, E)$ vanishes. As before, we identify $\pi_{X^\Syn, *} E$ with the data $(V, \Fil^\bullet V, \Fil_\bullet V)$ with $\gr^\bullet V=\gr_\bullet V$ equipped with operators $D$ and $\Theta$. From \cref{prop:syntomicetale-cohomologyhod}, we see that $\pi_{X^\Syn, *} E$ has Hodge--Tate weights all at most $-p-1$. Thus, we conclude
\begin{equation*}
F_\Hod(E)=\fib(\gr^0 V\xrightarrow{\Theta} \gr^{-p} V)=0\;.
\end{equation*}
Moreover, as the filtration $\Fil^\bullet V$ is again \EDIT{complete by \cref{lem:finiteness-pushforwardcomplete}}, we also have $\Fil^0 V=\Fil^{-p} V=0$ and hence also
\begin{equation*}
F_{\dR, +}(E)=\fib(\Fil^0 V\xrightarrow{\Theta} \Fil^{-p} V)=0\;.
\end{equation*}
Finally, we see that $\Fil_{-1} V=\Fil_0 V=V$ and thus
\begin{equation*}
F_{\HT, c}(E)=\fib(\Fil_{-1} V\xrightarrow{D}\Fil_0 V)=\fib(V\xrightarrow{\Theta} V)=F_\dR(E)\;.
\end{equation*}
Overall, using (\ref{eq:syntomification-cohomologyreduced}), we obtain
\begin{equation*}
R\Gamma(X^\Syn_\red, E)=\fib(F_{\dR, +}(E)\oplus F_{\HT, c}(E)\xrightarrow{a_E-b_E} F_\dR(E)\oplus F_\Hod(E))=0\;,
\end{equation*} 
as wanted.
\end{proof}

\comment{
Finally, we show how to obtain the claimed improvement upon the result from \cref{thm:syntomicetale-mainfine} in the case where $X$ admits a morphism to $\Spf\O_C$:

\begin{proof}[Proof of \cref{thm:syntomicetale-mainfineoc}]
Let $d$ be the relative dimension of $X$ over $\Spf\O_C$. As in the proof of \cref{thm:syntomicetale-mainfine}, \EDIT{we may first assume that $E\in\Vect((X^\Syn)_{p=0})$ and are then reduced to showing that
\begin{equation*}
\gr^\Syn_k R\Gamma(X^\Syn, E)[\tfrac{1}{v_1}]\cong R\Gamma(X^\Syn_\red, E/v_{1, X}\{k(p-1)\})
\end{equation*}
is concentrated in cohomological degrees at least $i+1$ for all $k\geq 1$. Replacing $E$ by $E/v_{1, X}\{k(p-1)\}$, our assumption thus becomes that $E$ is a vector bundle on $X^\Syn_\red$ with Hodge--Tate weights all at most $-i-1$ and we have to show that the complex $R\Gamma(X^\Syn_\red, E)$ is concentrated in cohomological degrees at least $i+1$. Denoting the map $X^\Syn\rightarrow\O_C^\Syn$ by $\pi'_{X^\Syn}$ and} recalling that 
\begin{equation*}
\O_C^\N\cong \Spf (A_\inf(\O_C)\langle u, t\rangle/(ut-\phi^{-1}(\xi)))/\G_m
\end{equation*}
from \cref{ex:filprism-perfd} and $v_{1, \O_C}=u^pt\in A_\inf(\O_C)[u, t]/(ut-\phi^{-1}(\xi), p)$, we may identify the pushforward \EDIT{$\pi'_{X^\Syn, *}E$} of $E$ to $\O_{C, \red}^\Syn$ with a diagram
\begin{equation*}
\begin{tikzcd}
\dots \ar[r,shift left=.5ex,"t"]
  & \ar[l,shift left=.5ex, "u"] M^{i+1} \ar[r,shift left=.5ex,"t"] & \ar[l,shift left=.5ex, "u"] M^i \ar[r,shift left=.5ex,"t"] & \ar[l,shift left=.5ex, "u"] M^{i-1} \ar[r,shift left=.5ex,"t"] & \ar[l,shift left=.5ex, "u"] \dots
\end{tikzcd}
\end{equation*}
of $\xi$-complete $A_\inf(\O_C)/p$-complexes such that $ut=tu=\phi^{-1}(\xi)$ and $u^pt=0$ together with an identification $\tau: \phi^*M^\infty\cong M^{-\infty}$ similarly to \cref{ex:syntomification-fgaugesperfd}. Consider the filtration $\Fil^\bullet V\coloneqq M^\bullet/M^{\bullet-1}$ induced by the $t$-maps with associated graded $\gr^\bullet V$ \EDIT{and note that the filtered complex $\Fil^\bullet V$ identifies with the pullback of $\pi'_{X^\Syn, *}E$ to} 
\begin{equation*}
\EDIT{
(\O_{C, \red}^\Syn)_{u=0}=\O_{C, \dR, +}^\N\cong \Spec(\O_C/p)[t]/\G_m\;.
}
\end{equation*}
From the analogue of \cref{prop:syntomicetale-cohomologyhod} over $\Spf\O_C$, we may conclude that $\gr^\bullet V=0$ for $\bullet\geq -i+d$ and that, moreover, for $0\leq k\leq d$, the complex $\gr^{-i+d-k-1} V$ is concentrated in degrees at least $d-k$. In particular, as the filtration $\Fil^\bullet V$ is again \EDIT{complete by \cref{lem:finiteness-pushforwardcomplete}}, we conclude that $\Fil^\bullet V=0$ for $\bullet\geq -i+d$ and that $\Fil^{-i+d-k-1} V$ is concentrated in degrees at least $d-k$ for $0\leq k\leq d$. \EDIT{In other words,} the map $u: M^\bullet\rightarrow M^{\bullet+1}$ is an isomorphism for $\bullet\geq -i+d-1$ and, for $0\leq k\leq d$, the map $u: M^{-i+d-k-2}\rightarrow M^{-i+d-k-1}$ induces an isomorphism in cohomological degrees at most $d-k-1$ and an injection in degree $d-k$ for $0\leq k\leq d$. In particular, we conclude that the map $u^\infty: M^0\rightarrow M^\infty$ induces an isomorphism in cohomological degrees at most $i+1$ and an injection in degree $i+2$. Moreover, the map $u^p: M^{-1}\rightarrow M^{p-1}$ induces an isomorphism in cohomological degrees at most $i$ and an injection in degree $i+1$, so, due to $u^pt=0$, we conclude that the map $t: M^0\rightarrow M^{-1}$ induces the zero map in cohomological degrees at most $i+1$ and hence the same is true for $t^\infty: M^0\rightarrow M^{-\infty}$. By virtue of
\begin{equation*}
R\Gamma(\O_{C, \red}^\Syn, E)=\fib(M^0\xrightarrow{t^\infty-\tau u^\infty} M^{-\infty})\;,
\end{equation*} 
see \cref{ex:syntomification-fgaugesperfd}, we conclude that $R\Gamma(\O_{C, \red}^\Syn, E)$ is concentrated in degrees at least $i+1$ and this finishes the proof.
\end{proof}

\begin{proof}[Proof of \cref{thm:syntomicetale-maincoarseoc}]
We use the notations from the \EDIT{proof of \cref{thm:syntomicetale-mainfineoc} and can again reduce to showing that
\begin{equation*}
\gr^\Syn_k R\Gamma(X^\Syn, E)[\tfrac{1}{v_1}]\cong R\Gamma(X^\Syn_\red, E/v_{1, X}\{k(p-1)\})
\end{equation*}
vanishes in the case $E\in\Vect((X^\Syn)_{p=0})$. As before, we replace $E$ by $E/v_{1, X}\{k(p-1)\}$; then our assumption becomes that $E$ is a vector bundle} on $X^\Syn_\red$ with Hodge--Tate weights all at most $-d-1$ \EDIT{and we have to prove that} the complex $R\Gamma(X^\Syn_\red, E)$ vanishes. Again, from \cref{prop:syntomicetale-cohomologyhod}, we see that $\gr^\bullet V=0$ for $\bullet\geq 0$. As before, we now successively conclude that $\Fil^\bullet V=0$ for $\bullet\geq 0$ and hence the maps $u: M^\bullet\rightarrow M^{\bullet+1}$ are isomorphisms for $\bullet\geq -1$. In particular, the maps $u^\infty: M^0\rightarrow M^\infty$ and $u^p: M^{-1}\rightarrow M^{p-1}$ are isomorphisms and thus $t: M^0\rightarrow M^{-1}$ must be zero due to $u^pt=0$. By virtue of
\begin{equation*}
R\Gamma(\O_{C, \red}^\Syn, E)=\fib(M^0\xrightarrow{t^\infty-\tau u^\infty} M^{-\infty})\;,
\end{equation*} 
see \cref{ex:syntomification-fgaugesperfd}, we obtain $R\Gamma(\O_{C, \red}^\Syn, E)=0$, as required.
\end{proof}

\begin{rem}
\EDIT{
Note that the proofs of \cref{thm:syntomicetale-mainfineoc} and \cref{thm:syntomicetale-maincoarseoc} only make use of the fact that $\O_C$ is perfectoid, but do not rely on $C$ being algebraically closed.
}
\end{rem}
}

\section{A comparison of crystalline cohomology and étale cohomology}
\label{sect:cryset}

In this final section, we establish a generalisation of a comparison theorem of Colmez--Nizio{\l} between rational crystalline cohomology and étale cohomology, see \cite[Cor.\ 1.4]{ColmezNiziol}, which allows for coefficients. 

\begin{thm}
\label{thm:cryset-main}
Let $X$ be a $p$-adic formal scheme \EEDIT{which is smooth and proper over $\Spf\Z_p$}. For any crystalline local system $L$ on $X_\eta$ with Hodge--Tate weights all at most $-i-1$ for some $i\geq 0$, let $\cal{E}$ be its associated $F$-isocrystal. Then there is a natural morphism
\begin{equation*}
R\Gamma_\crys(X_{p=0}, \cal{E})[\tfrac{1}{p}][-1]\rightarrow R\Gamma_\proet(X_\eta, L)[\tfrac{1}{p}]\;,
\end{equation*}
which induces an isomorphism
\begin{equation*}
\tau^{\leq i} R\Gamma_\crys(X_{p=0}, \cal{E})[\tfrac{1}{p}][-1]\cong\tau^{\leq i} R\Gamma_\proet(X_\eta, L)[\tfrac{1}{p}]
\end{equation*}
and an injection on $H^{i+1}$.
\end{thm}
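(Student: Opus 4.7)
The plan is to reduce the claim to the $F$-gauge comparison \cref{thm:intro-syntomicetale} via the Fontaine--Messing fibre sequence with coefficients (the corollary after \cref{thm:beilfibsq-fmrat} in the proper case).

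First, I would use \cref{thm:intro-locsysfgauges} to attach to $L$ a vector bundle $F$-gauge $E\in\Vect(X^\Syn)$ with $T_\et(E)\cong L$ and $T_\dR(E)$ identifying with $\cal{E}$; moreover, the Hodge--Tate weights of $E$ (in the sense of \cref{defi:nygaardhodge-htweights}) coincide with those of $L$, so they are all at most $-i-1$. Then the corollary after \cref{thm:beilfibsq-fmrat} supplies a natural fibre sequence
\begin{equation*}
R\Gamma_\Syn(X,E)[\tfrac{1}{p}] \to \Fil^0_\Hod R\Gamma_\dR(X,T_{\dR,+}(E))[\tfrac{1}{p}] \xrightarrow{1-\tau} R\Gamma_\dR(X,T_\dR(E))[\tfrac{1}{p}]\;.
\end{equation*}
Combining the identification $T_\dR(E)\leftrightarrow\cal{E}$ from \cref{thm:intro-locsysfgauges} with Berthelot's classical comparison between the rational de Rham cohomology of the smooth proper lift $X$ and the rational crystalline cohomology of its special fibre with coefficients, the rightmost term becomes $R\Gamma_\crys(X_{p=0},\cal{E})[\tfrac{1}{p}]$. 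Rotating this fibre sequence produces a natural connecting map
\begin{equation*}
R\Gamma_\crys(X_{p=0},\cal{E})[\tfrac{1}{p}][-1] \to R\Gamma_\Syn(X,E)[\tfrac{1}{p}]\;,
\end{equation*}
and its composition with the syntomic--étale comparison map of \cref{thm:intro-syntomicetale} is the desired natural morphism.

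To verify the isomorphism on $\tau^{\leq i}$ and injection on $H^{i+1}$, I would check these properties for each factor of the composition separately. The second map has them by direct application of \cref{thm:intro-syntomicetale} to $E$. For the connecting map, the long exact sequence of the fibre sequence reduces the claim to showing that $\Fil^0_\Hod R\Gamma_\dR(X,T_{\dR,+}(E))[\tfrac{1}{p}]$ is concentrated in cohomological degrees $\geq i+1$. Using \cref{rem:fildrstack-vect}, I would describe $T_{\dR,+}(E)$ as a filtered vector bundle $(V,\Fil^\bullet V)$ on $X$ with a flat connection satisfying Griffiths transversality, and translate the Hodge--Tate weight hypothesis into the vanishing $\gr^k V=0$ for all $k>-i-1$, which forces $\Fil^k V=0$ for all $k\geq -i$. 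Then the Hodge-filtered de Rham complex
\begin{equation*}
\Fil^0 V \to \Fil^{-1}V\tensor\Omega^1_{X/\Z_p} \to \Fil^{-2}V\tensor\Omega^2_{X/\Z_p} \to \cdots
\end{equation*}
has zero terms in cohomological positions $0,1,\dots,i$, so its hypercohomology lives in degrees $\geq i+1$, as required.

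The main technical input is \cref{thm:intro-locsysfgauges}, together with the compatibilities that the associated $F$-gauge $E$ lies in $\Vect(X^\Syn)$, that its Hodge--Tate weights match those of $L$, and that $T_\dR(E)\leftrightarrow\cal{E}$ in a way compatible with the Berthelot isomorphism; once these are granted, the remainder of the argument is a routine assembly of the two input fibre sequences and a short Hodge-filtration analysis.
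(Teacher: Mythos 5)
The overall strategy you outline — attach an $F$-gauge to $L$ via \cref{thm:cryset-locsysfgauges}, combine the Fontaine--Messing fibre sequence with \cref{thm:syntomicetale-mainfine}, and control the Hodge-filtered de Rham term by a weight argument — is the same as the paper's. However, there is a genuine gap at the very start: you claim that \cref{thm:cryset-locsysfgauges} produces a vector bundle $F$-gauge $E\in\Vect(X^\Syn)$. It does not. The embedding lands in $\Perf(X^\Syn)$, and the essential image (the reflexive $F$-gauges $\Coh^\refl(X^\Syn)$) is constructed by taking an analytic prismatic $F$-crystal and pushing it forward along the open immersion $\Spec(A)\setminus V(p,I)\to\Spec A$; the result is in general only a coherent (reflexive) sheaf, not locally free. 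This matters because everything downstream in your argument quietly uses vector-bundleness: \cref{thm:syntomicetale-mainfine} is stated only for $E\in\Vect(X^\Syn)$, and the description of $T_{\dR,+}(E)$ as a filtered vector bundle with a Griffiths-transversal connection via \cref{rem:fildrstack-vect} also requires it. With $E$ merely perfect, you have no control over which cohomological degrees the graded pieces of $T_{\dR,+}(E)$ occupy, so the claim that the Hodge-filtered de Rham complex vanishes in degrees $\leq i$ does not follow from the Hodge--Tate weight hypothesis alone.

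The missing ingredient is precisely \cref{lem:cryset-degreezero}: one shows that for a reflexive $F$-gauge $E$, the pullback of $E$ to $X^\N_\Hod$ (equivalently, by flatness of the cover $(X^\Hod)_{p=0}\to X^\N_\Hod$, to $(X^\Hod)_{p=0}$) is concentrated in cohomological degree zero. This is the sole property of vector bundle $F$-gauges actually used in the degree bookkeeping of \cref{prop:syntomicetale-cohomologyhod}, and once it is in hand the arguments of \cref{prop:cryset-fine} and \cref{thm:syntomicetale-mainfine} apply verbatim. That lemma is itself a nontrivial statement about the saturated Nygaardian filtration on the underlying prismatic $F$-crystal (proved by a $p$-torsionfreeness and $\xi$-regularity calculation in the quasiregular semiperfectoid case), so it cannot be waved away. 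The rest of your assembly — the fibre sequence from the Fontaine--Messing comparison, the rotation and composition with the syntomic--\'etale map, and the reduction to concentration of $\Fil^0_\Hod$ — is correct once this point is repaired, and your use of the classical Berthelot comparison in place of the stacky identification $X^\dR\cong(X_{p=0})^\prism$ is a cosmetic difference.
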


\EDIT{In order to clear up all the terms appearing in the statement, we will first recall the basic definitions surrounding crystalline local systems. Afterwards, we will go on to proving \cref{thm:cryset-main}, for which we will roughly proceed as follows: First, we show how to associate to any crystalline local system $L$ on $X_\eta$ an $F$-gauge $E$ on $X$ with the property that $T_\et(E)\cong L$ and $T_\dR(E)\cong \cal{E}$ under the identification $X^\dR\cong (X_{p=0})^\prism$ from \cite[Constr.\ 3.1.1]{FGauges}. Then we prove that
\begin{equation*}
\tau^{\leq i} R\Gamma_\dR(X, T_\dR(E))[\tfrac{1}{p}][-1]\cong\tau^{\leq i} R\Gamma_\proet(X_\eta, T_\et(E))[\tfrac{1}{p}]
\end{equation*}
for this $F$-gauge $E$ using the results from Section \ref{sect:beilfibsq} and Section \ref{sect:syntomicetale}.}

\subsection{Recollections on crystalline local systems}
\label{subsect:review-cryslocsys}

\EDIT{We briefly recap the theory of crystalline local systems along the lines of \cite[Sect. 2]{GuoReinecke} and start by recalling the definition of an $F$-isocrystal. For this, let $Y$ be an $\F_p$-scheme.

\begin{defi}
Consider the standard divided power structure on $\Z_p$ (i.e.\ the one given by $\gamma_n(p)=\tfrac{p^n}{n!}$ for $n\geq 0$). A \emph{crystal} (in coherent sheaves) on $Y$ is a sheaf $\cal{E}$ of $\O_{Y/\Z_p}$-modules on the big crystalline site $(Y/\Z_p)_\crys$ of $Y$ satisfying the following two properties:
\begin{enumerate}[label=(\roman*)]
\item For each object $(U, T, \gamma)$ of $(Y/\Z_p)_\crys$, the restriction $\cal{E}|_T$ of $\cal{E}$ to the Zariski-site of $T$ is a coherent sheaf.

\item For each map $\alpha: (U', T', \gamma')\rightarrow (U, T, \gamma)$ in $(Y/\Z_p)_\crys$, the induced map $\alpha^*(\cal{E}|_{T'})\rightarrow \cal{E}|_T$ is an $\O_T$-linear isomorphism.
\end{enumerate}
We denote the category of crystals on $Y$ by $\Crys(Y/\Z_p)$ and call the isogeny category
\begin{equation*}
\Isoc(Y/\Z_p)\coloneqq \Crys(Y/\Z_p)[\tfrac{1}{p}]
\end{equation*}
the category of \emph{isocrystals} on $Y$.
\end{defi}

The absolute Frobenius on $Y$ and the Witt vector Frobenius on $\Z_p$ (which is the identity in our case) are compatible and hence we obtain a morphism of sites
\begin{equation*}
F: (Y/\Z_p)_\crys\rightarrow (Y/\Z_p)_\crys
\end{equation*}
inducing a morphism of the corresponding topoi.

\begin{defi}
Let $Y$ be an $\F_p$-scheme. An \emph{$F$-isocrystal} on $Y$ is an isocrystal $\cal{E}$ on $Y$ equipped with an isomorphism $\phi: F^*\cal{E}\rightarrow\cal{E}$. We use $\Isoc^\phi(Y/\Z_p)$ to denote the category of $F$-isocrystals on $Y$. 
\end{defi}

To proceed further, we need to introduce certain period rings on the pro-étale site of any locally noetherian adic space $W$ over $\Spa\Q_p$. As affinoid perfectoid objects of $W_\proet$ form a basis of the topology by \cite[Lem.\ 4.6, Prop.\ 4.8]{PAdicHodgeTheory}, \EEDIT{a sheaf on $W_\proet$ can be defined by giving its values on affinoid perfectoid objects; since $\Q_p\rightarrow C$ is pro-étale, we may furthermore restrict to affinoid perfectoids over $\Spa C$, where we recall that $C$ denotes a fixed completed algebraic closure of $\Q_p$.}

\begin{defi}
Let $W$ be a locally noetherian adic space \EEDIT{over $\Spa\Q_p$}. The sheaves $\mathbb{A}_\crys, \mathbb{B}_\crys^+$ and $\mathbb{B}_\crys$ on $W_\proet$ are defined as follows: For any affinoid perfectoid $\Spa(R, R^+)\in W_\proet$ \EEDIT{over $\Spa C$}, we define
\begin{enumerate}[label=(\roman*)]
\item $\mathbb{A}_\crys(R, R^+)\coloneqq A_\crys(R^+)$, where we recall that the right-hand side was defined in \cref{ex:drstack-perfd},
\item $\mathbb{B}_\crys^+(R, R^+)\coloneqq \mathbb{A}_\crys(R, R^+)[\tfrac{1}{p}]$,
\item $\mathbb{B}_\crys(R, R^+)\coloneqq \mathbb{B}_\crys^+(R, R^+)[\tfrac{1}{\mu}]$, where $\mu\coloneqq [\epsilon^{1/p}]-1\in A_\inf(\O_C)$ is defined as in the proof of \cref{lem:beilfibsq-etalephimod}.
\end{enumerate}
\end{defi}

Now assume that $W=X_\eta$ is the generic fibre of a smooth $p$-adic formal scheme $X$. In this case, we can define a variant of the sheaf $\mathbb{B}_\crys$ with coefficients in any $F$-isocrystal on the special fibre $X_{p=0}$:

\begin{defi}
Let $X$ be a smooth $p$-adic formal scheme and $\cal{E}\in\Isoc^\phi(X_{p=0}/\Z_p)$ an $F$-isocrystal on the special fibre of $X$. Then we define sheaves $\mathbb{B}_\crys^+(\cal{E})$ and $\mathbb{B}_\crys(\cal{E})$ on $X_{\eta, \proet}$ as follows: For an affinoid perfectoid $\Spa(R, R^+)\in X_{\eta, \proet}$ \EEDIT{over $\Spa C$}, we define
\begin{enumerate}[label=(\roman*)]
\item $\mathbb{B}_\crys^+(\cal{E})(R, R^+)\coloneqq \cal{E}(A_\crys(R^+))[\tfrac{1}{p}]$, where
\begin{equation*}
\cal{E}(A_\crys(R^+))\coloneqq \lim_n \cal{E}(A_\crys(R^+)/p, A_\crys(R^+)/p^n, \gamma)
\end{equation*}
and $\gamma$ denotes the canonical divided power structure,

\item $\mathbb{B}_\crys(\cal{E})(R, R^+)\coloneqq \mathbb{B}_\crys^+(\cal{E})(R, R^+)[\tfrac{1}{\mu}]$.
\end{enumerate}
\end{defi}

Note that the Frobenius on $A_\crys(R^+)$ for any affinoid perfectoid $\Spa(R, R^+)$ over $\Spa C$ induces a Frobenius $F: \mathbb{B}_\crys\rightarrow\mathbb{B}_\crys$ on the sheaf $\mathbb{B}_\crys$ and we have an isomorphism $F^*\mathbb{B}_\crys(\cal{E})\cong\mathbb{B}_\crys(F^*\cal{E})$ for any $F$-isocrystal $\cal{E}$ on $X_{p=0}$. Thus, we obtain a Frobenius
\begin{equation*}
\phi: F^*\mathbb{B}_\crys(\cal{E})\cong\mathbb{B}_\crys(F^*\cal{E})\xrightarrow{\cong}\mathbb{B}_\crys(\cal{E})
\end{equation*}
on $\mathbb{B}_\crys(\cal{E})$ by functoriality of the assignment $\cal{E}\mapsto \mathbb{B}_\crys(\cal{E})$.

By \cite[Prop.\ 2.38]{GuoReinecke}, we may now define a crystalline local system as follows:

\begin{defi}
Let $X$ be a smooth $p$-adic formal scheme. A $\widehat{\Z}_p$-local system $L$ on $X_{\eta, \proet}$ is called \emph{crystalline} if there exists an $F$-isocrystal $\cal{E}$ on $X_{p=0}$ and an isomorphism
\begin{equation*}
\mathbb{B}_\crys(\cal{E})\cong \mathbb{B}_\crys\tensor_{\widehat{\Z}_p} L
\end{equation*}
compatible with the Frobenius on both sides. In this case, we call $\cal{E}$ the \emph{associated $F$-isocrystal} of $L$. Here, $\widehat{\Z}_p$ denotes the sheaf $\lim_n \Z/p^n\Z$ on $X_{\eta, \proet}$, as usual.
\end{defi}

\begin{rem}
Intuitively, one should think of a crystalline local system on $X_\eta$ as a continuous family of crystalline representations. Indeed, in yet unpublished work, Guo--Yang show that a local system $L$ on $X_\eta$ is crystalline if and only if its restrictions to all closed points of $X_\eta$ are crystalline representations, see \cite[Rem.\ 2.32]{GuoReinecke}. In particular, this also implies that the notion of a local system on $X_\eta$ being crystalline is independent of the choice of the smooth integral model $X$.
\end{rem}

Finally, we need to establish the notion of \emph{Hodge--Tate weights} of a crystalline local system $L$ on $X_\eta$. For this, we first need to define even more period sheaves.

\begin{defi}
\EEDIT{Let $W$ be a locally noetherian adic space over $\Spa\Q_p$. The sheaves $\mathbb{B}_\dR^+$ and $\mathbb{B}_\dR$ on $W_\proet$ are defined as follows: For any affinoid perfectoid $\Spa(R, R^+)\in W_\proet$ over $\Spa C$, we define}
\begin{enumerate}[label=(\roman*)]
\item \EEDIT{$\mathbb{B}_\dR^+(R, R^+)\coloneqq A_\inf(R^+)[\tfrac{1}{p}]^\wedge_{\Ker\theta}$ equipped with the $(\Ker\theta)$-adic filtration, where $\theta: A_\inf(R^+)[\tfrac{1}{p}]\rightarrow R$ is induced by the usual Fontaine map,}
\item \EEDIT{$\mathbb{B}_\dR(R, R^+)\coloneqq \mathbb{B}_\dR^+(R, R^+)[\tfrac{1}{t}]$ with the filtration
\begin{equation*}
\Fil^n \mathbb{B}_\dR\coloneqq \sum_{m\geq -n} t^{-m}\Fil^{m+n}\mathbb{B}_\dR^+(R, R^+)\;,
\end{equation*}
where $t\coloneqq \log [\epsilon]\in A_\crys(R, R^+)$ is defined as in (\ref{eq:beilfibsq-t}).}
\end{enumerate}
\end{defi}

\begin{defi}
Let $W$ be a locally noetherian adic space \EEDIT{over $\Spa\Q_p$}. The sheaf $\O\mathbb{B}_\dR^+$ on $W_\proet$ is defined as follows: For any affinoid perfectoid $\Spa(R, R^+)=\lim_i\Spa(R_i, R_i^+)\in W_\proet$ \EEDIT{over $\Spa C$}, we define
\begin{equation*}
\EEDIT{
\O\mathbb{B}_\dR^+(R, R^+)\coloneqq \colim_i (R_i^+\widehat{\tensor}_{\Z_p} A_\inf(R^+))[\tfrac{1}{p}]^\wedge_{\Ker\theta}\;,
}
\end{equation*}
where we abuse notation and denote by $\theta: (R_i^+\widehat{\tensor}_{\Z_p} A_\inf(R^+))[\tfrac{1}{p}]\rightarrow R$ the map induced by the two maps $R_i\rightarrow R$ and $A_\inf(R^+)\rightarrow R$. We equip $\O\mathbb{B}_\dR^+(R, R^+)$ with the $(\Ker\theta)$-adic filtration and hence obtain a decreasing filtration on $\O\mathbb{B}_\dR^+(R, R^+)[\tfrac{1}{t}]$ given by
\begin{equation*}
\Fil^n \O\mathbb{B}_\dR^+(R, R^+)[\tfrac{1}{t}]\coloneqq \sum_{m\geq -n} t^{-m}\Fil^{m+n}\O\mathbb{B}_\dR^+(R, R^+)\;.
\end{equation*}
This defines a filtered sheaf on \EEDIT{$W_\proet$} whose completion with respect to the filtration we denote by $\O\mathbb{B}_\dR$ and call the \emph{structural de Rham sheaf}. Its associated graded is the \emph{structural Hodge--Tate sheaf} denoted by $\O\mathbb{B}_\HT$.
\end{defi}

Now recall that being crystalline implies being de Rham in the sense of \cite[Def. 8.3]{PAdicHodgeTheory} by \cite[Cor.\ 2.37]{GuoReinecke} and thus, \emph{a fortiori}, also being Hodge--Tate in the sense of \cite[Def. 5.6]{HTWeights}, see \cite[Cor.\ 3.12]{LiuZhu}. In particular, this means that the sheaves
\begin{equation}
\label{eq:cryset-ddr}
D_\dR(L)\coloneqq \nu_*(L\tensor_{\widehat{\Z}_p} \O\mathbb{B}_\dR)
\end{equation}
and 
\begin{equation}
\label{eq:cryset-dht}
D_\HT(L)\coloneqq \nu_*(L\tensor_{\widehat{\Z}_p} \O\mathbb{B}_\HT)
\end{equation}
are vector bundles on $X_{\eta, \et}$. Here, the map $\nu: X_{\eta, \proet}\rightarrow X_{\eta, \et}$ denotes the canonical projection; however, we warn the reader that, contrary to our usual convention that all pushforwards are automatically derived, the pushforward $\nu_*$ appearing in (\ref{eq:cryset-dht}) and (\ref{eq:cryset-ddr}) is \emph{underived}.}

\begin{defi}
\EDIT{
Let $X$ be a smooth $p$-adic formal scheme and $L$ a crystalline local system on the generic fibre $X_\eta$. The \emph{Hodge--Tate weights} of $L$ are those integers $i\in\Z$ such that the graded vector bundle $D_\HT(L)$ is non-trivial in grading degree $i$.
}
\end{defi}

\begin{rem}
\label{rem:cryset-htweightsddr}
\EDIT{
Note that, by definition, the sheaf $D_\HT(L)$ is the associated graded of the filtered vector bundle $D_\dR(L)$. Thus, one may alternatively define the Hodge--Tate weights of $L$ as the integers $i\in\Z$ for which the filtration on $D_\dR(L)$ jumps.
}
\end{rem}

\begin{rem}
\EDIT{
If $X_\eta$ is geometrically connected, by \cite[Thm.\ 1.1]{HTWeights}, we may alternatively define the Hodge--Tate weights of $L$ as the Hodge--Tate weights of the $p$-adic Galois representation $L_{\ol{x}}$ of $k(x)$, where $x$ denotes any classical point of $X_\eta$ (i.e.\ any point of $X_\eta$ seen as a rigid-analytic variety) and $k(x)$ is its residue field.
}
\end{rem}

\subsection{$F$-gauges associated to crystalline local systems}
\label{subsect:locsystofgauge}

We first show how to associate an $F$-gauge to any crystalline local system on the generic fibre of a smooth formal scheme $X$ over $\Z_p$. To this end, recall that, \EDIT{in independent work, both} Guo--Reinecke \EDIT{and Du--Liu--Moon--Shimizu} have recently given a description of crystalline local systems on $X_\eta$ in terms of the prismatic site of $X$, which we review here briefly, \EDIT{see \cite{GuoReinecke} and \cite{DuLiuMoonShimizu}.}

\begin{defi}
For any prism $(A, I)$, let $\Vect^\phi(\Spec(A)\setminus V(p, I))$ denote the category of vector bundles $M$ on $\Spec(A)\setminus V(p, I)$ together with an $A$-linear isomorphism $\phi^*M[1/I]\cong M[1/I]$, where $\phi$ is the Frobenius of $A$. Then the category of \emph{analytic prismatic $F$-crystals} on $X$ is defined as
\begin{equation*}
\Vect^{\an, \phi}(X_\prism)\coloneqq \lim_{(A, I)\in X_\prism} \Vect^\phi(\Spec(A)\setminus V(p, I))\;.
\end{equation*}
\end{defi}

In our setting, it turns out that crystalline local systems and analytic prismatic $F$-crystals are the same:

\begin{prop}
\label{prop:cryset-locsysvectan}
Let $X$ be a smooth $p$-adic formal scheme. Then there is a natural equivalence of categories
\begin{equation*}
\Vect^{\an, \phi}(X_\prism)\cong \Loc^\crys_{\Z_p}(X_\eta)\;.
\end{equation*}
\end{prop}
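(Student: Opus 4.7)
The plan is to construct a pair of quasi-inverse functors between the two categories. In one direction, I would first build the \emph{étale realization} $T_\et: \Vect^{\an, \phi}(X_\prism) \to \Loc_{\Z_p}(X_\eta)$. An analytic prismatic $F$-crystal $\cal{M}$ is, by definition, a vector bundle on $\Spec(A) \setminus V(p, I)$ for each prism $(A, I) \in X_\prism$ equipped with a Frobenius that becomes an isomorphism after inverting $I$. Restricting $\cal{M}$ along the open immersion $\Spec(A[1/I]^\wedge_p) \hookrightarrow \Spec(A) \setminus V(p, I)$ produces a Laurent $F$-crystal on $X_\prism$, and \cite[Cor.\ 3.7]{FCrystals} then gives a corresponding $\Z_p$-local system on $X_\eta$.

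Next, I would associate an $F$-isocrystal $\cal{E}$ on $X_{p=0}$ to $\cal{M}$ by evaluating on the \emph{crystalline prisms}, i.e.\ those of the form $(A, (p))$: these form a basis of the absolute crystalline site of $X_{p=0}$, and on such a prism $\Spec(A) \setminus V(p, I) = \Spec(A[1/p])$, so $\cal{M}$ restricts to a vector bundle with Frobenius, yielding an $F$-isocrystal after inverting $p$. I would then verify that $T_\et(\cal{M})$ is crystalline with associated $F$-isocrystal $\cal{E}$ by constructing a natural isomorphism $\mathbb{B}_\crys(\cal{E}) \cong \mathbb{B}_\crys \otimes_{\widehat{\Z}_p} T_\et(\cal{M})$ on affinoid perfectoid objects $\Spa(R, R^+) \in X_{\eta, \proet}$: both sides arise as base changes of the single datum $\cal{M}|_{\Spec(A_\inf(R^+)) \setminus V(p, \xi)}$, one via $A_\inf(R^+) \to A_\crys(R^+) \to \mathbb{B}_\crys$ and one via the Laurent $F$-crystal structure, and functoriality of $\cal{M}$ forces them to agree.

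The quasi-inverse direction takes a crystalline local system $L$ with associated $F$-isocrystal $\cal{E}$ and reconstructs an analytic prismatic $F$-crystal by \emph{gluing}: on the open $\Spec(A[1/I]^\wedge_p) \subseteq \Spec(A) \setminus V(p, I)$ of a prism $(A, I)$, I use the Laurent $F$-crystal corresponding to $L$ under \cite[Cor.\ 3.7]{FCrystals}; on the complementary open $\Spec(A[1/p])$, I use the value of $\cal{E}$ coming from the prismatic-crystalline comparison; and the crystalline period isomorphism built into the definition of $L$ being crystalline furnishes the gluing datum over their overlap $\Spec(A[1/p, 1/I])$. A purity-type argument is then needed to extend this glued datum to a genuine vector bundle on all of $\Spec(A) \setminus V(p, I)$, which is exactly the step where analyticity---removing the codimension-two locus $V(p, I)$ rather than merely $V(p)$---becomes essential.

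The main obstacle is carrying out this purity step and verifying that the two functors are mutually inverse. The cleanest strategy is to descend to small affines $\Spf R$ admitting a Breuil--Kisin-type prism $(\mathfrak{S}, E) \in R_\prism$, where analytic prismatic $F$-crystals admit an explicit description in terms of $(\phi, E)$-modules on $\mathfrak{S}$; the mutually inverse character of the two functors can then be checked by matching these modules directly against the filtered $F$-isocrystal attached to $L$. This ultimately reduces to the case $X = \Spf \O_K$ for $K/\Q_p$ a finite extension, which is Bhatt--Scholze's theorem identifying analytic prismatic $F$-crystals on $\Spf \O_K$ with crystalline $G_K$-representations via $D_\crys$.
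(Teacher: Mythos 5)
The paper does not prove this proposition; it cites Guo--Reinecke [Thm.~A] and, independently, Du--Liu--Moon--Shimizu [Thm.~1.3, Rem.~1.5]. Your sketch follows the broad contours of the cited arguments — Laurent restriction for the \'etale realization, evaluation on crystalline prisms $(A,(p))$ for the associated $F$-isocrystal, a gluing over the two rational loci to build the quasi-inverse, and reduction to local computations where Bhatt--Scholze's $\Spf\O_K$ theorem applies — so as a reading guide it is serviceable. But the steps you yourself defer (the extension to an honest vector bundle on $\Spec(A)\setminus V(p,I)$ and the verification of mutual inverseness) constitute essentially all of the content of those papers, so the sketch is an outline of why the citation is plausible, not a proof.

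One concrete imprecision should be corrected: $\Spec(A[1/I]^\wedge_p)\to \Spec(A)\setminus V(p,I)$ is \emph{not} an open immersion (the $p$-adic completion is not a localisation), and $\Spec(A[1/p])$ together with $\Spec(A[1/I]^\wedge_p)$ is not a Zariski cover of the punctured spectrum; the Zariski cover is by $\Spec(A[1/p])$ and $\Spec(A[1/\xi])$ for a generator $\xi$ of $I$. The gluing you invoke is therefore a Beauville--Laszlo-type formal glueing, and the ``purity'' input you mention is needed precisely to show the glued datum extends across, and remains a vector bundle near, the codimension-two locus $V(p,I)$; handling this honestly is a substantial part of Guo--Reinecke's argument. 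Likewise, the final reduction to $\Spf\O_K$ does not fall out of descent to Breuil--Kisin prisms alone: one must propagate crystallinity across the small affine and compare the completed and uncompleted Laurent localisations. Given this, leaving the result as a citation, as the paper does, is the appropriate choice.
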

\begin{proof}
See \cite[Thm.\ A]{GuoReinecke} \EDIT{or \cite[Thm.\ 1.3, Rem.\ 1.5]{DuLiuMoonShimizu}.}
\end{proof}

Next, given an analytic prismatic $F$-crystal on $X$, we extend it to a prismatic $F$-crystal in perfect complexes on $X$ by (underived) pushforward along the open immersion 
\begin{equation*}
\EEDIT{
\Spec(A)\setminus V(p, I)\rightarrow\Spec A
}
\end{equation*}
for all $(A, I)\in X_\prism$.

\begin{prop}
\label{prop:cryset-vectanpushforward}
Let $X$ be a smooth $p$-adic formal scheme. Then \EEDIT{(underived)} pushforward along the open immersions $\Spec(A)\setminus V(p, I)\rightarrow \Spec A$ for all $(A, I)\in X_\prism$ induces a fully faithful \EDIT{embedding}
\begin{equation*}
\Vect^{\an, \phi}(X_\prism)\hookrightarrow \Perf^\phi(X_\prism)
\end{equation*}
from the category of analytic prismatic $F$-crystals to the category of prismatic $F$-crystals in perfect complexes.
\end{prop}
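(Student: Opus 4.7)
The plan is to first define the functor, then verify that it factors through $\Perf^\phi(X_\prism)$, and finally prove fully faithfulness. For the definition, given an analytic prismatic $F$-crystal $M = \{M_A\}_{(A,I)\in X_\prism}$, I would send it to $\{j_{A,*}M_A\}$, where $j_A: \Spec(A)\setminus V(p,I)\rightarrow\Spec(A)$ is the open immersion, with Frobenius structure inherited from that of $M$ (using that $V(p,I)$ is Frobenius-stable since $\phi(I)\subseteq (p,I)$).

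The heart of the argument is then to show that, for each prism $(A, I)$, the pushforward $j_{A,*}M_A$ is a perfect $A$-module, and that the formation of $j_{A,*}M_A$ is compatible with base change along transition maps $(A,I)\rightarrow (B,J)$ in the prismatic site so as to yield a crystal. For the perfectness, I would first reduce to transversal (in particular $p$-torsion-free) prisms, since these form a cover of $X_\prism$; in this situation, $V(p,I)\subseteq \Spec A$ has depth at least $2$, so the Mayer--Vietoris description
\begin{equation*}
j_{A,*}M_A \cong M_A[\tfrac{1}{p}]\times_{M_A[\tfrac{1}{pI}]} (M_A[\tfrac{1}{I}])_p^\wedge
\end{equation*}
identifies the pushforward with a reflexive coherent $A$-module. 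To upgrade reflexive to perfect, which is the genuinely hard step, I would exploit the Frobenius isomorphism: applying it iteratively gives a Frobenius-linear identification of $j_{A,*}M_A$ with a pullback of $M_A$ along faithfully flat maps to Breuil--Kisin type (regular Noetherian) prisms, where reflexive coherent modules automatically have finite projective dimension and are therefore perfect; faithfully flat descent of perfectness then concludes. Compatibility with transition maps then follows from flat base change for open immersions, since $V(p,I)$ pulls back to $V(p,J)$ along any prism map and the higher pushforwards $R^{>0}j_{A,*}M_A$ vanish in the transversal case.

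For fully faithfulness, since $j_A$ is a quasi-compact open immersion, the counit $j_A^* j_{A,*}\rightarrow\id$ is an isomorphism on quasi-coherent sheaves over $U_A$, so restriction back to $\{U_A\}_{(A,I)\in X_\prism}$ provides a natural retraction to our embedding. This retraction is compatible with Frobenius by construction, so for any two analytic prismatic $F$-crystals $M, N$ it induces a natural isomorphism
\begin{equation*}
\Hom_{\Perf^\phi(X_\prism)}(j_*M, j_*N)\cong \Hom_{\Vect^{\an,\phi}(X_\prism)}(M, N)\;,
\end{equation*}
which is precisely fully faithfulness.

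The main obstacle is the perfectness step: the prisms on $X_\prism$ are rarely Noetherian or regular, so reflexivity is not by itself sufficient, and one must genuinely use the $F$-structure (together with appropriate descent) to upgrade reflexive coherent modules to perfect complexes. Once this is done, the crystal compatibility and fully faithfulness are essentially formal consequences of flat base change and the fact that $j_A$ is an open immersion.
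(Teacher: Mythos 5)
The paper does not prove this proposition itself but cites \cite[Thm.\ 5.10]{GuoReinecke}; you are therefore reconstructing the argument of that reference. Your high-level strategy --- a depth argument to control the degree-zero pushforward, a reduction to a well-behaved class of prisms, a descent step for perfectness and the crystal property, and the adjunction counit $j_A^*j_{A,*}\cong\id$ for the quasi-compact open immersion $j_A$ to obtain full faithfulness --- is broadly aligned with Guo--Reinecke's proof, and your full faithfulness argument is essentially correct.

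However, the perfectness and crystal-compatibility parts have genuine gaps. First, your Mayer--Vietoris formula is wrong: for the Zariski cover $\Spec A\setminus V(p, I) = \Spec A[\tfrac{1}{p}]\cup\Spec A[\tfrac{1}{d}]$ with $I=(d)$, the underived pushforward is $M_A[\tfrac{1}{p}]\times_{M_A[\tfrac{1}{pd}]}M_A[\tfrac{1}{d}]$, with no $p$-adic completion on the last factor; inserting one changes the object. Second, the assertion that this fibre product is a \emph{coherent} $A$-module for an arbitrary transversal prism $(A, I)$ is unjustified: transversal prisms such as $A_\inf(\O_C)$ are far from Noetherian, and depth alone does not yield coherence there. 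The appropriate reduction is to Breuil--Kisin type prisms, which are regular Noetherian and form a basis of $X_\prism$ when $X$ is smooth; over such prisms the depth bound gives a coherent reflexive module, which is then automatically perfect by regularity --- no iterated Frobenius is needed at that point, and I cannot make your intermediate ``Frobenius-linear identification along faithfully flat maps to Breuil--Kisin type prisms'' step precise. Third, and most substantively, the crystal compatibility is where the real technical work lies: you dismiss it with ``flat base change for open immersions'', but morphisms in the prismatic site are not flat in general, and one needs the $(p,I)$-completed base change $j_{A,*}M_A\widehat{\otimes}_A B\xrightarrow{\sim}j_{B,*}M_B$ for arbitrary maps of prisms $(A,I)\rightarrow(B,J)$ in order to land in $\Perf^\phi(X_\prism)$. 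Establishing this, via a reduction to the Breuil--Kisin cover together with descent, is one of the main points of \cite[Thm.\ 5.10]{GuoReinecke}, and your sketch does not confront it.
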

\begin{proof}
See \cite[Thm.\ 5.10]{GuoReinecke}.
\end{proof}

To finish the construction, we have to show how to extend a prismatic $F$-crystal in perfect complexes $M$, i.e.\ a perfect complex on $X^\prism$ together with a Frobenius structure, to a perfect complex on $X^\N$. The Frobenius structure will ensure that the result descends to $X^\Syn$, see \cite[Ex.\ 6.1.7]{FGauges}. By quasisyntomic descent, see \cref{lem:filprism-quasisyntomiccover}, it suffices to describe the procedure for $X=\Spf R$ with a quasiregular semiperfectoid ring $R$. Here, we have
\begin{equation*}
R^\N\cong \Spf\Rees(\Fil^\bullet_\N \Prism_R)/\G_m
\end{equation*}
by \cref{ex:filprism-qrsp} and thus we need to equip $M$ with a decreasing filtration $\Fil^\bullet M$ compatible with the Nygaard filtration on $\Prism_R$. This construction is supplied by the following definition from \cite[Def. 6.6.6]{FGauges}:

\begin{defi}
Let $R$ be a quasiregular semiperfectoid ring and $(M, \tau)\in\Mod^\phi(\Prism_R)$ be a prismatic $F$-crystal in finitely presented modules on $\Spf R$. A \emph{Nygaardian filtration} on $M$ is a filtration $\Fil^\bullet M$ of $M$ in $(p, I)$-complete $\Prism_R$-modules such that the map
\begin{equation*}
M\xrightarrow{\can} \phi^*M\hookrightarrow \phi^*M[1/I]\overset{\tau}{\cong} M[1/I]
\end{equation*}
carries $\Fil^i M$ into $I^i M$ for all $i\in\Z$; here, $I$ is the effective Cartier divisor on $\Prism_R$ defining the initial prism of $R_\prism$. A Nygaardian filtration is called \emph{saturated} if it is maximal, i.e.\ if $\Fil^\bullet M$ is the preimage of $I^\bullet M$ under the map above. 
\end{defi}

Indeed, this construction gives what we want provided we impose a small restriction on the class of prismatic $F$-crystals in perfect complexes we are considering.

\begin{defi}
Let $X$ be a \EDIT{$p$-quasisyntomic} $p$-adic formal scheme. We define the category $\Perf^\phi_{I-\mathrm{tf}}(X_\prism)$ to be the full subcategory of $\Perf^\phi(X_\prism)$ consisting of prismatic $F$-crystals $\cal{E}$ satisfying the following property: For any quasiregular semiperfectoid ring $R$ such that $\Spf R\rightarrow X$ is \EDIT{$p$-quasisyntomic}, the complex $\cal{E}(\Prism_R)$ is concentrated in cohomological degree zero and has no $I$-torsion, where again $(\Prism_R, I)$ is the inital object of $R_\prism$.
\end{defi}

\begin{prop}
\label{prop:cryset-fcrysfgauge}
Let $X$ be a smooth $p$-adic formal scheme. Then there is a fully faithful embedding
\begin{equation*}
\Perf^\phi_{I-\mathrm{tf}}(X_\prism)\hookrightarrow \Perf(X^\Syn)
\end{equation*}
characterised by the requirement that its composition with pullback along $R^\Syn\rightarrow X^\Syn$ for any quasiregular semiperfectoid ring $R$ such that $\Spf R\rightarrow X$ is $p$-completely flat sends a prismatic $F$-crystal $\cal{E}$ to the module $\cal{E}(\Prism_R)$ equipped with its saturated Nygaardian filtration.
\end{prop}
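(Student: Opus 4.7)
The plan is to construct the embedding via quasisyntomic descent, using the fact that quasiregular semiperfectoid (qrsp) rings form a basis of the quasisyntomic site of $X$, see \cite[Def.\ 4.10]{THHandPAdicHodgeTheory}. For a qrsp ring $R$ with a quasisyntomic structure map $\Spf R\rightarrow X$, the identification $R^\N\cong\Spf\Rees(\Fil^\bullet_\N\Prism_R)/\G_m$ from \cref{ex:filprism-qrsp} lets us view $\D(R^\N)$ as the category of filtered $(p, I)$-complete $\Prism_R$-complexes, and the gluing description (\ref{eq:syntomification-equaliser}) reduces further descent from $R^\N$ to $R^\Syn$ to supplying a Frobenius-type identification between the two pullbacks to $R^\prism$. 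Given $\cal{E}\in\Perf^\phi_{I-\mathrm{tf}}(X_\prism)$, the module $M_R\coloneqq\cal{E}(\Prism_R)$ is by hypothesis a finitely presented, $I$-torsion-free $\Prism_R$-module concentrated in degree zero; equipping it with its saturated Nygaardian filtration produces an object of $\D(R^\N)$, while the Frobenius structure on $\cal{E}$ furnishes the descent datum to $R^\Syn$ as in \cite[Ex.\ 6.1.7]{FGauges}.

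The first technical point is functoriality under quasisyntomic covers $R\rightarrow S$ of qrsp rings over $X$, which is needed in order to glue the construction globally via \cref{lem:filprism-quasisyntomiccover}. For such a cover, the induced map $\Prism_R\rightarrow\Prism_S$ is $(p, I)$-completely faithfully flat, and the crystal property gives $M_S\cong (M_R\tensor_{\Prism_R}\Prism_S)^\wedge_{(p, I)}$. Writing $\Fil^i M_R$ as the preimage of $I^i M_R$ under the composite $M_R\xrightarrow{\can}\phi^*M_R\hookrightarrow\phi^*M_R[1/I]\xrightarrow{\tau} M_R[1/I]$ (the middle injection being valid since $\phi^*M_R$ embeds into $M_R[1/I]$ via $\tau$ and is therefore $I$-torsion-free), flat base change preserves such preimages and yields $\Fil^i M_S\cong(\Fil^i M_R\tensor_{\Prism_R}\Prism_S)^\wedge_{(p, I)}$. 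Perfectness of the resulting $F$-gauge can be checked after pullback to $R^\Syn$, hence reduces to perfectness of $\Rees(\Fil^\bullet M_R)$ over $\Rees(\Fil^\bullet_\N\Prism_R)$, which follows from perfectness of $M_R$ over $\Prism_R$ together with the explicit perfectoid description of the Rees algebra from \cref{ex:filprism-qrsp} and the regularity statement of \cref{rem:filprism-regular}.

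For full faithfulness, note that pullback of the associated $F$-gauge along $j_\N\circ j_\dR\colon X^\prism\rightarrow X^\Syn$ recovers $\cal{E}$ together with its Frobenius by construction, so the induced map on Hom-spaces is injective. Given $\cal{E}_1, \cal{E}_2\in\Perf^\phi_{I-\mathrm{tf}}(X_\prism)$ with associated $F$-gauges $E_1, E_2$, a map $E_1\rightarrow E_2$ in $\D(X^\Syn)$ is by (\ref{eq:syntomification-equaliser}) equivalently given by a filtration-preserving map between the underlying Nygaardian-filtered $\Prism_R$-modules that is compatible with Frobenius; however, any $F$-crystal morphism $\cal{E}_1\rightarrow\cal{E}_2$ automatically preserves the saturated Nygaardian filtrations by their maximality, so the induced map on Hom-spaces is also surjective. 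I expect the main obstacle to be the descent-compatibility of the saturated Nygaardian filtration described above --- it is precisely here that the $I$-torsion-free hypothesis enters in an essential way, and any weakening of this restriction would likely require a more delicate derived treatment of the saturated filtration.
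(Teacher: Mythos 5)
The paper's own proof of this proposition is a one-line citation to \cite[Thm.\ 2.31]{GuoLi}; you are attempting to reprove that theorem from scratch, which is an ambitious and genuinely different route. Your overall strategy (quasisyntomic descent down to qrsp rings, construct the filtered object via the saturated Nygaardian filtration, use the Frobenius to descend from $R^\N$ to $R^\Syn$ via (\ref{eq:syntomification-equaliser})) is the correct skeleton, and you have correctly identified the pressure point --- descent-compatibility of the saturated filtration along quasisyntomic covers of qrsp rings --- and correctly observed that maximality of the saturated filtration makes any $F$-crystal morphism automatically filtration-preserving.

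However, there are real gaps at exactly the places where the work of \cite{GuoLi} lives. First, the assertion that ``flat base change preserves such preimages and yields $\Fil^i M_S\cong(\Fil^i M_R\tensor_{\Prism_R}\Prism_S)^\wedge_{(p,I)}$'' is not an argument: uncompleted flat base change commutes with kernels, but you need the $(p,I)$-\emph{completed} base change of the kernel $\Fil^i M_R = \Ker(M_R\to M_R[1/I]/I^iM_R)$ to agree with the kernel of the base-changed map. This is false in general for completed tensor products; it requires showing, e.g., that $\Fil^i M_R$ is finitely generated over $\Prism_R$ (or an equivalent finiteness statement) so that the completed tensor product agrees with the ordinary one, and establishing that finiteness is itself nontrivial for objects of $\Perf^\phi_{I-\mathrm{tf}}$ that are merely coherent rather than locally free. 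Second, the perfectness claim for $\Rees(\Fil^\bullet M_R)$ over $\Rees(\Fil^\bullet_\N\Prism_R)$ does not simply ``follow from perfectness of $M_R$'': even when $M_R$ is perfect the Rees module of an arbitrary filtration need not be finitely generated, and one must show the filtration stabilises (only finitely many jumps contribute, with all higher terms $I$-saturated). Third, the full faithfulness argument is carried out at the level of $\pi_0(\Map)$ only; since $\Perf^\phi(X_\prism)$ and $\Perf(X^\Syn)$ are stable $\infty$-categories, one must show an equivalence of mapping spectra, and the equaliser (\ref{eq:syntomification-equaliser}) produces a fibre sequence on $\RHom$'s that has to be analysed in all degrees, not just $H^0$. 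The split monomorphism you produce (via pullback along $j_\N\circ j_\dR$) does take care of injectivity in all degrees, but your surjectivity argument only addresses honest morphisms.
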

\begin{proof}
See \cite[Thm.\ 2.31]{GuoLi}.
\end{proof}

To be able to apply the above proposition in our situation, we only need to check that the essential image of the functor from \cref{prop:cryset-vectanpushforward} is contained in $\Perf^\phi_{I-\mathrm{tf}}(X_\prism)$. However, for any quasiregular semiperfectoid ring $R$, any object in the essential image of 
\begin{equation*}
\Vect^{\an, \phi}(R_\prism)\rightarrow \Perf^\phi(R_\prism)\cong \Perf^\phi(\Prism_R)
\end{equation*}
identifies with the global sections of a vector bundle on $\Spec(\Prism_R)\setminus V(p, I)$ and this immediately implies the claim as $I$ is an effective Cartier divisor on $\Prism_R$ and hence $\Prism_R$ has no $I$-torsion.

We summarise the results we have collected so far:

\begin{thm}
\label{thm:cryset-locsysfgauges}
Let $X$ be a smooth $p$-adic formal scheme. Then there is a fully faithful embedding
\begin{equation*}
\Loc^\crys_{\Z_p}(X_\eta)\hookrightarrow \Perf(X^\Syn)
\end{equation*}
which is a weak right inverse to the étale realisation. The image of a crystalline local system $L$ under this embedding is called the \emph{associated $F$-gauge} $E$ of $L$ and has the property that $T_\dR(E)$ identifies with the associated $F$-isocrystal of $L$.
\end{thm}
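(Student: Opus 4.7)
The plan is to build the desired embedding as the composition of the three fully faithful functors already assembled in the paragraphs leading up to this theorem, and then verify the two asserted compatibilities by tracing through those constructions. Explicitly, Propositions~\ref{prop:cryset-locsysvectan}, \ref{prop:cryset-vectanpushforward} and \ref{prop:cryset-fcrysfgauge} together provide
\begin{equation*}
\Loc^\crys_{\Z_p}(X_\eta)\;\cong\;\Vect^{\an, \phi}(X_\prism)\;\hookrightarrow\;\Perf^\phi(X_\prism)\;\hookrightarrow\;\Perf(X^\Syn)\;,
\end{equation*}
where I would invoke the observation already made in the excerpt that the essential image of the middle arrow lies in $\Perf^\phi_{I-\mathrm{tf}}(X_\prism)$: an analytic prismatic $F$-crystal restricts on quasiregular semiperfectoid $R$ to a vector bundle on $\Spec(\Prism_R)\setminus V(p, I)$ whose pushforward is automatically concentrated in degree zero and $I$-torsion-free because $I$ is an effective Cartier divisor on $\Prism_R$. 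The composition is fully faithful as a composition of fully faithful functors.

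For the weak right inverse property, I would trace $L$ through the composition together with $T_\et$, using the global description from Remark~\ref{rem:syntomification-etalerealisationglobal} of $T_\et$ as the composition
\begin{equation*}
\Perf(X^\Syn)\to\Perf^\phi(X_\prism)\to\Perf^\phi(X_\prism, \O_\prism[1/\cal{I}_\prism]_{(p)}^\wedge)\cong\D^b_\lisse(X_\eta, \Z_p)\;.
\end{equation*}
By the characterisation of the functor of Proposition~\ref{prop:cryset-fcrysfgauge} in terms of pullback to $R^\prism$ for quasiregular semiperfectoid $R$, the first arrow carries the $F$-gauge associated to $L$ back to the prismatic $F$-crystal in perfect complexes produced from $L$ in Proposition~\ref{prop:cryset-vectanpushforward}. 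Inverting $\cal{I}_\prism$ and $p$-completing then recovers the original analytic prismatic $F$-crystal underlying $L$, since pushforward along $\Spec(A)\setminus V(p, I)\hookrightarrow \Spec A$ is an isomorphism after restricting to $\Spec(A)[\tfrac{1}{I}]_{(p)}^\wedge$, and the final equivalence in the display above returns $L$ by Proposition~\ref{prop:cryset-locsysvectan}.

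For the de Rham identification, I would compose with the stacky crystalline comparison which identifies $X^\dR$ with an appropriate Frobenius twist of $(X_{p=0})^\prism$ (compare Remark~\ref{rem:beilfibsq-crysfrob}). Under this identification, $T_\dR(E)$ corresponds to the pullback of the underlying prismatic $F$-crystal $\cal{E}$ to the prismatic site of $X_{p=0}$ followed by inverting $p$, which via the prismatic--crystalline comparison identifies with the crystal obtained by restricting $\cal{E}$ to the crystalline site of $X_{p=0}$ and inverting $p$. The main obstacle is matching this crystalline restriction with the $F$-isocrystal associated to $L$ through the defining isomorphism $\mathbb{B}_\crys(\cal{E})\cong\mathbb{B}_\crys\tensor_{\widehat{\Z}_p} L$: the computations already recalled in the proof of \cref{lem:beilfibsq-etalephimod} (notably $\mathbb{B}_\crys(\cal{E})(R, R^+)=\cal{E}(A_\crys(R^+))[\tfrac{1}{p}]$ together with the base-change isomorphism $\cal{E}(A_\inf)\tensor_{A_\inf} A_\crys\cong\cal{E}(A_\crys)$) reduce the problem to a compatibility already essentially present in Guo--Reinecke's proof of Proposition~\ref{prop:cryset-locsysvectan}, but verifying Frobenius-equivariance on the nose will require carefully unwinding the site-theoretic comparison between the absolute prismatic site of $X_{p=0}$ and its crystalline site.
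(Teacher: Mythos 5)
Your construction of the embedding and the verification that the essential image of $\Vect^{\an, \phi}(X_\prism)\to\Perf^\phi(X_\prism)$ lands in the $I$-torsion-free subcategory match the paper exactly, and your treatment of the weak-right-inverse property via \cref{rem:syntomification-etalerealisationglobal} is the same argument (one small imprecision: \cref{prop:cryset-fcrysfgauge} is characterised via pullback along $R^\Syn\to X^\Syn$, not $R^\prism$, though the conclusion you draw from it is correct).

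Where you diverge from the paper is the de Rham compatibility. You set out to re-derive the identification of the crystalline restriction of $\cal{E}$ with the $F$-isocrystal associated to $L$ by tracing through the period-sheaf computations already present in the proof of \cref{lem:beilfibsq-etalephimod}, and you honestly flag that "verifying Frobenius-equivariance on the nose will require carefully unwinding the site-theoretic comparison." The paper sidesteps this entirely: after drawing the commutative triangle
\begin{equation*}
\begin{tikzcd}[row sep=tiny]
X^\dR\ar[dr, "i_\dR"]\ar[dd, "\cong", swap] \\
& X^\prism \\
(X_{p=0})^\prism\ar[ur]
\end{tikzcd}
\end{equation*}
it simply cites Guo--Reinecke (Constr.\ 3.9.(ii) together with the proof of their Thm.\ 3.12), which states precisely that the associated $F$-isocrystal of $L$ is obtained from its analytic prismatic $F$-crystal via the composite $\Vect^{\an, \phi}(X_\prism)\to\Vect^{\an, \phi}((X_{p=0})_\prism)\cong\Isoc^\phi(X_{p=0}/\Z_p)$, Frobenius structure included. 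So the compatibility you are worried about unwinding is exactly what that reference supplies. Your plan is not wrong, but it amounts to re-proving a lemma the paper quotes, and as written it does not close the loop; invoking the reference directly finishes the argument and is what the paper does.
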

\begin{proof}
The existence of the embedding follows by combining \cref{prop:cryset-locsysvectan}, \cref{prop:cryset-vectanpushforward} and \cref{prop:cryset-fcrysfgauge}; moreover, \EDIT{by the description of the étale realisation from \cref{rem:syntomification-etalerealisationglobal},} the relation with the étale realisation is clear from the construction: \EDIT{indeed, the functor from \cref{prop:cryset-fcrysfgauge} is a weak right inverse to the functor $\Perf(X^\Syn)\rightarrow\Perf^\phi(X_\prism)$ from (\ref{eq:syntomification-etalerealisationglobal}) and this implies the claim.} Finally, the last \EDIT{statement about compatibility with the de Rham realisation} follows from the commutativity of the diagram
\begin{equation*}
\begin{tikzcd}[row sep=tiny]
X^\dR\ar[dr, "i_\dR"]\ar[dd, "\cong", swap] \\
& X^\prism \\
(X_{p=0})^\prism\ar[ur]
\end{tikzcd}
\end{equation*}
and the fact that the associated $F$-isocrystal of $L$ can be obtained from the associated analytic prismatic $F$-crystal via \EDIT{the composite functor
\begin{equation*}
\Vect^{\an, \phi}(X_\prism)\rightarrow\Vect^{\an, \phi}((X_{p=0})_\prism)\cong \Isoc^\phi(X_{p=0}/\Z_p)
\end{equation*}
from} \cite[Constr.\ 3.9.(ii)]{GuoReinecke}, see the proof of \cite[Thm.\ 3.12]{GuoReinecke}. 
\end{proof}

\begin{defi}
The essential image of the embedding 
\begin{equation*}
\Loc^\crys_{\Z_p}(X_\eta)\hookrightarrow \Perf(X^\Syn)
\end{equation*}
is called the category of \emph{reflexive $F$-gauges} on $X$ and denoted by $\Coh^\refl(X^\Syn)$.
\end{defi}

\begin{rem}
We shortly discuss why the notions of Hodge--Tate weights of a crystalline local system $L$ and its associated $F$-gauge $E$ are compatible. By \cref{rem:cryset-htweightsddr}, this would follow from the fact that $D_\dR(L)$ can be identified with (the pullback to $X_\eta$ of) $T_{\dR, +}(E)$. Using \cite[Cor.\ 6.19]{PAdicHodgeTheory}, it then suffices to prove that there is a filtered isomorphism
\begin{equation*}
L\tensor_{\widehat{\Z}_p} \O\mathbb{B}_\dR\cong T_{\dR, +}(E)\tensor_{\O_{X_\eta}} \O\mathbb{B}_\dR\;.
\end{equation*}
\EEDIT{However, the unfiltered isomorphism exists by \cite[Prop.\ 2.36]{GuoReinecke} and the fact that $T_\dR(E)$ identifies with the associated $F$-isocrystal of $L$. To check that this isomorphism respects the filtrations, we may now proceed locally on $X_{\eta, \proet}$ and hence assume $\O\mathbb{B}_\dR^+\cong \mathbb{B}_\dR^+[[X_1, \dots, X_n]]$ by \cite[Prop.\ 6.10]{PAdicHodgeTheory}. Then we are reduced to showing that 
\begin{equation*}
L\tensor_{\widehat{\Z}_p} \mathbb{B}_\dR\cong T_{\dR, +}(E)\tensor_{\O_{X_\eta}} \mathbb{B}_\dR
\end{equation*}
compatibly with the filtrations on either side, which may be checked on affinoid perfectoid objects $\Spa(R, R^+)\in X_{\eta, \proet}$ over $\Spa C$ such that $L|_{\Spa(R, R^+)}$ is trivial since such objects form a basis of $X_{\eta, \proet}$ -- however, in this case, the claim follows by virtually the same proof as \cref{lem:beilfibsq-etalefiltered} with the use of \cite[Lem.\ 4.26]{BMS} replaced by \cite[Lem.\ 3.13]{GuoReinecke}.}
\end{rem}

\subsection{Proof of the main theorem}

We now move on to prove the necessary comparison results for reflexive $F$-gauges. However, for simplicity, we start with the case of vector bundles:

\begin{prop}
\label{prop:cryset-fine}
Let $X$ be a $p$-adic formal scheme \EEDIT{which is smooth and proper over $\Spf\Z_p$}. For any vector bundle $F$-gauge $E\in\Vect(X^\Syn)$ with Hodge--Tate weights all at most $-i-1$ for some $i\geq 0$, there is a natural morphism
\begin{equation*}
R\Gamma_\dR(X, T_\dR(E))[\tfrac{1}{p}][-1]\rightarrow R\Gamma_\proet(X_\eta, T_\et(E))[\tfrac{1}{p}]\;,
\end{equation*}
which induces an isomorphism
\begin{equation*}
\tau^{\leq i} R\Gamma_\dR(X, T_\dR(E))[\tfrac{1}{p}][-1]\cong\tau^{\leq i} R\Gamma_\proet(X_\eta, T_\et(E))[\tfrac{1}{p}]
\end{equation*}
and an injection on $H^{i+1}$.
\end{prop}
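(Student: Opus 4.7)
My plan is to combine the Beilinson fibre square from \cref{cor:beilfibsq-coeffs} with the étale comparison from \cref{thm:syntomicetale-mainfine}, using the hypothesis that the Hodge--Tate weights of $E$ are all at most $-i-1$ to cut off one corner of the square.

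First, I would apply \cref{cor:beilfibsq-coeffs} to produce a pullback square relating $R\Gamma_\Syn(X, E)[\tfrac{1}{p}]$ to its special-fibre and Hodge-filtered analogues. The Hodge--Tate weight hypothesis should then force $\Fil^0 T_{\dR, +}(E) = 0$: the nonzero graded pieces of the Hodge filtration on $T_{\dR, +}(E)$ are indexed by the Hodge--Tate weights (\cref{defi:nygaardhodge-htweights} together with the convention on $\Z_{p, \dR, +}^\N$ recorded in \cref{sect:syntomification}), and the filtration on a vector bundle $F$-gauge is bounded above, so vanishing of all non-negative graded pieces forces $\Fil^0$ itself to vanish. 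Consequently $\Fil^0_\Hod R\Gamma_\dR(X, T_{\dR,+}(E))[\tfrac{1}{p}] = 0$ and the pullback square degenerates to the fibre sequence
\begin{equation*}
R\Gamma_\dR(X, T_\dR(E))[\tfrac{1}{p}][-1] \to R\Gamma_\Syn(X, E)[\tfrac{1}{p}] \to R\Gamma_\Syn(X_{p=0}, T_\crys(E))[\tfrac{1}{p}].
\end{equation*}

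Next, I would apply \cref{thm:syntomicetale-mainfine} to the smooth proper $\F_p$-scheme $X_{p=0}$ together with the vector bundle $F$-gauge $T_\crys(E)\in\Vect((X_{p=0})^\Syn)$, whose Hodge--Tate weights are inherited from those of $E$ via functoriality of crystalline realisation. Since the adic generic fibre of $X_{p=0}$ is empty, $R\Gamma_\proet((X_{p=0})_\eta, T_\et(T_\crys(E)))$ vanishes, so the theorem forces $R\Gamma_\Syn(X_{p=0}, T_\crys(E))$ to be concentrated in cohomological degrees at least $i+2$: the isomorphism on $\tau^{\leq i}$ with $0$ gives vanishing in degrees $\leq i$, while injectivity on $H^{i+1}$ forces $H^{i+1}$ to vanish as well. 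The same persists after inverting $p$.

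Finally, the long exact sequence of the fibre sequence combined with this vanishing will show that the connecting map $R\Gamma_\dR(X, T_\dR(E))[\tfrac{1}{p}][-1] \to R\Gamma_\Syn(X, E)[\tfrac{1}{p}]$ is an isomorphism on $\tau^{\leq i+1}$ and injective on $H^{i+2}$. Composing with the étale comparison from \cref{thm:syntomicetale-mainfine} (an isomorphism on $\tau^{\leq i}$, injective on $H^{i+1}$) then produces the desired natural morphism with the claimed properties. The main subtle point is the verification that $\Fil^0 T_{\dR,+}(E) = 0$, which requires careful tracking of sign conventions linking Hodge--Tate weights with the Hodge filtration on $T_{\dR,+}(E)$; the remainder is a straightforward diagram chase.
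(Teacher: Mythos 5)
Your first observation is correct: if the Hodge--Tate weights of $E$ are all $\leq -i-1\leq -1$, then the filtered vector bundle $T_{\dR,+}(E)$ on $X$ satisfies $\Fil^0 T_{\dR,+}(E)=0$, since the Hodge filtration on a vector bundle $F$-gauge is finite and all graded pieces in grading degree $\geq -i$ vanish. However, the inference to $\Fil^0_\Hod R\Gamma_\dR(X, T_{\dR,+}(E))[\tfrac{1}{p}]=0$ is false, and this is the main gap. By definition, $\Fil^0_\Hod R\Gamma_\dR(X, T_{\dR,+}(E))$ is the degree-zero term of the filtered pushforward $\pi_{X^{\dR,+},*}T_{\dR,+}(E)$, which, writing $\Fil^\bullet V$ for the filtered bundle on $X$ underlying $T_{\dR,+}(E)$, is computed by the filtered de Rham complex
\begin{equation*}
R\Gamma\bigl(X, \Tot\bigl(\Fil^0 V\xrightarrow{\nabla}\Fil^{-1}V\tensor\Omega^1_{X}\xrightarrow{\nabla}\Fil^{-2}V\tensor\Omega^2_{X}\to\dots\bigr)\bigr)
\end{equation*}
as in \cref{prop:syntomicetale-cohomologyhod} and the formula (\ref{eq:finiteness-filcohdr+}). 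Even when $\Fil^0 V=0$, the terms $\Fil^{-k}V\tensor\Omega^k_X$ for $k\geq 1$ contribute nontrivially starting in cohomological degree $k$; for instance, for $X$ an elliptic curve and $E=\O\{1\}$ one finds $\Fil^0_\Hod R\Gamma_\dR\cong R\Gamma(X,\Omega^1_X)[-1]\neq 0$. The correct statement, and what the paper actually establishes, is that $\Fil^0_\Hod R\Gamma_\dR(X, T_{\dR,+}(E))$ is concentrated in cohomological degrees $\geq i+1$; this requires the degree bounds on the graded pieces from \cref{prop:syntomicetale-cohomologyhod} together with the completeness of the pushforward filtration from \cref{lem:finiteness-pushforwardcomplete} (which is where properness enters).

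A secondary point: the paper does not argue from the pullback square of \cref{cor:beilfibsq-coeffs}, but from the three-term fibre sequence of \cref{thm:beilfibsq-fmrat},
\begin{equation*}
R\Gamma_\Syn(X, E)[\tfrac{1}{p}]\to\Fil^0_\Hod R\Gamma_\dR(X, T_{\dR,+}(E))[\tfrac{1}{p}]\xrightarrow{1-\tau} R\Gamma_\dR(X, T_\dR(E))[\tfrac{1}{p}]\;,
\end{equation*}
which already contains the boundary map $R\Gamma_\dR(X, T_\dR(E))[\tfrac{1}{p}][-1]\to R\Gamma_\Syn(X,E)[\tfrac{1}{p}]$. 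This sidesteps the need to control the crystalline corner $R\Gamma_\Syn(X_{p=0}, T_\crys(E))$ at all. Your plan to bound that corner by applying \cref{thm:syntomicetale-mainfine} to $X_{p=0}$ is outside the stated hypotheses of that theorem, which requires a formal scheme smooth over $\Spf\Z_p$; while it is plausible that the argument adapts to the special fibre, this is an additional burden one does not need to carry.
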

\begin{proof}
Combining \cref{thm:beilfibsq-fmrat} and \cref{prop:finiteness-main}, we obtain a fibre sequence
\begin{equation*}
R\Gamma_\dR(X, T_\dR(E))[\tfrac{1}{p}][-1]\rightarrow R\Gamma_\Syn(X, E)[\tfrac{1}{p}]\rightarrow \EDIT{\Fil^0_\Hod R\Gamma_\dR(X, T_{\dR, +}(E))[\tfrac{1}{p}]}\;.
\end{equation*}
In view of \cref{thm:syntomicetale-mainfine}, it thus suffices to show that \EDIT{$\Fil^0_\Hod R\Gamma_\dR(X, T_{\dR, +}(E))$} is concentrated in degrees at least $i+1$. To this end, identify $T_{\dR, +}(\pi_{X^\Syn, *}E)$ with a filtered object $\Fil^\bullet V$ \EEDIT{and let $d$ be the relative dimension of $X$ over $\Spf\Z_p$}. Then \cref{prop:syntomicetale-cohomologyhod} implies that $\gr^\bullet V=0$ for $\bullet\geq d-i$ and that, moreover, $\gr^{d-i-1-k} V$ is concentrated in degrees at least $d-k$ for $0\leq k\leq d$. As the filtration $\Fil^\bullet V$ is \EDIT{complete by \cref{lem:finiteness-pushforwardcomplete}}, we may \EDIT{thus} conclude that $\Fil^0 V$ is concentrated in degrees at least $i+1$, which yields the claim.
\end{proof}

We can also formulate an analogous result for arbitrary perfect $F$-gauges, albeit in a slightly weaker range:

\begin{prop}
\label{prop:cryset-coarse}
Let $X$ be a $p$-adic formal scheme \EEDIT{which is proper and smooth} of relative dimension $d$ over $\Spf\Z_p$. For any perfect $F$-gauge $E\in\Perf(X^\Syn)$ with Hodge--Tate weights all at most $-d-2$ for some $i\geq 0$, there is a natural isomorphism
\begin{equation*}
R\Gamma_\dR(X, T_\dR(E))[\tfrac{1}{p}][-1]\cong R\Gamma_\proet(X_\eta, T_\et(E))[\tfrac{1}{p}]\;.
\end{equation*}
\end{prop}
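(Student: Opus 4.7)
The plan is to follow the same three-step strategy as in the proof of \cref{prop:cryset-fine}, but with the coarser input provided by \cref{thm:syntomicetale-maincoarse} in place of the finer \cref{thm:syntomicetale-mainfine}. This will let us replace the partial (truncated) comparison by an unconditional quasi-isomorphism, at the cost of the stronger Hodge--Tate weight bound $-d-2$.

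Concretely, I would first invoke the corollary of \cref{thm:beilfibsq-fmrat} in the smooth proper setting (which applies thanks to \cref{prop:finiteness-main}) to obtain a natural fibre sequence
\begin{equation*}
R\Gamma_\dR(X, T_\dR(E))[\tfrac{1}{p}][-1]\rightarrow R\Gamma_\Syn(X, E)[\tfrac{1}{p}]\rightarrow \Fil^0_\Hod R\Gamma_\dR(X, T_{\dR, +}(E))[\tfrac{1}{p}]\;.
\end{equation*}
Next, the Hodge--Tate weight bound $\leq -d-2$ allows me to apply \cref{thm:syntomicetale-maincoarse}, which already integrally identifies $R\Gamma_\Syn(X, E)$ with $R\Gamma_\proet(X_\eta, T_\et(E))$. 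It therefore only remains to check that the third term of the fibre sequence vanishes rationally.

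For this vanishing, I would mimic the graded computation at the end of the proof of \cref{prop:cryset-fine}. Identifying $T_{\dR,+}(\pi_{X^\Syn,*}E)$ with a filtered complex $\Fil^\bullet V$ and writing $M^\bullet$ for the graded complex on $Y = X_{p=0}$ obtained by pulling $E$ back along $Y\times B\G_m\rightarrow X^\Syn$, \cref{prop:syntomicetale-cohomologyhod} gives
\begin{equation*}
\gr^j V\cong R\Gamma\bigl(Y, \Tot\bigl(M^j\xrightarrow{\Phi} M^{j-1}\tensor\Omega^1_{Y/\F_p}\xrightarrow{\Phi}\dots\xrightarrow{\Phi} M^{j-d}\tensor\Omega^d_{Y/\F_p}\bigr)\bigr)\;.
\end{equation*}
Since the Hodge--Tate weights of $E$ are all $\leq -d-2$, we have $M^l = 0$ for $l > -d-2$, so $M^{j-k}\tensor\Omega^k_{Y/\F_p}$ can contribute only when $j - k \leq -d-2$; for $0\leq k\leq d$ this forces $j \leq -2$, and in particular $\gr^j V = 0$ for all $j \geq 0$. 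Because $\Fil^\bullet V$ is complete by \cref{lem:finiteness-pushforwardcomplete}, this yields $\Fil^0 V = 0$, hence $\Fil^0_\Hod R\Gamma_\dR(X, T_{\dR,+}(E))[\tfrac{1}{p}] = 0$, and the fibre sequence collapses to the required natural isomorphism.

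I do not expect any genuine obstacle here: the stronger Hodge--Tate bound $-d-2$ is precisely calibrated so that both the syntomic-étale comparison is an honest isomorphism and the Hodge-filtered de Rham term vanishes in the relevant range. The only care required is to confirm that the completeness and base change statements used for vector bundle $F$-gauges in the proof of \cref{prop:cryset-fine} extend to arbitrary perfect $F$-gauges, which should be a routine consequence of \cref{lem:finiteness-pushforwardcomplete} together with the fact that all functors in sight commute with shifts and fibres.
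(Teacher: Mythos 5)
Your proposal is correct and matches the paper's argument essentially verbatim: both proofs reuse the fibre sequence from \cref{thm:beilfibsq-fmrat} together with \cref{prop:finiteness-main}, replace \cref{thm:syntomicetale-mainfine} by \cref{thm:syntomicetale-maincoarse}, and reduce to the vanishing of $\Fil^0_\Hod R\Gamma_\dR(X, T_{\dR, +}(E))$, which follows from \cref{prop:syntomicetale-cohomologyhod} (shifting the weight bound from $-d-2$ to $-2$ after pushforward) and the completeness supplied by \cref{lem:finiteness-pushforwardcomplete}. You spell out the graded bookkeeping a bit more explicitly than the paper, but the logic is identical.
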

\begin{proof}
As in the proof of \cref{prop:cryset-fine}, now using \cref{thm:syntomicetale-maincoarse} in place of \cref{thm:syntomicetale-mainfine}, we are reduced to showing that \EDIT{$\Fil^0_\Hod R\Gamma_\dR(X, T_{\dR, +}(E))=0$}. However, \EEDIT{by \cref{lem:finiteness-pushforwardcomplete}}, the filtered object $T_{\dR, +}(\pi_{X^\Syn, *}E)\in\D(\A^1_-/\G_m)$ is \EDIT{complete} and \EEDIT{\cref{prop:syntomicetale-cohomologyhod} shows that} $\pi_{X^\Syn, *}E$ has Hodge--Tate weights all at most $-2$, hence $T_{\dR, +}(\pi_{X^\Syn, *}E)$ is zero in filtration degree zero and this implies the claim.
\end{proof}

Now observe that, by \cref{thm:cryset-locsysfgauges}, having the conclusion of \cref{prop:cryset-fine} for reflexive $F$-gauges instead of just vector bundle $F$-gauges would immediately imply \cref{thm:cryset-main}. \EDIT{This will be a consequence of the following lemma:}

\begin{lem}
\label{lem:cryset-degreezero}
Let $X$ be a smooth $p$-adic formal scheme and $E\in\Perf(X^\Syn)$ a reflexive $F$-gauge. Then the pullback of $E$ to $X^\N_\Hod$ is concentrated in cohomological degree zero. 
\end{lem}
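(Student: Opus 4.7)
The plan is to verify this locally on $X$ after reducing to a quasiregular semiperfectoid cover. Since cohomological amplitude can be checked fpqc-locally and the formation of the Hodge component commutes with pullback, \cref{lem:filprism-quasisyntomiccover} reduces us to the case $X=\Spf R$ for $R$ quasiregular semiperfectoid. In this setting, \cref{prop:cryset-fcrysfgauge} identifies the reflexive $F$-gauge $E$ with a prismatic $F$-crystal $M=\cal{E}(\Prism_R)\in\Perf^\phi_{I-\mathrm{tf}}(R_\prism)$, equipped with its saturated Nygaardian filtration $\Fil^\bullet M$; by hypothesis, $M$ is a finitely generated $\Prism_R$-module concentrated in cohomological degree zero with no $I$-torsion, and the analytic $F$-crystal it comes from via \cref{prop:cryset-vectanpushforward} is a vector bundle on $\Spec(\Prism_R)\setminus V(p, I)$.

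Using the Rees presentation $R^\N\cong \Spf(\Rees(\Fil^\bullet_\N\Prism_R))/\G_m$ from \cref{ex:filprism-qrsp}, the pullback $E|_{R^\N}$ corresponds to the graded Rees module $\Rees(\Fil^\bullet M)$. Recalling that the Hodge component $R^\N_\Hod$ arises as the intersection of the conjugate-filtered Hodge--Tate locus (cut out by $t=0$) and the Hodge-filtered de Rham locus (cut out by $u^p=0$) inside $(R^\N)_{p=0}$, the pullback $E|_{R^\N_\Hod}$ corresponds under this presentation to the graded quotient $\gr^\bullet(\Fil^\bullet M)\tensorL_{\Z_p}\F_p$ viewed as a graded module over $\gr^\bullet_\N\Prism_R/p$.

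The main task is therefore to show that this graded complex is concentrated in cohomological degree zero. Since each $\Fil^i M$ is a $\Prism_R$-submodule of the degree-zero module $M$, it is itself concentrated in degree zero, and so each graded piece $\gr^i=\Fil^i M/\Fil^{i+1} M$ is a priori concentrated in non-positive cohomological degrees, while the cofibre presentation shows it is also concentrated in non-negative degrees modulo potential contributions from the map $\Fil^{i+1} M\hookrightarrow \Fil^i M$ being non-injective at the level of discrete modules -- which it cannot be, as both are submodules of $M$. The hard part will be controlling the base change to $\F_p$: a priori, reducing modulo $p$ could produce $\mathrm{Tor}_1$ contributions in degree $-1$ if the graded pieces carry $p$-torsion. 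To rule this out, we will need to establish the $p$-flatness of each $\gr^i(\Fil^\bullet M)$; we expect this to follow from the fact that $M$ is obtained by underived pushforward of a vector bundle from the open locus $\Spec(\Prism_R)\setminus V(p, I)$, combined with a fine analysis of how the saturated Nygaardian filtration interacts with the Breuil--Kisin twist structure on $\gr^\bullet_\N\Prism_R$, possibly invoking further structural results of Guo--Li or Bhatt--Lurie on reflexive prismatic $F$-crystals in this setting.
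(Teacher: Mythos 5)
You correctly set up the reduction to the quasiregular semiperfectoid case and correctly identify the essential obstacle: potential $\mathrm{Tor}_1$ contributions from the base change to $\F_p$, which requires showing that the graded pieces of the saturated Nygaardian filtration are $p$-torsionfree. But your proposal then stops short: you write ``we expect this to follow from\ldots possibly invoking further structural results of Guo--Li or Bhatt--Lurie'' without giving an argument. This is exactly the content of the lemma, and deferring it is a genuine gap, not a cosmetic one. The paper proves it directly and briefly: first it reduces to $R$ being $p$-torsionfree and notes that $M$ inherits $p$-torsionfreeness from $\Prism_R$; then it shows that $M/I$ is $p$-torsionfree using the four-term exact sequence
\begin{equation*}
0\rightarrow M\rightarrow M[1/I]\oplus M[1/p]\rightarrow M[1/pI]
\end{equation*}
(which is exactly where the fact that $M$ is the underived pushforward of a vector bundle from $\Spec(\Prism_R)\setminus V(p,I)$ is used); and finally it deduces by a direct computation, using the defining property of the saturated Nygaardian filtration and the Frobenius $\tau$, that the transition maps $\Fil^i M/p\rightarrow\Fil^{i-1}M/p$ remain injective, which is equivalent to the vanishing of $p$-torsion in the associated graded. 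No deeper structure theory is needed.

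There is also a secondary inaccuracy in your geometric identification: you claim that $E|_{R^\N_\Hod}$ is $\gr^\bullet(\Fil^\bullet M)\tensorL_{\Z_p}\F_p$, but that graded complex is the pullback to the conjugate-filtered Hodge--Tate component $R^\N_{\HT,c}=(R^\N)_{p=t=0}$, not to $R^\N_\Hod=R^\N_{\HT,c}\cap R^\N_{\dR,+}$. Passing further to $R^\N_\Hod$ requires killing, in each degree $\bullet$, the image of multiplication by $\xi$ shifted by $p$; the paper exhibits this as a cofibre
\begin{equation*}
\cofib\bigl((\Fil^{\bullet-p} M/p)/(\Fil^{\bullet-p+1} M/p)\tensor_{\Prism_R/p}\xi\Prism_R/p\rightarrow(\Fil^\bullet M/p)/(\Fil^{\bullet+1} M/p)\bigr)
\end{equation*}
and proves the defining map is injective (using $\phi$-linearity of $\tau$ and the fact that $\xi$ is a non-zerodivisor), so that the cofibre stays in degree zero. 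Also note that the description of $R^\N_{\dR,+}$ as cut out by $u^p=0$ only makes literal sense over a perfectoid base; for general qrsp $R$ the paper uses a map to $R_0^\N$ for a chosen perfectoid $R_0\rightarrow R$ to transport this description, and this choice is also what makes the generator $\xi$ of $I$ available.
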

\begin{proof}
By quasisyntomic descent, see \cref{lem:filprism-quasisyntomiccover}, we may check this in the case where $X=\Spf R$ for a $p$-torsionfree quasiregular semiperfectoid ring $R$. \EEDIT{For this, choose a perfectoid ring $R_0$ mapping to $R$; then also the initial object $(\Prism_{R_0}, (\xi))$ of the absolute prismatic site of $R_0$ maps to the initial object $(\Prism_R, I)$ of the absolute prismatic site of $R$ and, in particular, we conclude that $I=\xi\Prism_R$ is principal by rigidity. Now recall that $R^\N\cong \Spf\Rees(\Fil^\bullet_\N\Prism_R)/\G_m$ by \cref{ex:filprism-qrsp} and that} the $F$-gauge $E$ arises by endowing the module $M$ of global sections of a vector bundle on $\Spec(\Prism_R)\setminus V(p, I)$ equipped with a Frobenius structure $\tau: \phi^*M[1/I]\cong M[1/I]$ with its saturated Nygaardian filtration $\Fil^\bullet M$ \EEDIT{by \cref{prop:cryset-fcrysfgauge}.} We begin by noting that $M$ is $p$-torsionfree since $\Prism_R$ is and therefore the pullback $\Fil^\bullet M/p$ of $E$ to $(R^\N)_{p=0}$ is concentrated in degree zero. Now further observe that $M/I$ is $p$-torsionfree by virtue of the exact sequence
\begin{equation*}
\begin{tikzcd}
0\ar[r] & M\ar[r] & M[1/I]\oplus M[1/p]\ar[r] & M[1/pI]\nospacepunct{\;;}
\end{tikzcd}
\end{equation*}
\EEDIT{indeed, if $pm=am'$ for some $m, m'\in M$ and $a\in I$, then we would have $m/a=m'/p$, i.e.\ $m/a\in M$, which would in turn imply that $m$ is already zero in $M/I$. This now} implies that, \EEDIT{for all $i\in\Z$, the map} $\Fil^i M/p\rightarrow\Fil^{i-1} M/p$ is injective: If $m=pm'$ for some $m\in\Fil^i M$ and $m'\in\Fil^{i-1} M$, then $p\tau(m')=\tau(m)\in I^i M$; \EEDIT{as $\tau(m')\in I^{i-1}$ and $I$ is generated by $\xi$, we obtain the equation $p\tfrac{\tau(m')}{\xi^{i-1}}=\tfrac{\tau(m)}{\xi^{i-1}}\in IM$ and and hence $\tfrac{\tau(m')}{\xi^{i-1}}\in IM$, i.e.\ $\tau(m')\in I^iM$,} as $M/I$ is $p$-torsionfree -- however, this implies $m'\in\Fil^i M$ by definition of the saturated Nygaardian filtration and hence already $m=0\in\Fil^i M/p$. Therefore, also the pullback $\gr^\bullet M/p=(\Fil^\bullet M/p)/(\Fil^{\bullet+1} M/p)$ of $E$ to $(R^\N)_{p=t=0}=R^\N_{\HT, c}$ is concentrated in degree zero. 

\EEDIT{Finally, to compute the further pullback of $E$ to $R^\N_\Hod$, observe that there is a commutative diagram
\begin{equation*}
\begin{tikzcd}[ampersand replacement=\&]
R^\N_{\dR, +}\ar[r]\ar[d] \& R^\N\ar[d] \\
R_{0, \dR, +}^\N\ar[r]\ar[d] \& R_0^\N\ar[d] \\
\Z_{p, \dR, +}^\N\ar[r] \& \Z_p^\N\nospacepunct{\;,}
\end{tikzcd}
\end{equation*}
in which the large rectangle and the bottom square are pullbacks by definition. Thus, also the top square is a pullback and recalling that $R_{0, \dR, +}=(R_0^\N)_{p=u^p=0}$, where we use the isomorphism
\begin{equation*}
R_0^\N\cong \Spf(A_\inf(R_0)\langle u, t\rangle/(ut-\phi^{-1}(\xi)))/\G_m
\end{equation*}
from \cref{ex:filprism-qrsp}, we learn that $R^\N_{\dR, +}$ identifies with the locus 
\begin{equation*}
\{p=\phi^{-1}(\xi)^pt^{-p}=0\}=\{p=\xi t^{-p}=0\}\subseteq R^\N\;.
\end{equation*}
Thus, we conclude that the pullback of $\gr^\bullet M/p$ from $R^\N_{\HT, c}$ to $R^\N_\Hod=R^\N_{\HT, c}\cap R^\N_{\dR, +}$ identifies with the graded object given by} the cofibre of the map
\begin{equation*}
(\Fil^{\bullet-p} M/p)/(\Fil^{\bullet-p+1} M/p)\tensor_{\Prism_R/p} \xi\Prism_R/p\rightarrow (\Fil^\bullet M/p)/(\Fil^{\bullet+1} M/p)\;.
\end{equation*}
However, \EEDIT{recalling that $I=\xi\Prism_R$,} by definition of the saturated Nygaardian filtration, this is clearly an injection since multiplication by $\xi$ corresponds to multiplication by $\xi^p$ after applying $\tau$ by $\phi$-linearity \EEDIT{and $\xi$ is a non-zerodivisor.}
\end{proof}

\begin{proof}[Proof of \cref{thm:cryset-main}]
\EDIT{As observed previously, \cref{thm:cryset-locsysfgauges} reduces us to having the conclusion of \cref{prop:cryset-fine} for reflexive $F$-gauges instead of just vector bundle $F$-gauges. However, note} that all we really need \EDIT{in order} to copy the proofs of \cref{prop:cryset-fine} and \cref{thm:syntomicetale-mainfine} verbatim for a reflexive $F$-gauge $E$ is that the pullback of $E$ to $(X^\Hod)_{p=0}$ is concentrated in cohomological degree zero. \EDIT{As this follows from \cref{lem:cryset-degreezero} and the flatness of the cover $(X^\Hod)_{p=0}\rightarrow X^\N_\Hod$ from (\ref{eq:syntomicetale-coverhod}), we are done.}
\end{proof}

\appendix

\section{Some base change results}
\label{sect:basechange}

Throughout this paper, we often use base change results for cartesian squares of formal stacks like
\begin{equation*}
\begin{tikzcd}
X^\prism\ar[r, "j_\dR"]\ar[d] & X^\N\ar[d] \\
\Z_p^\prism\ar[r, "j_\dR"] & \Z_p^\N\nospacepunct{\;.}
\end{tikzcd}
\end{equation*}
However, as we are working with formal stacks instead of algebraic (or derived) stacks, such base change compatibilities are usually not automatic. Thus, in this appendix, we indicate how to prove these statements -- in the main body of the paper, we will then often use them without further mention.

\begin{defi}
We say that a cartesian diagram
\begin{equation*}
\begin{tikzcd}
\cal{X}'\ar[r, "g'"]\ar[d, "f'", swap] & \cal{X}\ar[d, "f"] \\
\cal{Y}'\ar[r, "g"] & \cal{Y}
\end{tikzcd}
\end{equation*}
of stacks satisfies \emph{base change for bounded below complexes} if, for any \EEDIT{$F\in\D^+(X)$, see \cref{defi:beilfibsq-tstructbounded},} there is an isomorphism
\begin{equation*}
g^*f_*F\cong f'_*{g'}^*F
\end{equation*}
given by the natural map.
\end{defi}

As it turns out, \EEDIT{many} of the base change results we need are an application of the following general statement:

\begin{prop}
\label{prop:basechange-locclosed}
Let $f: \cal{X}\rightarrow\cal{Y}$ be a morphism of stacks over $\Spf\Z_p$ such that $\cal{X}$ can be written as the colimit of a simplicial flat affine formal $\cal{Y}$-scheme $U^\bullet$ and assume that $\cal{Y}$ admits a flat cover from a regular affine formal scheme. \EEDIT{Then,} for any locally closed immersion $i: \cal{Z}\rightarrow\cal{Y}$, the cartesian diagram
\begin{equation*}
\begin{tikzcd}
\cal{X}_{\cal{Z}}\ar[r, "i'"]\ar[d, "f'", swap] & \cal{X}\ar[d, "f"] \\
\cal{Z}\ar[r, "i"] & \cal{Y}
\end{tikzcd}
\end{equation*}
satisfies base change for bounded below complexes. 
\end{prop}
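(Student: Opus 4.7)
The plan is to factor the locally closed immersion $i$ as a composition $\cal{Z} \xrightarrow{k} \cal{U} \xrightarrow{j} \cal{Y}$ of a closed immersion followed by an open immersion; since a pasting of cartesian squares each satisfying base change for bounded below complexes again has this property, it suffices to treat the two extreme cases separately.

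For the open immersion $j$, I would use the simplicial presentation $\cal{X} \simeq \colim U^\bullet$ to write
\[
f_*F \simeq \Tot\bigl(f_{n,*}(F|_{U^n})\bigr)
\]
for any $F \in \D^+(\cal{X})$, where $f_n \colon U^n \to \cal{Y}$ denotes the structure map. Since $j$ is flat, the derived pullback $j^*$ is $t$-exact and commutes with arbitrary limits, so it is enough to verify base change for each affine square with $U^n$ in the upper right corner. This reduces to the elementary assertion that, for a flat affine morphism of formal schemes, pushforward and pullback of modules commute across an open immersion in the base.

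For the closed immersion $k \colon \cal{Z} \hookrightarrow \cal{Y}$, I would first use the flat cover $V \to \cal{Y}$ from a regular affine formal scheme $V$, together with its \v{C}ech nerve, to descend the question to the case $\cal{Y} = V$; this descent is legitimate because the cover is flat, so flat base change (already established in the open case) allows one to propagate the statement. With $\cal{Y}$ now regular affine, the key geometric input is that $k$ has finite Tor-amplitude: locally on $\cal{Y}$, $k_*\O_{\cal{Z}}$ admits a finite free resolution by a Koszul complex on a regular sequence cutting out $\cal{Z}$. Consequently, $k^*$ shifts cohomological degrees by at most a uniform constant, and since $F \in \D^+(\cal{X})$ ensures that each $f_{n,*}(F|_{U^n})$ is bounded below (here using that $U^n \to \cal{Y}$ is an affine morphism of affine formal schemes, hence has $t$-exact pushforward), this allows $k^*$ to commute with the totalization $\Tot(f_{n,*}(F|_{U^n}))$. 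Each level-$n$ affine base change square is then again elementary by the module-theoretic description of pushforward along an affine morphism.

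The main obstacle is the commutation of $k^*$ past the totalization in the closed immersion case; this is precisely the step that requires both the regularity of $\cal{Y}$ (to bound the Tor-dimension of $k$) and the bounded below hypothesis on $F$ (so that the relevant totalization is controlled by a finite truncation after a shift). All other ingredients, namely the flat and open cases and the affine-level base change, are standard.
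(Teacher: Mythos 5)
Your overall strategy is the same as the paper's: use the simplicial presentation $U^\bullet \to \cal{X}$ to express $f_*F$ as a cosimplicial totalisation in $\D^{\geq n}(\cal{Y})$, and then argue that $i^*$ commutes with such totalisations by reducing to a commutation result plus the finiteness coming from regularity. Where you differ is the packaging: the paper treats the whole locally closed immersion at once, observing that $i^*(-) = (-) \otimes i_*\O_{\cal{Z}}$ where $i_*\O_{\cal{Z}}$ is a perfect complex over a localisation of $\cal{Y}$, and then invokes \cref{lem:syntomicetale-colimtot} once (totalisations of $\D^{\geq n}$-valued cosimplicial objects commute with filtered colimits) together with the fact that they commute with tensoring by perfect complexes. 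Your decomposition into a closed immersion followed by an open immersion unwinds the same content. That said, two steps in your write-up are stated incorrectly, even though the conclusions you need are true.

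First, for the open immersion you assert that $j^*$ ``commutes with arbitrary limits'' because $j$ is flat. This is false: already for an open immersion of affine schemes, $j^*$ is tensoring with a non-finitely-presented localisation of the ring and does not commute with infinite products, let alone with general limits. What is true, and what you actually need, is that $j^*$ is $t$-exact and that for a cosimplicial object valued in $\D^{\geq n}$ the truncation $\tau^{\leq m}\Tot(-)$ only depends on finitely many cosimplicial levels, so any $t$-exact functor commutes with the totalisation. Equivalently, $j^*$ is a filtered colimit (followed, in the formal setting, by a completion of finite cohomological dimension), and this is precisely the class of operations handled by \cref{lem:syntomicetale-colimtot}; the completion step is where the formal-scheme hypotheses enter and cannot be waved away by appealing to exactness alone.

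Second, in the closed case you descend to $\cal{Y} = V$ along the flat cover $V \to \cal{Y}$ and justify this by ``flat base change (already established in the open case)''. But a flat cover from a regular affine formal scheme is not an open immersion, so the open case does not apply. The base change you need here is of a different nature — compatibility of $f_*$ with pullback along a flat cover of the target — and while it does hold in this setting, it should be justified on its own terms (for instance by the same truncated-totalisation argument applied to the Čech nerve, or by the construction of $f_*$ on stacks), not attributed to the open immersion step. Once both of these points are repaired, your argument goes through and matches the paper's in substance.
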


\EDIT{
Before we can prove the above proposition, we need to establish the following technical statement about the commutation of filtered colimits with cosimplicial totalisations:}

\begin{lem}
\EDIT{
\label{lem:syntomicetale-colimtot}
Let $\cal{X}$ be a stack admitting a flat cover from an affine scheme or a noetherian affine formal scheme. Assume we are given a filtered diagram of cosimplicial objects 
\begin{equation*}
F: I\times\Delta^\op\rightarrow \D^{\geq 0}(\cal{X})\subseteq \D(\cal{X})
\end{equation*}
in the category $\D(\cal{X})$ whose terms are all cosimplicial objects of the full subcategory $\D^{\geq 0}(\cal{X})$, i.e.\ $I$ is a filtered category and $\Delta$ denotes the simplex category, as usual. Then taking colimits over $I$ in $\D(\cal{X})$ commutes with cosimplicial totalisation, i.e.\ there is a natural isomorphism}
\begin{equation*}
\EDIT{
\colim_{i\in I} \Tot(F(i, \bullet))\cong \Tot(\colim_{i\in I} F(i, \bullet))
}
\end{equation*}
\EDIT{in the category $\D(\cal{X})$.}
\end{lem}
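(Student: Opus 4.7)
The plan is to reduce the totalisation, which is a limit, to a \emph{finite} limit in each cohomological degree, after which the statement becomes formal.

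First, I would exploit the bounded-below hypothesis to observe that cosimplicial totalisations are essentially finite at each cohomological level. Namely, for any cosimplicial object $C^\bullet$ in $\D^{\geq 0}(\cal{X})$, the totalisation can be written as $\Tot(C^\bullet) = \lim_N \Tot_N(C^\bullet)$, where $\Tot_N$ denotes partial totalisation over $\Delta_{\leq N}$. The fibre of $\Tot_N \to \Tot_{N-1}$ is essentially $\tilde{C}^N[-N]$ (the $N$-th normalised piece shifted), which lies in $\D^{\geq N}(\cal{X})$. Therefore, for each $k \geq 0$, we have $\tau^{\leq k}\Tot(C^\bullet) \cong \tau^{\leq k}\Tot_N(C^\bullet)$ whenever $N \geq k+1$. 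Since $\Tot_N$ is a \emph{finite} limit, it automatically commutes with filtered colimits in any stable $\infty$-category with $t$-exact filtered colimits.

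Second, I would reduce to the affine case using the flat cover $p: U \to \cal{X}$ in the hypothesis. Since $p^*$ is $t$-exact and jointly conservative on bounded-below complexes (which is the relevant input from descent), and since $p^*$ commutes with filtered colimits (as a left adjoint) and with bounded-below totalisations (by the truncation argument of step one, pullback commuting with finite limits), the statement in $\D(\cal{X})$ reduces to the corresponding statement in $\D(U)$. In $\D(\Spec R)$, filtered colimits are $t$-exact, so the natural map $\alpha: \colim_i \Tot(F(i, \bullet)) \to \Tot(\colim_i F(i, \bullet))$ is a map between objects of $\D^{\geq 0}$; applying $\tau^{\leq k}$ to both sides and using step one reduces it to an isomorphism $\tau^{\leq k}\colim_i \Tot_N(F(i, \bullet)) \cong \tau^{\leq k}\Tot_N(\colim_i F(i, \bullet))$ for $N \geq k+1$, which holds because $\Tot_N$ is a finite limit and filtered colimits are $t$-exact.

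The main obstacle is the formal scheme case $U = \Spf R$, where $\D(\Spf R) = \widehat{\D}(R)$ consists of derived $I$-complete complexes. Here filtered colimits in $\widehat{\D}(R)$ are computed by taking the colimit in $\D(R)$ and then $I$-completing, so moving a filtered colimit past a totalisation requires knowing that $I$-completion commutes with bounded-below cosimplicial totalisations. This is where the noetherian hypothesis is essential: since $R$ is noetherian and $I$ is finitely generated, each $R/I^n$ is a perfect (hence dualisable) $R$-module, so $- \otimes^{\mathbb{L}}_R R/I^n$ preserves all limits, and hence also $(-)_{I}^{\wedge} = R\lim_n (- \otimes^{\mathbb{L}}_R R/I^n)$ commutes with totalisations (by Fubini for limits). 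With this in hand, computing totalisations in $\widehat{\D}(R)$ agrees with computing them in $\D(R)$ (since $\widehat{\D}(R) \subseteq \D(R)$ is closed under limits), and the argument of the second paragraph carries over verbatim.
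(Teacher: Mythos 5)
Your overall strategy is genuinely different from the paper's: the paper works directly in $\D(\cal{X})$ using the fact that filtered colimits in $\D(\cal{X})$ have finite cohomological dimension (checking this on a flat cover, where for $\Spf A$ the key input is that derived completion has finite cohomological dimension, \cite[Tag 0AAJ]{Stacks}), whereas you reduce to the affine case via the flat cover and then argue that derived $I$-completion commutes with bounded-below totalisations. Your first two steps -- the truncation argument reducing bounded-below totalisations to finite partial totalisations, and the reduction to $\D(U)$ along a $t$-exact conservative $p^*$ -- are sound and, in fact, close to what the paper does implicitly.

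However, there is a genuine error in the formal-scheme step. You assert that because $R$ is noetherian and $I$ is finitely generated, each $R/I^n$ is a perfect $R$-module. This is false in general: perfectness of $R/I^n$ requires $R$ to be regular (or at least that the relevant quotients have finite projective dimension), and noetherianity alone is not enough. For instance, for $R = \Z_p[x]/(px)$ with $I = (p)$, the ring $R$ is noetherian and $p$-complete, but $R/p \cong \F_p[x]$ has infinite projective dimension over $R$ and is therefore not perfect; the same phenomenon occurs for $R = k[x]/(x^2)$ with $I = (x)$. Consequently the inference "$-\otimes^{\mathbb{L}}_R R/I^n$ preserves all limits" does not hold as stated. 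The fix is to use the Koszul complexes $K_n = \Kos(R; f_1^n, \dots, f_r^n)$ for a chosen finite generating set $(f_1, \dots, f_r)$ of $I$ instead of the quotients $R/I^n$: the $K_n$ are honestly perfect (hence dualisable, so $-\otimes^{\mathbb{L}}_R K_n \cong \sRHom(K_n^\vee, -)$ preserves all limits), and derived $I$-completion is $R\lim_n(-\otimes^{\mathbb{L}}_R K_n)$ for any finitely generated $I$. With this substitution, the Fubini argument you describe goes through and the commutation of completion with totalisation holds. A further small point: your phrase that "the argument of the second paragraph carries over verbatim" is too quick, since filtered colimits in $\widehat{\D}(R)$ are \emph{not} $t$-exact; what you really want to write is that both sides unwind to $\bigl(\Tot(\colim_i^{\D(R)}F(i,\bullet))\bigr)^\wedge$ after applying the $\D(R)$-case and the (corrected) commutation of completion with $\Tot$. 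Once those two points are repaired, your proof is a valid alternative to the paper's, trading the paper's degree bookkeeping for a cleaner structural statement about completion.
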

\begin{proof}
\EDIT{
We first claim that filtered colimits in $\D(\cal{X})$ have finite cohomological dimension, i.e.\ that there is some $r\geq 0$ with the property that, if $G, G': K\rightarrow \D(\cal{X})$ are two filtered diagrams of the same shape together with a natural transformation $G\rightarrow G'$ with the property that $\tau^{\leq -1}G(k)\cong \tau^{\leq -1}G'(k)$ for all $k\in K$, then there is a natural isomorphism
\begin{equation}
\label{eq:syntomicetale-colimfindim}
\tau^{\leq -r-1}\colim_{k\in K} G(k)\cong\tau^{\leq -r-1}\colim_{k\in K} G'(k)\;.
\end{equation} 
Indeed, as colimits commute with pullbacks, this may be checked on a flat cover. Now there are two cases to consider: If we are given a flat cover $\Spec A\rightarrow\cal{X}$, the claim is clear as filtered colimits of $A$-modules are exact. If, however, we are given a flat cover $\Spf A\rightarrow\cal{X}$ with $A$ noetherian, then colimits in $\D(\Spf A)\cong \widehat{\D}(A)$ are computed by first taking the colimit in $\D(A)$ and then applying derived completion, so in this case the claim follows from the fact that derived completion has finite cohomological dimension in our setting due to the ideal of definition of $A$ being finitely generated, see \cite[Tag 0AAJ]{Stacks}. Thus, the claim is proved. In particular, note that, by picking $G'(k)=0$ for all $k$, we have also shown that, if $G(k)\in\D^{\geq 0}(\cal{X})$ for all $k\in K$, then
\begin{equation*}
\colim_{k\in K} G(k)\in \D^{\geq -r}(\cal{X})\;.
\end{equation*}

We now prove the claim from the statement. To check that the natural map 
\begin{equation*}
\colim_{i\in I} \Tot(F(i, \bullet))\rightarrow \Tot(\colim_{i\in I} F(i, \bullet))
\end{equation*}
is an isomorphism, it suffices to check that it induces an isomorphism on $\tau^{\leq n}$ for all $n$. As $\colim_{i\in I} F(i, \bullet)\in\D^{\geq -r}(\cal{X})$ by the first paragraph, the totalisation on the right-hand side may be replaced by a partial totalisation if we are only interested in the result after $n$-truncation; more precisely, we have
\begin{equation*}
\tau^{\leq n} \Tot(\colim_{i\in I} F(i, \bullet))\cong \tau^{\leq n}\lim_{j\in\Delta_{\leq n+r+1}} \colim_{i\in I} F(i, j)\;,
\end{equation*}
where $\Delta_{\leq n+r+1}$ denotes the full subcategory of $\Delta$ spanned by the objects $j$ with $j\leq n+r+1$. Now using that finite limits commute with filtered colimits and the finite cohomological dimension of filtered colimits, we obtain
\begin{equation*}
\begin{split}
\tau^{\leq n}\lim_{j\in\Delta_{\leq n+r+1}} \colim_{i\in I} F(i, j)&\cong \tau^{\leq n}\colim_{i\in I} \lim_{j\in\Delta_{\leq n+r+1}} F(i, j) \\
&\cong \tau^{\leq n} \colim_{i\in I} \tau^{\leq n+r}\lim_{j\in\Delta_{\leq n+r+1}} F(i, j)\;.
\end{split}
\end{equation*}
Finally, we use that $F(i, j)\in\D^{\geq 0}(\cal{X})$ to conclude that the $(n+r)$-truncation of the partial totalisation above actually agrees with the $(n+r)$-truncation of the full totalisation and make use of the finite cohomological dimension of filtered colimits once again to conclude
\begin{equation*}
\begin{split}
\tau^{\leq n} \colim_{i\in I} \tau^{\leq n+r}\lim_{j\in\Delta_{\leq n+r+1}} F(i, j)&\cong \tau^{\leq n} \colim_{i\in I} \tau^{\leq n+r}\Tot(F(i, \bullet)) \\
&\cong \tau^{\leq n} \colim_{i\in I} \Tot(F(i, \bullet))\;,
\end{split}
\end{equation*}
so we are done.
}
\end{proof}

\begin{proof}[Proof of \cref{prop:basechange-locclosed}]
We follow the argument given in the proof of \cite[Thm.\ 3.3.5]{FGauges}. Namely, using the presentations $U^\bullet\rightarrow\cal{X}$ and $U^\bullet_{\cal{Z}}\rightarrow\cal{X}_{\cal{Z}}$, we are reduced to checking that the formation of cosimplicial totalisations in $\D^{\geq n}(\cal{Y})$ commutes with pullback along $i$. However, as $i$ is a locally closed immersion, pullback along $i$ is merely tensoring with $i_*\O_{\cal{Z}}$ and this is a perfect complex over some localisation of $\cal{Y}$ by regularity of $\cal{Y}$. Thus, the claim now follows from the fact that cosimplicial totalisations in $\D^{\geq n}(\cal{Y})$ commute with filtered colimits, \EEDIT{see \cref{lem:syntomicetale-colimtot},} as well as tensoring with perfect complexes.
\end{proof}

By way of example, we will now indicate how to use \cref{prop:basechange-locclosed} to prove many of the base change results we need.

\begin{cor}
\label{cor:basechange-nygaardlocclosed}
Let $X$ be a bounded $p$-adic formal scheme which is $p$-quasisyntomic and qcqs. Then the cartesian diagram
\begin{equation*}
\begin{tikzcd}
X^\prism\ar[r, "j_\dR"]\ar[d] & X^\N\ar[d] \\
\Z_p^\prism\ar[r, "j_\dR"] & \Z_p^\N
\end{tikzcd}
\end{equation*}
satisfies base change for bounded below complexes. 
\end{cor}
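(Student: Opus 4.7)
The approach is to apply \cref{prop:basechange-locclosed} to the square in question with $\cal{X} = X^\N$, $\cal{Y} = \Z_p^\N$, $\cal{Z} = \Z_p^\prism$, and $i = j_\dR$. Of the three hypotheses of the proposition, two are essentially immediate from results already recorded. First, $j_\dR: \Z_p^\prism \to \Z_p^\N$ is an open immersion (hence a locally closed immersion) by \cref{ex:filprism-jdrjht}. Second, $\Z_p^\N$ admits a flat cover from the regular affine formal scheme $\Spf\Z_p\langle u, t\rangle$ by \cref{rem:filprism-regular}.

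The main task is therefore to exhibit $X^\N$ as the colimit of a simplicial flat affine formal $\Z_p^\N$-scheme $U^\bullet$. Using that $X$ is $p$-quasisyntomic and qcqs, I would pick a quasi-syntomic cover $\Spf R \to X$ with $R$ a (large enough) quasiregular semiperfectoid ring; this is possible since qrsp rings form a basis of the quasisyntomic site of Bhatt--Morrow--Scholze, as recalled towards the end of \cref{subsect:filprism}. By \cref{lem:filprism-quasisyntomiccover}, the induced map $R^\N \to X^\N$ is a flat cover; composing with the affine $\G_m$-torsor $\Spf\Rees(\Fil^\bullet_\N \Prism_R) \to R^\N$ arising from the description of $R^\N$ in \cref{ex:filprism-qrsp} yields an affine faithfully flat map $\Spf\Rees(\Fil^\bullet_\N \Prism_R) \to X^\N$. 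The Čech nerve $U^\bullet$ of this cover is then a simplicial affine formal scheme (each level being an iterated fibre product over $X^\N$ along affine morphisms) whose colimit is $X^\N$ by faithfully flat descent.

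The main obstacle is verifying that each term $U^n$ is flat as a formal scheme over $\Z_p^\N$. This flatness can be checked after base change along the flat cover $\Spf\Z_p\langle u, t\rangle \to \Z_p^\N$, reducing via the compatible $\G_m$-torsor structures on both sides to a flatness statement for the Rees algebra $\Rees(\Fil^\bullet_\N \Prism_R)$ over its $\Z_p$-counterpart. By a further reduction to the perfectoid case using that $R$ is a quotient of a perfectoid ring $R_0$, this ultimately follows from the explicit presentation $\Spf(A_\inf(R_0)\langle u, t\rangle/(ut - \phi^{-1}(\xi)))$ of the $\G_m$-cover of $R_0^\N$ combined with the $p$-complete flatness of $A_\inf(R_0) = W(R_0^\flat)$ over $\Z_p$. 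Once this flatness is confirmed, all hypotheses of \cref{prop:basechange-locclosed} are satisfied and the corollary follows directly.
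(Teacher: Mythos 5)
Your proposal matches the paper's proof: both invoke \cref{prop:basechange-locclosed} directly and dispatch the open‑immersion and regular‑cover hypotheses by citing \cref{ex:filprism-jdrjht} and \cref{rem:filprism-regular}, and both source the simplicial presentation from a quasiregular semiperfectoid cover of $X$ together with the explicit $\G_m$‑quotient description $R^\N\cong\Spf\Rees(\Fil^\bullet_\N\Prism_R)/\G_m$ of \cref{ex:filprism-qrsp}, with \cref{lem:filprism-quasisyntomiccover} supplying the flatness of the passage $R^\N\to X^\N$. The paper is considerably terser, simply saying that one may ``assume $X=\Spf R$ quasiregular semiperfectoid'' and that the claim is then ``clear from the explicit description,'' whereas you flesh out the Čech nerve of the composite $\Spf\Rees(\Fil^\bullet_\N\Prism_R)\to R^\N\to X^\N$ and sketch the verification that its terms are flat over $\Z_p^\N$ by base change to $\Spf\Z_p\langle u,t\rangle$ and reduction to the perfectoid case; that elaboration is in the spirit of what the paper leaves implicit and does not diverge from its argument.
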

\begin{proof}
To apply \cref{prop:basechange-locclosed}, first \EDIT{recall} that $j_\dR: \Z_p^\prism\rightarrow\Z_p^\N$ is an open immersion and that \EDIT{there is a flat cover} $\Spf\Z_p\langle u, t\rangle\rightarrow\Z_p^\N$, \EDIT{see \cref{rem:filprism-regular}}. To provide the necessary simplicial presentation of $X^\N$, first observe that, by quasisyntomic descent, see \cref{lem:filprism-quasisyntomiccover}, we may assume that $X=\Spf R$ for a quasiregular semiperfectoid ring $R$. However, then the claim is clear by the explicit description of $R^\N$ from \cref{ex:filprism-qrsp}.
\end{proof}

\begin{cor}
\label{cor:basechange-dr}
Let $X$ be a bounded $p$-adic formal scheme which is smooth and qcqs. Then the cartesian diagram
\begin{equation*}
\begin{tikzcd}
X^\Hod\ar[r]\ar[d] & X^{\dR, +}\ar[d] \\
\Z_p^\Hod\ar[r] & \Z_p^{\dR, +}
\end{tikzcd}
\end{equation*}
satisfies base change for bounded below complexes.
\end{cor}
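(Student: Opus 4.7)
Following the pattern of \cref{cor:basechange-nygaardlocclosed}, the plan is to apply \cref{prop:basechange-locclosed} to the morphism $X^{\dR,+}\to \Z_p^{\dR,+}$ and the locally closed immersion $\Z_p^\Hod\hookrightarrow\Z_p^{\dR,+}$. To identify these stacks concretely, observe that $\Spf\Z_p$ is terminal among $p$-adic formal stacks, since any derived $p$-complete animated ring admits a contractibly unique map from $\Z_p$; inspecting the definition of $X^{\dR,+}$ for $X=\Spf\Z_p$ and using that $\G_a^{\dR,+}(S)$ is a $p$-nilpotent animated ring whenever $S$ is, this yields $\Z_p^{\dR,+}\cong \widehat{\A}^1_-/\G_m$ and $\Z_p^\Hod\cong \widehat{B\G_m}$. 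Under these identifications, the bottom arrow of our square becomes the zero section, a closed immersion cut out by the universal section $t\colon \O(1)\to\O$, and hence in particular a locally closed immersion.

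For the hypothesis that $\Z_p^{\dR,+}$ admits a flat cover by a regular affine formal scheme, one may take $\Spf\Z_p\langle t\rangle\to \widehat{\A}^1_-/\G_m$, obtained by $p$-completing the faithfully flat affine cover $\A^1\to [\A^1/\G_m]$; the faithful flatness of the latter follows from the observation that the fibre product of this map with any morphism $(L,s)\colon \Spec R\to [\A^1/\G_m]$ is the $\G_m$-torsor of trivialisations of $L$.

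The remaining hypothesis is to present $X^{\dR,+}$ as a colimit of a simplicial flat affine formal $\Z_p^{\dR,+}$-scheme. For this, I would mimic the strategy of \cref{cor:basechange-nygaardlocclosed}, reducing by quasisyntomic descent to the case $X=\Spf R$ with $R$ perfectoid; then the explicit description
\begin{equation*}
R^{\dR,+}\cong \Spf\bigl(A_\inf(R)[\tfrac{u^n}{n!}, t : n\geq 1]_{(p)}^\wedge/(ut-\xi)\bigr)/\G_m
\end{equation*}
from \cref{ex:fildrstack-perfd}, combined with the $\G_m$-torsor $\Spf\Z_p\langle t\rangle\to \widehat{\A}^1_-/\G_m$ above, makes the required simplicial affine presentation manifest (the quotient by $\G_m$ is unwound by further base change along this torsor). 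The main obstacle here is verifying the appropriate form of quasisyntomic descent in this setting, namely that for a quasisyntomic cover $X\to Y$ of bounded $p$-adic formal schemes, the induced map $X^{\dR,+}\to Y^{\dR,+}$ is a flat cover; this should follow by an argument analogous to the one underlying \cref{lem:filprism-quasisyntomiccover}, using the Hodge-filtered de Rham comparison of \cref{thm:fildrstack-comparison} to transport the quasisyntomic descent of Hodge-filtered de Rham cohomology to the level of stacks.
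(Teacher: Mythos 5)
Your setup is on the right track — applying \cref{prop:basechange-locclosed} with the closed immersion $\Z_p^\Hod\cong B\G_m\hookrightarrow\A^1_-/\G_m\cong\Z_p^{\dR,+}$ and the flat cover $\Spf\Z_p\langle t\rangle\rightarrow\A^1_-/\G_m$ is exactly what is needed, and your identifications of these stacks are correct. However, the step where you produce the required simplicial flat affine presentation of $X^{\dR,+}$ over $\Z_p^{\dR,+}$ has a genuine gap.

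You propose to reduce by quasisyntomic descent to $X=\Spf R$ with $R$ perfectoid and then invoke \cref{ex:fildrstack-perfd}. This runs into two problems. First, a smooth $p$-adic formal scheme over $\Spf\Z_p$ cannot in general be quasisyntomically covered by perfectoid affines; at best one reduces to the quasiregular semiperfectoid case, for which \cref{ex:fildrstack-perfd} does not supply a description of $R^{\dR,+}$. Second, and more fundamentally, the statement that $X^{\dR,+}\rightarrow Y^{\dR,+}$ is a flat cover whenever $X\rightarrow Y$ is a quasisyntomic cover is precisely the point where your argument breaks down. Unlike the prismatisation and Nygaard-filtered prismatisation (where this is \cref{lem:filprism-quasisyntomiccover}, a substantial input from \cite[Cor.\ 6.12.8]{GardnerMadapusi}), no such result is established for $(-)^{\dR,+}$, and it is far from clear how to deduce it from \cref{thm:fildrstack-comparison}: that theorem concerns \emph{smooth} formal schemes, while the covers appearing in quasisyntomic descent (e.g.\ adjoining $p$-power roots of coordinates) destroy smoothness, and the behaviour of the Hodge-filtered de Rham stack for such non-smooth affines is not controlled here. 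So this reduction cannot simply be asserted.

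The paper instead avoids quasisyntomic descent altogether and works étale-locally: the functor $X\mapsto X^{\dR,+}$ preserves affine étale morphisms (by the proof of \cite[Thm.\ 2.3.6]{FGauges}), so one may assume $X$ admits an affine étale map $X\rightarrow\A^n$, giving an affine étale map $X^{\dR,+}\rightarrow(\A^n)^{\dR,+}=(\G_a/\V(\O(1))^\sharp)^n$ over $\A^1_-/\G_m$. Pulling back the torsor $\G_a^n\rightarrow(\G_a/\V(\O(1))^\sharp)^n$ to $X^{\dR,+}$ then exhibits $X^{\dR,+}$ as a quotient of a flat affine formal scheme by $\V(\O(1))^{\sharp\,n}$, and the bar construction of this torsor provides the desired simplicial presentation. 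This is both simpler and stays entirely within the smooth world where the de Rham stack is well-behaved. You should replace your quasisyntomic reduction with this étale-local argument.
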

\begin{proof}
As $\Z_p^{\dR, +}=\A^1_-/\G_m$ \EDIT{admits a flat cover from the regular affine formal scheme $\A^1$} and $\Z_p^\Hod=B\G_m\rightarrow\A^1_-/\G_m=\Z_p^{\dR, +}$ is a closed immersion, we again want to apply \cref{prop:basechange-locclosed}. To obtain the necessary simplicial presentation of $X^{\dR, +}$, first observe that the functor $X\mapsto X^{\dR, +}$ preserves (affine) étale maps by the proof of \cite[Thm.\ 2.3.6]{FGauges}. As $X$ Zariski-locally admits an affine étale map $X\rightarrow\A^n$, we may thus assume that we are given an affine étale map \EEDIT{$X^{\dR, +}\rightarrow (\A^n)^{\dR, +}=(\G_a/\V(\O(1))^\sharp)^n$} of stacks over $\A^1_-/\G_m$. \EEDIT{Then the pullback $\widetilde{X}$ of the ${\V(\O(1))^\sharp}^n$-torsor $\G_a^n\rightarrow (\G_a/\V(\O(1))^\sharp)^n$ to $X^{\dR, +}$ is a flat affine formal scheme over $\A^1_-/\G_m$ and we have $X^{\dR, +}\cong \widetilde{X}/{\V(\O(1))^\sharp}^n$, which yields the desired presentation.}
\end{proof}

The second strategy we can employ to prove base change results is to explicitly identify descent data. Let us again explain what we mean by this using an example:

\begin{lem}
\label{lem:basechange-nygaardsyn}
Let $X$ be a bounded $p$-adic formal scheme which is $p$-quasisyntomic and qcqs. Then the cartesian diagram
\begin{equation*}
\begin{tikzcd}
X^\N\ar[r, "j_\N"]\ar[d, "\pi_{X^\N}", swap] & X^\Syn\ar[d, "\pi_{X^\Syn}"] \\
\Z_p^\N\ar[r, "j_\N"] & \Z_p^\Syn
\end{tikzcd}
\end{equation*}
satisfies base change for bounded below complexes. 
\end{lem}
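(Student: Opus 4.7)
The strategy is to leverage the equalizer description (\ref{eq:syntomification-equaliser}), which identifies $\D(\Z_p^\Syn)$ (resp.\ $\D(X^\Syn)$) with the category of pairs consisting of an object of $\D(\Z_p^\N)$ (resp.\ $\D(X^\N)$) together with a descent isomorphism between its two pullbacks along $j_\HT$ and $j_\dR$. Under this identification, the pullback $j_\N^*$ is just the forgetful functor on pairs, so our problem amounts to identifying both sides as objects of $\D(\Z_p^\N)$ with compatible gluing data.

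First, I would observe that the proof of \cref{cor:basechange-nygaardlocclosed} goes through verbatim with $j_\dR$ replaced by $j_\HT$, since both are open immersions into $X^\N$ (cf.\ \cref{ex:filprism-jdrjht}) and so \cref{prop:basechange-locclosed} applies in either case. Now, given $F\in \D^+(X^\Syn)$ corresponding to a pair $((j_\N^X)^*F, \iota_X)$ under the equalizer, I would assemble a candidate $G\in\D(\Z_p^\Syn)$ from the object $\pi_{X^\N, *}(j_\N^X)^*F\in\D(\Z_p^\N)$ together with the descent datum obtained by conjugating $\iota_X$ by the base change isomorphisms
\begin{equation*}
(j_k^{\Z_p})^*\pi_{X^\N, *}(j_\N^X)^*F\xrightarrow{\cong} \pi_{X^\prism, *}(j_k^X)^*(j_\N^X)^*F=\pi_{X^\prism, *}(j_\prism^X)^*F\qquad (k\in\{\HT, \dR\}),
\end{equation*}
which are isomorphisms by the previous step applied to the bounded below complex $(j_\N^X)^*F$. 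By construction $(j_\N^{\Z_p})^*G=\pi_{X^\N, *}(j_\N^X)^*F$, so it remains to identify $G$ with $\pi_{X^\Syn, *}F$.

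For this, I would run a Yoneda argument. For any $K\in\D(\Z_p^\Syn)$, the adjunction yields $\mathrm{Map}_{\Z_p^\Syn}(K, \pi_{X^\Syn, *}F)\cong \mathrm{Map}_{X^\Syn}(\pi_{X^\Syn}^*K, F)$, and the equalizer description of $\D(X^\Syn)$ presents the latter as the fibre of
\begin{equation*}
\mathrm{Map}_{X^\N}((j_\N^X)^*\pi_{X^\Syn}^*K, (j_\N^X)^*F)\to \mathrm{Map}_{X^\prism}((j_\prism^X)^*\pi_{X^\Syn}^*K, (j_\prism^X)^*F).
\end{equation*}
Using the cartesian identities $(j_\N^X)^*\pi_{X^\Syn}^*=\pi_{X^\N}^*(j_\N^{\Z_p})^*$ and analogously for $j_\prism$, together with the $(\pi^*, \pi_*)$-adjunctions, this in turn identifies with the fibre of
\begin{equation*}
\mathrm{Map}_{\Z_p^\N}((j_\N^{\Z_p})^*K, \pi_{X^\N, *}(j_\N^X)^*F)\to \mathrm{Map}_{\Z_p^\prism}((j_\prism^{\Z_p})^*K, \pi_{X^\prism, *}(j_\prism^X)^*F),
\end{equation*}
which by the equalizer description of $\D(\Z_p^\Syn)$ is exactly $\mathrm{Map}_{\Z_p^\Syn}(K, G)$.

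The main point requiring care is verifying that, under the rewritings above, the two parallel arrows on the $X$-side (namely $(j_\HT^X)^*$ and $(j_\dR^X)^*$) correspond precisely, via the base change isomorphisms, to the two parallel arrows on the $\Z_p$-side. This amounts to the naturality of the base change isomorphism with respect to the factorisation $j_\prism=j_\N\circ j_k$ for $k\in\{\HT, \dR\}$, and it is exactly this coherence that ensures that the Yoneda identification $G\cong \pi_{X^\Syn, *}F$ realises the base change map.
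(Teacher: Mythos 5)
Your proof takes essentially the same approach as the paper's: both reduce to the base change results for the open immersions $j_\HT$ and $j_\dR$ (the latter proved in \cref{cor:basechange-nygaardlocclosed}, the former by an identical argument) and then observe that $\pi_{X^\N,*}j_\N^*F$ inherits a canonical descent datum to $\Z_p^\Syn$. The paper simply asserts that this descent datum realises $\pi_{X^\Syn,*}F$, whereas you fill in this step with an explicit Yoneda/adjunction argument via the equaliser presentation (\ref{eq:syntomification-equaliser}); that extra verification is correct and makes the identification precise.
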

\begin{proof}
\EDIT{Given any $F\in\D^+(X^\Syn)$,} by base change for the cartesian diagrams
\begin{equation*}
\begin{tikzcd}
X^\prism\ar[r, "j_\dR"]\ar[d] & X^\N\ar[d] \\
\Z_p^\prism\ar[r, "j_\dR"] & \Z_p^\N
\end{tikzcd}\;,\hspace{0.5cm}
\begin{tikzcd}
X^\prism\ar[r, "j_\HT"]\ar[d] & X^\N\ar[d] \\
\Z_p^\prism\ar[r, "j_\HT"] & \Z_p^\N
\end{tikzcd}\;,
\end{equation*}
see \cref{cor:basechange-nygaardlocclosed}, \EDIT{we have
\begin{equation*}
j_\dR^*\pi_{X^\N, *}j_\N^*F\cong \pi_{X^\prism, *}j_\dR^*j_\N^*F\cong \pi_{X^\prism, *}j_\HT^*j_\N^*F\cong j_\HT^*\pi_{X^\N, *}j_\N^*F\;,
\end{equation*}
where $j_\dR^*j_\N^*F\cong j_\HT^*j_\N^*F$ is due to the fact that $j_\N^*F$ is pulled back from $X^\Syn$. Thus,} we see that $\pi_{X^\N, *}j_\N^* F$ is equipped with a canonical descent datum to $\Z_p^\Syn$ and this descent datum corresponds to the complex $\pi_{X^\Syn, *} F$.
\end{proof}

The only base change claim which cannot be readily proved with either of the two techniques above is the following:

\begin{lem}
\label{lem:basechange-nygaarddr}
Let $X$ be a bounded $p$-adic formal scheme which is smooth and qcqs. Then the cartesian diagram
\begin{equation*}
\begin{tikzcd}
X^{\dR, +}\ar[r, "i_{\dR, +}"]\ar[d, "\pi_{X^{\dR, +}}", swap] & X^\N\ar[d, "\pi_{X^\N}"] \\
\Z_p^{\dR, +}\ar[r, "i_{\dR, +}"] & \Z_p^\N
\end{tikzcd}
\end{equation*}
satisfies base change for bounded below complexes.
\end{lem}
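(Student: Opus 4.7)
Plan:

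The obstruction to applying \cref{prop:basechange-locclosed} directly is that $i_{\dR, +}$ is not a locally closed immersion. I would resolve this by factoring $i_{\dR, +}$ through the Hodge-filtered de Rham component $\Z_{p, \dR, +}^\N$ introduced in Section \ref{subsect:syntomicetale-compred}. Specifically, the map $i_{\dR, +}: \Z_p^{\dR, +} = \A^1_-/\G_m\to\Z_p^\N$ factors as
\begin{equation*}
\A^1_-/\G_m\xrightarrow{f} \Z_{p, \dR, +}^\N\xrightarrow{j} \Z_p^\N\;,
\end{equation*}
where $j$ is the closed immersion cut out inside $\Z_p^\N$ by the equations $p=u^p=0$ and $f$ is an fpqc cover, as discussed in the analysis of the reduced locus of $\Z_p^\N$. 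Pulling back to $X$, the map $X^{\dR, +}\to X^\N$ factors correspondingly as $X^{\dR, +}\xrightarrow{f'} X^\N_{\dR, +}\xrightarrow{j'} X^\N$, and it suffices to establish base change separately for the cartesian squares involving $j$ and $f$.

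For the closed immersion part, I would apply \cref{prop:basechange-locclosed} to $j$. The required flat cover of $\Z_p^\N$ from a regular affine formal scheme is supplied by $\Spf\Z_p\langle u, t\rangle\to\Z_p^\N$, cf.\ \cref{rem:filprism-regular}, and the simplicial presentation of $X^\N$ over $\Z_p^\N$ is obtained via quasisyntomic descent exactly as in the proof of \cref{cor:basechange-nygaardlocclosed} by reducing to the quasiregular semiperfectoid case and invoking the explicit description from \cref{ex:filprism-qrsp}. This yields the base change isomorphism $j^*\pi_{X^\N, *} F\cong \pi_{X^\N_{\dR, +}, *}(j')^* F$ for any $F\in\D^+(X^\N)$.

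It remains to establish the flat base change $f^*\pi_{X^\N_{\dR, +}, *} G\cong \pi_{X^{\dR, +}, *}(f')^* G$ for any $G\in\D^+(X^\N_{\dR, +})$, where by \cref{eq:syntomicetale-coverdr+} the pullback $f'$ is again an fpqc cover. Here I would mimic the strategy of \cref{prop:basechange-locclosed}: taking the Čech nerve of $f'\to f$ gives a simplicial presentation of $X^\N_{\dR, +}$ over $\Z_{p, \dR, +}^\N$ by flat affine objects, which reduces the claim to showing that $f^*$ commutes with cosimplicial totalisations in $\D^+(\Z_{p, \dR, +}^\N)$. Since $f$ is flat, the functor $f^*$ is $t$-exact and commutes with finite limits, which together with the approximation of totalisations by partial totalisations in the bounded below setting (as in the proof of \cref{lem:syntomicetale-colimtot}) delivers the desired commutation.

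The main obstacle is this last step: justifying flat base change across $f$ without the perfect-complex input used in \cref{prop:basechange-locclosed}. A potentially cleaner alternative would be to reduce to the affine case via \cref{lem:filprism-quasisyntomiccover} by assuming $X=\Spf R$ for $R$ quasiregular semiperfectoid, and then verify the base change directly using the explicit description of $R^\N$ from \cref{ex:filprism-qrsp}, together with a further descent from the perfectoid case, where \cref{ex:fildrstack-perfd} provides an analogous explicit description of $R^{\dR, +}$.
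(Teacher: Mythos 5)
Your overall strategy — factor the map through $\Z_{p, \dR, +}^\N$ and handle the closed immersion and the flat cover separately — is essentially the same decomposition the paper uses, but there are two genuine gaps.

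First, you cannot factor $i_{\dR, +}: \Z_p^{\dR, +} = \A^1_-/\G_m\to\Z_p^\N$ through $\Z_{p, \dR, +}^\N$ as written, because $\Z_{p, \dR, +}^\N$ is by definition a closed substack of $(\Z_p^\N)_{p=0}$, while the source $\A^1_-/\G_m$ in the lemma lives over $\Spf\Z_p$. The paper first reduces mod~$p$ using $p$-completeness of all the relevant derived categories, together with base change for the square comparing $(X^{\dR, +})_{p=0}\to X^{\dR, +}$ with $(\A^1_-/\G_m)_{p=0}\to\A^1_-/\G_m$ (which comes out of the proof of \cref{cor:basechange-dr}). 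Only after this reduction does your factorisation exist, now as $(\A^1_-/\G_m)_{p=0}\to\Z_{p, \dR, +}^\N\hookrightarrow\Z_p^\N$. Your proposal skips this step entirely, so the rest does not get off the ground.

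Second, for the flat-cover part you acknowledge the difficulty but do not resolve it, and the resolution you gesture at (commuting $f^*$ with cosimplicial totalisations using flatness, $t$-exactness and partial totalisations) is not the route the paper takes and needs more care than you give it — \cref{prop:basechange-locclosed} works because pullback is tensoring with a perfect complex, which has finite Tor-amplitude, and flatness alone does not directly give that commutation without invoking left-completeness of the relevant derived categories. The paper instead uses the ``identify descent data'' technique of \cref{lem:basechange-nygaardsyn}: since $\A^1_-/\G_m\to\Z_{p, \dR, +}^\N$ is a $\cal{G}$-torsor for the explicit group scheme $\cal{G}$ from the reduced locus analysis, and the pullback square \cref{eq:syntomicetale-coverdr+} exhibits $(X^{\dR, +})_{p=0}\to X^\N_{\dR, +}$ as the pulled-back $\cal{G}$-torsor, the pushforward $\pi_{X^\N_{\dR, +}, *}G$ carries a canonical descent datum along $f$ whose underlying object on $\A^1_-/\G_m$ is $\pi_{X^{\dR, +}, *}(f')^*G$. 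This sidesteps the totalisation question altogether. Your alternative of reducing to quasiregular semiperfectoid $R$ also does not obviously help, since $R^{\dR,+}$ has no convenient explicit description in that generality — \cref{ex:fildrstack-perfd} requires $R$ perfectoid, and the smoothness hypothesis is what controls the outer $X^{\dR,+}$ square.
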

\begin{proof}
To check that the base change map $i_{\dR, +}^*\pi_{X^\N, *} F\rightarrow \pi_{X^{\dR, +}, *}i_{\dR, +}^* F$ is an isomorphism for any $F\in\D^+(X^\N)$, we may reduce mod $p$ by $p$-completeness, i.e.\ we may pull back to the stack $(\A^1_-/\G_m)_{p=0}$. Moreover, as the square
\begin{equation*}
\begin{tikzcd}
(X^{\dR, +})_{p=0}\ar[r]\ar[d] & X^{\dR, +}\ar[d] \\
(\A^1_-/\G_m)_{p=0}\ar[r] & \A^1_-/\G_m
\end{tikzcd}
\end{equation*}
satisfies base change for bounded below complexes by the proof of \cref{cor:basechange-dr}, we are reduced to proving that the square
\begin{equation*}
\begin{tikzcd}
(X^{\dR, +})_{p=0}\ar[r]\ar[d] & X^\N\ar[d] \\
\A^1_-/\G_m\ar[r] & \Z_p^\N
\end{tikzcd}
\end{equation*}
satisfies base change for bounded below complexes; note that we have now simply written $\A^1_-/\G_m$ instead of $(\A^1_-/\G_m)_{p=0}$ as any instance of the stack $\A^1_-/\G_m$ will be over $\F_p$ from now on. However, note that the above square admits a factorisation as 
\begin{equation*}
\begin{tikzcd}
(X^{\dR, +})_{p=0}\ar[r]\ar[d] & X^\N_{\dR, +}\ar[r]\ar[d] & X^\N\ar[d] \\
\A^1_-/\G_m\ar[r] & \Z_{p, \dR, +}^\N\ar[r] & \Z_p^\N\nospacepunct{\;,}
\end{tikzcd}
\end{equation*}
so it suffices to show that both the left and the right square satisfy base change for bounded below complexes. For the right square, as $\Z_{p, \dR, +}^\N\rightarrow \Z_p^\N$ is a closed immersion \EDIT{by the description of the reduced locus}, this follows from an application of \cref{prop:basechange-locclosed}; for the left square, we recall that $\A^1_-/\G_m\rightarrow\Z_{p, \dR, +}$ and hence also $(X^{\dR, +})_{p=0}\rightarrow X^\N_{\dR, +}$ is a $\cal{G}$-torsor, where $\cal{G}$ is the group scheme described at the end of Section \ref{sect:syntomification}, so here the claim follows by identifying descent data.
\end{proof}

\section{Finiteness of pushforwards along $X^\Syn\rightarrow\Z_p^\Syn$}
\label{sect:finiteness}

In this appendix, we will prove a finiteness result for pushforwards of perfect complexes along $X^\Syn\rightarrow\Z_p^\Syn$. Namely, the result we want to \EEDIT{establish} goes as follows:

\begin{prop}
\label{prop:finiteness-main}
Let $X$ be a $p$-adic formal scheme which is smooth and proper \EEDIT{over $\Spf\Z_p$}. For any perfect $F$-gauge $E\in\Perf(X^\Syn)$, the pushforward $\pi_{X^\Syn, *} E$ is perfect.
\end{prop}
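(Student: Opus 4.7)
The plan is to reduce perfectness of $\pi_{X^\Syn, *}E$ on $\Z_p^\Syn$ to perfectness of the analogous pushforwards to the constituent stacks $\Z_p^\N$ and $\Z_p^\prism$, and then to treat each of these cases separately by invoking known finiteness results for prismatic and Hodge cohomology of smooth proper $p$-adic formal schemes. In both reductions, the base change results from Appendix \ref{sect:basechange} will play a crucial role.

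First, using the equaliser diagram (\ref{eq:syntomification-equaliser}) together with \cref{lem:basechange-nygaardsyn} and its analogue for $j_\prism$, our question reduces to showing that the pullbacks $j_\N^* \pi_{X^\Syn, *}E \cong \pi_{X^\N, *}(j_{\N, X}^* E)$ on $\Z_p^\N$ and $j_\prism^* \pi_{X^\Syn, *}E \cong \pi_{X^\prism, *}(j_{\prism, X}^* E)$ on $\Z_p^\prism$ are each perfect; indeed, perfectness on $\Z_p^\Syn$ can be detected on these two pullbacks by the pushout presentation of \cref{defi:syntomification-def}.

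For the $\Z_p^\prism$-component, we identify $\pi_{X^\prism, *}(j_{\prism, X}^* E)$ with the absolute prismatic cohomology of $X$ with coefficients in the perfect prismatic $F$-crystal underlying $j_{\prism, X}^* E$ (via \cref{thm:prismatisation-comparison}), and invoke the classical finiteness of absolute prismatic cohomology of smooth proper $p$-adic formal schemes with perfect coefficients. For the $\Z_p^\N$-component, we exploit the Rees map $t : \Z_p^\N \to \A^1_-/\G_m$ together with \cref{lem:basechange-nygaarddr} to transport the problem into the filtered setting: the Rees pushforward $t_* \pi_{X^\N, *}(j_{\N, X}^* E)$ is a filtered object of $\widehat{\D}(\Z_p)$ whose associated graded, by further base change along the squares involving $X^\Hod$ and $\Z_p^\Hod = B\G_m$, computes the Hodge cohomology of $X$ with coefficients in the graded pieces of the pulled-back $F$-gauge, which is a perfect complex of graded $\Z_p$-modules by the classical finiteness of Hodge cohomology of smooth proper schemes with perfect coefficients. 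Since $E$ is perfect, its Hodge--Tate weights lie in a bounded range so that only finitely many graded pieces are non-trivial, and the filtration is complete by \cref{lem:finiteness-pushforwardcomplete}; this yields perfectness of $t_* \pi_{X^\N, *}(j_{\N, X}^* E)$ in $\widehat{\DF}(\Z_p)$, which descends to perfectness of $\pi_{X^\N, *}(j_{\N, X}^* E)$ on $\Z_p^\N$ via the flat cover $\Spf\Z_p\langle u, t\rangle \to \Z_p^\N$ from \cref{rem:filprism-regular} and standard descent.

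The main technical obstacle will be executing the base change argument cleanly: one must stitch together the various cartesian squares from Appendix \ref{sect:basechange} so that the associated graded of $t_* \pi_{X^\N, *}(j_{\N, X}^* E)$ really is described by the Hodge cohomology of $X$ with coefficients in an appropriate perfect complex, and then verify that perfectness over $\Spf\Z_p\langle u, t\rangle$ together with the canonical descent data recovers perfectness on $\Z_p^\N$. Modulo this organisational work, the only finiteness inputs needed are the standard perfectness theorems for prismatic and Hodge cohomology of smooth proper $p$-adic formal schemes.
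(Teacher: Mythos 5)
Your proposal has the right general flavor (use the Rees filtration, reduce to finiteness of some Hodge-type cohomology, use boundedness of Hodge--Tate weights and completeness), but two steps are genuinely wrong or circular, and the reduction strategy misses what actually makes the problem tractable.

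First, your treatment of the $\Z_p^\prism$-component is circular: perfectness of the absolute prismatic pushforward $\pi_{X^\prism,*}$ applied to a general perfect prismatic $F$-crystal is \emph{not} a classical black box that one can simply cite. The classical finiteness theorems (BMS, APC) are for \emph{relative} prismatic cohomology over a fixed prism; for absolute prismatic cohomology the statement is essentially established precisely by the kind of devissage through the Nygaard filtration you are proposing to use on the other component, so invoking it here begs the question.

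Second, your identification of the associated graded of $t_*\pi_{X^\N,*}(j_\N^*E)$ with Hodge cohomology is incorrect. The preimage of $B\G_m$ under the Rees map $t: \Z_p^\N\rightarrow\A^1_-/\G_m$ (after reducing mod $p$) is not the Hodge locus but the \emph{conjugate-filtered Hodge--Tate component} $\Z_{p,\HT,c}^\N$, whose quasi-coherent complexes carry a further conjugate filtration (they are graded $\mathcal{A}_1$-modules, not graded $\F_p$-complexes). The Hodge stack $X^\Hod$ appears only after a second degeneration to $\Z_{p,\Hod}^\N$, which is the intersection $\{t=0\}\cap\{u^p=0\}$. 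Your filtration-and-graded-pieces argument therefore skips a layer of structure that has to be controlled.

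Third, and most importantly, you never exploit that $v_1$ is topologically nilpotent on $\Z_p^\Syn$. The paper's proof reduces perfectness to the reduced locus $\Z_{p,\red}^\Syn = \{p=v_1=0\}$ by $(p,v_1)$-completeness, at which point the whole question lives over $\F_p$ and the components $\Z_{p,\HT,c}^\N$, $\Z_{p,\dR,+}^\N$ and $\Z_{p,\Hod}^\N$ have completely explicit descriptions. The only genuine inputs are then: finiteness of Hodge cohomology of a smooth proper scheme over $\F_p$ (giving perfectness on $(X^\Hod)_{p=0}\rightarrow B\G_m$), and $t$-completeness of the filtrations on the Hodge-filtered de Rham and conjugate-filtered Hodge--Tate components (\cref{lem:finiteness-pushforwardcomplete} and its conjugate analogue from \cite[Lem. 3.3.4]{NygaardHodge}). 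Working directly over $\Z_p^\N$ as you propose, without first passing to the reduced locus, leaves you with the full integral structure of the Nygaard filtration, which is precisely what makes the direct approach hard and what the paper deliberately avoids.
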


\EDIT{
Before we can proceed to the proof, we first need to establish the following two lemmas:
}

\begin{lem}
\label{lem:finiteness-perfa1gm}
\EEDIT{We work over $\F_p$.}
\begin{enumerate}[label=(\roman*)]
\EEDIT{
\item A complex $E\in\D(B\G_m)$ is perfect if and only if it corresponds to a graded complex consisting of only finitely many nonzero graded pieces, which are all bounded complexes of finite-dimensional $\F_p$-vector spaces.
\item For a complex $E\in\D(\A^1_-/\G_m)$, the following are equivalent:}
\begin{enumerate}[label=(\arabic*)]
\EEDIT{
\item $E$ is perfect.
\item $E$ is $t$-complete and its pullback to $B\G_m$ is perfect (recall that $t$ denotes the coordinate on $\A^1$).
\item Under the Rees equivalence, $E$ corresponds to a finite filtration $\Fil^\bullet V$ such that all graded pieces $\gr^\bullet V$ are bounded complexes of finite-dimensional $\F_p$-vector spaces; here, the filtration $\Fil^\bullet V$ being finite means that $\Fil^\bullet V=0$ for $\bullet\gg 0$ and that the transition maps $\Fil^\bullet V\rightarrow \Fil^{\bullet-1} V$ are isomorphisms for $\bullet\ll 0$.
}
\end{enumerate}
\end{enumerate}
\end{lem}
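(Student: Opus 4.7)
The plan is as follows. For part (i), I will invoke the Rees-style equivalence $\D(B\G_m)\cong\Fun(\Z, \D(\F_p))$ and test perfectness on the flat cover $\Spec\F_p\to B\G_m$: the pullback sends a graded complex $(V_n)_{n\in\Z}$ to $\bigoplus_n V_n$, which is perfect over $\F_p$ exactly when only finitely many $V_n$ are nonzero and each $V_n$ is bounded with finite-dimensional cohomology. Conversely, any such tuple is a finite direct sum of shifts of twists $\O(n)[k]$ and hence perfect.

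For part (ii), the implication $(1)\Rightarrow (2)$ is direct: pullback along the closed immersion $i\colon B\G_m\hookrightarrow\A^1_-/\G_m$ preserves perfectness, while $t$-completeness is inherited from the generators $\O(n)$---which correspond under the Rees equivalence to finite (hence automatically $t$-complete) filtrations---by stability under shifts, cofibres, and retracts.

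For $(3)\Rightarrow (1)$, I will build $\Fil^\bullet V$ as a finite iterated extension of ``single-degree'' filtrations, each corresponding to $i_*$ of $\gr^k V$ placed in grading degree $k$ on $B\G_m$. By part~(i) these graded pieces are perfect on $B\G_m$, and their pushforwards $i_*(\gr^k V\{k\})$ admit explicit length-two resolutions of the form $\gr^k V\tensor\O(-k+1)\xrightarrow{t}\gr^k V\tensor\O(-k)$, hence are perfect on $\A^1_-/\G_m$; since the filtration has only finitely many nonzero graded pieces, iterated extension then yields the perfectness of $\Fil^\bullet V$.

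The main point is $(2)\Rightarrow (3)$. Applying part (i) to the perfect pullback $\gr^\bullet V=E|_{B\G_m}$ shows that only finitely many $\gr^i V$ are nonzero, each one bounded with finite-dimensional cohomology. From $\gr^i V=0$ for $i\ll 0$ the transition maps $\Fil^{i+1} V\to\Fil^i V$ are isomorphisms for $i\ll 0$, as desired. For the upper bound the vanishing of $\gr^i V$ for $i\gg 0$ forces all $\Fil^i V$ for large $i$ to coincide with a common value $W$, and here the decisive input is $t$-completeness: under the Rees equivalence it translates into the statement
\begin{equation*}
\Fil^i V\cong\lim_n \Fil^i V/\Fil^{i+n} V\;,
\end{equation*}
which for $i$ large gives $W=\lim_n 0=0$, yielding $\Fil^i V=0$ for $i\gg 0$ as needed.
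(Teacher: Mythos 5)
Your arguments for part~(i), for $(1)\Rightarrow(2)$ and for $(2)\Rightarrow(3)$ are correct. For $(1)\Rightarrow(2)$ you argue $t$-completeness by noting that $\Perf(\A^1_-/\G_m)$ is the thick subcategory generated by the $\O(n)$'s (which is true, since graded finitely generated projective $\F_p[t]$-modules are graded free), whereas the paper simply cites \cite[Rem.\ 2.2.7]{FGauges}; both routes work. Your $(2)\Rightarrow(3)$ is a careful unpacking of the one-line claim in the paper that the Rees equivalence matches $t$-completeness with filtration-completeness, and it is correct, including the reformulation $\Fil^i V\cong\lim_n \Fil^i V/\Fil^{i+n}V$.

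However, $(3)\Rightarrow(1)$ contains a genuine error. You assert that $E$ is a finite iterated extension of the ``skyscraper'' objects $i_*(\gr^k V\{k\})$, but this is false even in the simplest case: take $E=\O$, so that $\gr^0 V=\F_p$ and all other graded pieces vanish. Your claim would make $\O$ a finite iterated extension of copies of $\O/t=i_*(\F_p)$; but any such extension is annihilated by a power of $t$, while $\O=\F_p[t]$ is $t$-torsion-free. The correct finite filtration on $E$ is the ``brutal'' one, $E_m$ with $(E_m)^i=\Fil^{\max(i,m)}V$, whose successive cofibres are $\gr^m V\otimes\O(-m)$ --- free Rees modules, not $t$-torsion modules. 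These are already perfect as line-bundle twists of bounded finite-dimensional complexes, so the length-two Koszul resolution you invoke for $i_*(\gr^k V\{k\})$ is both misattributed (it resolves the wrong object) and unnecessary. Once the pieces are corrected, your iterated-extension strategy does work, but the paper's route is simpler still: pull $E$ back along the flat cover $\A^1\to\A^1_-/\G_m$; condition~(3) forces the pullback to be a bounded complex of finitely generated graded $\F_p[t]$-modules, and since $\F_p[t]$ is regular this is already perfect.
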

\begin{proof}
\EEDIT{
Part (i) is clear as perfectness may be checked after pullback along $\Spec\F_p\rightarrow B\G_m$. For part (ii), first observe that, if $E\in\D(\A^1_-/\G_m)$ is perfect, then its pullback to $B\G_m$ must be perfect as well and \cite[Rem.\ 2.2.7]{FGauges} shows that $E$ is $t$-complete; hence, (1) $\Rightarrow$ (2). The implication (2) $\Rightarrow$ (3) is clear by (i) as the Rees equivalence matches $t$-completeness with completeness as a filtered object. Finally, to prove (3) $\Rightarrow$ (1), note that (3) ensures that the pullback of $E$ to $\A^1$ is a bounded complex of finitely generated $\F_p[t]$-modules -- however, as $\F_p[t]$ is regular, this is the same as being perfect.
}
\end{proof}

\begin{lem}
\label{lem:finiteness-pushforwardcomplete}
\EDIT{
Let $X$ be a \EEDIT{smooth qcqs} $p$-adic formal scheme. Then for any perfect complex $E\in\Perf((X^{\dR, +})_{p=0})$ on $(X^{\dR, +})_{p=0}$, the pushforward $\pi_{X^{\dR, +}, *}E\in\D(\A^1_-/\G_m)$ is $t$-complete (where we recall that $t$ denotes the coordinate on $\A^1$).
}
\end{lem}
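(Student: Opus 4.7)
My plan is to reduce the $t$-completeness to a local affine statement on $X$ and then exploit the fact that perfect complexes on $(X^{\dR,+})_{p=0}$ are Zariski-locally bounded complexes of vector bundles, each of which corresponds via \cref{rem:fildrstack-vect} to a filtered vector bundle on $X_{p=0}$ whose filtration is necessarily finite.

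First, I would argue that it suffices to prove the statement when $X = \Spf R$ is a small affine on which $E$ is (the image of) a bounded complex of vector bundles; since $X$ is smooth and qcqs, a finite affine Zariski cover of this form exists. The reduction proceeds via Čech descent for the open cover $\{U_i^{\dR,+}\}$ of $X^{\dR,+}$ induced by an affine cover $\{U_i\}$ of $X$, using the fact that $(-)^{\dR,+}$ preserves open immersions (which follows as in the proof of \cref{cor:basechange-dr} since $(-)^{\dR,+}$ preserves affine étale maps) together with the base change compatibilities of Appendix \ref{sect:basechange}. Since $t$-complete objects on $\A^1_-/\G_m$ form a full subcategory closed under limits, it will be enough to check $t$-completeness of each Čech term.

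Second, in the local case, I would unwind the description of perfect complexes on $(X^{\dR,+})_{p=0}$. By \cref{rem:fildrstack-vect}, a vector bundle on $X^{\dR,+}$ is a vector bundle $E_0$ on $X$ equipped with a decreasing filtration $\Fil^\bullet E_0$ by subbundles, a flat connection with nilpotent $p$-curvature mod $p$, and Griffiths transversality. Since $E_0$ has finite rank, the filtration by subbundles is necessarily finite: there exist $M, N \in \Z$ with $\Fil^n E_0 = 0$ for $n \geq N$ and $\Fil^n E_0 = E_0$ for $n \leq M$. Formally, this is a consequence of the fact that vector bundles on $\A^1_-/\G_m$ correspond under the Rees equivalence to finite filtrations, which is \cref{lem:finiteness-perfa1gm}(ii) restricted to vector bundles.

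Third, I would conclude by examining the pushforward. For a bounded complex of vector bundles on $(X^{\dR, +})_{p=0}$ of the form just described, the pushforward to $\A^1_-/\G_m$ corresponds under the Rees equivalence to the Hodge-filtered de Rham complex with coefficients in this filtered complex on $X_{p=0}$. Since each term has a finite filtration with a uniform bound on $N$, the resulting Hodge-filtered de Rham complex satisfies $\Fil^n = 0$ for $n$ sufficiently large. Such a filtered complex is trivially complete in the derived sense, so the corresponding sheaf on $\A^1_-/\G_m$ is $t$-complete.

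The main obstacle I expect is making the Čech descent entirely rigorous at the derived level: one must check that the induced map $U^{\dR,+} \hookrightarrow X^{\dR,+}$ for an open immersion $U \hookrightarrow X$ is really an open immersion of stacks and, more importantly, that the Čech diagram for $\{U_i^{\dR,+}\}$ computes the pushforward $\pi_{X^{\dR,+}, *}E$ after applying $\pi_{X^{\dR,+}, *}$ termwise; this requires an extension of the base change results in Appendix \ref{sect:basechange} to open immersions coming from covers of $X$ itself rather than of the target $\A^1_-/\G_m$.
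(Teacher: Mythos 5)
Your overall architecture — reduce to a small affine via Zariski descent (using that $(-)^{\dR,+}$ preserves open immersions), identify coefficients with filtered objects with flat connection, observe that perfectness forces the filtration to be eventually zero, conclude completeness of the pushforward — is the same as the paper's. The Čech reduction and the concluding observation that a filtered complex with $\Fil^n = 0$ for $n \gg 0$ is automatically complete are both fine. However, there is a gap in your intermediate reduction.

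You reduce to the case that $E$ "is (the image of) a bounded complex of vector bundles" on $(U_i^{\dR,+})_{p=0}$, citing only that $X$ has a finite affine Zariski cover. But a finite affine cover of $X$ does not give you that $E$ restricts to a bounded complex of objects of $\Vect((U_i^{\dR,+})_{p=0})$ on each piece: the stacks $(U_i^{\dR,+})_{p=0}$ are nontrivial gerbes/quotients by powers of $\V(\O(1))^\sharp$, and it is not clear that they have the resolution property, i.e.\ that every perfect complex on them is Zariski-locally a bounded complex of vector bundles. This claim is not justified, and without it the appeal to \cref{rem:fildrstack-vect} (which is an abelian-category statement about vector bundles) does not apply. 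The paper's proof avoids this issue entirely: after reducing to $X = \Spf R$ with an étale map $\Spf R \to \A^n$, it writes $(R^{\dR,+})_{p=0}$ as a quotient $(\Spec R/p \times \A^1_-/\G_m)/{\V(\O(1))^\sharp}^n$ and works directly with the resulting identification of \emph{quasi-coherent complexes} on $(R^{\dR,+})_{p=0}$ with filtered complexes of $R/p$-modules equipped with flat connection (a derived-level version of \cref{rem:fildrstack-vect}). One then computes the pushforward via a Koszul-type resolution as in \cref{lem:syntomicetale-vesharpreps}, and the finiteness of the filtration follows directly from perfectness of $E$ by (the relative version of) \cref{lem:finiteness-perfa1gm} — no resolution of $E$ by vector bundles is needed. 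If you replace your second step by this direct argument at the level of perfect complexes, the rest of your proof goes through.
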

\begin{proof}
\EDIT{We employ a similar strategy to the proof of \cite[Thm.\ 2.3.6]{FGauges}: One first observes that if $X$ is written as a colimit of a finite diagram $U^\bullet$ of affine open subschemes, then $X^{\dR, +}$ is the colimit of the diagram $U^{\bullet, \dR, +}$ and thus, by smoothness and the qcqs assumption, we may assume that \EEDIT{$X=\Spf R$ is affine and that} there is an étale map \EEDIT{$\Spf R\rightarrow \A^n$}. Then the diagram
\begin{equation*}
\EEDIT{
\begin{tikzcd}[ampersand replacement=\&]
\Spf R\times\A^1_-/\G_m\ar[r]\ar[d] \& R^{\dR, +}\ar[d] \\
\A^n\times\A^1_-/\G_m\ar[r] \& (\A^n)^{\dR, +}
\end{tikzcd}
}
\end{equation*}
is cartesian and the bottom map is a torsor for the group scheme \EEDIT{${\V(\O(1))^\sharp}^n$} over $\A^1_-/\G_m$, hence the same is true for the top row. This means that the map \EEDIT{$(R^{\dR, +})_{p=0}\rightarrow\A^1_-/\G_m$} admits a factorisation
\begin{equation}
\label{eq:finiteness-factorpidr+}
\EEDIT{
(R^{\dR, +})_{p=0}\cong (\Spec R/p\times\A^1_-/\G_m)/{\V(\O(1))^\sharp}^n\rightarrow B_{\A^1_-/\G_m}{\V(\O(1))^\sharp}^n\rightarrow \A^1_-/\G_m
}
\end{equation}
and hence the pushforward of a quasi-coherent complex along this map admits a similar description to the one in \cref{lem:syntomicetale-vesharpreps}: Namely, \EEDIT{we can identify} a quasi-coherent complex $E$ on \EEDIT{$(R^{\dR, +})_{p=0}$} with a complex $V$ \EEDIT{of $R/p$-modules} equipped with a decreasing filtration $\Fil^\bullet V$ and a flat connection \EEDIT{$\nabla: V\rightarrow V\tensor\Omega_{(R/p)/\F_p}^1$} having locally nilpotent $p$-curvature \EEDIT{such that} the restriction of $\nabla$ to $\Fil^i V$ comes with a factorisation through \EEDIT{$\Fil^{i-1} V\tensor\Omega_{(R/p)/\F_p}^1$} along the lines of \cref{rem:fildrstack-vect}. \EEDIT{Now observing that $\Omega_{(R/p)/\F_p}^1\cong (R/p)^n$ since $\Spf R$ is étale over $\A^n$ and using the $\G_m$-equivariant version of \cref{lem:syntomicetale-vesharpreps}, we find that $\pi_{R^{\dR, +}, *}E$} identifies with the filtered complex whose degree $i$ term is given by
\begin{equation}
\label{eq:finiteness-filcohdr+}
\Tot(\Fil^i V\xrightarrow{\nabla} \Fil^{i-1} V\tensor\Omega_{(R/p)/\F_p}^1\xrightarrow{\nabla} \Fil^{i-2} V\tensor\Omega_{(R/p)/\F_p}^2\xrightarrow{\nabla}\dots)
\end{equation}
\EEDIT{by virtue of the factorisation (\ref{eq:finiteness-factorpidr+}).}
However, if $E$ is perfect, then the filtration $\Fil^\bullet V$ is \EEDIT{finite by \cref{lem:finiteness-perfa1gm}} and hence also the filtration given by (\ref{eq:finiteness-filcohdr+}) is \EEDIT{finite and, in particular, complete}, so we are done as the Rees equivalence matches completeness as a filtered object with $t$-completeness.
}
\end{proof}

\begin{proof}[Proof of \cref{prop:finiteness-main}]
\EDIT{Recalling from the proof of \cref{prop:syntomicetale-filtrationetale} that $v_1$ is topologically nilpotent on $\Z_p^\Syn$}, we may check perfectness after pulling back to the reduced locus $\Z_{p, \red}^\Syn$ by $(p, v_1)$-completeness; as the natural map $\Z_{p, \red}^\N\rightarrow\Z_{p, \red}^\Syn$ is a cover, we may even pull back to $\Z_{p, \red}^\N$. By the gluing description of $\Z_{p, \red}^\N$, we first observe that the reasoning of \cite[Fn. 44]{FGauges} shows that it suffices to check perfectness after pullback to $\Z_{p, \HT, c}^\N$ and $\Z_{p, \dR, +}^\N$. Finally, recall that there are covers $\A^1_-/\G_m\rightarrow\Z_{p, \dR, +}$ and $\A^1_+/\G_m\rightarrow\Z_{p, \HT, c}$, so we may check perfectness after pullback along those.
\EEDIT{Using \cref{lem:finiteness-perfa1gm} and} some base change results analogous to the ones from Appendix \ref{sect:basechange}, we are thus reduced to showing the following assertions:
\begin{enumerate}[label=(\alph*)]
\item Pushforward along $(X^\Hod)_{p=0}\rightarrow B\G_m$ preserves perfect complexes;
\item pushforward along $(X^{\dHod, c})_{p=0}\rightarrow \A^1_+/\G_m$ carries perfect complexes to $t$-complete objects;
\item pushforward along $(X^{\dR, +})_{p=0}\rightarrow \A^1_-/\G_m$ carries perfect complexes to $t$-complete objects.
\end{enumerate}
For (a), we observe that this follows immediately from \EEDIT{\cref{lem:finiteness-perfa1gm}} together with \cref{prop:syntomicetale-cohomologyhod}; here, we make use of the properness assumption in order to conclude that cohomology on $X_{p=0}$ preserves perfect complexes. Moreover, (b) follows immediately by the same reasoning as in the proof of \cite[Lem.\ 3.3.4]{NygaardHodge}. Finally, (c) is exactly \cref{lem:finiteness-pushforwardcomplete}.
\end{proof}

\bibliographystyle{amsalpha}
\bibliography{References}
\end{document}